\newcommand{\mc}{\mathcal}
\renewcommand{\Re}{\mathrm{Re}\,}
\renewcommand{\Im}{\mathrm{Im}\,}
\newcommand{\N}{\mathbb{N}}
\newcommand{\R}{\mathbb{R}}
\newcommand{\C}{\mathbb{C}}
\newcommand{\Z}{\mathbb{Z}}
\newcommand{\supp}{\mathrm{supp}\,}
\newcommand{\ad}{\mathrm{ad}}
\newcommand \be{\begin{equation}}
\newcommand \ee{\end{equation}}
\newtheorem{lemma}{Lemma}[section]
\newtheorem{proposition}[lemma]{Proposition}
\newtheorem{theorem}[lemma]{Theorem}
\newtheorem{corollary}[lemma]{Corollary}
\theoremstyle{remark}
\newtheorem{remark}[lemma]{Remark}
\theoremstyle{definition}
\newtheorem{definition}[lemma]{Definition}
\newtheorem{hypothesis}{Hypothesis}
\numberwithin{equation}{section}
\author{Roland Donninger}
\address{\'Ecole Polytechnique F\'ed\'erale de Lausanne, 
Department of Mathematics, Station 8, CH-1015 Lausanne, Switzerland}
\email{roland.donninger@epfl.ch}
\author{Joachim Krieger}
\address{\'Ecole Polytechnique F\'ed\'erale de Lausanne, 
Department of Mathematics, Station 8, CH-1015 Lausanne, Switzerland}
\email{joachim.krieger@epfl.ch}
\title[Vector field method]{A vector field method on the distorted Fourier side and decay for wave equations with potentials}
\begin{document}
\begin{abstract}We study the Cauchy problem for the one-dimensional wave equation
\[ \partial_t^2 u(t,x)-\partial_x^2 u(t,x)+V(x)u(t,x)=0. \]
The potential $V$ is assumed to be smooth with asymptotic behavior 
\[ V(x)\sim -\tfrac14 |x|^{-2}\mbox{ as } |x|\to \infty. \]
We derive dispersive estimates, energy estimates, and estimates involving the scaling
vector field $t\partial_t+x\partial_x$,
where the latter are obtained by employing a vector field method on the ``distorted''
Fourier side.
In addition, we prove local energy decay estimates.
Our results have immediate applications in the context of geometric evolution problems.
The theory developed in this paper is fundamental for the proof of the co-dimension 1 stability of the catenoid
under the vanishing mean curvature flow in Minkowski space, see \cite{DonKriSzeWon13}.
\end{abstract}

\maketitle

\tableofcontents

\section{Introduction}
\noindent There is a growing interest in decay estimates for wave equations with potentials.
Such equations arise, for instance, as linearizations about nontrivial static solutions 
of nonlinear wave equations or in the context of geometric evolution problems.
Consequently, they are of great interest to mathematical physics, general relativity,
and geometry.
In this paper we study the Cauchy problem
\begin{equation}
\label{eq:main}
\left \{ \begin{array}{l}
\partial_t^2 u(t,x)-\partial_x^2 u(t,x)+V(x)u(t,x)=0,\quad t>0 \\
u(0,x)=f(x),\quad \partial_t u(t,x)|_{t=0}=g(x) \end{array} \right .
\end{equation}
where $f, g$ are prescribed functions (the initial data) and for simplicity
we restrict ourselves to the half-line case $x\geq 0$ with the Neumann condition
$\partial_x u(t,x)|_{x=0}=0$ for all $t$.
Needless to say that our methods carry over to the Dirichlet and full-line
cases as well.
Throughout we make the following assumptions on the potential.
\begin{hypothesis}
\label{hyp:A}\hfill
\begin{itemize}
\item $V\in C^\infty([0,\infty))$.
\item We have the asymptotics
\[ V(x)=-\tfrac14 x^{-2}[1+O(x^{-\alpha})],\quad x\geq 1 \] 
where $\alpha>0$ and
the $O$-term is of symbol type, i.e., $\partial_x^k O(x^{-\alpha})=O(x^{-\alpha-k})$
for all $x\geq 1$ and each $k\in \N_0$.
\item The potential $V$ does not admit bound states, i.e., there do not exist nontrivial
$f\in L^2((0,\infty))\cap C^\infty([0,\infty))$ with $f'(0)=0$ and 
\[ -f''+V f=E f \] for some $E<0$.
\end{itemize}
\end{hypothesis}
We remark that the last condition on the nonexistence of bound states is not a real restriction
since one may always orthogonally project the evolution to the continuous spectral subspace.
Our results then hold for the projected evolution.
Furthermore, the smoothness assumption on the potential may be relaxed considerably provided one modifies
the statements accordingly, but we do not elaborate on this.

It is well-known that inverse square decay of the potential is in some sense
critical for the spectral and scattering theory.
This is of course closely related to the fact that the angular momentum barrier $\frac{\ell(\ell+1)}{r^2}$
has exactly this type of decay as $r\to\infty$.
As a consequence, the decay of the associated wave evolution depends on the \emph{coefficient}
in front of the $x^{-2}$-term in the asymptotics of $V(x)$, cf.~\cite{SchSofSta10, SchSofSta10a, DonSchSof11}.
This is in stark contrast to faster decaying potentials where the decay of the associated wave flow
depends only on the dominant power in the asymptotic expansion of $V$ whereas the coefficient
is completely irrelevant \cite{DonSch10, CosHua12}.
In addition, the particular coefficient $-\frac14$ we are considering is also critical
in the sense that it leads to delicate logarithmic corrections in the asymptotics
of the spectral measure.
This is related to the fact that the asymptotics $-\frac14 x^{-2}$
are typical for 
two-dimensional problems.
To see this, rewrite the radial two-dimensional wave equation
\[ \partial_t^2 v(t,r)-\partial_r^2 v(t,r)-\tfrac{1}{r}\partial_r v(t,r)=0 \]
in terms of the variable $u(t,r):=r^\frac12 v(t,r)$.
This yields
\[ \partial_t^2 u(t,r)-\partial_r^2 u(t,r)-\tfrac{1}{4r^2}u(t,r)=0. \]
Of course, the free case is not compatible with Hypothesis \ref{hyp:A} due to the singularity
of $-\frac{1}{4r^2}$ at the origin $r=0$.
However, the study of wave equations on a large class of two-dimensional manifolds 
leads to potentials as in Hypothesis \ref{hyp:A}, see \cite{SchSofSta10}.
In fact, the main application we have in mind is the hyperbolic vanishing mean curvature flow as
studied in \cite{KriLin12}.
In this context, a wave equation with a potential satisfying Hypothesis \ref{hyp:A} (and, in fact,
all of the more restrictive assumptions we impose below) arises
by linearizing the evolution equation about the catenoid, a minimal surface which
is an explicit static solution of the system.
Consequently, the present paper develops the linear theory which is crucial for the proof
of the co-dimension 1 stability of the catenoid \cite{DonKriSzeWon13}.

\subsection{Main results}

In this paper we prove the following estimates for the solution of the Cauchy problem Eq.~\eqref{eq:main}.
Throughout we restrict ourselves to the half-line case $x\geq 0$ with a Neumann condition at the origin and
we write $\R_+:=[0,\infty)$ as well as $\langle x\rangle:=\sqrt{1+|x|^2}$.

\begin{theorem}[Basic dispersive estimate]
\label{thm:basicdisp}
Assume that Hypothesis \ref{hyp:A} holds. Then there exists a constant $C>0$ such that the solution $u$ of Eq.~\eqref{eq:main}
satisfies the estimate
\begin{align*} 
\left \|\langle \cdot \rangle^{-\frac12}u(t,\cdot)\right \|_{L^\infty(\R_+)}
\leq C\langle t\rangle^{-\frac12}\Big (&\left \|\langle \cdot \rangle^\frac12 f'\right \|_{L^1(\R_+)}
+\left \|\langle \cdot \rangle^\frac12 f\right \|_{L^1(\R_+)} \\
&+\left \|\langle \cdot \rangle^\frac12 g\right \|_{L^1(\R_+)} \Big )
\end{align*}
for all $t \geq 0$ and all $f,g \in \mc S(\R_+)$.
\end{theorem}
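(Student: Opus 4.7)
The plan is to represent the solution $u$ via the distorted Fourier transform $\mathcal F$ associated with $H=-\partial_x^2+V$ on $\R_+$ with Neumann boundary condition. Denoting the generalized eigenfunctions by $\phi(\cdot,\xi)$ (so that $H\phi(\cdot,\xi)=\xi^2\phi(\cdot,\xi)$) and the spectral measure by $d\rho(\xi)$, the spectral theorem gives
\[
u(t,x)=\int_0^\infty\Big[\cos(t\xi)\,\tilde f(\xi)+\tfrac{\sin(t\xi)}{\xi}\,\tilde g(\xi)\Big]\phi(x,\xi)\,d\rho(\xi),
\]
where $\tilde f=\mathcal F f$ and $\tilde g=\mathcal F g$. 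This reduces the theorem to a pointwise-in-$(t,x)$ bound for this oscillatory integral, paired with $L^\infty_\xi$ estimates for $\tilde f,\tilde g$ obtained by duality from $L^\infty$ bounds on the eigenfunctions $\phi(y,\xi)$.

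Next, I would split the $\xi$-integral using a smooth cutoff at frequency $\xi\sim \langle t\rangle^{-1}$. On the low-frequency piece $\xi\lesssim\langle t\rangle^{-1}$, the generalized eigenfunctions satisfy a bound of the form $|\phi(x,\xi)|\lesssim \langle x\rangle^{1/2}$ (up to the logarithmic corrections coming from the critical coefficient $-\tfrac14$), and $d\rho$ decays appropriately at the origin. Combined with $|\tilde f(\xi)|\lesssim \|\langle\cdot\rangle^{1/2}f\|_{L^1}$ and the analogous bound on $\tilde g/\xi$ (using $\sin(t\xi)/\xi=O(t)$ together with the measure bound), this piece contributes at least $\langle t\rangle^{-1}\langle x\rangle^{1/2}$ times the claimed right-hand side, which is better than what we need.

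On the high-frequency piece $\xi\gtrsim\langle t\rangle^{-1}$, I would plug in the WKB/Jost asymptotics of $\phi(x,\xi)$ on $x\xi\gtrsim 1$, which express $\phi(x,\xi)$ as a combination of $e^{\pm i x\xi}$ with amplitudes and phases of symbol type in $\xi$, and then apply stationary phase in $\xi$ to the integrals $\int e^{i(t\pm x)\xi}a(x,\xi)\tilde f(\xi)\,d\rho(\xi)$ and the analogous one for $g$. Since stationary-phase decay of order $\langle t\rangle^{-1/2}$ requires an amplitude that decays suitably in $\xi$, for the $\cos(t\xi)$ piece I would perform a single integration by parts in $\xi$, transferring one derivative onto $\tilde f$, which on the physical side produces the weighted $L^1$ norm of $f'$ together with a weighted $L^1$ norm of $f$ itself (the latter coming from terms where the $\xi$-derivative falls on the amplitude/phase/spectral density). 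The $\sin(t\xi)/\xi$ piece is already smooth at $\xi=0$ and does not require such an integration by parts, accounting for the absence of $g'$ in the estimate. In the transition region $x\xi\sim 1$ a separate, direct estimate based on the small-argument asymptotics of $\phi$ must be combined with the two regimes.

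The main technical obstacle is twofold: first, one must propagate the logarithmic corrections caused by the critical coefficient $-\tfrac14$ through the stationary-phase analysis without losing the sharp $\langle t\rangle^{-1/2}$ rate; and second, one must verify that the asymptotic expansions of $\phi(x,\xi)$ and of $d\rho(\xi)$ are of symbol type uniformly in both $x$ and $\xi$ on the full transition region $x\xi\sim 1$, so that the integration-by-parts and stationary-phase estimates paste together seamlessly across the low/intermediate/high regimes. These ingredients are presumably established in the spectral-theoretic sections preceding the theorem, and the proof itself is then a careful bookkeeping argument assembling them.
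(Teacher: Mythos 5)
The broad outline — use the distorted Fourier representation, split low and high frequencies, exploit the asymptotics of $\phi$ and $\rho$ established in the spectral section — matches the paper's strategy. But two of the mechanisms you propose do not actually work, and they are the crux of the argument.

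First, the decoupling step you propose (bound $\tilde f = \mc F f$ in $L^\infty_\xi$ by $\|\langle\cdot\rangle^{1/2}f\|_{L^1}$ using a pointwise bound on $\phi(y,\xi)$, then estimate the remaining $\xi$-integral) does not close at high frequency. At large $\xi$ one has $|\phi(x,\xi^2)|\simeq 1$ and $\tilde\rho(\xi)\simeq 1$, so $\int_1^\infty \cos(t\xi)\,\phi(x,\xi^2)\,\tilde f(\xi)\,\tilde\rho(\xi)\,d\xi$ does not converge absolutely if $\tilde f$ is only known to be bounded. The decay of $\tilde f(\xi)$ at large $\xi$ is an oscillatory cancellation coming from $\phi(y,\xi^2)\approx\cos(y\xi)$ inside $\mc F f(\xi)=\int\phi(y,\xi^2)f(y)\,dy$, and your $L^\infty_\xi$ reduction throws that away. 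The paper instead keeps both spatial variables, works with the kernel $K_N(x,y;t)=2\int\xi\cos(t\xi)\phi(x,\xi^2)\phi(y,\xi^2)\rho(\xi^2)\,d\xi$, and uses the combined phase $e^{i\xi(t\pm x\pm y)}$ together with $\rho(\xi^2)\simeq\xi^{-1}$ at large $\xi$ for convergence.

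Second, integration by parts in $\xi$ does not produce $f'$. Differentiating $\tilde f(\xi)=\int\phi(y,\xi^2)f(y)\,dy$ in $\xi$ pulls a factor of (roughly) $y$ out of the phase $e^{\pm iy\xi}$, i.e.\ one lands on $\mc F(|\cdot|f)$-type terms, not $\mc F(f')$. The paper's mechanism for producing $f'$ is an integration by parts in $y$: writing $\phi(y,\xi^2)\approx 2\xi^{-1}\partial_y\Im\big[e^{iy\xi}b(y,\xi)\big]$ at large $\xi$ and transferring $\partial_y$ onto $f$. The $\xi^{-1}$ gained precisely cancels the extra $\xi$ in the cosine kernel, which is what reduces the cosine estimate to the sine one. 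Finally, ``stationary phase'' is a mislabel: the phases $(t\pm x\pm y)\xi$ are linear in $\xi$, so the oscillatory gain (Lemma~\ref{lem:osc}) is $|t\pm x\pm y|^{-1}$ by nonstationary phase, and the $\langle t\rangle^{-1/2}$ rate near the light cone $|t\pm x\pm y|\lesssim t$ comes from trading the spatial weight, $\langle x\rangle^{-1/2}\langle y\rangle^{-1/2}\lesssim t^{-1/2}$, since there $x$ or $y$ must be $\gtrsim t$.
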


Theorem \ref{thm:basicdisp} is the analogue of the well-known dispersive estimate
for the $2$-dimensional free wave equation.
We remark that this estimate can also be extracted from \cite{SchSofSta10}.
In addition, we have the following version of improved decay.

\begin{theorem}[Accelerated decay]
\label{thm:accdec}
Under the assumptions of Theorem \ref{thm:basicdisp} we have the bound
\begin{align*} 
&\left \|\langle \cdot \rangle^{-1}u(t,\cdot)\right \|_{L^\infty(\R_+)} \\
&\quad \leq C\langle t\rangle^{-1}\Big (\left \|\langle \cdot \rangle f'\right \|_{L^1(\R_+)}
+\left \|\langle \cdot \rangle f\right \|_{L^1(\R_+)}+\left \|\langle \cdot \rangle g\right \|_{L^1(\R_+)} \Big )
\end{align*}
for all $t \geq 0$ and all $f,g \in \mc S(\R_+)$.
\end{theorem}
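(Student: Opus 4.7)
My plan is a scaling vector field argument. The improvement from Theorem~\ref{thm:basicdisp} ($\langle t\rangle^{-1/2}$) to Theorem~\ref{thm:accdec} ($\langle t\rangle^{-1}$) is exactly the gain one expects from one integration by parts in the spectral variable on the distorted Fourier side; on the spatial side this is mediated by the scaling vector field $S := t\partial_t + x\partial_x$. Using $[\partial_t^2-\partial_x^2,S] = 2(\partial_t^2 - \partial_x^2)$ and $[V,S] = -xV'$, a direct commutator computation yields
\[(\partial_t^2 - \partial_x^2 + V)(Su) = -(2V + xV')\,u,\]
and by Hypothesis~\ref{hyp:A}, $2V(x)+xV'(x) = O(\langle x\rangle^{-2-\alpha})$ at infinity, strictly faster than $V$, so the right-hand side is a perturbative source to be absorbed via Duhamel.

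I would split the argument into two spatial regions. In the \emph{outer} region $x \geq t/2$, one has $\langle x\rangle^{-1}\langle t\rangle \lesssim \langle x\rangle^{-1/2}\langle t\rangle^{1/2}$, so Theorem~\ref{thm:basicdisp} applied directly to $u$ already yields the desired bound (the data norms in Theorem~\ref{thm:basicdisp} are dominated by those of Theorem~\ref{thm:accdec}). In the \emph{inner} region $x < t/2$ I would derive a control on $Su$ by applying Theorem~\ref{thm:basicdisp} together with Duhamel's formula for the source $-(2V+xV')u$. The Cauchy data for $Su$ are $Su|_{t=0} = xf'$ and $\partial_t(Su)|_{t=0} = g + xg' = (xg)'$: the first matches the $\langle x\rangle^{1/2}$-weighted $L^1$ norms required by Theorem~\ref{thm:basicdisp}, while the second is a derivative that must be moved through the free propagator $H^{-1/2}\sin(tH^{1/2})$, with $H := -\partial_x^2 + V$, effectively reducing it to a cosine-type evolution of $xg$. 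A bootstrap exploiting the extra $\langle x\rangle^{-\alpha}$ decay of $2V+xV'$ relative to $V$ then closes the estimate and yields
\[\bigl\|\langle\cdot\rangle^{-1/2}(Su)(t,\cdot)\bigr\|_{L^\infty(\R_+)} \lesssim \langle t\rangle^{-1/2}\cdot(\text{RHS of Theorem~\ref{thm:accdec}}).\]

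The inner-region bound for $u$ then follows by combining this $Su$-estimate with the identity $Su = t\partial_t u + x\partial_x u$. A convenient mechanism is integration along the rays $\lambda \mapsto (\lambda t, \lambda x)$ emanating from the origin: since $\tfrac{d}{d\lambda}u(\lambda t, \lambda x) = \lambda^{-1}(Su)(\lambda t, \lambda x)$, one obtains
\[u(t, x) = u(\lambda_0 t, \lambda_0 x) + \int_{\lambda_0}^1 \lambda^{-1}(Su)(\lambda t, \lambda x)\,d\lambda,\]
and a judicious choice of $\lambda_0 \in (0,1)$ (essentially $\lambda_0 \sim \langle x\rangle/\langle t\rangle$) places the base point at an effective time where Theorem~\ref{thm:basicdisp} is already sharp for the target weight, while the ray integral contributes at the $\langle t\rangle^{-1}$ rate thanks to the $Su$-bound.

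\textbf{Main obstacles.} The principal technical difficulty is closing the Duhamel estimate for $Su$ without circularity; this rests on a Gronwall-type bootstrap that uses the extra $\langle x\rangle^{-\alpha}$ decay of $2V+xV'$ quantitatively. A secondary subtle point is the $(xg)'$ initial velocity, which must be moved across the free propagator so as to match the $\langle x\rangle$-weighted data norms of Theorem~\ref{thm:accdec} (which do not involve $g'$). A further care is needed in the inner-region assembly: the weights $\langle x\rangle$ and $\langle \lambda x\rangle$ must be tracked carefully along the ray integration in order for the bound to combine with the boundary term at the sharp rate uniformly across the region.
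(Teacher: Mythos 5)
Your proposal runs head-on into an obstruction that the paper itself flags in the introduction. There, the authors consider exactly the naive application of $S$ to the equation, leading (with $U=-2V-xV'$) to the Duhamel formula
\[
Su(t,\cdot)=\cos(t\sqrt A)Su(0,\cdot)+\frac{\sin(t\sqrt A)}{\sqrt A}\partial_t Su(t,\cdot)|_{t=0}+\int_0^t \frac{\sin((t-s)\sqrt A)}{\sqrt A}\big(Uu(s,\cdot)\big)\,ds,
\]
and they write explicitly: ``the time decay of $u$ is not strong enough to render the integral convergent as $t\to\infty$. This shows that such a naive approach cannot work.'' Concretely, the non-circular input you have at hand is Theorem~\ref{thm:basicdisp}, which gives $\|\langle\cdot\rangle^{1/2}Uu(s,\cdot)\|_{L^1}\lesssim\langle s\rangle^{-1/2}$ (using $U=O(\langle x\rangle^{-2-\alpha})$ so $\|\langle\cdot\rangle U\|_{L^1}<\infty$). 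Feeding this into Theorem~\ref{thm:basicdisp} for the sine propagator yields a contribution $\int_0^t\langle t-s\rangle^{-1/2}\langle s\rangle^{-1/2}\,ds=O(1)$, which is only a uniform bound, not the $\langle t\rangle^{-1/2}$ you require for $\langle\cdot\rangle^{-1/2}Su$. The extra $\langle x\rangle^{-\alpha}$ spatial decay of $U$ improves the \emph{weight} one can afford on $u(s,\cdot)$, but converting that into extra \emph{time} decay of the source (to make the convolution decay at the $\langle t\rangle^{-1/2}$ rate) requires precisely the $\sigma=1$ endpoint $\langle s\rangle^{-1}\|\langle\cdot\rangle^{-1}u(s,\cdot)\|_{L^\infty}\lesssim 1$, which is Theorem~\ref{thm:accdec} itself: the bootstrap is circular, and no Gronwall argument closes this because the integral is borderline logarithmically divergent at best ($\int_0^t\langle t-s\rangle^{-1/2}\langle s\rangle^{-1}\,ds\sim\langle t\rangle^{-1/2}\log\langle t\rangle$, still circular and still off by a log).

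The ray-integration assembly has a second gap of the same nature, so even granting the $Su$ bound you claim, the argument does not close. With $|Su(\lambda t,\lambda x)|\lesssim\langle\lambda x\rangle^{1/2}\langle\lambda t\rangle^{-1/2}D$, the integrand $\lambda^{-1}|Su(\lambda t,\lambda x)|$ behaves like $\lambda^{-3/2}t^{-1/2}\langle\lambda x\rangle^{1/2}$ for $\lambda t\gtrsim 1$, so $\int_{\lambda_0}^1\lambda^{-1}|Su|\,d\lambda$ is dominated by its lower endpoint. With $\lambda_0\sim\langle x\rangle/\langle t\rangle$ (and, say, $x\lesssim 1$, $t\gg1$) this evaluates to $O(1)$, not $O(\langle t\rangle^{-1})$; and the boundary term $u(\lambda_0 t,\lambda_0 x)\approx u(x, x^2/t)$, estimated via Theorem~\ref{thm:basicdisp}, is $\lesssim\langle x\rangle^{-1/2}D$, which does not satisfy $\lesssim\langle x\rangle\langle t\rangle^{-1}D$ for $x\ll t^{2/3}$. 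In short, $Su$ and $u$ decay at the same rate in this framework, so ray integration cannot produce the extra half power of $t$.

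The paper's actual proof (Lemmas~\ref{lem:sinL1Linfw}, \ref{lem:cosL1Linfw}, Corollary~\ref{cor:disp}) is much more elementary and sidesteps the vector field entirely. It revisits the kernel estimate from the proof of Lemma~\ref{lem:sinL1Linf} and simply replaces the weights $\langle x\rangle^{-1/2}\langle y\rangle^{-1/2}$ by $\langle x\rangle^{-1}\langle y\rangle^{-1}$. The cases handled by stationary phase (Lemma~\ref{lem:osc}, when $|t\pm x\pm y|\gtrsim t$) already produced the sharp $\langle t\rangle^{-1}$; only the complementary regime $|t\pm x\pm y|\lesssim t$ — which forces $x\gtrsim t$ or $y\gtrsim t$ — used the weight, and there the stronger weight directly upgrades $\langle t\rangle^{-1/2}$ to $\langle t\rangle^{-1}$. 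The scaling vector field is reserved in the paper for the genuinely harder Theorems~\ref{thm:vf} and \ref{thm:locen}, where it is realized on the distorted Fourier side (as $t\partial_t-\xi\partial_\xi-1+B$) precisely because the physical-space Duhamel approach you outline does not converge.
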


The next result concerns energy bounds.
Clearly, the wave equation \eqref{eq:main} has the conserved energy
\begin{equation}
\label{eq:energy} 
\tfrac12 \int_0^\infty \left (u_t(t,x)^2+u_x(t,x)^2+V(x)u(t,x)^2 \right ) dx. 
\end{equation}
For the following we write $Af:=-f''+V f$.
By Hypothesis \ref{hyp:A}, the operator $A$ is self-adjoint on $L^2(\R_+)$ and nonnegative.
In particular, $\sqrt A$ is defined by means of the functional calculus for $A$.
The conserved energy can be written as
\[ \tfrac12 \left (\|\partial_t u(t,\cdot)\|_{L^2(\R_+)}^2+\|\sqrt A u(t,\cdot)\|_{L^2(\R_+)}^2 \right ). \]
Similar to the free case, we obtain higher energy bounds.
These, however, require slightly stronger assumptions on the potential.

\begin{hypothesis}
\label{hyp:B}
In addition to Hypothesis \ref{hyp:A} we assume that
\[ V^{(2j+1)}(0)=0 \]
for all $j\in \N_0$, i.e., $V$ extends to an even function $V\in C^\infty(\R)$.
\end{hypothesis}

\begin{theorem}[Energy bounds]
\label{thm:energy}
Assume that Hypothesis \ref{hyp:B} holds.
Then there exist constants $C_{k,\ell}>0$ such that the solution $u$ of Eq.~\eqref{eq:main}
satisfies
\begin{align*} 
\left \|\partial_t^\ell \sqrt A u(t,\cdot)\right \|_{H^k(\R_+)}&\leq C_{k,\ell}\left [
\|\sqrt A f\|_{H^{k+\ell}(\R_+)}+\|g\|_{H^{k+\ell}(\R_+)} \right ] \\
\left \|\partial_t^\ell \partial_t u(t,\cdot)\right \|_{H^k(\R_+)}&\leq C_{k,\ell}\left [
\|\sqrt Af\|_{H^{k+\ell}(\R_+)}+\|g\|_{H^{k+\ell}(\R_+)} \right ] 
\end{align*}
for all $t\geq 0$, $k,\ell \in \N_0$, and all even $f,g \in \mc S(\R)$.
\end{theorem}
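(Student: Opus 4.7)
My plan is to combine functional calculus for $A$ with an elliptic regularity statement comparing the standard Sobolev norm $\|\cdot\|_{H^k(\R)}$ with the spectrally-defined norm $\|(1+A)^{k/2}\cdot\|_{L^2(\R)}$. Hypothesis \ref{hyp:B} allows me to extend $V$, $f$, $g$ (hence $u$) evenly to all of $\R$; even functions are preserved by the flow, and restriction to $\R_+$ recovers the Neumann condition. Working on $\R$, the spectral theorem gives $u(t,\cdot) = \cos(t\sqrt A) f + \sqrt A^{-1}\sin(t\sqrt A) g$, so that
\[
\partial_t^\ell \sqrt A\, u(t,\cdot) = A^{(\ell+1)/2}\,\Phi_1(t\sqrt A)\, f + A^{\ell/2}\,\Phi_2(t\sqrt A)\, g,
\]
with $\Phi_j\in\{\pm\sin,\pm\cos\}$; the analogous formula for $\partial_t^{\ell+1} u$ is read off from $u_t = -\sqrt A\sin(t\sqrt A) f + \cos(t\sqrt A) g$.

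The technical heart of the argument is the norm equivalence
\[
\|v\|_{H^k(\R)}^2 \sim \|(1+A)^{k/2} v\|_{L^2(\R)}^2, \qquad k\in\N_0,\ v\in\mc S(\R).
\]
The base case $k=1$ I will derive from the quadratic form identity $\|(1+A)^{1/2} v\|_{L^2}^2 = \|v\|_{L^2}^2 + \int_\R(|v'|^2+V|v|^2)\,dx$, combined with the boundedness of $V$ on $\R$, a consequence of Hypothesis \ref{hyp:A} and the even extension. The inductive step $k\rightsquigarrow k+2$ uses $v'' = Vv - Av$, the Leibniz rule together with bounded derivatives of $V$, and the elementary spectral inequality $1+\lambda^2 \leq 2(1+\lambda)^2$ for $\lambda\geq 0$ to pass between $\|(1+A)^{(k+2)/2} v\|^2$ and $\|(1+A)^{k/2} v\|^2 + \|(1+A)^{k/2} Av\|^2$.

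With this equivalence in hand, I apply $(1+A)^{k/2}$ to the formula for $\partial_t^\ell\sqrt A u$, commute it through $\Phi_j(t\sqrt A)$ by the functional calculus, and use the $L^2$-unitarity of these bounded functions of $A$ to obtain
\[
\|\partial_t^\ell \sqrt A u(t,\cdot)\|_{H^k} \lesssim \|(1+A)^{k/2} A^{(\ell+1)/2} f\|_{L^2} + \|(1+A)^{k/2} A^{\ell/2} g\|_{L^2}.
\]
The elementary pointwise bounds $\lambda^{\ell+1}(1+\lambda)^k \leq \lambda(1+\lambda)^{k+\ell}$ and $\lambda^\ell (1+\lambda)^k \leq (1+\lambda)^{k+\ell}$ (both for $\lambda \geq 0$), together with the norm equivalence reapplied to $\sqrt A f$ and $g$, turn the right-hand side into $\|\sqrt A f\|_{H^{k+\ell}} + \|g\|_{H^{k+\ell}}$, yielding the first asserted estimate. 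The second estimate follows identically from the corresponding formula for $\partial_t^{\ell+1} u$. The only real obstacle is the norm equivalence above, but this is a fairly standard elliptic regularity computation since Hypothesis \ref{hyp:B} guarantees $V\in C_b^\infty(\R)$.
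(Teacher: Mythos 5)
Your proposal is correct and takes essentially the same route as the paper. Your norm equivalence $\|v\|_{H^k}\simeq \|(1+A)^{k/2}v\|_{L^2}$ is, after Plancherel for the distorted Fourier transform, exactly the paper's Lemma~\ref{lem:derweis} (stated there in Fourier variables as $\|\langle\cdot\rangle^s\mc F f\|_{L^2_{\tilde\rho}}$), and both arguments prove it from $v''=Vv-Av$ together with boundedness of $V$ and its derivatives; the only cosmetic differences are that you work directly with $A$ on the physical side rather than through $\mc F$, and you use the $k=1$ base case to reach odd orders where the paper interpolates. One small caution: the even extension puts you on $L^2(\R)$, where $A$ might have negative spectrum in the \emph{odd} sector (Hypothesis~\ref{hyp:A} only forbids Neumann bound states), so statements like $\sigma(A)\subset[0,\infty)$ and your spectral inequality should be understood after restricting to the $A$-invariant even subspace, or alternatively one should replace $1+A$ by $c+A$ for $c$ large.
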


\begin{remark}
In view of nonlinear applications it is desirable to replace the nonlocal operator $\sqrt A$
by the ordinary derivative $\nabla$.
This can indeed be done at the expense of requiring an $L^1$-bound on the function $g$ and
one obtains the estimate
\[ \|\nabla u(t,\cdot)\|_{H^k(\R_+)}\leq C_{k} \left (
\|f\|_{H^{1+k}(\R_+)}+\|g\|_{H^{k}(\R_+)}+\|g\|_{L^1(\R_+)} \right ), \]
see Lemma \ref{lem:energyfree}.
\end{remark}

The final set of estimates involves the scaling vector field $S=t\partial_t+x\partial_x$.
Since the pioneering works of Morawetz and Klainerman, vector field methods have become a standard
device in the study of nonlinear wave equations.
A vector field is useful if it has a ``nice'' commutator with the wave operator $\Box=-\partial_t^2+\partial_x^2$.
In general, for the free wave equation one obtains a sufficiently large set of suitable vector fields by considering
the generators of the symmetries (and conformal symmetries) of Minkowski space \cite{Kla85}.
One particular example of this kind is the scaling vector field $S$ which obeys the 
commutator relation $[S,\Box]=-2\Box$.
In the presence of a potential, the situation becomes more complicated.
It should be remarked, however, that for special potentials (which arise, for instance, in the study of
wave equations on black hole backgrounds) one may still be able
to obtain suitable vector fields by geometric considerations, see e.g.~\cite{AndBlu09, 
DafRod09, DafRod10, Luk10, Luk12}.
For general potentials the situation is less clear.
In the following we restrict ourselves to the scaling vector field which, in conjunction with
local energy decay, has proven very useful for the study of nonlinear problems.
To illustrate the difficulties one encounters, let us naively hit the equation $-\Box u+V u=0$ with $S$
which yields
\[ -\Box Su+VSu=U u \]
where $U(x)=-2V(x)-xV'(x)$.
By Duhamel's formula, $Su$ is given by\footnote{As usual, the operators $\cos(t\sqrt A)$ and $\frac{\sin(t\sqrt A)}{\sqrt A}$
are defined in terms of the functional calculus for the self-adjoint Schr\"odinger operator
$Af=-f''+Vf$, see below.}
\begin{align*}
 Su(t,\cdot)=&\cos(t\sqrt A)Su(0,\cdot)+\frac{\sin(t\sqrt A)}{\sqrt A}\partial_t Su(t,\cdot)|_{t=0} \\
&+\int_0^t \frac{\sin((t-s)\sqrt A)}{\sqrt A}(Uu(s,\cdot))ds
\end{align*}
where $Af=-f''+Vf$.
However, the time decay of $u$ is not strong enough to render the integral
convergent as $t\to\infty$.
This shows that such a naive approach cannot work.
Consequently, we employ a more subtle method where we use a representation of 
$S$ on the distorted Fourier side.
This allows us to conclude suitable bounds involving the vector field $S$ 
in the
presence of a general potential satisfying Hypothesis \ref{hyp:B}.
To be more precise, we have to require an additional nonresonance condition on $V$.

\begin{definition}
The potential $V$ is called \emph{resonant}\footnote{As we will prove (see Lemma \ref{lem:en0} below), 
there exists a fundamental system $\{f_0,f_1\}$ for the equation $-f''+V f=0$ with asymptotic behavior
\[ f_0(x)\sim x^\frac12,\quad f_1(x)\sim x^\frac12 \log x \]
as $x\to \infty$. Consequently, the resonant case is exceptional and the nonresonant
case is generic.} if there exists a nontrivial function $f$ which
satisfies $-f''+Vf=0$, $f'(0)=0$, and $f(x)=O(x^\frac12)$ as $x\to \infty$.
Otherwise, it is called \emph{nonresonant}.
\end{definition}

\begin{theorem}[Vector field bounds]
\label{thm:vf}
Suppose Hypothesis \ref{hyp:B} holds true and the potential $V$ is nonresonant.
Furthermore, set $Df(x):=xf'(x)$.
Then there exist constants $C_{k,\ell,m}>0$ such that the solution $u$ of Eq.~\eqref{eq:main}
satisfies the bounds
\begin{align*} 
\left \|\partial_t^\ell \sqrt A S^m u(t,\cdot)\right \|_{H^k(\R_+)}&\leq C_{k,\ell,m}
\sum_{j=0}^m \Big [
\|\sqrt A D^j f\|_{H^{k+\ell}(\R_+)}+\|D^j g\|_{H^{k+\ell}(\R_+)} \Big ] \\
\left \|\partial_t^\ell \partial_t S^mu(t,\cdot)\right \|_{H^k(\R_+)}&\leq C_{k,\ell,m}\sum_{j=0}^m \Big [
\|\sqrt A D^j f\|_{H^{k+\ell}(\R_+)}+\|D^j g\|_{H^{k+\ell}(\R_+)} \Big ]
\end{align*}
for all $t\geq 0$, $k,\ell,m \in \N_0$, and all $f,g \in \mc S(\R)$ even.
\end{theorem}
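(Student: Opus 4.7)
The plan is to transplant the scaling vector field to the distorted Fourier side and to trade the dangerous $t$-factor against a $\xi$-derivative. Let $\phi(\cdot,\xi)$ denote the generalized eigenfunctions of $A=-\partial_x^2+V$ (with Neumann condition at $0$), let $d\rho$ be the associated spectral measure, and write $\mc Ff(\xi)=\tilde f(\xi)=\int_0^\infty f(x)\phi(x,\xi)dx$. Under $\mc F$, $\sqrt A$ becomes multiplication by $\xi$, and the solution of \eqref{eq:main} admits the spectral representation
\[ u(t,x)=\int_0^\infty\bigl[\cos(t\xi)\tilde f(\xi)+\tfrac{\sin(t\xi)}{\xi}\tilde g(\xi)\bigr]\phi(x,\xi)d\rho(\xi). \]

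Next I would establish an intertwining identity of the form
\[ \mc F[Df](\xi)=-(1+\xi\partial_\xi)\tilde f(\xi)+K\tilde f(\xi), \]
where the correction operator $K$ has kernel built from $x\partial_x\phi(x,\xi)-\xi\partial_\xi\phi(x,\xi)$ and hence vanishes in the free case (where $\phi(x,\xi)\propto\cos(x\xi)$ satisfies $x\partial_x\phi=\xi\partial_\xi\phi$). A parallel manipulation, exploiting $te^{\pm it\xi}=\mp i\partial_\xi e^{\pm it\xi}$ and an integration by parts in $\xi$, converts the $t\partial_t$-factor into $-\xi\partial_\xi$ on the Fourier side modulo boundary terms that vanish on Schwartz data. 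Combining the two conversions, the $\xi\partial_\xi$ contributions produced by $t\partial_t$ and by $D=x\partial_x$ cancel, leaving an identity of the schematic form
\[ Su(t,\cdot)=\cos(t\sqrt A)(Df)+\tfrac{\sin(t\sqrt A)}{\sqrt A}(g+Dg)+\mc E_1(t;f,g), \]
where $\mc E_1$ records the action of $K$ on the two oscillating factors.

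I would then iterate this identity $m$ times. Each iteration introduces one factor of $D$ on the data together with fresh contributions of $K$ applied to the Fourier data produced at the previous step. After $m$ steps, $S^m u$ is expressed as a sum of two kinds of terms: \emph{main terms} of the form $\cos(t\sqrt A)P_j(D)f$ and $\tfrac{\sin(t\sqrt A)}{\sqrt A}Q_j(D)g$ with polynomials $P_j,Q_j$ of degree at most $m$, and \emph{error terms} in which some $D$'s have been replaced by iterates of $K$. The main terms are estimated directly by Theorem~\ref{thm:energy} applied to the data $D^j f$ and $D^j g$, which already produces the right-hand side of both claimed inequalities.

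The principal obstacle is the control of the error terms, which rests on a quantitative analysis of $K$ as a bounded operator on the weighted $L^2(d\rho)$ spaces arising on the Fourier side after iteration. This requires detailed asymptotic expansions of the Jost solutions and of the spectral density near $\xi=0^+$ and as $\xi\to\infty$; the nonresonance assumption enters precisely here, guaranteeing that the zero-energy solution $f_0$ from the footnote has a non-degenerate normalization, which in turn forces $d\rho$ and $\phi(\cdot,\xi)$ to be well-behaved as $\xi\to 0^+$ despite the $-\tfrac14 x^{-2}$ tail of $V$ and its associated logarithmic resonances. Once $K$ is shown to be bounded, with a derivative loss absorbable by Theorem~\ref{thm:energy}, the iteration closes and both estimates follow.
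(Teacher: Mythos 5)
Your proposal is essentially the paper's approach: the paper defines the operator $B$ on the distorted Fourier side by $\mc F(Df)=-D\mc F f-\mc F f+B\mc F f$ (your $K$), derives the commutator identities $S\cos(t\sqrt A)f=\cos(t\sqrt A)Df+[E,\cos(t\sqrt A)]f$ with $E=\mc F^{-1}B\mc F$ (Lemma~\ref{lem:S}), proves boundedness of $B$ and its iterated $D$-commutators on weighted $L^2_{\tilde\rho}$ (Prop.~\ref{prop:B0}, Lemmas~\ref{lem:Bweighted}, \ref{lem:adD}), and then iterates, using the energy bounds for the main terms.

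One point is imprecise enough to flag. You write that nonresonance makes ``$d\rho$ and $\phi(\cdot,\xi)$ well-behaved as $\xi\to0^+$'' via a non-degenerate normalization of $f_0$. That is not quite the mechanism. The dispersively relevant quantity $\phi(1,\lambda)^2\rho(\lambda)\simeq1$ as $\lambda\to0^+$ holds \emph{regardless} of whether $V$ is resonant (this is stated explicitly in the paper), so nothing about the raw low-frequency behavior of the spectral data distinguishes the two cases. What nonresonance actually buys is $a_1\neq0$ in Lemma~\ref{lem:en0}, which forces the spectral density itself to have the extra logarithmic smoothing $\rho(\lambda)\sim(\log\lambda)^{-2}$, equivalently $\int_0^1\xi^{-2}\tilde\rho(\xi)\,d\xi<\infty$. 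This is used in exactly one place: the commutator $[\sqrt A,E]$, whose kernel on the Fourier side is $2F(\xi,\eta)(\xi+\eta)^{-1}\eta\rho(\eta^2)$, and which one must show maps $H^{s-1}$ (applied to $\sqrt A f$) into $H^s$ (Lemma~\ref{lem:commAB}). That commutator arises unavoidably when you push $\sqrt A$ past $E$ in estimating $\sqrt A S^m u$, and the iteration cannot close without it. Your proposal never isolates $[\sqrt A,K]$ as the step that needs the nonresonance hypothesis; as written, ``boundedness of $K$'' alone does not suffice, and attributing the nonresonance assumption to generic good behavior of $d\rho$ or $\phi$ near zero is a red herring given the case-independent asymptotics $\phi^2\rho\simeq1$.
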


\begin{remark}
As before with the energy bounds, it is possible to replace $\sqrt A$ by $\nabla$, see
Lemmas \ref{lem:vfn} and \ref{lem:vfodhigh}.
\end{remark}

\begin{remark}
We emphasize that the nonresonance condition on the potential seems necessary here.
The point is that upon interchanging the order of $\sqrt A$ with $S$ one encounters a commutator
$[\sqrt A,E]$ where $E$ is a certain nonlocal error operator which measures the deviation
from the free case $V=0$, see Lemma \ref{lem:commAB}.
In order to prove suitable mapping properties for this commutator, we use an additional
smoothing property of the spectral measure at small frequencies which is only available
in the nonresonant case. This problem does not occur for the simpler energy estimates
which is why Theorem \ref{thm:energy} does not require this additional
condition.
\end{remark}

Finally, we also prove the following bounds (local energy decay) involving the scaling
vector field.
We use the notation
\[ \|u(t,x)\|_{L^p_t(I) H^k_x(J)}:=\left (\int_I \|u(t,\cdot)\|_{H^k(J)}^p dt \right )^{1/p} \]
for intervals $I,J \subset \R$.
We emphasize that the nonresonance condition is not needed for the following estimates.

\begin{theorem}[Local energy decay]
\label{thm:locen}
Suppose $V$ satisfies Hypothesis \ref{hyp:B} and set $Df(x):=xf'(x)$.
Then there exist constants $C_{k,\ell,m}>0$ such that 
\begin{align*}
\left \|\langle x \rangle^{-\frac12-}\partial_t^\ell S^m e^{\pm it\sqrt A}f(x)\right \|_{L^2_t(\R_+) H_x^{k}(\R_+)}
&\leq C_{k,\ell,m}\sum_{j=0}^m \left \|D^j f\right \|_{H^{k+\ell}(\R_+)} \\
\left \|\langle x \rangle^{-1}
S^m \frac{\sin(t\sqrt A)}{\sqrt A}g(x)\right \|_{L^2_t(\R_+) L^2_x(\R_+)}
&\leq C_{k,m}\sum_{j=0}^m \|\langle \cdot \rangle^{\frac12+}D^j g\|_{L^2(\R_+)} \\
\left \|\langle x \rangle^{-1}
S^m \frac{\sin(t\sqrt A)}{\sqrt A}g(x)\right \|_{L^2_t(\R_+) H^{1+k}_x(\R_+)}
&\leq C_{k,m}\sum_{j=0}^m \|\langle \cdot \rangle^{\frac12+}D^j g\|_{H^{k}(\R_+)} 
\end{align*}
for all $k,\ell,m\in \N_0$ and all $f,g\in \mc S(\R)$ even.
\end{theorem}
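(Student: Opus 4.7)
The strategy is to transfer the problem to the distorted Fourier side, where the propagators $e^{\pm it\sqrt A}$ and $\frac{\sin(t\sqrt A)}{\sqrt A}$ become multipliers in the spectral variable, the time derivatives $\partial_t$ reduce to powers of $\xi$, and the scaling vector field $S$ becomes (essentially) $-\xi\partial_\xi$ plus a controlled correction. I would use the generalized Fourier representation
\[
e^{\pm it\sqrt A}f(x)=\int_0^\infty e^{\pm it\xi}\,\tilde f(\xi)\,\phi(x,\xi)\,d\rho(\xi),
\]
where $\phi$ denotes the distorted basis functions and $\rho$ the spectral measure that were analyzed earlier in the paper. For the first estimate, I would fix $x$ and apply Plancherel in $t$ (extending evenly across $t=0$) to rewrite $\int_0^\infty |e^{\pm it\sqrt A}f(x)|^2\,dt$ as a weighted spectral integral. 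Integrating against $\langle x\rangle^{-1-}$ in $x$ and applying Fubini reduces the $k=\ell=m=0$ case to a uniform-in-$\xi$ bound of the form
\[
\int_0^\infty \langle x\rangle^{-1-}|\phi(x,\xi)|^2\,dx \lesssim \rho'(\xi)^{-1},
\]
which follows directly from the WKB/Jost-type asymptotics of $\phi(x,\xi)$ developed earlier in the paper.

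To handle derivatives and vector fields I would induct on $k$, $\ell$, $m$. Each $\partial_t$ produces a factor $\pm i\xi$ on the spectral side; combined with $\|\xi^\ell\tilde f\|_{L^2(d\rho)}\sim\|A^{\ell/2}f\|_{L^2}\sim \|f\|_{H^\ell(\R_+)}$ (via Theorem \ref{thm:energy}) this matches the $H^{k+\ell}$ norm on the right. For $S^m$ I would invoke the representation of $S$ on the distorted Fourier side: at leading order $S$ corresponds to $-\xi\partial_\xi$ on $\tilde f$, with a symmetric remainder that is bounded as an operator by the calculus developed in the proof of Theorem \ref{thm:vf}. Iterating, each power of $S$ is absorbed at the cost of one factor of $D=x\partial_x$ on the data, producing the sum $\sum_{j=0}^m\|D^j f\|_{H^{k+\ell}(\R_+)}$. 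Because only $L^2_t$ control is needed here, we never have to invert $\sqrt A$ when commuting through $S$, which explains why the nonresonance hypothesis can be dropped in this theorem.

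The second and third estimates, involving $\frac{\sin(t\sqrt A)}{\sqrt A}g$, proceed by the same scheme, except that on the spectral side there is an additional factor $\xi^{-1}$. The resulting singularity at $\xi=0$ is absorbed by the stronger weight $\langle \cdot\rangle^{1/2+}$ imposed on the data, which after the distorted Fourier transform translates into enough regularity of $\tilde g(\xi)$ near the origin; the replacement of $\langle x\rangle^{-1/2-}$ by the slightly stronger spatial weight $\langle x\rangle^{-1}$ compensates the same low-frequency loss in the local-decay estimate itself. The $H^{1+k}_x$ version follows by using one factor of $\sqrt A$ to cancel one $\xi^{-1}$, after which a spatial derivative is traded for $\sqrt A$ modulo the bounded multiplication by $V$.

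The main technical obstacle is the low-frequency analysis. Because $V(x)\sim -\tfrac14 x^{-2}$, the spectral density $\rho'(\xi)$ carries logarithmic corrections at $\xi=0$, and $\phi(x,\xi)$ requires careful asymptotic matching across the transition region $x\xi\sim 1$. Establishing the pointwise bound on $\int_0^\infty\langle x\rangle^{-1-}|\phi(x,\xi)|^2\,dx$ uniformly in $\xi$, and its $\xi^{-1}$-adjusted analogue for the $\frac{\sin(t\sqrt A)}{\sqrt A}$ pieces, is where the principal work lies, especially since repeated $S$-commutations apply $\xi\partial_\xi$ on the spectral side and therefore progressively worsen the behavior near $\xi=0$; closing the induction requires the full strength of the distorted Fourier side vector-field calculus developed earlier in the paper.
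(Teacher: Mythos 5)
Your Plancherel--in--$t$ approach is correct and genuinely different from the paper's argument, and for the exponential propagator it is actually cleaner. The paper (Lemma~\ref{lem:locencos}) only uses Plancherel in $t$ for the high--frequency piece; for low frequencies it decomposes the kernel into four oscillatory regions, applies Cauchy--Schwarz in $y$ to pull out $\|f\|_{L^2}$, and then interpolates pointwise bounds of the form $O(t^0\langle x\rangle^{-1/2})$ and $O(\langle t\rangle^{-1}\langle x\rangle^{1/2})$ (Lemma~\ref{lem:osc}) to obtain $O(\langle t\rangle^{-1/2-}\langle x\rangle^{0+})$, which is then integrated in $(t,x)$. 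Your observation is that one can skip this case analysis entirely: Plancherel in $t$ and Fubini reduce everything to the uniform bound $\tilde\rho(\xi)\int_0^\infty\langle x\rangle^{-1-}|\phi(x,\xi^2)|^2\,dx\lesssim 1$, and this indeed follows from Lemma~\ref{lem:phi} and Corollary~\ref{cor:phi} --- the logarithmic factors in $\phi$ and $\tilde\rho$ cancel precisely, giving $\tilde\rho(\xi)\cdot L^2\xi^{-1+}\simeq\xi^{0+}$ at small $\xi$ with $L=|a_1\log\xi|+1$. The paper's pointwise bounds are exactly what you need, but you exploit them in a more economical way. For the $S^m$ induction you are also on the right track: the $\tilde S$/$(S-\tilde S)$ decomposition used in Lemma~\ref{lem:locencosvf} produces in place of $\phi(x,\xi^2)$ the function $[\xi\partial_\xi-x\partial_x+\xi\tilde\rho'/\tilde\rho]\phi(x,\xi^2)$, which satisfies the same bounds, so your Plancherel argument iterates unchanged.

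One place where your write-up glosses over a genuine step is the sine--evolution estimates. After your Plancherel step you are left comparing $\int_0^\infty\xi^{-2}|\mc Fg(\xi)|^2\tilde\rho(\xi)^2\bigl(\int_0^\infty\langle x\rangle^{-2}|\phi(x,\xi^2)|^2dx\bigr)\,d\xi$ with the \emph{physical--side} weighted norm $\|\langle\cdot\rangle^{1/2+}g\|_{L^2}^2$, which is not a Fourier--side quantity, so the argument does not close by Plancherel alone. You need an explicit low-frequency bound such as $|\mc Fg(\xi)|\lesssim L\,\xi^{-1/2+}\|\langle\cdot\rangle^{1/2+}g\|_{L^2}$ for $0<\xi\le1$ (obtained from Lemma~\ref{lem:phi}, Corollary~\ref{cor:phi}, and Cauchy--Schwarz), after which the resulting $\xi$-integral converges. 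Your phrase ``translates into enough regularity of $\tilde g(\xi)$ near the origin'' points in the right direction but should be made quantitative; as written it could be mistaken for a Sobolev-embedding claim for $\mc F$, which is not what is needed. Also, a minor correction: the remark that repeated $S$-commutations ``progressively worsen the behavior near $\xi=0$'' is not accurate --- $\xi\partial_\xi$ preserves the scaling and the kernel bounds are stable under it; the real subtlety is in the boundedness of the commutators $[D,B]$, $[\sqrt A,E]$ handled in Lemmas~\ref{lem:DB}, \ref{lem:adD} and the $\tilde S$-decomposition, which as you note are supplied by the earlier vector-field calculus.
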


Via Duhamel's principle, corresponding bounds hold for the inhomogeneous problem
\begin{equation}
\label{eq:maininhom}
 \left \{ \begin{array}{l}\partial_t^2 u(t,x)-\partial_x^2 u(t,x)+V(x)u(t,x)=f(t,x),\quad t\geq 0 \\
u(0,\cdot)=\partial_t u(t,\cdot)|_{t=0}=0 \end{array} \right . 
\end{equation}
where $f: [0,\infty)\times \R_+ \to \R$ is prescribed.
The analogue of Theorem \ref{thm:vf} for the inhomogeneous problem
is stated explicitly in Lemmas \ref{lem:vfinhom} and \ref{lem:vfinhomfree} below and for
the inhomogeneous version of the local energy decay we refer to Lemma \ref{lem:loceninhnabla}
and Remark \ref{rem:loceninh}.

In a final section we also consider the case when the initial data are in divergence form, i.e., if
$u_t(t,x)|_{t=0}=\tilde g'(x)$ for some function $\tilde g$,
where slight improvements can be obtained, see Lemmas \ref{lem:div}, \ref{lem:divvf},
\ref{lem:divinhx}, \ref{lem:divinhs} below.

\subsection{Related work}

While the problem of wave decay in the presence of potentials has a long history
in the physics literature, mathematically rigorous treatments are surprisingly recent
and there are still many open questions.
A large amount of work is devoted to the study of wave evolution on curved spacetimes
that arise in general relativity, e.g.~Schwarzschild and Kerr black holes.
This is currently a very active area of research which is motivated by the black hole stability problem.
Decay estimates for waves on black hole spacetimes are obtained, for instance, in
\cite{AndBlu09, Are12, DafRod05, DafRod09, DafRod10, DonSchSof11, DonSchSof12, Dya13, Luk10, Luk12,
FinKamSmoYau06, FinSmo09, Sch10, Tat09, TatToh11, MetTatToh12} and for
Strichartz-type estimates in this context we refer to \cite{MarMetTatToh10, Toh12}.
General one-dimensional wave equations with polynomially decaying potentials are studied
in \cite{DonSch10, CosHua12} and the semiclassical regime is considered in \cite{CosSchStaTan08, CosDonSchTan12}.
We also mention the recent \cite{AndBluNic13} which deals with a complex potential.
For other recent work on decay of solutions of wave equations with potentials see
e.g.~\cite{GeoVis03, GeoVis03a, Cuc00, CarVod12, BurPlaStaTah03, PlaStaTah03, BurPlaStaTah04, Bea94, AncFan08, Anc08,
AncFanVegVis10, CueVod06, BeaStr93, AncPie05, Pie07, Mou09} and references therein.
The first part of our paper is strongly influenced by the works \cite{SchSofSta10, SchSofSta10a}
on wave decay on manifolds with conical ends.
Needless to say, there are parallel developments for Schr\"odinger equations and other systems.
We refer the reader to the survey article \cite{Sch07}.

\subsection{Method of proof}

Our approach is based on the Fourier representation of the solution of Eq.~\eqref{eq:main}.
In other words, we use the spectral transformation (the ``distorted Fourier transform'') associated to the self-adjoint 
Schr\"odinger operator $Af:=-f''+Vf$ to write the solution as an oscillatory integral 
in the Fourier variable $\xi$.
This leads to the representation
\begin{equation}
\label{eq:introFourier}
 u(t,x)=2\int_0^\infty \phi(x,\xi^2) [\xi\cos(t\xi)\mc F f(\xi)+\sin(t\xi)\mc Fg(\xi)]\rho(\xi^2)d\xi. 
 \end{equation}
Here, $\rho(\lambda)d\lambda$ is the spectral measure associated to $A$, $\phi$ is the unique function
satisfying 
\[ -\phi''(x,\lambda)+V(x)\phi(x,\lambda)=\lambda \phi(x,\lambda),\quad \phi(0,\lambda)=-1, \phi'(0,\lambda)=0, \]
for $\lambda>0$ and
\[ \mc F f(\xi):=\int_0^\infty \phi(x,\xi^2)f(x)dx \]
is the distorted Fourier transform of $f$.
In analogy with the free Fourier transform we find it convenient to use the Fourier variable
$\xi$ instead of the spectral parameter $\lambda=\xi^2$.
In order to construct $\rho$ and $\phi$, we apply Weyl-Titchmarsh theory to the operator $A$, see e.g.~\cite{DunSch88,
Wei03, GesZin06, Tes09}.
Since the spectral problem for $A$ is not exactly solvable\footnote{This is standard terminology
in physics although it is a little misleading. What one means by ``exactly solvable'' is the
existence of a fundamental system to the equation $Af=\lambda f$ in terms of special functions
that are known sufficiently well such that one can obtain \emph{global} information on the
solutions of $Af=\lambda f$.},
we have to resort to a perturbative procedure.
In this respect we closely follow the work by Schlag, Soffer, and Staubach \cite{SchSofSta10} although
we obtain more detailed information under more general assumptions.
Experience shows that the important information on decay of the wave evolution is encoded
in the asymptotic properties of $\rho(\lambda)$ as $\lambda \to 0+$.
More precisely, the decisive quantity is $\phi(x,\lambda)\phi(y,\lambda)\rho(\lambda)$.
In order to extract the necessary information, we
distinguish the regimes $0<x\lambda\leq 1$ and $x\lambda \geq 1$.
In the former case we write the spectral problem as
$-f''+V f=\lambda f$ and construct a fundamental system for this equation by perturbing around $\lambda=0$.
In the latter case we write
\[ -f''(x)-\tfrac{1}{4x^2}f(x)-\lambda f(x)=O(x^{-2-\alpha})f(x) \]
and perturb off of Hankel functions.
The errors and \emph{all their derivatives} are quantitatively controlled by Volterra iterations.
The construction is set up in such a way that there remains a window where 
we can glue together both approximations which leads to a global representation
of the Jost function\footnote{
The Jost function is the unique function satisfying 
\[ -f_+''(x,\xi)+V(x)f_+(x,\xi)=\xi^2 f_+(x,\xi),\quad \xi>0 \]
and $f_+(x,\xi)\sim e^{ix\xi}$ as $x\to\infty$.}
$f_+(\cdot,\xi)$ for small $\xi>0$.
From this information we obtain a precise description of $\rho(\lambda)$ and $\phi(x,\lambda)$ for small
$\lambda>0$.
In particular, we show that $\phi(1,\lambda)^2\rho(\lambda)\simeq 1$ as $\lambda\to 0+$ (regardless
whether $V$ is resonant or not!).
The regime $\lambda>0$ large is much easier and can be treated as a perturbation
of the free case $V=0$.
Then we use the representation Eq.~\eqref{eq:introFourier}
and nonstationary-phase-type arguments to obtain the dispersive estimates
stated in Theorems \ref{thm:basicdisp}, \ref{thm:accdec}.
The energy bounds from Theorem \ref{thm:energy} follow by relating the standard
Sobolev spaces $H^k(\R_+)$ to the ``distorted'' Sobolev spaces generated by the operator $\sqrt A$.

The main part of the paper is concerned with the vector field bounds.
Here we employ a novel approach where we develop a vector field method on the distorted Fourier
side. 
This idea is based on the observation that for the free equation ($V=0$) the vector field
$t\partial_t+x\partial_x$ has a simple representation on the Fourier side, namely $t\partial_t-\xi\partial_\xi-1$.
This suggests to write $t\partial_t+x\partial_x$ as $t\partial_t-\xi\partial_\xi-1+B$
on the \emph{distorted} Fourier side
where $B$ is a nonlocal operator.
Heuristically, one expects $B$ to be ``well-behaved'' since $-\partial_x^2+V$ may be viewed
as a perturbation of $-\partial_x^2$.
We quantify this expectation by proving various bounds for $B$ on suitable weighted $L^2$-spaces.
The operator $B$ decomposes into delta-like contributions and a singular integral operator
of Hilbert-transform-type.
We analyze $B$ in detail and obtain the necessary pointwise bounds on the kernel of $B$ that allow us
to eventually conclude the estimates stated in Theorems \ref{thm:vf} and \ref{thm:locen}.
This also requires bounds on the commutators and iterated commutators 
of $B$ with $\xi\partial_\xi$ and the evolution operators.
We remark that the analysis of the operator $B$ bears some similarities with the study
of the ``$\mc K$-operators'' in \cite{KriSchTat08, KriSchTat09, DonKri13}
although the potentials considered there display stronger decay.
Finally, we derive similar bounds for the inhomogeneous problem Eq.~\eqref{eq:maininhom}
by applying Duhamel's formula.

\subsection{Further discussion}
One should contrast the spectral behavior of $A$ to the free one-dimensional case, i.e., $V=0$ with a Neumann condition at $x=0$, 
where one has $\phi(1,\lambda)^2 \rho(\lambda)\simeq \lambda^{-\frac12}$.
Thus, the spectral measure for $A$ is \emph{more regular} than in the free one-dimensional
case which leads to the decay of the wave evolution stated in Theorems \ref{thm:basicdisp} and \ref{thm:accdec}.
We further remark that in the free two-dimensional case, i.e., 
$V(x)=-\frac{1}{4x^2}$, one obtains $\phi(1,\lambda)^2 \rho(\lambda)\simeq 1$ as $\lambda\to 0+$.
This shows that the spectral behavior of the operator $A$ is essentially the same as in
the free two-dimensional case.
We refer to Section \ref{sec:free} for a more detailed discussion on this.

To conclude this introduction, we would like to emphasize that our techniques are by no means confined to
potentials with the particular asymptotics stated in Hypothesis \ref{hyp:A} or potentials with inverse
square decay. Since we work with an explicit representation of the solution, 
it is in principle possible to consider \emph{any} kind of potential as long as
the necessary spectral theoretic assumptions are satisfied. 
In this sense, our method provides a \emph{universal} approach to the study of one-dimensional (or radial) wave evolution
in the presence of a potential.

\subsection{Notations and conventions}
The natural numbers $\{1,2,3,\dots\}$ are denoted by $\N$ and we set $\N_0:=\{0\}\cup \N$.
Furthermore, we abbreviate $\R_+:=[0,\infty)$ and $\langle x\rangle:=\sqrt{1+|x|^2}$
(the ``japanese bracket'').
We write $a\simeq b$ if there exists a constant $C>0$ such that $a\leq C b$.
Similarly, we use $a\gtrsim b$ and $a\simeq b$ means $b\lesssim a\lesssim b$.
In general, the letter $C$ (possibly with indices to indicate dependencies) stands
for a positive constant that might change its actual value at each occurrence.
We write $f(x)\sim g(x)$ for $x\to a$ 
if $\lim_{x\to a}\frac{f(x)}{g(x)}=1$.

The symbol $O(f(x))$ is used to denote a generic real-valued function that satisfies
$|O(f(x))|\lesssim |f(x)|$ in a domain of $x$ that is either specified explicitly or
follows from the context.
If the function attains complex values as well, we indicate this by a subscript $\C$, e.g.~$O_\C(x)$.
We write $O(f(x)g(y))$, etc. if the function depends on more variables.
We say that $O(x^\gamma)$, $\gamma\in \R$, behaves like a symbol if $|\partial_x^k O(x^\gamma)|\leq C_k |x|^{\gamma-k}$
for all $k\in \N_0$.

We use the standard Sobolev spaces $H^k(I)$ where $I\subset \R$ is an interval.
We also write $L^2_w(I)$ for the weighted $L^2$-space with norm $\|f\|_{L^2_w(I)}=\|w^\frac12 f\|_{L^2(I)}$
where $w$ is a weight function.
Especially when dealing with functions that depend on more variables 
it is convenient to state the variables explicitly and for this purpose we occasionally
use the notation
$\|f(x)\|_{H^k_x(I)}:=\|f\|_{H^k(I)}$.
As usual, $\mc S(I)$ denotes the Schwartz space, i.e., $f\in \mc S(I)$ if $f\in C^\infty(I)$
and $\|\langle \cdot\rangle^k \nabla^\ell f\|_{L^\infty(I)}\leq C_{k,\ell}$ for all $k,\ell\in \N_0$.
Since we are mostly dealing with functions on $\R$, the nabla operator $\nabla$ is just
the ordinary derivative but sometimes it is notationally more convenient to write $\nabla f$
instead of $f'$.

Throughout the paper, we assume that Hypothesis \ref{hyp:A} holds and the symbols $V$ and $\alpha$
are reserved for this purpose. Furthermore, when we speak of the solution
of the Cauchy problem \eqref{eq:main} we always mean the solution on the half-space $x\geq 0$
with a Neumann condition at the origin.

\section{Weyl-Titchmarsh theory for $ A$}

In this section we obtain representations for the spectral measure $\rho$ and the
function $\phi$ which yields the distorted Fourier basis.
We begin by recalling that the operator $Af=-f''+V f$ is self-adjoint on $L^2(\R_+)$
with domain
\[ \mc D(A)=\{f\in L^2(\R_+): f,f'\in \mathrm{AC}_\mathrm{loc}(\R_+), f'(0)=0, f''\in L^2(\R_+)\}. \]
Furthermore, we have $\sigma(A)=\sigma_\mathrm{ac}(A)=[0,\infty)$, see \cite{DunSch88, Wei03, Tes09}
for these standard facts and the general background on Weyl-Titchmarsh theory.

\subsection{Zero energy solutions}

We start by constructing a suitable fundamental system $\{\phi_0,\theta_0\}$ for the equation
$Af=0$.

\begin{lemma}
\label{lem:en0}
There exists a (smooth) fundamental system $\{\phi_0,\theta_0\}$ of $Af=0$ with 
$\phi_0(0)=-\theta_0'(0)=-1$ and $\phi_0'(0)=\theta_0(0)=0$.
Furthermore, we have
\begin{align*}
\phi_0(x)&=a_1 x^\frac12 \log x+a_2 x^\frac12 +O(x^{\frac12-\alpha+}) \\
\theta_0(x)&=b_1 x^\frac12 \log x+b_2 x^\frac12+ O(x^{\frac12-\alpha+})
\end{align*}
for $x\geq 2$ where $a_j,b_j \in \R$ and 
$a_1b_2-a_2b_1=1$.
Finally, the $O$-terms behave like symbols under differentiation.
\end{lemma}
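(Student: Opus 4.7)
The plan is to define $\phi_0,\theta_0$ globally on $[0,\infty)$ by the prescribed initial data and then recover the asymptotics at infinity by matching against an auxiliary fundamental system constructed perturbatively off the explicit model equation $-f''-\tfrac{1}{4x^2}f=0$. Since $V\in C^\infty([0,\infty))$, classical linear ODE theory yields unique smooth global solutions of $Af=0$ for the given Cauchy data. The equation carries no first order term, so the Wronskian $W(\phi_0,\theta_0):=\phi_0\theta_0'-\theta_0\phi_0'$ is constant in $x$, and evaluation at the origin gives $W(\phi_0,\theta_0)\equiv-1$; in particular $\{\phi_0,\theta_0\}$ is a fundamental system.

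For the asymptotic part, set $W_1(x):=V(x)+\tfrac{1}{4x^2}$, which by Hypothesis~\ref{hyp:A} is $O(x^{-2-\alpha})$ of symbol type. The model equation $g''+\tfrac{1}{4x^2}g=0$ admits the explicit fundamental system $v_0(x):=x^{1/2}$, $v_1(x):=x^{1/2}\log x$ with $W(v_0,v_1)=1$. I would solve the perturbed equation $u''+\tfrac{1}{4x^2}u=W_1(x)u$ via the Volterra ansatz
\[ u_j(x)=v_j(x)-\int_x^\infty\bigl[v_1(x)v_0(y)-v_0(x)v_1(y)\bigr]W_1(y)\,u_j(y)\,dy,\qquad j=0,1. \]
After renormalizing by $v_j$, the rescaled kernel carries an overall gain $O(x^{-\alpha})$ (with at most a log factor from the $y$-integration), so a standard Neumann iteration converges in $L^\infty$ for $x\geq x_0$ sufficiently large and produces smooth solutions with
\[ u_0(x)=x^{1/2}+O(x^{1/2-\alpha+}),\qquad u_1(x)=x^{1/2}\log x+O(x^{1/2-\alpha+}), \]
the ``$+$'' absorbing a possible logarithmic loss.

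Since $\{u_0,u_1\}$ is a fundamental system on $[x_0,\infty)$, there exist unique real constants $a_j,b_j$ with $\phi_0=a_1u_1+a_2u_0$ and $\theta_0=b_1u_1+b_2u_0$; substituting the asymptotic expansions of $u_0,u_1$ delivers the claimed form of $\phi_0,\theta_0$. The Wronskian $W(u_1,u_0)$ is constant and, in the limit $x\to\infty$, reduces to $W(v_1,v_0)=-1$, so $W(u_1,u_0)\equiv-1$; the identity
\[ -1=W(\phi_0,\theta_0)=(a_1b_2-a_2b_1)\,W(u_1,u_0)=-(a_1b_2-a_2b_1) \]
then forces $a_1b_2-a_2b_1=1$.

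For the symbol behavior of the error terms, I would differentiate the Volterra equation iteratively. The key observation is that the kernel $K(x,y):=v_1(x)v_0(y)-v_0(x)v_1(y)$ vanishes on the diagonal $y=x$, so $\partial_x\int_x^\infty$ produces no boundary contribution at first order; at higher orders the boundary terms that appear are explicit polynomial expressions in $v_0,v_1$ and their derivatives, which are of symbol type, and the model relation $v_j''=-\tfrac{1}{4x^2}v_j$ together with the identity $(\partial_x K)(x,x)=W(v_0,v_1)=1$ lets one rewrite them inductively. Combined with the symbol bounds on $W_1$ from Hypothesis~\ref{hyp:A}, an induction on $k$ yields $\partial_x^k\bigl[O(x^{1/2-\alpha+})\bigr]=O(x^{1/2-\alpha+-k})$. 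I expect this propagation of symbol-type estimates uniformly through arbitrarily many derivatives to be the main obstacle, since each differentiation generates a boundary term at $y=x$ that must be reabsorbed into the Volterra iteration without loss of decay; the vanishing $K(x,x)=0$ and the explicit structure of the iterated boundary contributions are what make the argument work.
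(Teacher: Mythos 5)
Your proposal is correct and follows essentially the same route as the paper: rewrite $Af=0$ as a perturbation $u''+\tfrac{1}{4x^2}u=W_1 u$ of the explicit model $g''+\tfrac{1}{4x^2}g=0$, construct an auxiliary fundamental system by Volterra iteration near $x=\infty$, match $\phi_0,\theta_0$ against it on the overlap, and deduce $a_1b_2-a_2b_1=1$ from the constancy of the Wronskian by evaluating once at the origin and once asymptotically. The one place you deviate is in how the logarithmic solution is produced: you run the Volterra ansatz for $u_1$ directly, whereas the paper only constructs the $x^{1/2}$-solution $f_0$ by Volterra and then obtains the second solution by reduction of order, $f_1(x)=f_0(x)\int_c^x f_0(y)^{-2}\,dy$. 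Your version is sound — after renormalizing by $v_1(x)=x^{1/2}\log x$ the kernel carries an extra $(\log y)^2$, but $\int^\infty y^{-1-\alpha}(\log y)^2\,dy<\infty$, so the Neumann series converges and the logarithmic loss is absorbed by the ``$\alpha-$''. The paper's reduction-of-order step is a touch cleaner: it keeps the Volterra kernel down to a single log factor and avoids having to keep $v_1(x)$ away from zero near the left endpoint. Both routes propagate symbol bounds the same way, by differentiating the Volterra equation and using $K(x,x)=0$ to control boundary terms; one small slip in your write-up is that with your kernel $(\partial_x K)(x,x)=-W(v_0,v_1)=-1$ rather than $+1$, but the sign is immaterial to the induction.
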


\begin{proof}
First of all, it is clear the $\phi_0$ and $\theta_0$ exist on $[0,c]$ for any fixed 
$c>0$ since $V \in C^\infty([0,\infty))$.
In order to study the asymptotics, we write the equation $Af=0$ as
\[ f''(x)+\tfrac{1}{4x^2}f(x)=O(x^{-2-\alpha})f(x). \]
Note that $x^\frac12$ and $x^\frac12 \log x$ are solutions to $f''(x)+\frac{1}{4x^2}f(x)=0$.
First, we construct a solution $f_0$ which behaves like $f_0(x)\sim x^\frac12$ as $x\to\infty$.
The variation of constants formula suggests to consider the integral equation
\begin{align}
\label{eq:voltth0} f_0(x)&=x^\frac12
-x^\frac12 \int_x^\infty y^\frac12 \log y O(y^{-2-\alpha})
f_0(y)dy \nonumber \\
&\quad +x^\frac12 \log x \int_x^\infty y^\frac12 O(y^{-2-\alpha})f_0(y)dy.
\end{align}
Setting $\tilde f_0(x):=x^{-\frac12}f_0(x)$ we rewrite Eq.~\eqref{eq:voltth0}
as 
\[ \tilde f_0(x)=1+\int_x^\infty K(x,y)\tilde f_0(y)dy \]
with a kernel $K$ satisfying
\[ |K(x,y)|\lesssim y^{-1-\alpha}\log y \]
for all $2\leq x\leq y$. Consequently, we infer
\[ \int_2^\infty \sup_{x\in (2,y)}|K(x,y)|dy \lesssim 1 \]
and the standard result on Volterra equations (see e.g.~\cite{DeiTru79} or \cite{SchSofSta10}, Lemma 2.4) 
implies the existence of $f_0$
satisfying Eq.~\eqref{eq:voltth0} for $x\geq 2$.
Furthermore, by construction, $f_0$ satisfies
\[ f_0(x)=x^\frac12[1 + O(x^{-\alpha+})] \]
for $x\geq 2$ with an $O$-term that behaves like a symbol.

Next, by the standard reduction formula, a second solution $f_1$ is given by
\[ f_1(x)=f_0(x)\int_c^x f_0(y)^{-2}dy \]
provided $c$ is chosen so large that $f_0(y)>0$ for all $y\geq c$.
We have the asymptotics $f_0(y)^{-2}=y^{-1}[1+O(y^{-\alpha+})]$ for $y\geq c$ and thus,
\[ f_1(x)=x^\frac12 \log x+\tilde b_2 x^\frac12+O(x^{\frac12-\alpha+}) \]
for $x\geq c$ where $\tilde b_2 \in \R$ is some constant.
Since $f_0$ and $f_1$ are linearly independent, there exist constants $c_0,c_1 \in \R$
such that $\phi_0=c_0 f_0+c_1 f_1$ and this yields the stated asymptotics
for $\phi_0$.
Similarly, there exist constants $\tilde c_0, \tilde c_1 \in \R$ such that
$\theta_0=\tilde c_0 f_0+\tilde c_1 f_1$ and we obtain the claimed asymptotics for $\theta_0$.
From the boundary conditions at $0$ we infer $W(\theta_0,\phi_0)=1$ and evaluating this
Wronskian as $x\to\infty$ yields $a_1b_2-a_2b_1=1$.
\end{proof}

\subsection{Perturbative solutions for small energies}
Next, we construct solutions to $Af=\lambda f$ where $\lambda>0$ is small.

\begin{lemma}
\label{lem:ensm}
There exists a (smooth) fundamental system $\{\phi(\cdot,\lambda),\theta(\cdot,\lambda)\}$
for the equation $Af=\lambda f$ which satisfies
$\phi(0,\lambda)=-\theta'(0,\lambda)=-1$ and $\phi'(0,\lambda)=\theta(0,\lambda)=0$
for all $\lambda>0$.
In addition, we have the asymptotics
\begin{align*}
\phi(x,\lambda)&=\phi_0(x)+O(x^2\langle x\rangle^{\frac12+}\lambda) \\
\theta(x,\lambda)&=\theta_0(x)+O(x^2\langle x\rangle^{\frac12+}\lambda)
\end{align*}
for $x\in [0,\lambda^{-\frac12+}]$ and $0<\lambda \leq 1$
where the $O$-terms behave like symbols under differentiation with respect to $x$ and $\lambda$.
\end{lemma}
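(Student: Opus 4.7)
The plan is to treat $\lambda f$ as a small inhomogeneity for the zero energy equation and to convert the initial value problem into a Volterra integral equation using the fundamental system $\{\phi_0,\theta_0\}$ provided by Lemma \ref{lem:en0}. Since $W(\theta_0,\phi_0)=1$ and $\phi(\cdot,\lambda)$ has the same initial data at $x=0$ as $\phi_0$, variation of parameters applied to $-\phi''+V\phi=\lambda\phi$ rewrites the problem as
\[ \phi(x,\lambda)=\phi_0(x)+\lambda\int_0^x K_0(x,y)\,\phi(y,\lambda)\,dy, \]
with kernel $K_0(x,y):=\theta_0(x)\phi_0(y)-\phi_0(x)\theta_0(y)$, and an analogous equation (with $\theta_0$ in place of $\phi_0$ on the right) governs $\theta(\cdot,\lambda)$. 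The whole argument then reduces to solving this Volterra equation contractively on $[0,\lambda^{-\frac12+}]$ and propagating the resulting bounds to derivatives.

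First I would collect the kernel estimates coming from Lemma \ref{lem:en0}. The asymptotics there yield the global bound $|K_0(x,y)|\lesssim \langle x\rangle^{\frac12+}\langle y\rangle^{\frac12+}$ for $0\le y\le x$. In addition, the vanishing of $K_0$ on the diagonal combined with the initial values $\phi_0(0)=-1$, $\phi_0'(0)=0$, $\theta_0(0)=0$, $\theta_0'(0)=-1$ gives $|K_0(x,y)|\lesssim (x-y)$ on $0\le y\le x\le 1$, which is exactly what will produce the factor $x^2$ (rather than merely a constant) in the remainder at small $x$. With these in hand the driving term $\lambda\int_0^x K_0(x,y)\phi_0(y)dy$ is immediately of size $\lambda\, x^2\langle x\rangle^{\frac12+}$, which is the target for $\phi-\phi_0$.

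Next I would run Banach fixed point in the weighted space with norm $\|h\|_*:=\sup_{0<x\le L}|h(x,\lambda)|/[\lambda\, x^2\langle x\rangle^{\frac12+}]$ on an interval $[0,L]$ with $L\lesssim\lambda^{-\frac12+}$. A direct estimate of the self-map term $\lambda\int_0^x K_0(x,y)h(y,\lambda)dy$ shows that its $\|\cdot\|_*$-norm is controlled by $\lambda\langle L\rangle^{2+}\|h\|_*$, and this contraction factor is small precisely in the claimed regime $L\le\lambda^{-\frac12+}$. The iteration then produces a unique $h=\phi(\cdot,\lambda)-\phi_0$ with the desired pointwise bound, and the identical argument handles $\theta(\cdot,\lambda)$.

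Finally, to upgrade the pointwise bound to symbol-type control under differentiation in $x$ and $\lambda$, I would differentiate the Volterra equation and iterate the same contraction on each $\partial_x^j\partial_\lambda^k h$. The symbol-type behaviour of $\phi_0,\theta_0$ from Lemma \ref{lem:en0} (and of $K_0$ derived from them) ensures that each $x$-derivative costs one power of $\langle x\rangle^{-1}$ in the remainder, and the $\lambda$-derivative equations come with source terms of the correct size. The main obstacle, in my view, is really the delicate bookkeeping of the various ``$+$'' exponents: one has to fix the small parameter in $L=\lambda^{-\frac12+}$ compatibly with the ``$+$'' appearing in the conclusion, and consistently with the logarithmic contributions already absorbed into the asymptotics of $\phi_0,\theta_0$, so that the contraction factor is small while the claimed remainder is of the advertised form. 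Beyond that, the analytical core is a standard Volterra iteration.
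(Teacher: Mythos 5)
Your proposal is substantially the same as the paper's argument: both set up the Volterra equation $\phi(x,\lambda)=\phi_0(x)+\lambda\int_0^x K_0(x,y)\phi(y,\lambda)\,dy$ via variation of parameters around the zero-energy system and solve it on $[0,\lambda^{-\frac12+}]$. The difference is one of presentation. The paper normalizes $\tilde\phi=\phi/[\langle x\rangle^{\frac12}\langle\log\langle x\rangle\rangle]$ and invokes the standard Volterra existence result with the hypothesis $\int_0^{\lambda^{-1/2+}}\sup_{x\geq y}|K(x,y,\lambda)|\,dy\lesssim 1$, whereas you run a Banach fixed point directly in the weighted norm $\|h\|_*=\sup |h|/[\lambda x^2\langle x\rangle^{\frac12+}]$. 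Your version has the merit of making the origin of the $x^2$-factor in the remainder explicit (the vanishing of $K_0$ on the diagonal together with the Wronskian normalization $\partial_y K_0(x,y)|_{y=x}=W(\theta_0,\phi_0)=1$), which the paper leaves implicit. One small caveat: the contraction-mapping framing asks for the factor $\lambda\langle L\rangle^{2+}$ to be strictly less than one, which degenerates when $\lambda$ is of order one. Since the kernel is triangular, the Volterra–Neumann iteration converges with the usual factorial gain as soon as the kernel integral is merely \emph{finite}, so smallness is not actually required; invoking that observation (as the paper does) removes the need to tune the ``$+$'' exponents for uniformity on the whole range $0<\lambda\leq 1$. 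The extension to $\theta$ and to $x$- and $\lambda$-derivatives works exactly as you describe.
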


\begin{proof}
In view of the variation of constants formula and Lemma \ref{lem:en0} our goal is to construct
a function $\phi(\cdot,\lambda)$ which satisfies
\begin{align} 
\label{eq:voltphi}
\phi(x,\lambda)&=\phi_0(x)-\lambda \phi_0(x)\int_0^x \theta_0(y)\phi(y,\lambda)dy \nonumber \\
&\quad+\lambda \theta_0(x)\int_0^x \phi_0(y)\phi(y,\lambda)dy.
\end{align}
We set $\tilde \phi(x,\lambda):=\frac{\phi(x,\lambda)}{\langle x\rangle^\frac12 \langle 
\log \langle x \rangle \rangle}$ and infer the Volterra equation
\[ \tilde \phi(x,\lambda)=\frac{\phi_0(x)}{\langle x\rangle^\frac12 \langle \log \langle x
\rangle \rangle}+\int_0^x K(x,y,\lambda)\tilde \phi(y,\lambda)dy \]
with
\begin{align*} |K(x,y,\lambda)|&=\lambda \frac{\langle y\rangle^\frac12 \langle \log\langle y\rangle\rangle}
{\langle x\rangle^\frac12 \langle \log\langle x\rangle\rangle}\left |\phi_0(x)\theta_0(y)
-\phi_0(y)\theta_0(x) \right | \\
&\lesssim \lambda \langle y\rangle^{1+}
\end{align*}
for all $0\leq y\leq x$.
We obtain
\[ \int_0^{\lambda^{-1/2+}}\sup_{x\geq y}|K(x,y,\lambda)|dy\lesssim 1 \]
and the standard result on Volterra equations implies the existence of a solution $\phi(\cdot,\lambda)$ of
Eq.~\eqref{eq:voltphi} on $[0,\lambda^{-\frac12+}]$.
Furthermore, we have the bound
\[ \phi(x,\lambda)=\phi_0(x)+O(x^2 \langle x\rangle^{\frac12+}\lambda) \]
for $x\in [0,\lambda^{-\frac12+}]$ as claimed.
The construction for $\theta(\cdot,\lambda)$ is identical.
\end{proof}

\subsection{The Jost function at small energies}

As a next step in the analysis we construct the outgoing Jost function of the operator
$A$, i.e., the function $f_+(\cdot,\xi)$ satisfying $Af_+(\cdot,\xi)=\xi^2 f_+(\cdot,\xi)$, 
$\xi>0$, and $f_+(x,\xi)\sim e^{i\xi x}$ as $x\to\infty$. 
In the following, $J_0$, $Y_0$, and $H^\pm_0=J_0\pm iY_0$ denote the standard Bessel and Hankel functions of order $0$
as defined in \cite{Olv97, DLMF}.

\begin{lemma}
\label{lem:Jostsm}
Fix $\epsilon>0$.
Then the outgoing Jost function $f_+(\cdot,\xi)$ of the operator $A$ is of the form
\[ f_+(x,\xi)=\sqrt{\pi/2}e^{i\pi/4}(x\xi)^\frac12 [J_0(x\xi)+iY_0(x\xi)]
+e^{ix\xi}O_\C(x^{-\alpha-\epsilon}\xi^{-\epsilon}\langle x\xi\rangle^{-1+\epsilon}) \]
for all $x\geq \xi^{-\epsilon/\alpha}$ and $0<\xi\leq 1$.
Furthermore, the $O_\C$-term behaves like a symbol under differentiation with respect 
to $x$ and $\xi$.
\end{lemma}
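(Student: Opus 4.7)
The plan is to construct $f_+(\cdot,\xi)$ perturbatively, treating $V$ as a small modification of the ``free two-dimensional'' potential $-\tfrac{1}{4x^2}$ and using rescaled Hankel functions as the unperturbed basis. Concretely, I would introduce the model solutions
\[ u_0^\pm(x,\xi):=\sqrt{\pi/2}\, e^{\pm i\pi/4}(x\xi)^{1/2}H_0^\pm(x\xi),\qquad H_0^\pm=J_0\pm iY_0, \]
and verify, via the substitution $v(r)=r^{1/2}u(r)$ in Bessel's equation of order $0$, that $\{u_0^+,u_0^-\}$ is a fundamental system for $-u''-\tfrac{1}{4x^2}u=\xi^2 u$ with constant Wronskian $W[u_0^+,u_0^-]=-2i\xi$. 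The large-argument asymptotics of $H_0^\pm$ yield $u_0^\pm(x,\xi)=e^{\pm ix\xi}[1+O((x\xi)^{-1})]$ for $x\xi\geq 1$, while the small-argument expansion gives $|u_0^\pm(x,\xi)|\lesssim (x\xi)^{1/2}\langle\log(x\xi)\rangle$ for $0<x\xi\leq 1$; both regimes come with symbol bounds in $(x,\xi)$.

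Next I would rewrite $-f''+Vf=\xi^2 f$ as
\[ \bigl(-\partial_x^2-\tfrac{1}{4x^2}-\xi^2\bigr)f_+(x,\xi)=-W_\infty(x)f_+(x,\xi),\qquad W_\infty(x):=V(x)+\tfrac{1}{4x^2}=O(x^{-2-\alpha}), \]
where the $O$-term is of symbol type by Hypothesis~\ref{hyp:A}, and encode the outgoing condition $f_+(x,\xi)\sim e^{ix\xi}$ as the Volterra equation
\[ f_+(x,\xi)=u_0^+(x,\xi)+\int_x^\infty G(x,y,\xi)\,W_\infty(y)\,f_+(y,\xi)\,dy,\quad G(x,y,\xi):=\tfrac{u_0^-(x,\xi)u_0^+(y,\xi)-u_0^+(x,\xi)u_0^-(y,\xi)}{-2i\xi}. \]
To run the iteration on $[x_0,\infty)$ with $x_0:=\xi^{-\epsilon/\alpha}$, I would combine the pointwise bounds on $u_0^\pm$ just described with $|W_\infty(y)|\lesssim y^{-2-\alpha}$ to show that the kernel has small $L^1$-norm in $y$ uniformly in $x\geq x_0$; the crucial quantitative input is the inequality $x_0^{-\alpha}\leq\xi^\epsilon$, which converts the $\xi^{-1}$ issuing from the Wronskian into the desired $\xi^{-\epsilon}$ loss. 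The standard Volterra fixed-point argument (of the form used in Lemma~\ref{lem:en0}) then produces $f_+$ together with the residual bound
\[ |f_+(x,\xi)-u_0^+(x,\xi)|\lesssim \int_x^\infty\bigl|G(x,y,\xi)\bigr|\,y^{-2-\alpha}\,|u_0^+(y,\xi)|\,dy. \]

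To extract the form $e^{ix\xi}O_\C(x^{-\alpha-\epsilon}\xi^{-\epsilon}\langle x\xi\rangle^{-1+\epsilon})$ I would split this residual integral according to $x\xi\geq 1$ versus $x\xi<1$. In the first regime $|G|\lesssim\xi^{-1}$ and $|u_0^+|\lesssim 1$ give an integral of order $\xi^{-1}x^{-1-\alpha}$, which coincides with $x^{-\alpha-\epsilon}\xi^{-\epsilon}(x\xi)^{-1+\epsilon}$. In the second, I would split the integration at $y=\xi^{-1}$: the outer piece is handled as in the first regime, while on the inner piece $|G|\lesssim\xi^{-1}(x\xi)^{1/2-}(y\xi)^{1/2-}$ together with $|u_0^+(y,\xi)|\lesssim(y\xi)^{1/2-}$ yields, after integration, $\lesssim(x\xi)^{1/2}x^{-\alpha}$ (up to logs), and this is $\lesssim x^{-\alpha-\epsilon}\xi^{-\epsilon}$ because $(x\xi)^{1/2+\alpha+\epsilon}\leq 1$ when $x\xi\leq 1$. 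The symbol-type bounds on the error are then obtained by differentiating the Volterra equation in $x$ and $\xi$ and re-running the iteration, using that $u_0^\pm$ and $W_\infty$ themselves carry symbol bounds.

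The main obstacle is the uniform handling of the Volterra iteration across the transition $x\xi\sim 1$, where the Hankel functions change from logarithmic--polynomial to oscillatory behavior while the Green's kernel retains an inverse $\xi$ from its Wronskian. The threshold $x\geq\xi^{-\epsilon/\alpha}$ is chosen precisely so that the decay of $W_\infty$ absorbs this $\xi^{-1}$ via $x^{-\alpha}\leq\xi^\epsilon$ and so that the logarithms arising from the small-argument Hankel expansion can be traded against the arbitrarily small exponent $\epsilon$. Carrying this bookkeeping through for every mixed $(x,\xi)$-derivative in order to obtain full symbol bounds on the error is the most technically delicate part of the argument.
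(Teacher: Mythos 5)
Your outline of the construction matches the paper's in its essential structure: rescaled Hankel functions as the zeroth-order approximation, the perturbation $W_\infty = V + \tfrac{1}{4x^2} = O(x^{-2-\alpha})$, a Volterra equation enforcing the outgoing asymptotics, and the threshold $x\geq\xi^{-\epsilon/\alpha}$ to control the small-$\xi$ singularity. The paper happens to first substitute $z=x\xi$ and run the Volterra iteration in $z$, but that is a bookkeeping difference from working directly in $x$ as you do, and the residual estimates you sketch are equivalent.

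The gap is in the last step, where you claim the symbol bounds follow ``by differentiating the Volterra equation in $x$ and $\xi$ and re-running the iteration.'' Stated that way this does not close: in the oscillatory regime $x\xi,y\xi\gtrsim 1$ the Volterra kernel contains $e^{\pm ix\xi}$ and $e^{\pm iy\xi}$, and differentiating $e^{iy\xi}$ in $\xi$ (or $e^{ix\xi}$ in $x$) produces factors of $y$ (resp.\ $\xi$) which destroy the integrability used in the iteration. The paper addresses this explicitly by a change of variables: after dividing through by $e^{iz}$ and substituting $y\mapsto y+z$ in rescaled coordinates, the oscillatory part of the kernel collapses to $1-e^{-2iy}$, whose phase is independent of $z$ and $\xi$, so that $z$- and $\xi$-derivatives land only on nonoscillatory symbol-type factors. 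In your $x$-variable formulation the analogous move is the substitution $(y-x)\xi\mapsto y$; without some such synchronization of the phases the inductive differentiation of the Volterra series is not justified. You also slightly misdiagnose the main obstacle as the transition region $x\xi\sim 1$: the genuine difficulty for the symbol bounds lies in the oscillatory regime $x\xi\gtrsim 1$, which is exactly what the change of variables is designed to handle.
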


\begin{proof}
We rewrite the spectral equation $Af=\xi^2 f$ as 
\[ f''(x)+\tfrac{1}{4x^2}f(x)+\xi^2 f(x)=x^{-2}O(\langle x\rangle^{-\alpha})f(x). \]
Rescaling by $f(x)=g(\xi x)$ yields
\[ g''(z)+\tfrac{1}{4z^2}g(z)+g(z)
=z^{-2}O(\langle \tfrac{z}{\xi}\rangle^{-\alpha})g(z) \]
where $z=\xi x$.
A fundamental system for the left-hand side is provided by the Hankel functions
$h_\pm(z):=\sqrt{\pi/2}e^{\pm i\pi/4}\sqrt{z}H^\pm_0(z)$ with asymptotic behavior
$h_\pm(z)\sim e^{\pm iz}$ as $z\to\infty$.
In terms of Bessel functions we have
\begin{align*} 
h_\pm(z)&=\sqrt{\pi/2}e^{\pm i\pi/4}\sqrt{z}[J_0(z)\pm iY_0(z)]. 
 \end{align*}
Note that $|h_\pm(z)|\lesssim 1$ for $z\geq 1$ and, since $|J_0(z)|\lesssim 1$, $|Y_0(z)|\lesssim \langle \log z\rangle$
for $z\in (0,1]$, we infer the bound $|h_\pm(z)|\lesssim z^{\frac12-}$ for $z\in (0,1]$.
We consider the Volterra equation
\begin{equation}
\label{eq:volg+} 
\frac{g_+(z,\xi)}{e^{iz}}=\frac{h_+(z)}{e^{iz}}+\int_z^\infty K(z,y,\xi)
\frac{g_+(y,\xi)}{e^{iy}}dy 
\end{equation}
with $K(z,y,\xi)=ie^{-i(z-y)}[h_+(z)h_-(y)-h_+(y)h_-(z)]y^{-2}O(\langle \frac{y}{\xi}
\rangle^{-\alpha})$.
The kernel $K$ satisfies the bounds
\begin{align*}
|K(z,y,\xi)|&\lesssim y^{-2}\langle \tfrac{y}{\xi}\rangle^{-\alpha}\lesssim \xi^\alpha y^{-2-\alpha}, & & y\geq 1, z\geq 0 \\
|K(z,y,\xi)|&\lesssim y^{\frac12-}z^{\frac12-}y^{-2}\langle \tfrac{y}{\xi}\rangle^{-\alpha}\lesssim \xi^\alpha y^{-1-\alpha-\epsilon}
& & \xi\leq z\leq y\leq 1
\end{align*}
which may be combined to yield $|K(z,y,\xi)|\lesssim \xi^\alpha y^{-1-\alpha-\epsilon}\langle y\rangle^{-1+\epsilon}$
for all $\xi\leq z\leq y$.
This implies
\[ \int_z^\infty |K(z,y,\xi)|dy\lesssim \xi^\alpha z^{-\alpha-\epsilon}\langle z\rangle^{-1+\epsilon} \]
for $z\geq \xi$ and thus,
\[ \int_{\xi^{1-\epsilon/\alpha}}^\infty \sup_{z\in (\xi^{1-\epsilon/\alpha},y)}|K(z,y,\xi)|dy\lesssim \xi^{\epsilon^2/\alpha}\lesssim 1. \]
As a consequence, Eq.~\eqref{eq:volg+} has a solution on $z\geq \xi^{1-\epsilon/\alpha}$ satisfying
\[ g_+(z,\xi)=h_+(z)+e^{iz}O_\C(z^{-\alpha-\epsilon}\langle z\rangle^{-1+\epsilon}\xi^\alpha). \]
The claimed symbol behavior of the $O_\C$-term deserves a comment.
At first glance, this might seem odd since the kernel $K$ is oscillatory after all.
However, by a simple change of variables one may rewrite the Volterra equation \eqref{eq:volg+} 
as
\[ \tilde g_+(z,\xi)=\frac{h_+(z)}{e^{iz}}+\int_0^\infty K(z,y+z,\xi)
\tilde g_+(y+z,\xi)dy \]
for $\tilde g_+(z,\xi)=e^{-iz}g_+(z,\xi)$.
Furthermore, by the asymptotics of $h_\pm$ we infer
\begin{align*} 
K(z,y,\xi)&=e^{-i(z-y)}[e^{i(z-y)}-e^{-i(z-y)}]O_\C(z^0 y^0 \xi^0) \\
&=[1-e^{-2i(z-y)}]O_\C(z^0 y^0 \xi^0)
\end{align*}
with an $O_\C$-term of symbol type and thus,
\[ K(z,y+z,\xi)=[1-e^{-2iy}]O_\C(z^0 y^0 \xi^0). \]
In this formulation it is evident 
that derivatives with respect to $z$ and $\xi$ 
hit only terms with symbol behavior.
In summary, we obtain the representation of the Jost solution
\[ f_+(x,\xi)=g_+(x\xi,\xi)=h_+(x\xi)+e^{ix\xi}O_\C(x^{-\alpha-\epsilon}\xi^{-\epsilon}\langle x\xi\rangle^{-1+\epsilon}). \]
with symbol behavior of the $O_\C$-term.
\end{proof}

\subsection{The Jost function at large energies}
Next, we derive a suitable representation of the Jost function $f_+(\cdot,\xi)$ in the case
$\xi\geq \frac12$.
This is much easier since the problem can be treated as a perturbation of the free case $V=0$.

\begin{lemma}
\label{lem:Jostlg}
The outgoing Jost function of the operator $A$ is of the form
\[ f_+(x,\xi)=e^{ix\xi}[1+O_\C(\langle x\rangle^{-1}\xi^{-1})] \]
for all $x\geq 0$ and $\xi\geq \frac12$ where the $O_\C$-term behaves like a symbol under
differentiation with respect to $x$ and $\xi$.
\end{lemma}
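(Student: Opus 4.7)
The plan is to follow the same Volterra-iteration scheme used for small energies, but now treating the entire potential $V$ as a perturbation of the free operator $-\partial_x^2$, whose outgoing Jost solution is simply $e^{ix\xi}$. Since $\{e^{ix\xi}, e^{-ix\xi}\}$ is a fundamental system of $-f''-\xi^2f=0$ with Wronskian $-2i\xi$, variation of parameters gives the Volterra equation
\[
f_+(x,\xi)=e^{ix\xi}+\frac{1}{2i\xi}\int_x^\infty\bigl[e^{i(x-y)\xi}-e^{-i(x-y)\xi}\bigr]V(y)f_+(y,\xi)\,dy,
\]
which, after the substitution $\tilde f_+(x,\xi):=e^{-ix\xi}f_+(x,\xi)$, becomes
\[
\tilde f_+(x,\xi)=1+\int_x^\infty K(x,y,\xi)\tilde f_+(y,\xi)\,dy,\qquad K(x,y,\xi):=\frac{1-e^{-2i(y-x)\xi}}{2i\xi}V(y).
\]

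The first step is to run the standard Volterra result. Using $|V(y)|\lesssim\langle y\rangle^{-2}$ (immediate from Hypothesis~\ref{hyp:A}) and $\xi\geq\tfrac12$, the kernel is bounded by $|K(x,y,\xi)|\lesssim \xi^{-1}\langle y\rangle^{-2}$ uniformly in $0\le x\le y$, hence
\[
\int_0^\infty\sup_{x\in(0,y)}|K(x,y,\xi)|\,dy\lesssim \xi^{-1}\lesssim 1,
\]
which yields a unique solution $\tilde f_+(\cdot,\xi)$ on $[0,\infty)$. Iterating the identity once more and estimating $|\tilde f_+(y,\xi)-1|$ gives the pointwise bound
\[
\tilde f_+(x,\xi)=1+O_\C\bigl(\xi^{-1}\langle x\rangle^{-1}\bigr),
\]
so that $f_+(x,\xi)=e^{ix\xi}\bigl[1+O_\C(\xi^{-1}\langle x\rangle^{-1})\bigr]$ as claimed.

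The second step, and the only genuinely delicate one, is the symbol behavior of the error under differentiation in $x$ and $\xi$. As in Lemma~\ref{lem:Jostsm}, the apparent difficulty is that the kernel oscillates and the limits of integration depend on $x$, so naive differentiation produces boundary and phase contributions. I would remove both issues simultaneously by the change of variables $y=x+s$, rewriting the equation as
\[
\tilde f_+(x,\xi)=1+\int_0^\infty \widetilde K(x,s,\xi)\tilde f_+(x+s,\xi)\,ds,\qquad \widetilde K(x,s,\xi)=\frac{1-e^{-2is\xi}}{2i\xi}V(x+s).
\]
Now every $\partial_x$ falls on $V(x+s)$ or on $\tilde f_+(x+s,\xi)$, and every $\partial_\xi$ falls either on the explicit $\xi$-factor in $\widetilde K$ or on the bounded oscillation $e^{-2is\xi}$, producing an additional factor $s$ which is harmless once combined with the decay $V(x+s)=O(\langle x+s\rangle^{-2})$. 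Differentiating the Volterra equation $k+\ell$ times in $(x,\xi)$ therefore yields a new Volterra equation of the same form for $\partial_x^k\partial_\xi^\ell\tilde f_+$, whose inhomogeneity is controlled by $\langle x\rangle^{-1-k}\xi^{-1-\ell}$ using the symbol property of $V$ together with the previously established bounds on lower-order derivatives. A straightforward induction on $k+\ell$ then upgrades the pointwise $O_\C$-bound to a symbol bound, completing the proof.

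The main obstacle is precisely this symbol bookkeeping: the trick is to rewrite the integral so that the $x$-dependence sits only inside the smooth, rapidly decaying factor $V(x+s)$, and then confirm that the extra $s$-factors generated by differentiating $e^{-2is\xi}$ are absorbed by the decay of $V$. All of this is a simpler (and nonsingular) analogue of the argument in Lemma~\ref{lem:Jostsm}.
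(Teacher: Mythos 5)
Your Volterra setup, kernel, and the bound $|K(x,y,\xi)|\lesssim \xi^{-1}\langle y\rangle^{-2}$ reproduce the paper's argument exactly, and the $\partial_x$-part of your symbol discussion is correct: differentiating in $x$ only hits $V(x+s)$ and $\tilde f_+(x+s,\xi)$, producing an extra $\langle x+s\rangle^{-1}$ of decay, which gives the required gain $\langle x\rangle^{-1-k}$ by induction.

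The gap is in the $\partial_\xi$-part. You claim that differentiating $e^{-2is\xi}$ produces a factor $s$ ``which is harmless once combined with the decay $V(x+s)=O(\langle x\rangle^{-2})$.'' This is false as stated: the relevant integral
\[
\int_0^\infty s\,\langle x+s\rangle^{-2}\,ds
\]
diverges logarithmically for every $x\geq 0$, so the naive pointwise estimate on $\partial_\xi \widetilde K$ does not close the induction, and a fortiori does not produce the required extra factor $\xi^{-1}$. More precisely, a direct computation gives
\[
\partial_\xi\!\left[\frac{1-e^{-2is\xi}}{2i\xi}\right]=\frac{e^{-2is\xi}-1}{2i\xi^2}+\frac{s\,e^{-2is\xi}}{\xi}
= O_\C\!\bigl(\min\{s^2,\,s\xi^{-1}\}\bigr),
\]
and the second term is the obstruction. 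To fix this you must exploit the oscillation of $e^{-2is\xi}$ via an integration by parts in $s$ (using $\xi\geq\tfrac12$): writing $s\,e^{-2is\xi}=\tfrac{is}{2\xi}\partial_s e^{-2is\xi}$ and integrating by parts trades the stray factor of $s$ for a $\partial_s$ acting on $s\,V(x+s)\tilde f_+(x+s,\xi)$, which by the symbol property of $V$ (so $sV'(x+s)=O(\langle x+s\rangle^{-2})$ since $s\leq\langle x+s\rangle$) and the inductive bound on $\partial_x\tilde f_+$ keeps the integral convergent, while the integration by parts supplies the missing $\xi^{-1}$. Alternatively, one can rescale $z=x\xi$ exactly as in Lemma~\ref{lem:Jostsm}, so that the oscillatory factor becomes $\xi$-independent, $\bigl[1-e^{-2iu}\bigr]$, and all $\xi$-derivatives then fall only on the symbol-type factor $\xi^{-2}V\bigl((z+u)/\xi\bigr)$; this makes the symbol behavior manifest without any integration by parts. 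Either repair should be spelled out, since the assertion that the extra $s$ is absorbed by the decay of $V$ alone is incorrect.
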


\begin{proof}
We rewrite $Af=\xi^2 f$ as
\[ f''(x)+\xi^2 f(x)=O(\langle x\rangle^{-2})f(x) \]
and the variation of constants formula yields the Volterra equation
\[ e^{-i\xi x}f_+(x,\xi)=1+\int_x^\infty K(x,y,\xi)e^{-i\xi y}f_+(y,\xi)dy \]
with
\begin{align*} 
K(x,y,\xi)&=\tfrac{1}{2i\xi}e^{-i\xi (x-y)}(e^{i\xi x}e^{-i\xi y}-e^{i\xi y}e^{-i\xi x})
O(\langle y\rangle^{-2})  \\
&=\tfrac{1}{2i\xi}(1-e^{-2i\xi(x-y)})O(\langle y\rangle^{-2}).
\end{align*}
We have
$|K(x,y,\xi)|\lesssim \xi^{-1}\langle y\rangle^{-2}$
and thus,
\[ \int_0^\infty \sup_{x> 0}|K(x,y,\xi)|dy\lesssim \xi^{-1} \]
which yields the existence of $f_+(\cdot,\xi)$ on $[0,\infty)$ with the stated bounds.
\end{proof}

\subsection{The Wronskians}

We compute the Wronskians of the Jost function and the solutions $\phi$, $\theta$
from Lemma \ref{lem:ensm} for small energies.

\begin{lemma}
\label{lem:Wsm}
We have
\begin{align*} 
\sqrt{2/\pi}e^{-i\pi/4}W(f_+(\cdot,\xi),\phi(\cdot,\xi^2))&=\xi^\frac12
\big [a_1+\tfrac{2i a_1}{\pi} \log\xi+\tfrac{2i}{\pi}(\gamma_0 a_1-a_2) \\
&\quad + O_\C(\xi^{\frac{3\alpha}{4+2\alpha}-}) \big ] \\
\sqrt{2/\pi}e^{-i\pi/4}W(f_+(\cdot,\xi),\theta(\cdot,\xi^2))&=\xi^\frac12
\big [b_1+\tfrac{2i b_1}{\pi} \log\xi+\tfrac{2i}{\pi}(\gamma_0 b_1-b_2) \\
&\quad + O_\C(\xi^{\frac{3\alpha}{4+2\alpha}-}) \big ] 
\end{align*}
for $0<\xi\leq \frac12$ where $a_j$, $b_j$, $j\in \{1,2\}$, are from Lemma \ref{lem:en0} 
and $\gamma_0=\gamma-\log 2$ with $\gamma$ the Euler-Mascheroni constant.
\end{lemma}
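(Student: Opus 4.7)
The plan is to exploit the fact that the Wronskian is constant in $x$: since both $f_+(\cdot,\xi)$ and $\phi(\cdot,\xi^2)$ solve $Af=\xi^2 f$, I can evaluate $W(f_+(\cdot,\xi),\phi(\cdot,\xi^2))$ at any convenient point $x_0=x_0(\xi)$ in the overlap region where the Jost expansion from Lemma \ref{lem:Jostsm} and the perturbative identity $\phi(\cdot,\xi^2)=\phi_0+O(x^2\langle x\rangle^{1/2+}\xi^2)$ from Lemma \ref{lem:ensm} are simultaneously valid. A suitable choice, which will be justified by balancing errors, is $x_0=\xi^{-2/(2+\alpha)}$; note that $x_0\xi=\xi^{\alpha/(2+\alpha)}\to 0$ and $x_0\to\infty$ as $\xi\to 0+$.

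First I would insert the small-argument Bessel expansions
\[ J_0(z)=1+O(z^2),\qquad Y_0(z)=\tfrac{2}{\pi}[\log(z/2)+\gamma]+O(z^2\log z) \]
into the representation of Lemma \ref{lem:Jostsm} and use $\gamma_0=\gamma-\log 2$ to obtain
\begin{align*}
\sqrt{2/\pi}e^{-i\pi/4}f_+(x_0,\xi)=\xi^{1/2}\Big[\tfrac{2i}{\pi}x_0^{1/2}\log x_0+\big(1+\tfrac{2i}{\pi}\log\xi+\tfrac{2i}{\pi}\gamma_0\big)x_0^{1/2}\Big]
\end{align*}
modulo errors. In parallel, Lemmas \ref{lem:en0} and \ref{lem:ensm} give
\[ \phi(x_0,\xi^2)=a_1 x_0^{1/2}\log x_0+a_2 x_0^{1/2}+O(x_0^{1/2-\alpha+})+O(x_0^{5/2+}\xi^2). \]
The algebraic part then reduces to the elementary identity $W(x^{1/2}\log x,x^{1/2})=-1$, which produces
\[ \xi^{1/2}\Big[a_1\big(1+\tfrac{2i}{\pi}\log\xi+\tfrac{2i}{\pi}\gamma_0\big)-\tfrac{2i a_2}{\pi}\Big]=\xi^{1/2}\Big[a_1+\tfrac{2ia_1}{\pi}\log\xi+\tfrac{2i}{\pi}(\gamma_0 a_1-a_2)\Big], \]
exactly the principal term in the claim.

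The technical core is the error analysis. Four sources contribute to the Wronskian error: (i) the Jost remainder $x^{-\alpha-\epsilon}\xi^{-\epsilon}$ from Lemma \ref{lem:Jostsm}; (ii) the tail of the Bessel series, of size $(x\xi)^{5/2}\log(x\xi)$; (iii) the $\phi_0$-tail $O(x^{1/2-\alpha+})$; and (iv) the $\phi-\phi_0$ perturbation $O(x^{5/2+}\xi^2)$. Because each remainder behaves like a symbol in $x$, differentiating it to form the Wronskian costs only a factor $x^{-1}$. A direct power count at $x_0=\xi^{-2/(2+\alpha)}$ shows that contribution (i) produces a Wronskian error of size $\xi^{1/2+3\alpha/(4+2\alpha)-}$, while (ii), (iii), (iv) are each bounded by $\xi^{1/2+2\alpha/(2+\alpha)-}$, whose exponent is strictly larger. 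Dividing through by the $\xi^{1/2}$ prefactor yields the claimed $O_\C(\xi^{3\alpha/(4+2\alpha)-})$.

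The Wronskian involving $\theta(\cdot,\xi^2)$ is handled in exactly the same way, since Lemma \ref{lem:ensm} gives an entirely parallel perturbative representation with $\theta_0$ replacing $\phi_0$ and the coefficients $(b_1,b_2)$ from Lemma \ref{lem:en0} replacing $(a_1,a_2)$. The main obstacle is the bookkeeping: producing the precise exponent $3\alpha/(4+2\alpha)$ requires using the symbol-type differentiation estimate on every remainder simultaneously, and the choice $x_0=\xi^{-2/(2+\alpha)}$ is precisely the point at which the Jost remainder becomes admissible while the Bessel, $\phi_0$, and $\phi$-perturbation tails remain comfortably small.
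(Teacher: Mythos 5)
Your proposal is correct and follows essentially the same route as the paper: evaluate the constant-in-$x$ Wronskian at $x_0=\xi^{-2/(2+\alpha)}$, insert the small-argument Bessel asymptotics into the Jost representation of Lemma \ref{lem:Jostsm} together with the perturbative expansion of $\phi$ from Lemmas \ref{lem:en0}, \ref{lem:ensm}, and track the same error exponents. The one (minor) organizational improvement is that by keeping the Jost function complex and reducing the leading term to the bilinear identity $W(x^{1/2}\log x,x^{1/2})=-1$, you get the cancellation of the $\log^2\xi$ contribution automatically from $W(x^{1/2}\log x,x^{1/2}\log x)=0$, whereas the paper splits into $\Re W$ and $\Im W$ and cancels the $\log^2\xi$ terms by hand.
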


\begin{proof}
From Lemmas \ref{lem:en0} and \ref{lem:ensm} we have
\begin{align*}
\phi(x,\xi^2)&=a_1 x^\frac12 \log x+a_2x^\frac12+O(x^{\frac12-\alpha+})+O(x^{\frac52+}\xi^2) \\
\phi'(x,\xi^2)&=\tfrac{a_1}{2}x^{-\frac12}\log x+(a_1+\tfrac{a_2}{2})x^{-\frac12}
+O(x^{-\frac12-\alpha+})+O(x^{\frac32+}\xi^2)
\end{align*}
for $x\in [2,\xi^{-1+}]$.
Furthermore, from Lemma \ref{lem:Jostsm} we infer
\begin{align*} 
\Re [\sqrt{2/\pi}e^{-i\pi/4} f_+(x,\xi)]&=(x\xi)^\frac12 J_0(x\xi)+O(x^{-\alpha-\epsilon}
\xi^{-\epsilon}) \\
&=(x\xi)^\frac12+O((x\xi)^\frac52)+O(x^{-\alpha-\epsilon}
\xi^{-\epsilon}) \\
\Re [\sqrt{2/\pi}e^{-i\pi/4} f_+'(x,\xi)]&=\tfrac12 \xi(x\xi)^{-\frac12}+O(x^\frac32 \xi^\frac52)
+O(x^{-1-\alpha-\epsilon} \xi^{-\epsilon})
\end{align*}
for $x \in [\xi^{-\epsilon/\alpha},\xi^{-1}]$.
We evaluate the Wronskians at $x=\xi^{-\frac{2}{2+\alpha}}$.
From the above we infer
\begin{align*}
\phi(\xi^{-\frac{2}{2+\alpha}},\xi^2)&=-\tfrac{2a_1}{2+\alpha}\xi^{-\frac{1}{2+\alpha}}\log \xi
+a_2 \xi^{-\frac{1}{2+\alpha}}+O(\xi^{\frac{2\alpha-1}{2+\alpha}-}) \\
\phi'(\xi^{-\frac{2}{2+\alpha}},\xi^2)&=-\tfrac{a_1}{2+\alpha}\xi^{\frac{1}{2+\alpha}}\log \xi
+(a_1+\tfrac{a_2}{2})\xi^{\frac{1}{2+\alpha}}+O(\xi^{\frac{2\alpha+1}{2+\alpha}-})
\end{align*}
as well as
\begin{align*}
\Re[\sqrt{2/\pi}e^{-i\pi/4} f_+(\xi^{-\frac{2}{2+\alpha}},\xi)]&=\xi^\frac{\alpha}{4+2\alpha}
+O(\xi^{\frac{2\alpha}{2+\alpha}-}) \\
\Re [\sqrt{2/\pi}e^{-i\pi/4}f_+'(\xi^{-\frac{2}{2+\alpha}},\xi)]&=
\tfrac12 \xi^{\frac{4+\alpha}{4+2\alpha}}+O(\xi^{\frac{2+2\alpha}{2+\alpha}-}) .
\end{align*}
This yields
\begin{align*}
\Re W(\sqrt{2/\pi}e^{-i\pi/4}f_+(\cdot,\xi),\phi(\cdot,\xi^2))&=-\tfrac{a_1}{2+\alpha}
\xi^\frac12 \log \xi+(a_1+\tfrac{a_2}{2})\xi^\frac12 \\
&\quad -\left [ -\tfrac{a_1}{2+\alpha}\xi^\frac12 \log \xi+\tfrac{a_2}{2}\xi^\frac12 \right ] \\
&\quad +O(\xi^{\frac{1+2\alpha}{2+\alpha}-}) \\
&=\xi^\frac12[a_1+O(\xi^{\frac{3\alpha}{4+2\alpha}-})].
\end{align*}

Similarly, for the imaginary part we obtain
\begin{align*}
\Im [\sqrt{2/\pi}e^{-i\pi/4}f_+(x,\xi)]&=(x\xi)^\frac12 Y_0(x\xi)+O(x^{-\alpha-\epsilon}
\xi^{-\epsilon}) \\
&=\tfrac{2}{\pi}(x\xi)^\frac12 [\log(x \xi)+\gamma_0]+O((x\xi)^{\frac52-}) \\
&\quad +O(x^{-\alpha-\epsilon}
\xi^{-\epsilon}) \\
\Im [\sqrt{2/\pi}e^{-i\pi/4}f_+'(x,\xi)]&=\tfrac{1}{\pi} \xi(x\xi)^{-\frac12}\log(x\xi)
+\tfrac{2+\gamma_0}{\pi}\xi (x\xi)^{-\frac12}+O(x^{\frac32-} \xi^{\frac52-}) \\
&\quad +O(x^{-1-\alpha-\epsilon} \xi^{-\epsilon})
\end{align*}
for $x \in [\xi^{-\epsilon/\alpha},\xi^{-1}]$ where $\gamma_0=\gamma-\log 2$ and
$\gamma$ is Euler's constant.
Consequently, at $x=\xi^{-\frac{2}{2+\alpha}}$ we infer
\begin{align*}
\Im [\sqrt{2/\pi}e^{-i\pi/4}f_+(\xi^{-\frac{2}{2+\alpha}},\xi)]&=\tfrac{2}{\pi}\tfrac{\alpha}{2+\alpha}
\xi^\frac{\alpha}{4+2\alpha}\log \xi
+\tfrac{2\gamma_0}{\pi}\xi^{\frac{\alpha}{4+2\alpha}}+O(\xi^{\frac{2\alpha}{2+\alpha}-}) \\
\Im [\sqrt{2/\pi}e^{-i\pi/4}f_+'(\xi^{-\frac{2}{2+\alpha}},\xi)]&=
\tfrac{1}{\pi}\tfrac{\alpha}{2+\alpha}\xi^{\frac{4+\alpha}{4+2\alpha}}\log \xi
+\tfrac{2+\gamma_0}{\pi}\xi^{\frac{4+\alpha}{4+2\alpha}}+O(\xi^{\frac{2+2\alpha}{2+\alpha}-}) 
\end{align*}
and thus,
\begin{align*}
\Im W&(\sqrt{2/\pi}e^{-i\pi/4}f_+(\cdot,\xi),\phi(\cdot,\xi^2))= \\
&=-\tfrac{2}{\pi}
\tfrac{\alpha a_1}{(2+\alpha)^2}\xi^\frac12 \log^2 \xi
+\tfrac{2}{\pi}\tfrac{\alpha(a_1+\frac{a_2}{2})-\gamma_0 a_1}{2+\alpha}\xi^\frac12 \log \xi 
+\tfrac{2\gamma_0}{\pi}(a_1+\tfrac{a_2}{2})\xi^\frac12 \\
&\quad -\Big [ -\tfrac{2}{\pi}\tfrac{\alpha a_1}{(2+\alpha)^2}\xi^\frac12 \log^2 \xi 
+\tfrac{1}{\pi}\tfrac{\alpha a_2-2 a_1(2+\gamma_0)}{2+\alpha}\xi^\frac12 \log \xi 
+\tfrac{2+\gamma_0}{\pi}a_2 \xi^\frac12 \Big ] \\
&\quad +O(\xi^{\frac{1+2\alpha}{2+\alpha}-}) \\
&=\tfrac{2}{\pi}a_1 \xi^\frac12 \log \xi
+\tfrac{2(\gamma_0 a_1-a_2)}{\pi}\xi^\frac12 +O(\xi^{\frac{1+2\alpha}{2+\alpha}-}).
\end{align*}
\end{proof}

We also provide expressions for the Wronskians at large energies. This is considerably easier
since Lemma \ref{lem:Jostlg} allows us to evaluate the Jost function at $x=0$.

\begin{lemma}
\label{lem:Wlg}
We have
\begin{align*} 
W(f_+(\cdot,\xi), \phi(\cdot,\xi^2))&=i\xi[1+O_\C(\xi^{-1})] \\
W(f_+(\cdot,\xi), \theta(\cdot,\xi^2))&=1+O_\C(\xi^{-1})
\end{align*}
for all $\xi\geq \frac12$ where the $O_\C$-terms are of symbol type.
\end{lemma}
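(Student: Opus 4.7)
The plan is to exploit the fact that the Wronskian of two solutions of the same second-order linear ODE is constant in $x$, and to evaluate at the most convenient endpoint, namely $x=0$. At $x=0$ the boundary conditions for $\phi(\cdot,\xi^2)$ and $\theta(\cdot,\xi^2)$ read $\phi(0,\xi^2)=-1$, $\phi'(0,\xi^2)=0$, $\theta(0,\xi^2)=0$, $\theta'(0,\xi^2)=1$ (by the same normalization as in Lemma \ref{lem:ensm}, which extends by standard ODE theory to all $\xi>0$ since $V$ is smooth). Crucially, in the large-energy regime the Jost representation from Lemma \ref{lem:Jostlg} is valid for \emph{all} $x\geq 0$, unlike in Lemma \ref{lem:Jostsm} where one had to restrict to $x\geq\xi^{-\epsilon/\alpha}$. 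Consequently $f_+(0,\xi)$ and $f_+'(0,\xi)$ are directly accessible, and the entire lemma reduces to a pointwise evaluation.

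Using the convention $W(f,g)=fg'-f'g$ (the one under which $W(\theta_0,\phi_0)=1$ in Lemma \ref{lem:en0}), the boundary conditions collapse the Wronskians to
\[
W(f_+(\cdot,\xi),\phi(\cdot,\xi^2))=f_+'(0,\xi),\qquad W(f_+(\cdot,\xi),\theta(\cdot,\xi^2))=f_+(0,\xi),
\]
so it suffices to show that $f_+(0,\xi)$ and $f_+'(0,\xi)$ have the stated symbol-type $\xi$-behavior. The second Wronskian follows at once by setting $x=0$ in Lemma \ref{lem:Jostlg}. For the first, I would write $f_+(x,\xi)=e^{ix\xi}[1+\varepsilon(x,\xi)]$ with $\varepsilon(x,\xi)=O_\C(\langle x\rangle^{-1}\xi^{-1})$ of symbol type, differentiate in $x$, and invoke the symbol bound $\partial_x\varepsilon(x,\xi)=O_\C(\langle x\rangle^{-2}\xi^{-1})$ to obtain
\[
f_+'(0,\xi)=i\xi\bigl[1+\varepsilon(0,\xi)\bigr]+\partial_x\varepsilon(0,\xi)=i\xi\bigl[1+O_\C(\xi^{-1})\bigr],
\]
after noting that $\partial_x\varepsilon(0,\xi)/(i\xi)=O_\C(\xi^{-2})$ can be absorbed into the $O_\C(\xi^{-1})$ bracket.

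The symbol behavior in $\xi$ of the resulting error terms is inherited directly from the symbol behavior of $\varepsilon$ in Lemma \ref{lem:Jostlg} via the Leibniz rule, so both remainders satisfy $|\partial_\xi^k O_\C(\xi^{-1})|\lesssim \xi^{-1-k}$. I do not foresee any real obstacle: the argument is essentially a one-line evaluation at the endpoint once Lemma \ref{lem:Jostlg} is in hand. The only piece of bookkeeping worth checking is that the $\partial_x\varepsilon(0,\xi)$ contribution to $f_+'(0,\xi)$ is compatible with the stated form, which it is because division by $i\xi$ turns an $O_\C(\xi^{-1})$ into an $O_\C(\xi^{-2})$.
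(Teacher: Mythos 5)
Your proof is correct and is essentially the same as the paper's: the paper likewise observes that $\phi(0,\xi^2)=-\theta'(0,\xi^2)=-1$, $\phi'(0,\xi^2)=\theta(0,\xi^2)=0$, plugs in the large-energy Jost representation at $x=0$, and reads off the Wronskians directly. Your extra care in spelling out that the $\partial_x$ hitting the $O_\C(\langle x\rangle^{-1}\xi^{-1})$ error produces a term of lower order $O_\C(\xi^{-2})$ relative to $i\xi$ is exactly the bookkeeping implicit in the paper's line $f_+'(x,\xi)=i\xi e^{i\xi x}[1+O_\C(\langle x\rangle^{-1}\xi^{-1})]$, which relies on the symbol behavior stated in Lemma \ref{lem:Jostlg}.
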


\begin{proof}
According to Lemma \ref{lem:Jostlg} we have
\begin{align*} 
f_+(x,\xi)&=e^{i\xi x}[1+O_\C(\langle x\rangle^{-1}\xi^{-1})] \\
f_+'(x,\xi)&=i\xi e^{i\xi x}[1+O_\C(\langle x\rangle^{-1}\xi^{-1})].
\end{align*}
Since $\phi(0,\xi^2)=-\theta'(0,\xi^2)=-1$ and $\phi'(0,\xi^2)=\theta(0,\xi^2)=0$ for all
$\xi\geq 0$ by Lemma \ref{lem:ensm}, we obtain by evaluation at $x=0$,
\begin{align*}
W(f_+(\cdot,\xi),\phi(\cdot,\xi^2))&=i\xi[1+O_\C(\xi^{-1})] \\
W(f_+(\cdot,\xi),\theta(\cdot,\xi^2))&=1+O_\C(\xi^{-1})
\end{align*}
for all $\xi\geq \frac12$ as claimed.
\end{proof}

\subsection{Computation of the spectral measure}

We have collected all the information necessary to compute the spectral measure.

\begin{lemma}
\label{lem:rho}
The spectral measure $\mu$ associated to the operator $ A$ is purely absolutely continuous
and given by $d\mu=\rho(\lambda)d\lambda$
where $\lambda$ is the Lebesgue measure on $\R$ and
\begin{align*} 
\rho(\lambda)&=0  & &\lambda< 0 \\
\rho(\lambda)&=\frac{1}{\frac{\pi^2}{2}a_1^2+2[\frac{a_1}{2}\log\lambda+\gamma_0 a_1-a_2]^2}
[1+O(\lambda^{\frac{3\alpha}{8+4\alpha}-})] & &0\leq \lambda< \tfrac14 \\
\rho(\lambda)&=\tfrac{1}{\pi}\lambda^{-\frac12}[1+O(\lambda^{-\frac12})] & & \lambda \geq \tfrac14
\end{align*}
where $a_1, a_2$ are from Lemma \ref{lem:en0} and $\gamma_0=\gamma-\log 2$
with $\gamma$ the Euler-Mascheroni constant.
Furthermore, the $O$-terms are of symbol type.
\end{lemma}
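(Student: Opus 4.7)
The plan is to derive a closed-form expression for the spectral density $\rho(\lambda)$ in terms of the Wronskian $W(f_+(\cdot,\xi),\phi(\cdot,\xi^2))$ via Stone's formula, and then to plug in the asymptotics established in Lemmas \ref{lem:Wsm} and \ref{lem:Wlg}. The core identity I will establish first is
\[
\rho(\lambda)=\frac{\sqrt\lambda}{\pi\,|W(f_+(\cdot,\sqrt\lambda),\phi(\cdot,\lambda))|^2},\qquad \lambda>0.
\]
This is standard Weyl-Titchmarsh: for $\mathrm{Im}\,z>0$ the resolvent kernel of $A$ is
$G(x,y;z)=-\phi(x_\wedge,z)f_+(x_\vee,\sqrt{z})/W(f_+(\cdot,\sqrt z),\phi(\cdot,z))$ where $x_\wedge,x_\vee$ are $\min,\max$, and Stone's formula gives $d\mu(\lambda)=\pi^{-1}\,\mathrm{Im}\,G(x,x;\lambda+i0)\,dx$ per unit of $\phi(x,\lambda)^2$; using $\overline{f_+(x,\sqrt\lambda)}=f_-(x,\sqrt\lambda)$ and $W(f_+,f_-)=-2i\sqrt\lambda$, one extracts the stated density. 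The nonnegativity of $A$ (so $\sigma(A)\subset[0,\infty)$) comes from the absence of bound states in Hypothesis \ref{hyp:A} together with the fact that the half-line problem with Neumann condition is nonnegative; combined with the above formula, which is continuous and finite on $(0,\infty)$, this both gives $\rho(\lambda)=0$ for $\lambda<0$ and forces the spectrum to be purely absolutely continuous.

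Next I treat the high-energy regime $\lambda\geq \tfrac14$. By Lemma \ref{lem:Wlg},
\[
|W(f_+(\cdot,\xi),\phi(\cdot,\xi^2))|^2=\xi^2[1+O(\xi^{-1})],
\]
with symbol behavior in $\xi$, so substitution into the master formula and the change of variables $\xi=\sqrt\lambda$ yields $\rho(\lambda)=\pi^{-1}\lambda^{-1/2}[1+O(\lambda^{-1/2})]$, with symbol-type error. For the low-energy regime $0<\lambda<\tfrac14$, I use Lemma \ref{lem:Wsm}. Writing the bracket inside as $a_1+\tfrac{2i}{\pi}\bigl[a_1\log\xi+\gamma_0 a_1-a_2\bigr]+O_\C(\xi^{3\alpha/(4+2\alpha)-})$ and noting that the prefactor $\sqrt{\pi/2}\,e^{i\pi/4}$ contributes $\tfrac{\pi}{2}$ to the modulus squared, I compute
\[
|W|^2=\tfrac{\pi}{2}\,\xi\left[a_1^2+\tfrac{4}{\pi^2}\bigl(a_1\log\xi+\gamma_0 a_1-a_2\bigr)^2\right]\!\bigl[1+O(\xi^{3\alpha/(4+2\alpha)-})\bigr].
\]
Substituting $\log\xi=\tfrac12\log\lambda$ and $\xi=\lambda^{1/2}$ and cancelling the $\xi$ with $\sqrt\lambda$ in the numerator gives exactly
\[
\rho(\lambda)=\frac{1}{\tfrac{\pi^2}{2}a_1^2+2\bigl[\tfrac{a_1}{2}\log\lambda+\gamma_0 a_1-a_2\bigr]^2}\bigl[1+O(\lambda^{3\alpha/(8+4\alpha)-})\bigr],
\]
which is the asserted expression.

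The last item is propagating the symbol character of the error from Lemmas \ref{lem:Wsm}–\ref{lem:Wlg} through the reciprocal. This is automatic once we know the denominator is bounded away from zero: at high energy $|W|^2\simeq\xi^2$, while at low energy the bracket is bounded below because either $a_1\neq 0$ (in which case the $\log^2\lambda$-term dominates as $\lambda\to 0^+$ and only finitely many values of $\lambda$ could make the bracket small, where $a_1^2>0$ takes over) or $a_1=0$, in which case $a_1 b_2-a_2 b_1=1$ from Lemma \ref{lem:en0} forces $a_2\neq 0$ and the bracket is bounded below by $2a_2^2$. A quotient/product of symbol-type quantities with a denominator bounded below is again of symbol type (by the Leibniz and Faà-di-Bruno rules), which finishes the claim.

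The main obstacle is the bookkeeping in the small-$\lambda$ computation: one must track that every manipulation preserves the symbol property with the same exponent $\tfrac{3\alpha}{8+4\alpha}-$, which in turn hinges on the lower bound for the denominator just discussed. The Stone's-formula derivation in Step 1 is standard and proceeds by a contour argument on the resolvent.
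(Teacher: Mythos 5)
Your proposal is correct. Both your argument and the paper's are Weyl--Titchmarsh in nature, but the routes diverge at an interesting point. The paper starts from $m(\xi^2)=W(\theta(\cdot,\xi^2),f_+(\cdot,\xi))/W(f_+(\cdot,\xi),\phi(\cdot,\xi^2))$ and computes $\rho=\tfrac1\pi\Im m=\tfrac1\pi\frac{A_1B_2-A_2B_1}{A_1^2+A_2^2}$, which forces it to invoke \emph{both} Wronskian asymptotics from Lemma \ref{lem:Wsm} and to use the determinant identity $a_1b_2-a_2b_1=1$ from Lemma \ref{lem:en0} to simplify the numerator. You instead write $f_+=c_\phi\phi+c_\theta\theta$, use $W(f_+,\overline{f_+})=-2i\xi$ to get $\Im(c_\phi\overline{c_\theta})=\xi$, and thereby land on the cleaner identity
\[
\rho(\lambda)=\frac{\sqrt\lambda}{\pi\,|W(f_+(\cdot,\sqrt\lambda),\phi(\cdot,\lambda))|^2},
\]
which needs only the $\phi$-Wronskian from Lemma \ref{lem:Wsm} and dispenses with the $\theta$-Wronskian and the determinant identity at this stage. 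That is a genuine (if modest) economy. Your handling of the lower bound for the denominator (splitting into $a_1\ne0$ and $a_1=0$, the latter forcing $a_2\ne0$ via $a_1b_2-a_2b_1=1$) and the propagation of the symbol property through the reciprocal is also a bit more explicit than the paper, which simply asserts the symbol property of the error. Two small cautions: the informal Stone's-formula sentence ("per unit of $\phi(x,\lambda)^2$") should really be replaced by the direct $\Im m$ computation sketched above, and the claim that absence of bound states alone yields $\sigma(A)\subset[0,\infty)$ also uses that $\sigma_{\mathrm{ess}}(A)=[0,\infty)$ for a decaying potential — but the paper already records both facts as standard Weyl--Titchmarsh background, so this is not a gap.
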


\begin{proof}
The fact that $\rho(\lambda)=0$ for $\lambda<0$ follows from $\sigma( A)=[0,\infty)$ and
thus, it suffices to consider $\lambda\geq 0$.
We set $\xi=\lambda^\frac12$ and consider $0\leq \xi<\frac12$.
The Weyl-Titchmarsh $m$-function is given by
\[ m(\xi^2)=\frac{W(\theta(\cdot,\xi^2),f_+(\cdot,\xi))}{W(f_+(\cdot,\xi),\phi(\cdot,\xi^2))}
=\frac{B_1(\xi)+iB_2(\xi)}{A_1(\xi)+iA_2(\xi)} \]
where
\begin{align*}
A_1(\xi)&=a_1+O(\xi^{\frac{3\alpha}{4+2\alpha}-}) \\
A_2(\xi)&=\tfrac{2a_1}{\pi}\log \xi+\tfrac{2}{\pi}(\gamma_0 a_1-a_2)+O(\xi^{\frac{3\alpha}{4+2\alpha}-}) \\
B_1(\xi)&=-b_1+O(\xi^{\frac{3\alpha}{4+2\alpha}-}) \\
B_2(\xi)&=-\tfrac{2b_1}{\pi}\log \xi-\tfrac{2}{\pi}(\gamma_0 b_1-b_2)+O(\xi^{\frac{3\alpha}{4+2\alpha}-}),
\end{align*}
see Lemma \ref{lem:Wsm}.
The spectral function is computed as
\[ \rho(\xi^2)=\tfrac{1}{\pi}\Im m(\xi^2)=\tfrac{1}{\pi}\frac{A_1(\xi) B_2(\xi)
-A_2(\xi) B_1(\xi)}{A_1(\xi)^2+A_2(\xi)^2} \]
and we have
\begin{align*} 
A_1(\xi)^2+A_2(\xi)^2&=a_1^2+\tfrac{4}{\pi^2}[a_1 \log \xi+\gamma_0 a_1-a_2]^2
+O(\xi^{\frac{3\alpha}{4+2\alpha}-}) \\
A_1(\xi)B_2(\xi)-A_2(\xi)B_1(\xi)&=\tfrac{2}{\pi}(a_1b_2-a_2b_1)
+O(\xi^{\frac{3\alpha}{4+2\alpha}-})
\end{align*}
which yields the stated form for $0\leq \lambda< \frac14$.
For $\lambda\geq \frac14$ we use Lemma \ref{lem:Wlg} and infer
\[ m(\xi^2)=\frac{-1}{i\xi}[1+O_\C(\xi^{-1})]=i\xi^{-1}[1+O_\C(\xi^{-1})] \]
and thus, $\rho(\xi^2)=\frac{1}{\pi}\Im m(\xi^2)=\tfrac{1}{\pi}\xi^{-1}[1+O(\xi^{-1})]$
for all $\xi\geq \frac12$.
\end{proof}

\subsection{Global representations for $\phi(\cdot,\lambda)$}
Based on the above we obtain the following representation of the function $\phi(\cdot,\lambda)$.

\begin{lemma}
\label{lem:phi}
Fix $\epsilon>0$.
For the function $\phi$ from Lemma \ref{lem:ensm} we have
\begin{align*}
\phi(x,\xi^2)&=\phi_0(x)+O(x^2 \langle x \rangle^{\frac12+}\xi^2) 
& &x\in [0,\xi^{-1+}] \\
\phi(x,\xi^2)&= x^\frac12[a_1\log x+a_2+O(x^0 \xi^{\frac{3\alpha}{4+2\alpha}-})]
[1+O((x\xi)^{2-})] \\
&\quad +\xi^{-\frac12}[1+a_1 \log \xi]O(x^{-\alpha-\epsilon}\xi^{-\epsilon})
&& x\in [\xi^{-\epsilon/\alpha},\xi^{-1}] \\
\phi(x,\xi^2)&=a(\xi)e^{i\xi x}[1+O_\C((x\xi)^{-1})+O_\C(x^{-1-\alpha}\xi^{-1})] \\
&\quad +\mbox{compl.~conj.} && x\geq \xi^{-1}
\end{align*}
for all $0<\xi<\frac12$
where $|a(\xi)|\lesssim \xi^{-\frac12}(|a_1\log \xi|+1)$.
Furthermore, in the case $\xi\geq \frac12$ we have
\begin{align*}
\phi(x,\xi^2)&=[-\tfrac12+O_\C(\xi^{-1})]e^{i\xi x}[1+O_\C(\langle x\rangle^{-1}\xi^{-1})]
+\mbox{compl.~conj.}
\end{align*}
for all $x\geq 0$.
Finally, all $O$- and $O_\C$-terms are of symbol type.
\end{lemma}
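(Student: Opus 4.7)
The plan is to treat Regime 1 ($x \in [0, \xi^{-1+}]$) as an immediate consequence of Lemma \ref{lem:ensm} combined with the asymptotics of $\phi_0$ from Lemma \ref{lem:en0}, and to address Regimes 2 and 3 through a single representation of $\phi$ as a real linear combination of the Jost function and its conjugate. Since $\phi(\cdot,\xi^2)$ is real-valued and $\{f_+(\cdot,\xi),\overline{f_+(\cdot,\xi)}\}$ is a fundamental system of $Af=\xi^2 f$ with Wronskian $-2i\xi$ (as follows from $f_+(x,\xi)\sim e^{ix\xi}$), one can write
\[ \phi(x,\xi^2) = a(\xi)f_+(x,\xi) + \overline{a(\xi)f_+(x,\xi)}, \qquad a(\xi) = \frac{\overline{W(f_+(\cdot,\xi),\phi(\cdot,\xi^2))}}{2i\xi}. \]
Inserting Lemma \ref{lem:Wsm} immediately produces the bound $|a(\xi)| \lesssim \xi^{-1/2}(|a_1\log\xi|+1)$ together with explicit expansions of the real and imaginary parts of $a(\xi)\sqrt{\pi/2}e^{i\pi/4}\xi^{1/2}$ modulo errors of size $O(\xi^{3\alpha/(4+2\alpha)-})$.

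For Regime 3 ($x \geq \xi^{-1}$) I would apply the standard large-argument expansion $\sqrt{\pi/2}e^{i\pi/4}\sqrt{z}[J_0(z)+iY_0(z)] = e^{iz}[1+O(z^{-1})]$ (valid for $z\geq 1$) to the leading term in Lemma \ref{lem:Jostsm}, which on this range yields $f_+(x,\xi) = e^{i\xi x}[1 + O_{\mathbb{C}}((x\xi)^{-1}) + O_{\mathbb{C}}(x^{-1-\alpha}\xi^{-1})]$. Together with the bound on $a(\xi)$ this produces the claimed form. The large-energy case $\xi \geq 1/2$ is handled by the same argument, now with Lemma \ref{lem:Wlg} (which gives $a(\xi) = -\tfrac{1}{2}[1+O_{\mathbb{C}}(\xi^{-1})]$) in place of Lemma \ref{lem:Wsm} and Lemma \ref{lem:Jostlg} in place of Lemma \ref{lem:Jostsm}.

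For Regime 2 ($x \in [\xi^{-\epsilon/\alpha},\xi^{-1}]$) I would insert the small-argument expansions $J_0(z) = 1 + O(z^{2-})$ and $Y_0(z) = \tfrac{2}{\pi}(\log z + \gamma_0) + O(z^{2-})$ into the Hankel form of $f_+$. Writing $A(\xi)+iB(\xi) := a(\xi)\sqrt{\pi/2}e^{i\pi/4}\xi^{1/2}$, the leading part of $a(\xi)f_+(x,\xi) + \mathrm{c.c.}$ collapses to
\[ x^{1/2}\Bigl[-\tfrac{4B(\xi)}{\pi}\log x + 2A(\xi) - \tfrac{4B(\xi)}{\pi}(\log\xi + \gamma_0)\Bigr]. \]
The Wronskian expansions of Lemma \ref{lem:Wsm} yield $-\tfrac{4B(\xi)}{\pi} = a_1 + O(\xi^{3\alpha/(4+2\alpha)-})$ and $2A(\xi) - \tfrac{4B(\xi)}{\pi}(\log\xi+\gamma_0) = a_2 + O(\xi^{3\alpha/(4+2\alpha)-})$; this is the crucial algebraic cancellation, as the $\log\xi$ and $\gamma_0$ contributions from $A(\xi)$ exactly match those generated by the $Y_0(x\xi)$ factor. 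The main term thus reduces to $x^{1/2}(a_1\log x + a_2)$, and the three residual errors in the statement correspond respectively to $O(\xi^{3\alpha/(4+2\alpha)-})$ from the expansion of $a(\xi)$; $O((x\xi)^{2-})$ from the Bessel tails (which, since they arise multiplicatively, appear as the factor $[1+O((x\xi)^{2-})]$); and the additive $\xi^{-1/2}[1+a_1\log\xi]O(x^{-\alpha-\epsilon}\xi^{-\epsilon})$ coming from the Jost remainder in Lemma \ref{lem:Jostsm} scaled by $|a(\xi)|$. The symbol-type behavior of all errors propagates from the input lemmas.

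The main obstacle is precisely this algebraic matching in Regime 2: one must carry the expansions of $a(\xi)$ and $f_+(x,\xi)$ to sufficient precision and verify that the $\log\xi$- and $\gamma_0$-dependent pieces conspire to yield the $\xi$-independent zero-energy asymptotics $a_1\log x + a_2$ of $\phi_0$. Conceptually this cancellation is forced by consistency with Regime 1 in the overlap region, but concretely it requires careful bookkeeping of the real and imaginary parts of $a(\xi)$ using Lemma \ref{lem:Wsm} together with the constants $a_j, b_j$ and the Wronskian normalization $a_1 b_2 - a_2 b_1 = 1$ from Lemma \ref{lem:en0}.
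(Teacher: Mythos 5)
Your proposal follows essentially the same route as the paper: regime 1 from Lemma \ref{lem:ensm}, and regimes 2--3 (and $\xi\geq\tfrac12$) by writing $\phi(x,\xi^2)=2\Re[a(\xi)f_+(x,\xi)]$ with $a$ determined via $W(f_+,\phi)=-2i\xi\,\overline{a}$ and then inserting the Bessel/Hankel asymptotics from Lemma \ref{lem:Jostsm} (resp.\ Lemma \ref{lem:Jostlg}). Your $A+iB$ bookkeeping in regime 2 is just a slightly more explicit version of the computation the paper leaves implicit, and the identifications $-\tfrac{4B}{\pi}=a_1+O(\cdot)$ and $2A-\tfrac{4B}{\pi}(\log\xi+\gamma_0)=a_2+O(\cdot)$ check out against Lemma \ref{lem:Wsm}.
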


\begin{proof}
The first assertion is a consequence of Lemma \ref{lem:ensm}.
From the asymptotics $f_+(x,\xi)\sim e^{i\xi x}$ as $x\to \infty$ we infer
$W(f_+(\cdot,\xi),\overline{f_+(\cdot,\xi)})=-2i\xi$ which shows that 
$\{f_+(\cdot,\xi),\overline{f_+(\cdot,\xi)}\}$ is a fundamental system for the equation
$Af=\xi^2 f$ provided $\xi>0$.
Consequently, there exist constants $a(\xi)$, $b(\xi)$ such that
\begin{equation}
\label{eq:phif+}
 \phi(x,\xi^2)=a(\xi)f_+(x,\xi)+b(\xi)\overline{f_+(x,\xi)}. 
\end{equation}
Since $\phi(\cdot,\xi^2)$ is real-valued for $\xi\geq 0$, we must have $b(\xi)=\overline{a(\xi)}$.
From Eq.~\eqref{eq:phif+} we infer
\[ W(f_+(\cdot,\xi),\phi(\cdot,\xi^2))=\overline{a(\xi)}W(f_+(\cdot,\xi),\overline{f_+(\cdot,\xi)})
=-2i\xi \overline{a(\xi)} \]
and Lemma \ref{lem:Wsm} yields
\[ \overline{a(\xi)}=\tfrac{i}{2}\sqrt{\pi/2}e^{i\pi/4}\xi^{-\frac12}
\big [a_1+\tfrac{2i a_1}{\pi} \log\xi+\tfrac{2i}{\pi}(\gamma_0 a_1-a_2) \\
 + O_\C(\xi^{\frac{3\alpha}{4+2\alpha}-}) \big ] \]
 provided $0<\xi<\frac12$.
From Lemma \ref{lem:Jostsm} and the Bessel asymptotics we obtain
 \begin{align*}
\overline{f_+(x,\xi)}&=\sqrt{\pi/2}e^{-i\pi/4}(x\xi)^\frac12[1-i\tfrac{2}{\pi}\log (x\xi)
-i\tfrac{2\gamma_0}{\pi}+O_\C((x\xi)^{2-})] \\
&\quad +O_\C(x^{-\alpha-\epsilon}\xi^{-\epsilon})
 \end{align*}
for all $x\in [\xi^{-\epsilon/\alpha},\xi^{-1}]$
and this yields
 \begin{align*}
\phi(x,\xi^2)&=2\Re[\overline{a(\xi)f_+(x,\xi)}] \\
&=x^\frac12[a_1\log x+a_2+O(x^0 \xi^{\frac{3\alpha}{4+2\alpha}-})][1+O((x\xi)^{2-})] \\
&\quad +[1+a_1\log \xi]O(x^{-\alpha-\epsilon}\xi^{-\frac12-\epsilon})
 \end{align*}
as claimed.
In the case $x\geq \xi^{-1}$ we invoke Lemma \ref{lem:Jostsm} and the Hankel asymptotics
to conclude
\[ \overline{f_+(x,\xi)}=e^{-i\xi x}[1+O_\C((x\xi)^{-1})
+O_\C(x^{-1-\alpha} \xi^{-1})] \]
and we infer the stated expression.

In the case $\xi\geq \frac12$ we obtain from Lemma \ref{lem:Wlg} that
$a(\xi)=-\tfrac12+O_\C(\xi^{-1})$ and the stated expression follows from Lemma \ref{lem:Jostlg}.
\end{proof}

A simple but useful consequence of the representations in Lemma \ref{lem:phi}
is the following estimate for the nonoscillatory regime.

\begin{corollary}
\label{cor:phi}
We have the bound
\[ |\phi(x,\xi^2)|\lesssim \langle x\rangle^\frac12 (|a_1\log \xi|+1) \]
for all $x \in[0,\xi^{-1}]$ and $0<\xi\leq \frac12$ where $a_1$ is the constant
from Lemma \ref{lem:en0}.
\end{corollary}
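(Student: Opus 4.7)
The plan is to split the interval $[0,\xi^{-1}]$ into two subintervals $[0,\xi^{-1+\delta}]$ and $[\xi^{-1+\delta},\xi^{-1}]$ (with $\delta\in(0,1)$ small, to be chosen) and apply the two corresponding representations from Lemma \ref{lem:phi} on each. The pervasive observation used throughout is that, for $1\leq x\leq \xi^{-1}$ with $0<\xi\leq\tfrac12$, one has $\log x\leq|\log\xi|$, which allows one to replace $|\log x|$ by $|\log\xi|$ in the asymptotic expansions of $\phi_0$ and of the second representation.

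On the region $x\in[0,\xi^{-1+\delta}]$ I employ the first representation $\phi(x,\xi^2)=\phi_0(x)+O(x^2\langle x\rangle^{\frac12+}\xi^2)$. For $x\leq 1$ both terms are trivially $O(1)\lesssim\langle x\rangle^{\frac12}(|a_1\log\xi|+1)$. For $1\leq x\leq\xi^{-1+\delta}$, Lemma \ref{lem:en0} yields $|\phi_0(x)|\lesssim x^{\frac12}(|a_1\log x|+1)\lesssim x^{\frac12}(|a_1\log\xi|+1)$, while the error can be rewritten as $x^{\frac12}(x\xi)^2 x^{0+}\lesssim x^{\frac12}\xi^{2\delta-\sigma(1-\delta)}$, where $\sigma>0$ denotes the arbitrarily small exponent hidden in the ``$0+$''; fixing $\delta$ first and then choosing $\sigma<2\delta/(1-\delta)$ keeps this bounded by $x^{\frac12}$.

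On the region $x\in[\xi^{-1+\delta},\xi^{-1}]$ I use the second representation of Lemma \ref{lem:phi}. Its main part
\[
x^{\frac12}\big[a_1\log x+a_2+O\big(\xi^{\frac{3\alpha}{4+2\alpha}-}\big)\big]\big[1+O((x\xi)^{2-})\big]
\]
is immediately bounded by $x^{\frac12}(|a_1\log\xi|+1)$ using $|\log x|\leq|\log\xi|$ and $x\xi\leq 1$. The delicate contribution is the additional error $\xi^{-\frac12}[1+a_1\log\xi]\,O(x^{-\alpha-\epsilon}\xi^{-\epsilon})$. Plugging in $x\geq\xi^{-1+\delta}$ gives $x^{-\alpha-\epsilon}\leq\xi^{(1-\delta)(\alpha+\epsilon)}$, which, together with $x^{\frac12}\geq\xi^{-(1-\delta)/2}$, reduces matters to verifying $\alpha-\delta(\alpha+\epsilon)\geq \delta/2$; this holds for $\delta$ small enough, giving the required bound $\lesssim x^{\frac12}(|a_1\log\xi|+1)$.

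The main obstacle is the consistent choice of the splitting parameter $\delta$: it must be small enough for $x^{\frac12}\geq\xi^{-(1-\delta)/2}$ to absorb the $\xi^{-\frac12}$ loss coming from the error in the second representation, yet large enough to dominate the arbitrarily small power loss $x^{0+}$ from the error in the first representation. Since the ``$0+$'' exponent can be tuned after $\delta$ is fixed, both constraints are simultaneously satisfied, and the three estimates patch together to yield the claimed global bound.
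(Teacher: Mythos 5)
Your proof is correct and takes essentially the same approach as the paper: split $[0,\xi^{-1}]$ near the right endpoint, use the representation $\phi=\phi_0+O(x^2\langle x\rangle^{\frac12+}\xi^2)$ from Lemma \ref{lem:ensm} on the inner piece and the oscillatory representation from Lemma \ref{lem:phi} on the outer piece, exploiting $|\log x|\le|\log\xi|$ there. The paper fixes a single parameter (splitting at $\xi^{-1+\epsilon/2}$) and just records the resulting exponents, whereas you keep $\delta$, $\sigma$, and $\epsilon$ separate and explain the tuning order; your exponent constraint $\alpha-\delta(\alpha+\epsilon)\ge\delta/2$ is the same inequality in different clothing.
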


\begin{proof}
Let $\epsilon \in (0,1)$. If $x\in [0,\xi^{-1+\epsilon/2}]$ we use 
$\langle x\rangle^{2+\epsilon}\xi^2\leq \xi^{\epsilon^2/2}\lesssim 1$ to conclude
the stated bound from Lemmas \ref{lem:en0} and \ref{lem:phi}.
If $x\in [\xi^{-1+\epsilon/2},\xi^{-1}]$ we have $\xi^{-\frac12}\lesssim \langle x\rangle^{\frac12+\epsilon}$
and, since 
\[ \langle x\rangle^\epsilon O(x^{-\alpha-\epsilon}\xi^{-\epsilon})
=O(\xi^{\alpha-\epsilon \alpha/2-\epsilon-\epsilon^2/2}), \]
we infer the stated bound from Lemma \ref{lem:phi} provided $\epsilon$ is sufficiently small.
\end{proof}

\subsection{The distorted Fourier transform}

Now we define the distorted Fourier transform by
\[ \mc F f(\xi):=\int_0^\infty \phi(x,\xi^2)f(x)dx, \quad f\in \mc S(\R_+). \]

\begin{lemma}
\label{lem:Fourier}
The operator $\mc F: \mc S(\R_+)\to L^2_{\tilde \rho}(\R_+)$ extends to a unitary map
$\mc F: L^2(\R_+)\to L^2_{\tilde \rho}(\R_+)$ where $\tilde \rho(\xi):=2\xi \rho(\xi^2)$.
Furthermore, for all $f\in \mc D(A)$ we have
\[ \mc F  A f(\xi)=\xi^2 \mc F f(\xi). \]
Finally, the inverse $\mc F^{-1}$ is given by
\[ \mc F^{-1}f(x)=\int_0^\infty \phi(x,\xi^2)f(\xi)\tilde \rho(\xi)d\xi \]
for $f\in \mc S(\R_+)$.
\end{lemma}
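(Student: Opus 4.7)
The plan is to deduce this lemma from the standard Weyl-Titchmarsh/Plancherel theorem for self-adjoint Sturm-Liouville operators on the half-line, composed with a change of variables $\lambda=\xi^2$. First I would invoke the general result (as in \cite{Wei03, Tes09, DunSch88}): since $A$ is self-adjoint on $L^2(\R_+)$ with the Neumann condition at $0$ and purely absolutely continuous spectrum $[0,\infty)$, and $\phi(\cdot,\lambda)$ is the regular solution normalized by $\phi(0,\lambda)=-1$, $\phi'(0,\lambda)=0$, the map
\[
\tilde{\mc F}f(\lambda) := \int_0^\infty \phi(x,\lambda)f(x)\,dx, \qquad f\in \mc S(\R_+),
\]
extends to a unitary $\tilde{\mc F}: L^2(\R_+) \to L^2((0,\infty), \rho(\lambda)\,d\lambda)$ that intertwines $A$ with multiplication by $\lambda$ on $\mc D(A)$, with inverse $\tilde{\mc F}^{-1}h(x) = \int_0^\infty \phi(x,\lambda)h(\lambda)\rho(\lambda)\,d\lambda$. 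All hypotheses of this abstract theorem have been verified earlier: self-adjointness and absence of bound states come from Hypothesis \ref{hyp:A}, and the spectral function $\rho$ has been identified via the boundary value of the Weyl $m$-function in Lemma \ref{lem:rho}.

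Second, I would perform the change of variables $\lambda=\xi^2$, $d\lambda=2\xi\,d\xi$, for $\xi>0$. Setting $\mc F f(\xi):=\tilde{\mc F}f(\xi^2)$ reproduces the formula $\mc F f(\xi)=\int_0^\infty \phi(x,\xi^2)f(x)\,dx$, and the pushforward of $\rho(\lambda)\,d\lambda$ under $\lambda\mapsto\sqrt\lambda$ is precisely $2\xi\rho(\xi^2)\,d\xi=\tilde\rho(\xi)\,d\xi$. Hence $h\mapsto h(\cdot^2)$ is an isometric isomorphism from $L^2((0,\infty),\rho(\lambda)\,d\lambda)$ onto $L^2_{\tilde\rho}(\R_+)$. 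Composing with $\tilde{\mc F}$ yields the unitarity of $\mc F$, translates the diagonalization identity into $\mc F Af(\xi)=\xi^2 \mc F f(\xi)$ for $f \in \mc D(A)$, and rewrites the inversion formula in the claimed form.

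The remaining task is to confirm that the defining integral is absolutely convergent on $\mc S(\R_+)$, so that the pointwise formula genuinely agrees with the unitary extension on this dense class. Corollary \ref{cor:phi} provides $|\phi(x,\xi^2)|\lesssim \langle x\rangle^{1/2}(|\log\xi|+1)$ in the nonoscillatory regime $x\xi\le 1$ with $0<\xi\le 1/2$, while Lemma \ref{lem:phi} yields the uniform bound $|\phi(x,\xi^2)|\lesssim 1$ in the oscillatory regime $x\xi\ge 1$ and for $\xi\ge 1/2$. Combined with the Schwartz decay of $f$, this makes $\mc F f(\xi)$ well-defined pointwise for every $\xi>0$ and square-integrable against $\tilde\rho$, since $\tilde\rho(\xi)\simeq \xi/(1+\log^2\xi)$ near $\xi=0$ by Lemma \ref{lem:rho} absorbs the mild $|\log\xi|$ growth.

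The main obstacle is essentially none: all substantive work -- the construction of $\phi$, the Jost function $f_+$, the Wronskian computations, and the explicit spectral density $\rho$ -- has already been carried out in Lemmas \ref{lem:en0} through \ref{lem:phi}. The one small thing worth flagging is the sign convention $\phi(0,\lambda)=-1$ rather than $+1$, but this affects neither unitarity nor the inverse formula, since the kernel $\phi(x,\xi^2)\phi(y,\xi^2)$ appearing in $\mc F^{-1}\mc F$ is invariant under $\phi\mapsto -\phi$. If one preferred a self-contained derivation of the abstract input, one would apply Stone's formula to the resolvent $(A-z)^{-1}$, whose kernel is expressible through $\phi(\cdot,z)$ and $f_+(\cdot,\sqrt z)$ divided by their Wronskian (compare Lemmas \ref{lem:Wsm}, \ref{lem:Wlg}), and read off the spectral resolution directly.
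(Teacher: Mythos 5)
Your proposal is correct and takes essentially the same approach as the paper: the paper's proof is a one-line citation to the standard Weyl--Titchmarsh/spectral theorem in \cite{DunSch88, Wei03, GesZin06, Tes09}, and you simply make explicit the steps that citation is invoking (the Plancherel theorem for the regular solution $\phi$, the change of variables $\lambda=\xi^2$ producing the measure $\tilde\rho(\xi)\,d\xi$, and the routine check that the defining integral converges absolutely on $\mc S(\R_+)$ using Lemma~\ref{lem:phi}, Corollary~\ref{cor:phi}, and Lemma~\ref{lem:rho}).
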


\begin{proof}
This is a consequence of Weyl-Titchmarsh theory, see \cite{DunSch88, Wei03, GesZin06, Tes09}.
\end{proof}

As in the case of the standard Fourier transform, we will use the term ``Plancherel's theorem'' to refer to the fact that 
$\|\mc F f\|_{L^2_{\tilde \rho}(\R_+)}=\|f\|_{L^2(\R_+)}$.

\section{Dispersive bounds}
\label{sec:disp}

In this section we prove dispersive estimates for the solution of the 
initial value problem
\begin{equation}
\label{eq:init}
 \left \{ \begin{array}{l}
\partial_t^2 u(t, \cdot)+ Au(t, \cdot)=0, \quad t>0 \\
u(0,\cdot)=f,\quad \partial_t u(t,\cdot)|_{t=0}=g \end{array} \right . 
\end{equation}
where $f,g \in \mc S(\R_+)$. As usual, by standard approximation arguments
one may allow for more general $f,g$ as well. 

\subsection{Fundamental dispersive estimate}

We start with the fundamental dispersive estimate.
First, we consider the sine evolution.
The following simple result will be useful in the sequel.

\begin{lemma}
\label{lem:osc}
Let $f \in C^2(0,\infty)$ with $f(\xi)=0$ for all $\xi\geq a$ and some $a>0$.
Suppose further that there exists a constant $c>0$
such that $|f^{(k)}(\xi)|\leq c\xi^{\sigma-k}$ for a $\sigma \in (-1,1)$ and
all $\xi>0$, $k\in \{0,1,2\}$.
Then we have the estimate
\[ \left | \int_0^\infty e^{it \xi}f(\xi)d\xi \right |\lesssim c \langle t \rangle^{-1-\sigma}\]
for all $t\geq 0$.
\end{lemma}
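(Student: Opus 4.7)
The plan is a standard split-and-integrate-by-parts argument, tuned to the singular behavior of $f$ at the origin encoded by the symbol-type bounds $|f^{(k)}(\xi)|\le c\xi^{\sigma-k}$. For $t$ bounded (say $t\le 1$) the claim is trivial: one just estimates $|\int_0^\infty e^{it\xi}f(\xi)d\xi|\le \int_0^a |f(\xi)|d\xi\le c\int_0^a \xi^\sigma d\xi\lesssim c$, which is comparable to $c\langle t\rangle^{-1-\sigma}$ on this range, with an implicit constant depending on $a$ and $\sigma$ (the assumption $\sigma>-1$ ensures integrability at $0$).

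For $t\ge 1$ I would split the integral at the natural scale $\xi_0:=1/t$. On the low-frequency piece $\int_0^{1/t} e^{it\xi}f(\xi)d\xi$ I bound the phase trivially and use $|f(\xi)|\le c\xi^\sigma$ to get
\[ \Big|\int_0^{1/t} e^{it\xi}f(\xi)d\xi\Big|\le c\int_0^{1/t}\xi^\sigma d\xi=\frac{c}{1+\sigma}\,t^{-1-\sigma}, \]
which is exactly the target decay (and again requires $\sigma>-1$).

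On the high-frequency piece $\int_{1/t}^a e^{it\xi}f(\xi)d\xi$ I integrate by parts twice, using that $f\in C^2(0,\infty)$ vanishes identically on $[a,\infty)$, which forces $f(a)=f'(a)=0$ by continuity; hence all boundary contributions at $\xi=a$ drop. The surviving terms are the boundary contributions at $\xi=1/t$, namely $(it)^{-1}e^{i}f(1/t)$ and $(it)^{-2}e^{i}f'(1/t)$, together with the remainder $(it)^{-2}\int_{1/t}^a e^{it\xi}f''(\xi)d\xi$. The first two are bounded by $ct^{-1}(1/t)^\sigma+ct^{-2}(1/t)^{\sigma-1}\lesssim ct^{-1-\sigma}$. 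For the remainder I use $|f''(\xi)|\le c\xi^{\sigma-2}$ to estimate
\[ \frac{1}{t^2}\int_{1/t}^a \xi^{\sigma-2}d\xi=\frac{c}{(1-\sigma)t^2}\bigl(t^{1-\sigma}-a^{\sigma-1}\bigr)\lesssim c\,t^{-1-\sigma}, \]
where the hypothesis $\sigma<1$ is exactly what makes $t^{1-\sigma}$ dominate and absorb the $a$-dependence.

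Combining the three pieces yields the bound $c\langle t\rangle^{-1-\sigma}$ for all $t\ge 0$. The only mildly delicate point in executing this plan is checking that the boundary data at $\xi=a$ indeed vanish (so that integration by parts is clean) and tracking that the powers of $\xi$ produced by two derivatives, combined with two factors of $t^{-1}$ from the integrations by parts, land precisely on $t^{-1-\sigma}$; this is where the two-sided condition $\sigma\in(-1,1)$ enters and cannot be relaxed without a logarithmic loss.
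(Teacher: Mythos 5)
Your proof is correct and follows the paper's argument step for step: trivial estimate for $t\lesssim 1$, split at $\xi=1/t$ for $t\gtrsim 1$, crude bound on the piece $\xi\le 1/t$, and two integrations by parts on the piece $\xi\ge 1/t$, with the two-sided restriction $\sigma\in(-1,1)$ used exactly where you indicate. The only cosmetic difference is that the paper keeps the upper limit of integration at $\infty$ (where the support condition makes the boundary terms vanish trivially) rather than cutting at $\xi=a$ and checking $f(a)=f'(a)=0$ as you do.
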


\begin{proof}
If $t \in [0,1]$ we simply note
\[ \int_0^\infty \left |e^{it\xi}f(\xi)\right |d\xi \leq c \int_0^a \xi^{\sigma}d\xi\lesssim c. \]
Thus, we may focus on $t> 1$.
Oscillations can only be exploited if $t\xi\geq 1$ and thus, it is natural to distinguish
between $\xi\leq 1/t$ and $\xi\geq 1/t$.
In the case $\xi\leq 1/t$ we put absolute values inside and obtain
\[ \int_0^{1/t} \left |e^{it\xi}f(\xi)\right | d\xi 
\leq c\int_0^{1/t} \xi^\sigma d\xi
\lesssim c t^{-1-\sigma}. \]
In the case $\xi \geq 1/t$ we exploit oscillations by
performing two integrations by parts, i.e., 
\begin{align*} 
\int_{1/t}^\infty e^{it\xi}f(\xi)d\xi&=\left .
\frac{1}{it}e^{it\xi}f(\xi)\right |_{1/t}^\infty
-\frac{1}{it}\int_{1/t}^\infty e^{it\xi}f'(\xi)d\xi \\
&=O_\C(ct^{-1-\sigma})+ \left .\frac{1}{t^2}e^{it\xi}f'(\xi) \right |_{1/t}^\infty
+\frac{1}{t^2}\int_{1/t}^a O_\C(c\xi^{\sigma-2})d\xi \\
&=O_\C(ct^{-1-\sigma}).
\end{align*}
\end{proof}

\begin{lemma}
\label{lem:sinL1Linf}
We have the estimate
\[ \left \|\langle \cdot \rangle^{-\frac12}\frac{\sin(t\sqrt{ A})}{\sqrt{ A}}
g\right\|_{L^\infty(\R_+)}\lesssim \langle t \rangle^{-\frac12}
\|\langle \cdot \rangle^\frac12 g\|_{L^1(\R_+)} \]
for all $t\geq 0$ and all $g\in \mc S(\R_+)$.
\end{lemma}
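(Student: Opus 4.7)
The plan is to use the distorted Fourier representation of Lemma \ref{lem:Fourier} to write
\[ \frac{\sin(t\sqrt A)}{\sqrt A}g(x) = 2\int_0^\infty K(t,x,y)g(y)\,dy,\qquad K(t,x,y):=\int_0^\infty \phi(x,\xi^2)\phi(y,\xi^2)\sin(t\xi)\rho(\xi^2)\,d\xi, \]
so the claim reduces to the pointwise kernel bound
\[ |K(t,x,y)| \lesssim \langle t\rangle^{-1/2}\langle x\rangle^{1/2}\langle y\rangle^{1/2},\qquad t,x,y\geq 0. \]
With a smooth cutoff at $\xi=1/2$ I decompose $K = K_{\mathrm{lo}} + K_{\mathrm{hi}}$.

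For $K_{\mathrm{hi}}$ I insert the high-frequency asymptotics from Lemmas \ref{lem:phi} and \ref{lem:rho} and expand $\sin(t\xi)=\tfrac{1}{2i}(e^{it\xi}-e^{-it\xi})$, reducing $K_{\mathrm{hi}}$ to a finite linear combination of oscillatory integrals
\[ I_\omega := \int_{1/2}^\infty \frac{e^{i\xi\omega}}{\xi}\,m(\xi,x,y)\,d\xi,\qquad \omega=\epsilon_1 x+\epsilon_2 y+\epsilon_3 t,\ \epsilon_j\in\{\pm 1\}, \]
with a symbol $m$ of order zero in $\xi$ (plus absolutely integrable remainders carrying extra $\xi^{-1}$ or $\langle x\rangle^{-1}$ gains). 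One integration by parts yields $|I_\omega|\lesssim\langle\omega\rangle^{-1}$, while the Dirichlet-integral bound $|I_\omega|\lesssim 1$ holds uniformly in $\omega$. For $K_{\mathrm{lo}}$, Corollary \ref{cor:phi} together with Lemma \ref{lem:rho} gives the uniform estimate $|\phi(x,\xi^2)\phi(y,\xi^2)\rho(\xi^2)|\lesssim \langle x\rangle^{1/2}\langle y\rangle^{1/2}$ on $(0,1/2]$, since the logarithmic growth of $\phi$ is exactly compensated by the logarithmic smallness of $\rho$; combining this with the symbol-type $\xi$-derivative bounds of Lemma \ref{lem:phi} (the amplitude then behaves, at worst, like a symbol of order $-\tfrac12$ in $\xi$ when one is in the oscillatory regime $x\xi\gtrsim 1$ or $y\xi\gtrsim 1$), Lemma \ref{lem:osc} provides $|K_{\mathrm{lo}}(t,x,y)|\lesssim\langle t\rangle^{-1/2}\langle x\rangle^{1/2}\langle y\rangle^{1/2}$.

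Two regimes complete the argument for $t\geq 1$. Off the light cone, $x+y\leq t/2$, every phase satisfies $|\omega|\geq t/2$, so the integration-by-parts bound on $I_\omega$ combined with the low-frequency estimate yields $|K|\lesssim t^{-1}\langle x\rangle^{1/2}\langle y\rangle^{1/2}$, beating the target. On or beyond the light cone, $x+y\geq t/2$, one has $\max(\langle x\rangle,\langle y\rangle)\gtrsim t$, so it suffices to show $|K|\lesssim \min(\langle x\rangle,\langle y\rangle)^{1/2}$, which follows from the uniform bounds $|I_\omega|\lesssim 1$ and $|K_{\mathrm{lo}}|\lesssim\langle x\rangle^{1/2}\langle y\rangle^{1/2}$. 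The case $t\leq 1$ reduces to $|K|\lesssim\langle x\rangle^{1/2}\langle y\rangle^{1/2}$, obtained using $|\sin(t\xi)|\leq t\xi$ on $K_{\mathrm{lo}}$ and integration by parts on $K_{\mathrm{hi}}$.

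The principal obstacle is the low-frequency analysis of $K_{\mathrm{lo}}$ across the transition region $x\xi\sim 1$ (and $y\xi\sim 1$), where the three representations of $\phi(x,\xi^2)$ in Lemma \ref{lem:phi} change character. Patching them together while tracking the $\xi^{-1/2}$-type losses under differentiation and preserving the symbol-type bounds required by Lemma \ref{lem:osc} on the whole interval $(0,1/2]$ is the delicate step; once in hand, the remainder of the argument proceeds by the case analysis above.
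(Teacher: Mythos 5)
Your overall strategy is the same as the paper's: write $\frac{\sin(t\sqrt A)}{\sqrt A}$ as an integral kernel, split at $\xi\sim 1$, expand into oscillatory integrals with phases $\omega=t\pm x\pm y$, and treat the light-cone and off-light-cone regimes separately. The high-frequency piece and the off-light-cone case $x+y\leq t/2$ are handled correctly in your sketch.

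However, there is a genuine gap in the regime $x+y\gtrsim t$. You correctly reduce the target there to showing $|K|\lesssim\min(\langle x\rangle,\langle y\rangle)^{1/2}$, but then claim this ``follows from the uniform bounds $|I_\omega|\lesssim 1$ and $|K_{\mathrm{lo}}|\lesssim\langle x\rangle^{1/2}\langle y\rangle^{1/2}$.'' It does not: $\langle x\rangle^{1/2}\langle y\rangle^{1/2}$ is not $\lesssim\min(\langle x\rangle,\langle y\rangle)^{1/2}$, so the asserted uniform bound is simply too weak. The estimate $|K_{\mathrm{lo}}|\lesssim\min(\langle x\rangle,\langle y\rangle)^{1/2}$ is in fact true, but establishing it requires precisely the decomposition you defer: one must track the cutoffs $\chi(x\xi)$ and $\chi(y\xi)$, which restrict the $\xi$--integration to $\xi\gtrsim 1/\max(x,y)$ (or both $\xi\gtrsim 1/x$ and $\xi\gtrsim 1/y$ in the doubly oscillatory piece), and then trade powers of $\xi^{-1/2}$ against $\langle x\rangle^{1/2}$ or $\langle y\rangle^{1/2}$. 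This is exactly what the paper does in its pieces $K_{N,1},\dots,K_{N,4}$; for instance, in the doubly oscillatory term the amplitude is $O(\xi^{-1})$, which is not absolutely integrable near $0$, and the light-cone constraint $|t\pm x\pm y|\leq t/2$ is used together with the cutoffs to convert it into $t^{-1/2}O(\xi^{-1/2})$ before integrating. Your blanket uniform bound erases this structure.

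Two smaller imprecisions: (i) Corollary \ref{cor:phi} is stated only for $x\in[0,\xi^{-1}]$, so citing it for a bound over all $x$ is not immediate (though the bound does extend to $x\geq\xi^{-1}$ via Lemma \ref{lem:phi}); (ii) the sentence asserting ``Lemma \ref{lem:osc} provides $|K_{\mathrm{lo}}|\lesssim\langle t\rangle^{-1/2}\langle x\rangle^{1/2}\langle y\rangle^{1/2}$'' before the case analysis is premature, since Lemma \ref{lem:osc} only decays in $\omega=t\pm x\pm y$, not in $t$ itself; the $t$-decay only emerges after the light-cone dichotomy. You correctly identify the transition region $x\xi\sim 1$ as the delicate step, but as written the low-frequency argument asserts the needed bound rather than deriving it, and the route you propose (global uniform bound $\Rightarrow$ $\min$-bound) does not close.
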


\begin{proof}
The functional calculus for $ A$ and Lemma \ref{lem:Fourier} yield
\begin{align*} 
\frac{\sin(t\sqrt{ A})}{\sqrt{ A}}g(x)&=\int_0^\infty \frac{\sin(t\sqrt{\lambda})}
{\sqrt \lambda}\phi(x,\lambda)\mc F g(\sqrt \lambda)\rho(\lambda)d\lambda \\
&=2\lim_{N\to\infty}\int_{1/N}^N \int_0^\infty \sin(t\xi)\phi(x,\xi^2)\phi(y,\xi^2)g(y)\rho(\xi^2)dy d\xi.
\end{align*}
By Fubini-Tonelli we may interchange the order of integration and obtain
\[ 
\frac{\sin(t\sqrt{ A})}{\sqrt{ A}}g(x)=\lim_{N\to\infty}\int_0^\infty K_N(x,y;t)g(y)dy
\]
where
\[ K_N(x,y;t)=2\int_{1/N}^N \sin(t\xi)\phi(x,\xi^2)\phi(y,\xi^2)\rho(\xi^2)d\xi. \]
Thus, it suffices to establish the bound $\langle x\rangle^{-\frac12}
|K_N(x,y;t)|\langle y\rangle^{-\frac12}\lesssim \langle t\rangle^{-\frac12}$ for all $N \in \N$ and
all $x,y,t \in \R_+$.

We distinguish different cases and to this end we introduce a smooth cut-off $\chi$
satisfying $\chi(x)=1$ for $|x|\leq \frac12$ and $\chi(x)=0$ for $|x|\geq 1$.
We set 
\[ K_{N,1}(x,y;t):=2\int_{1/N}^N \chi(\xi)\chi(x\xi)\chi(y\xi)\sin(t\xi)\phi(x,\xi^2)\phi(y,\xi^2)\rho(\xi^2)d\xi. \]
Lemmas \ref{lem:rho}, \ref{lem:phi}, and Corollary \ref{cor:phi} yield
\[ \chi(\xi)\chi(x\xi)\chi(y\xi)\phi(x,\xi^2)\phi(y,\xi^2)\rho(\xi^2)=\langle x\rangle^\frac12 \langle y\rangle^\frac12 \chi(\xi)O(x^0 y^0 \xi^0) \]
where the $O$-term behaves like a symbol.
Thus, we have
\[ \langle x\rangle^{-\frac12}K_{N,1}(x,y;t)\langle y\rangle^{-\frac12}
=\int_{1/N}^N \sin(t\xi)\chi(\xi)O(x^0 y^0 \xi^0)d\xi=O(\langle t \rangle^{-1}) \]
by Lemma \ref{lem:osc}.

Next, we consider
\[ K_{N,2}(x,y;t):=2\int_{1/N}^N \chi(\xi)[1-\chi(x\xi)]\chi(y\xi)\sin(t\xi)\phi(x,\xi^2)\phi(y,\xi^2)\rho(\xi^2)d\xi. \]
From Lemmas \ref{lem:rho}, \ref{lem:phi}, and Corollary \ref{cor:phi} we infer
\begin{align*} 
\chi(\xi)[1-\chi(x\xi)]\chi(y\xi)\phi(x,\xi^2)\phi(y,\xi^2)\rho(\xi^2)=&\langle y\rangle^\frac12 \chi(\xi)
[1-\chi(x\xi)]\xi^{-\frac12} \\
&\times \Re [e^{ix\xi}O_\C(x^0 \xi^0)]
\end{align*}
with an $O_\C$-term that behaves like a symbol.
Consequently, it suffices to estimate the expression
\begin{align*} 
I(t,x):=\langle x\rangle^{-\frac12}\int_0^\infty e^{\pm i\xi(t\pm x)}\chi(\xi)[1-\chi(x\xi)]O_\C(\xi^{-\frac12})d\xi
 \end{align*}
 where any sign combination in the exponent of the exponential may occur and the $O_\C$-term
 behaves like a symbol.
Since $|I(t,x)|\lesssim 1$ we may restrict ourselves to $t\geq 1$.
We distinguish two cases. If $|t\pm x|\geq \frac12 t$ we note that 
$[1-\chi(x\xi)]\langle x\rangle^{-\frac12}\xi^{-\frac12}=O(x^0 \xi^0)$ and apply Lemma \ref{lem:osc}. This yields
$|I(t,x)|\lesssim |t\pm x|^{-1}\lesssim t^{-1}$.
If $|t\pm x|\leq \frac12 t$ we have $x\geq \frac12 t$ and we use $\langle x\rangle^{-\frac12}\lesssim t^{-\frac12}$
to conclude $|I(t,x)|\lesssim t^{-\frac12}$.
 In summary, we obtain 
 \[ \langle x\rangle^{-\frac12}|K_{N,2}(x,y;t)|\langle y\rangle^{-\frac12}\lesssim \langle t\rangle^{-\frac12}. \]
By symmetry, the
same bound is true for $K_{N,3}(x,y;t):=K_{N,2}(y,x;t)$.

We continue with
\begin{align*} 
K_{N,4}(x,y;t):=&2\int_{1/N}^N \chi(\xi)[1-\chi(x\xi)][1-\chi(y\xi)] \\
&\times \sin(t\xi)\phi(x,\xi^2)\phi(y,\xi^2)\rho(\xi^2)d\xi. 
\end{align*}
By Lemmas \ref{lem:rho} and \ref{lem:phi} this reduces to the study of integrals of the type
\begin{align*}
I(t,x,y):=&\langle x\rangle^{-\frac12}\langle y\rangle^{-\frac12}\int_0^\infty e^{\pm i\xi(t\pm x\pm y)}\chi(\xi)
\\
&\times [1-\chi(x\xi)][1-\chi(y\xi)]O_\C(\xi^{-1})d\xi
\end{align*}
where any sign combination in the exponent may occur and the $O_\C$-term behaves like a symbol. Again, 
we may restrict ourselves to $t\geq 1$.
As before, we distinguish the cases $|t\pm x\pm y|\geq \frac12 t$ and $|t\pm x\pm y|\leq \frac12 t$. 
In the former case we use 
\[ [1-\chi(x\xi)][1-\chi(y\xi)]\langle x\rangle^{-\frac12}\langle y\rangle^{-\frac12}O_\C(\xi^{-1})=O_\C(x^0 y^0 \xi^0)\] 
and apply Lemma \ref{lem:osc} which yields $|I(t,x,y)|\lesssim |t\pm x\pm y|^{-1}\lesssim t^{-1}$.
If, on the other hand, $|t\pm x\pm y|\leq \frac12 t$ then we must have $x\gtrsim t$ or $y\gtrsim t$.
Consequently, we infer 
\[ \langle x\rangle^{-\frac12}\langle y\rangle^{-\frac12}[1-\chi(x\xi)][1-\chi(y\xi)]O_\C(\xi^{-1})=t^{-\frac12}
O_\C(x^0 y^0 \xi^{-\frac12}) \]
and this yields $|I(t,x,y)|\lesssim t^{-\frac12}$.
We conclude that 
\[ \langle x\rangle^{-\frac12}K_{N,4}(x,y;t)\langle y \rangle^{-\frac12}\lesssim \langle t\rangle^{-\frac12} \]
as desired.

Finally, we turn to the large frequency case $\xi\geq 1$ and set
\[ K_{N,5}(x,y;t)=2\int_{1/N}^N [1-\chi(\xi)]\sin(t\xi)\phi(x,\xi^2)\phi(y,\xi^2)\rho(\xi^2)d\xi. \]
Lemmas \ref{lem:rho}, \ref{lem:phi} imply
\[ [1-\chi(\xi)]\phi(x,\xi^2)\phi(y,\xi^2)\rho(\xi^2)=-\tfrac{1}{\pi}\cos(x\xi)\cos(y\xi)\xi^{-1}+O(x^0 y^0 \xi^{-2}) \]
and by the trigonometric identity
\begin{align*} 
\sin a \cos b \cos c=&\tfrac14 [\sin(a+b-c)+\sin(a-b-c) \\
&+\sin(a-b+c)+\sin(a+b+c)] 
\end{align*}
this case reduces to estimating the integrals
\begin{align*} 
I_1(t,x,y)&:=\langle x\rangle^{-\frac12}\langle y \rangle^{-\frac12}\int_0^\infty 
[1-\chi(\xi)]\frac{\sin((t\pm x\pm y)\xi)}{\xi}d\xi \\
I_2(t,x,y)&:=\langle x \rangle^{-\frac12}\langle y \rangle^{-\frac12}\int_0^\infty [1-\chi(\xi)]e^{\pm i\xi(t\pm x\pm y)}O_\C(\xi^{-2})d\xi
\end{align*}
where any sign combination may occur.
Obviously, $|I_2(t,x,y)|\lesssim 1$ and since
\begin{equation}
\label{eq:intsin} 
\left |\int_1^\infty \frac{\sin(a\xi)}{\xi}d\xi \right |\lesssim 1 
\end{equation}
for all $a\in \R$, we also have $|I_1(t,x,y)|\lesssim 1$.
Consequently, it suffices to consider $t\geq 1$.
If $|t\pm x\pm y|\geq \frac12 t$ we integrate by parts and obtain $|I_j(t,x,y)|\lesssim 
|t\pm x\pm y|^{-1}\lesssim t^{-1}$, $j\in \{1,2\}$.
In the case $|t\pm x\pm y|\leq \frac12 t$ we use $\langle x\rangle^{-\frac12}\langle y \rangle^{-\frac12}\lesssim t^{-\frac12}$
and Eq.~\eqref{eq:intsin} to conclude $|I_j(t,x,y)|\lesssim t^{-\frac12}$, $j\in \{1,2\}$.
As a consequence, we infer 
\[ \langle x\rangle^{-\frac12}|K_{N,5}(x,y;t)|\langle y\rangle^{-\frac12}\lesssim \langle t\rangle^{-\frac12} \]
and, since $K_N=\sum_{k=1}^5 K_{N,k}$, this finishes the proof.
\end{proof}

\begin{remark}
\label{rem:sinL1Linf}
Inspection of the proof of Lemma \ref{lem:sinL1Linf} shows that we also have the bound
\[ \left \|\langle \cdot \rangle^{-\epsilon} \frac{\sin(t\sqrt A)}{\sqrt A}g\right \|_{L^\infty(\R_+)}
\lesssim \tfrac{1}{\epsilon} \|g\|_{L^1(\R_+)} \]
for any $\epsilon \in (0,1)$. This comment will be useful in connection with energy bounds, see below.
\end{remark}

Similarly, we obtain the following bound for the cosine evolution.

\begin{lemma}
\label{lem:cosL1Linf}
We have
\[ \left \|\langle \cdot \rangle^{-\frac12}
\cos(t\sqrt{ A})f \right \|_{L^\infty(\R_+)}\lesssim \langle t\rangle^{-\frac12}\left (
\|\langle \cdot \rangle^\frac12 f'\|_{L^1(\R_+)}
+\|\langle \cdot \rangle^\frac12f\|_{L^1(\R_+)} \right )
 \]
for all $t\geq 0$ and all $f\in \mc S(\R_+)$.
\end{lemma}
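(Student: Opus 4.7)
The proof mirrors that of Lemma \ref{lem:sinL1Linf}. By spectral theory and $\tilde\rho(\xi)=2\xi\rho(\xi^2)$ one has
\[ \cos(t\sqrt A)f(x)=\lim_{N\to\infty}\int_0^\infty K_N^{\cos}(x,y;t)f(y)\,dy, \]
\[ K_N^{\cos}(x,y;t):=2\int_{1/N}^N \xi\cos(t\xi)\phi(x,\xi^2)\phi(y,\xi^2)\rho(\xi^2)\,d\xi, \]
so the kernel differs from the sine kernel of Lemma \ref{lem:sinL1Linf} only by an extra factor of $\xi$. I decompose $K_N^{\cos}=\sum_{k=1}^5 K_{N,k}^{\cos}$ using the same five $(\xi,x\xi,y\xi)$-localizations as before. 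For the four low-frequency pieces $k=1,\dots,4$ (those carrying $\chi(\xi)$), the extra $\xi$ only improves the behavior as $\xi\to 0$: Lemmas \ref{lem:rho}, \ref{lem:phi} and Corollary \ref{cor:phi} show that the integrand is $\langle x\rangle^{1/2}\langle y\rangle^{1/2}$ times a symbol of $\xi$-order no worse than in the sine case, and Lemma \ref{lem:osc} combined with the case split $|t\pm x\pm y|\ge t/2$ vs.~$\le t/2$ yields $\langle x\rangle^{-1/2}|K_{N,k}^{\cos}|\langle y\rangle^{-1/2}\lesssim\langle t\rangle^{-1/2}$, producing the $\|\langle\cdot\rangle^{1/2}f\|_{L^1(\R_+)}$ contribution in the final bound.

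The high-frequency piece $K_{N,5}^{\cos}$ (carrying $1-\chi(\xi)$, i.e.~$\xi\ge \tfrac12$) is the only delicate part: its leading behavior $\xi\phi(x,\xi^2)\phi(y,\xi^2)\rho(\xi^2)\sim \cos(x\xi)\cos(y\xi)/\pi$ would give the non-convergent integrals $\int_1^\infty\cos((t\pm x\pm y)\xi)\,d\xi$. To absorb the extra $\xi$ I integrate by parts in $y$. Setting
\[ F(y,\xi):=\int_0^y \phi(y',\xi^2)\,dy', \]
one has $F(0,\xi)=0$, and for $f\in \mc S(\R_+)$ Fubini--Tonelli together with integration by parts give, without boundary contribution,
\[ \int_0^\infty \phi(y,\xi^2)f(y)\,dy=-\int_0^\infty F(y,\xi)f'(y)\,dy. \]
Substituting in $K_{N,5}^{\cos}$ and swapping orders of integration yields
\[ \int_0^\infty K_{N,5}^{\cos}(x,y;t)f(y)\,dy=-\int_0^\infty L_N(x,y;t)f'(y)\,dy, \]
\[ L_N(x,y;t):=2\int_{1/N}^N[1-\chi(\xi)]\cos(t\xi)\phi(x,\xi^2)\bigl[\xi F(y,\xi)\bigr]\rho(\xi^2)\,d\xi. \]
The crucial fact is that for $\xi\ge\tfrac12$ one has $\xi F(y,\xi)=-\sin(y\xi)+R(y,\xi)$ with a bounded, symbol-type remainder $R$. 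This follows from the identity $\xi^2 F(y,\xi)=-\phi'(y,\xi^2)+\int_0^y V(y')\phi(y',\xi^2)\,dy'$, obtained by integrating the eigenvalue ODE $\phi''=(V-\xi^2)\phi$ with $\phi'(0,\xi^2)=0$, combined with the large-$\xi$ asymptotics $\phi'(y,\xi^2)\sim\xi\sin(y\xi)$ from Lemma \ref{lem:phi} and the decay $V=O(\langle\cdot\rangle^{-2})$. Using also $\phi(x,\xi^2)\sim -\cos(x\xi)$ and $\rho(\xi^2)\sim\xi^{-1}/\pi$, the kernel $L_N$ reduces, modulo symbol errors, to $(2/\pi)\int[1-\chi(\xi)]\cos(t\xi)\cos(x\xi)\sin(y\xi)\xi^{-1}\,d\xi$; expanding $\cos(t\xi)\cos(x\xi)\sin(y\xi)=\tfrac14\sum\pm\sin((t\pm x\pm y)\xi)$ reproduces exactly the integrals $I_1,I_2$ analysed in the $K_{N,5}$-step of Lemma \ref{lem:sinL1Linf}, and the same oscillation/case-split argument delivers $\langle x\rangle^{-1/2}|L_N|\langle y\rangle^{-1/2}\lesssim\langle t\rangle^{-1/2}$, i.e.~the $\|\langle\cdot\rangle^{1/2}f'\|_{L^1(\R_+)}$ contribution.

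The main obstacle is securing the uniform symbol-type control on the remainder $R(y,\xi)=\xi F(y,\xi)+\sin(y\xi)$ and on its derivatives in $y$ and $\xi$, since $F$ is built by integrating $\phi(\cdot,\xi^2)$ across the regimes $y\xi\lesssim 1$ and $y\xi\gtrsim 1$ in which Lemma \ref{lem:phi} gives different representations of $\phi$. One checks in particular that the potentially dangerous $y\cos(y\xi)$-term produced by $\partial_\xi R$ cancels against the analogous contribution from $\xi\partial_\xi F$, so that $\partial_\xi R=O(\xi^{-1})$ uniformly in $y$. Once this symbol control is in place, the remaining steps are a direct adaptation of the corresponding estimates in Lemma \ref{lem:sinL1Linf}.
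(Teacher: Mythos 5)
Your proposal is correct and follows the same overall strategy as the paper: the cosine kernel is the sine kernel with an extra factor of $\xi$, the low-frequency pieces only benefit from this factor and are handled exactly as in Lemma~\ref{lem:sinL1Linf}, and the high-frequency piece is reduced to the sine case by integrating by parts in $y$, which trades the extra $\xi$ for a derivative on $f$. The way you realize that integration by parts, however, is a genuinely different (and arguably cleaner) technical variant. The paper uses the large-frequency representation $\phi(y,\xi^2)=2\Re[e^{i\xi y}b(y,\xi)]$ from Lemma~\ref{lem:phi} to write $\phi$ as $2\xi^{-1}\partial_y\Im[e^{i\xi y}b(y,\xi)-b(0,\xi)]$ plus an $O(\langle y\rangle^{-2}\xi^{-2})$ remainder; the subtraction of $b(0,\xi)$ is needed to kill the boundary term at $y=0$, the symbol bounds are inherited directly from Lemma~\ref{lem:phi}, and the remainder produces both an $f'$ and an $f$ contribution at high frequency. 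You instead take the exact primitive $F(y,\xi)=\int_0^y\phi(y',\xi^2)\,dy'$ (so the boundary terms vanish automatically) and identify its leading behavior via the ODE identity $\xi^2 F=-\phi'+\int_0^y V\phi$, giving $\xi F(y,\xi)=-\sin(y\xi)+R(y,\xi)$. This eliminates the extra remainder term and cleanly separates the $f'$ (high-frequency) and $f$ (low-frequency) contributions, but it requires you to verify the symbol bounds on $R$ and its $y$- and $\xi$-derivatives by hand; the cancellation of the $y\cos(y\xi)$ terms in $\partial_\xi R$ that you flag is exactly the point that needs to be checked, and it does go through because $\partial_\xi\phi$ has the compensating oscillation $\sim y\sin(y\xi)$. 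Both routes are valid; the paper's gets symbol control for free from Lemma~\ref{lem:phi}, yours streamlines the error bookkeeping at the cost of re-deriving those bounds.
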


\begin{proof}
As in the proof of Lemma \ref{lem:sinL1Linf} it suffices to bound the kernel
\[ K_N(x,y;t)=2 \int_{1/N}^N \xi\cos(t\xi)\phi(x,\xi^2)\phi(y,\xi^2)\rho(\xi^2)d\xi. \]
We distinguish between small and large frequencies by decomposing
$K_N=K_N^-+K_N^+$ with
\[ K_N^-(x,y;t)=2\int_{1/N}^N \chi(\xi)\xi\cos(t\xi)\phi(x,\xi^2)\phi(y,\xi^2)\rho(\xi^2)d\xi \]
where $\chi$ is the cut-off introduced in the proof of Lemma \ref{lem:sinL1Linf}.
From the proof of Lemma \ref{lem:sinL1Linf} we immediately obtain the bound
\[ \langle x\rangle^{-\frac12}|K_N^-(x,y;t)|\langle y\rangle^{-\frac12}\lesssim \langle t \rangle^{-\frac12} \]
for all $t,x,y \in \R_+$ and all $N\in \N$.
Thus, we may restrict ourselves to $K_N^+(x,y;t)$.
We recall from Lemma \ref{lem:phi} that, for $\xi\geq \frac12$,
\[ \phi(y,\xi^2)=2 \Re[e^{i\xi y}b(y,\xi)] \]
where $b(y,\xi)=-\frac12+O_\C(\xi^{-1})+
O_\C(\langle y\rangle^{-1}\xi^{-1})$ with symbol behavior of both $O_\C$-terms.
Consequently, we obtain
\begin{align*} 
\phi(y,\xi^2)&=2\xi^{-1}\partial_y \Im[e^{i\xi y}b(y,\xi)]+O(\langle y\rangle^{-2}\xi^{-2}) \\
&=2\xi^{-1}\partial_y \Im [e^{i\xi y}b(y,\xi)-b(0,\xi)]+O(\langle y\rangle^{-2}\xi^{-2})
\end{align*}
and an integration by parts yields
\begin{align*} 
\mc F f(\xi)&=\int_0^\infty \phi(y,\xi^2)f(y)dy \\
&=2\xi^{-1}\Im[e^{i\xi y}b(y,\xi)-b(0,\xi)]f(y)\Big |_0^\infty \\
&\quad -2\xi^{-1}\int_0^\infty 
\Im[e^{i\xi y}b(y,\xi)-b(0,\xi)]f'(y)dy \\
&\quad +\int_0^\infty O(\langle y\rangle^{-2}\xi^{-2})f(y)dy \\
&=-2\xi^{-1}\int_0^\infty \Im[e^{i\xi y}b(y,\xi)]f'(y)dy+\int_0^\infty O(y^0\xi^{-2})[f'(y)+f(y)]dy
\end{align*}
by noting that $\Im b(0,\xi)=O(\xi^{-1})$. 
Consequently, the extra factor $\xi^{-1}$ and the fact that $\Im[e^{i\xi y}b(y,\xi)]$ behaves
like $\sin(y\xi)$ to leading order for large $\xi$, reduces us to the situation treated in the
proof of Lemma \ref{lem:sinL1Linf}.
\end{proof}

\subsection{Improved decay}

Next, we improve the decay in time at the cost of introducing stronger weights in space.

\begin{lemma}
\label{lem:sinL1Linfw}
We have the bound
\[ \left \|\langle \cdot \rangle^{-1}\frac{\sin(t\sqrt{ A})}{\sqrt{ A}}g \right \|_{L^\infty(\R_+)}
\lesssim \langle t\rangle^{-1} \|\langle \cdot \rangle g\|_{L^1(\R_+)} \]
for all $t\geq 0$ and all $g \in \mc S(\R_+)$.
\end{lemma}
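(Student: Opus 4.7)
The plan is to follow the structure of the proof of Lemma \ref{lem:sinL1Linf} essentially verbatim, with a single systematic improvement in how the polynomial weights are spent. After unfolding the functional calculus and applying Fubini--Tonelli as there, the estimate reduces to the pointwise kernel bound
\[ \langle x\rangle^{-1}\,|K_N(x,y;t)|\,\langle y\rangle^{-1}\lesssim \langle t\rangle^{-1}, \]
uniformly in $N\in\N$ and $x,y,t\in\R_+$, for
\[ K_N(x,y;t)=2\int_{1/N}^N \sin(t\xi)\phi(x,\xi^2)\phi(y,\xi^2)\rho(\xi^2)\,d\xi. \]
One then decomposes $K_N=\sum_{k=1}^{5}K_{N,k}$ using the same cut-off $\chi$ as in Lemma \ref{lem:sinL1Linf}, inserts the asymptotic expansions from Lemmas \ref{lem:rho} and \ref{lem:phi} and Corollary \ref{cor:phi}, and is led to exactly the oscillatory integrals treated there; only the prefactors in $\langle x\rangle$ and $\langle y\rangle$ differ.

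The whole improvement comes from the additional factor $\langle x\rangle^{-1/2}\langle y\rangle^{-1/2}$ available on the left-hand side. For each $K_{N,k}$ one splits further according to the phase $t\pm x\pm y$. In the regime $|t\pm x\pm y|\geq \tfrac12 t$, one or two integrations by parts (or, where applicable, a direct appeal to Lemma \ref{lem:osc} with $\sigma=0$) yield the factor $|t\pm x\pm y|^{-1}\lesssim \langle t\rangle^{-1}$ automatically, independently of the weights. In the regime $|t\pm x\pm y|\leq \tfrac12 t$, simple geometry forces $\max(x,y)\gtrsim t$, and the bound $\langle x\rangle^{-1/2}\lesssim \langle t\rangle^{-1/2}$ used in Lemma \ref{lem:sinL1Linf} upgrades to $\langle x\rangle^{-1}\lesssim \langle t\rangle^{-1}$; the remaining weight factor $\langle y\rangle^{-1}$ (or $\langle x\rangle^{-1}$, by symmetry) is then either used harmlessly or invested to tame the singular factor $\xi^{-1}$ present in the amplitude, via $\xi^{-1}\lesssim \langle y\rangle$ on the support of $1-\chi(y\xi)$. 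This gives the desired $\langle t\rangle^{-1}$ for each of the five pieces.

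The fully non-oscillatory small-frequency contribution $K_{N,1}$ already decayed at the rate $\langle t\rangle^{-1}$ in Lemma \ref{lem:sinL1Linf}, so nothing needs to be done there. The only point requiring a moment of care is the large-frequency piece $K_{N,5}$, where the $\xi$-integral contains the conditionally convergent $\int_1^\infty \xi^{-1}\sin(a\xi)\,d\xi$: in the sub-regime $|a|=|t\pm x\pm y|\leq \tfrac12 t$ it is merely uniformly bounded by \eqref{eq:intsin}, but this is enough because the stronger weight now supplies the full $\langle t\rangle^{-1}$ by itself, whereas the sub-regime $|a|\geq \tfrac12 t$ is handled by a single integration by parts exactly as before. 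I expect the only real work to be systematic bookkeeping across the five sub-cases; there is no new analytic ingredient beyond those already used for Lemma \ref{lem:sinL1Linf}.
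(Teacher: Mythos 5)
Your proposal is correct and follows essentially the same route as the paper: inspect the proof of Lemma \ref{lem:sinL1Linf} and observe that the pieces handled by integration by parts or Lemma \ref{lem:osc} already gave $\langle t\rangle^{-1}$, while the pieces that obtained decay from the weight ($|t\pm x\pm y|\leq \tfrac12 t$, forcing $\max(x,y)\gtrsim t$) now yield $\langle t\rangle^{-1}$ from the stronger weight $\langle x\rangle^{-1}\langle y\rangle^{-1}$, with the leftover half-power of the weight still available to absorb the $\xi^{-1/2}$ or $\xi^{-1}$ in the amplitude. The paper states this more tersely, but the bookkeeping you describe across the five sub-kernels is exactly the intended argument.
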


\begin{proof}
The claim follows by inspection of the proof of Lemma \ref{lem:sinL1Linf}.
The point is that the terms treated by integration by parts (or Lemma \ref{lem:osc}) already
exhibited the stronger decay $\langle t\rangle^{-1}$.
Only the terms where the decay comes from the weight yielded the weaker $\langle t\rangle^{-\frac12}$.
With the stronger weights $\langle x\rangle^{-1}\langle y\rangle^{-1}$ we now obtain
the decay $\langle t \rangle^{-1}$ also for these terms.
\end{proof}

As usual, a similar result holds for the cosine evolution.

\begin{lemma}
\label{lem:cosL1Linfw}
We have the bound
\[ \left \| \langle \cdot \rangle^{-1}\cos(t\sqrt{ A})f \right \|_{L^\infty(\R_+)}
\lesssim \langle t \rangle^{-1}\left ( \|\langle \cdot\rangle f'\|_{L^1(\R_+)}
+\|\langle \cdot \rangle f\|_{L^1(\R_+)} \right ) \]
for all $t\geq 0$ and all $f\in \mc S(\R_+)$.
\end{lemma}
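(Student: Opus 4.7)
The plan is to follow the same scheme as in the proof of Lemma \ref{lem:cosL1Linf}, but to upgrade the bounds on each kernel piece exactly as was done in Lemma \ref{lem:sinL1Linfw} relative to Lemma \ref{lem:sinL1Linf}. As before, write the cosine evolution in terms of the distorted Fourier representation via Lemma \ref{lem:Fourier}, so that it suffices to obtain the pointwise kernel bound
\[
\langle x\rangle^{-1}\,|K_N(x,y;t)|\,\langle y\rangle^{-1}\lesssim \langle t\rangle^{-1}
\]
uniformly in $N\in\N$ and $x,y,t\in\R_+$, where
\[
K_N(x,y;t)=2\int_{1/N}^N \xi\cos(t\xi)\phi(x,\xi^2)\phi(y,\xi^2)\rho(\xi^2)\,d\xi,
\]
and then integrate the resulting kernel estimate against the $L^1$ norm of $f$ (or $f'$) after an integration by parts at large frequencies.

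First I would decompose $K_N=K_N^-+K_N^+$ by inserting the cutoff $\chi(\xi)$ used in the proof of Lemma \ref{lem:sinL1Linf}. For the small-frequency piece $K_N^-$, I would repeat the five-case analysis of Lemma \ref{lem:sinL1Linf} (the pieces $K_{N,1},\dots,K_{N,5}$ supported in $\xi\leq 1$), using Lemma \ref{lem:rho}, Lemma \ref{lem:phi}, Corollary \ref{cor:phi} to extract the $\langle x\rangle^{1/2}\langle y\rangle^{1/2}$ growth, and Lemma \ref{lem:osc} to integrate out the oscillatory $\xi$-integrals. The whole point, exactly as observed in Lemma \ref{lem:sinL1Linfw}, is that the cases treated purely by nonstationary phase (or Lemma \ref{lem:osc}) already produce the stronger decay $\langle t\rangle^{-1}$, while the cases that produced only $\langle t\rangle^{-1/2}$ did so because one had to trade decay for a spatial weight via $\langle x\rangle^{-1/2}\lesssim t^{-1/2}$ (on the light cone region $|t\pm x\pm y|\leq \tfrac12 t$). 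With the stronger weights $\langle x\rangle^{-1}\langle y\rangle^{-1}$ now available, the same argument gives $\langle x\rangle^{-1}\langle y\rangle^{-1}\lesssim t^{-1}$ in this region, hence the desired $\langle t\rangle^{-1}$ bound on all five sub-kernels.

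For the large-frequency piece $K_N^+$, I would reproduce the integration by parts in $y$ performed in Lemma \ref{lem:cosL1Linf}. Using the Jost representation from Lemma \ref{lem:phi} at $\xi\geq \tfrac12$, namely $\phi(y,\xi^2)=2\Re[e^{i\xi y}b(y,\xi)]$ with $b(y,\xi)=-\tfrac12+O_\C(\xi^{-1})+O_\C(\langle y\rangle^{-1}\xi^{-1})$ of symbol type, one writes $\phi(y,\xi^2)=2\xi^{-1}\partial_y\Im[e^{i\xi y}b(y,\xi)-b(0,\xi)]+O(\langle y\rangle^{-2}\xi^{-2})$ and integrates by parts against $f$. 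Since $\Im b(0,\xi)=O(\xi^{-1})$, the boundary terms vanish and one obtains a representation of $\mc F f(\xi)$ in terms of an $L^1$-pairing with $f'$ and $f$, weighted by an extra $\xi^{-1}$. This converts the cosine evolution at large frequencies into a sine-type evolution against $f'$ (plus a fast-decaying remainder from the $f$ term), and I would then invoke the large-frequency analysis already carried out in Lemma \ref{lem:sinL1Linfw}, which gives the $\langle t\rangle^{-1}$ decay with weight $\langle\cdot\rangle^{-1}$.

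The main obstacle, as in Lemma \ref{lem:cosL1Linf}, is the integration by parts at large $\xi$: one has to be careful that the boundary term at $y=0$ (from $b(0,\xi)$) produces only an $O(\xi^{-1})$ contribution rather than an $O(1)$ one, so that the subtracted term $b(0,\xi)$ does not destroy the extra $\xi^{-1}$ gained. Once this symbol-level bookkeeping is verified, everything else is a direct transplantation of the small-frequency refinement of Lemma \ref{lem:sinL1Linfw} together with the large-frequency reduction of Lemma \ref{lem:cosL1Linf}.
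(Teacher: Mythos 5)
Your proposal is correct and follows essentially the same route as the paper, which disposes of this lemma in one line ("The claim follows from the proof of Lemma \ref{lem:sinL1Linfw} by inspection"). You correctly unpack that this "inspection" combines the large-frequency integration by parts from Lemma \ref{lem:cosL1Linf} (trading $f$ for $f'$ and gaining a factor $\xi^{-1}$ to reduce to a sine-type kernel) with the weight-improvement observation from Lemma \ref{lem:sinL1Linfw} (the only pieces that gave $\langle t\rangle^{-1/2}$ were those where decay came from $\langle x\rangle^{-1/2}\langle y\rangle^{-1/2}\lesssim t^{-1/2}$ on the light cone, and the stronger weights upgrade these to $t^{-1}$). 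One small clarification on the high-frequency boundary term: the boundary contribution at $y=0$ vanishes identically because of the subtraction $b(y,\xi)-b(0,\xi)$ inside the bracket, while the observation $\Im b(0,\xi)=O(\xi^{-1})$ is used afterwards to absorb the left-over $b(0,\xi)$ term under the integral into the $O(\xi^{-2})$ remainder; your phrasing slightly conflates these two steps, but the bookkeeping you invoke is the right one.
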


\begin{proof}
The claim follows from the proof of Lemma \ref{lem:sinL1Linfw} by inspection.
\end{proof}

As a consequence, we obtain the following weighted decay bounds.

\begin{corollary}
\label{cor:disp}
The solution $u$ to the initial value problem \eqref{eq:init}
satisfies the bound
\begin{align*} 
\left \|\langle \cdot \rangle^{-\sigma}u(t, \cdot)\right \|_{L^\infty(\R_+)}
\lesssim \langle t\rangle^{-\sigma}\Big (&\|\langle \cdot \rangle^\sigma f'\|_{L^1(\R_+)}
+\|\langle \cdot \rangle^\sigma f\|_{L^1(\R_+)} \\
&+\|\langle \cdot \rangle^\sigma g\|_{L^1(\R_+)}
\Big )
 \end{align*}
for all $t\geq 0$, $\sigma \in [\frac12,1]$, and all $f,g \in \mc S(\R_+)$.
\end{corollary}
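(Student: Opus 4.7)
The plan is to interpolate between the two endpoints $\sigma = 1/2$, which is covered by Theorem \ref{thm:basicdisp} (i.e., Lemmas \ref{lem:sinL1Linf} and \ref{lem:cosL1Linf}), and $\sigma = 1$, which is covered by Theorem \ref{thm:accdec} (i.e., Lemmas \ref{lem:sinL1Linfw} and \ref{lem:cosL1Linfw}). By linearity I would split $u(t,\cdot) = \cos(t\sqrt A) f + \frac{\sin(t\sqrt A)}{\sqrt A} g$ and handle the two evolutions separately.

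The key observation is that the proofs of Lemmas \ref{lem:sinL1Linf} and \ref{lem:sinL1Linfw} in fact establish, pointwise and uniformly in $N, x, y, t$, the two kernel bounds
\[ \langle x\rangle^{-1/2} |K_N(x,y;t)| \langle y\rangle^{-1/2} \lesssim \langle t\rangle^{-1/2}, \qquad \langle x\rangle^{-1} |K_N(x,y;t)| \langle y\rangle^{-1} \lesssim \langle t\rangle^{-1}, \]
where $K_N(x,y;t) = 2\int_{1/N}^N \sin(t\xi)\phi(x,\xi^2)\phi(y,\xi^2)\rho(\xi^2)\,d\xi$ is the approximating sine kernel. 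For $\sigma \in [1/2,1]$ I would write $\theta := 2\sigma - 1 \in [0,1]$, so that $\sigma = (1-\theta)/2 + \theta$, and apply the trivial geometric-mean identity
\[ \langle x\rangle^{-\sigma} |K_N| \langle y\rangle^{-\sigma} = \bigl(\langle x\rangle^{-1/2}|K_N|\langle y\rangle^{-1/2}\bigr)^{1-\theta}\bigl(\langle x\rangle^{-1}|K_N|\langle y\rangle^{-1}\bigr)^{\theta}, \]
yielding $\langle x\rangle^{-\sigma}|K_N(x,y;t)|\langle y\rangle^{-\sigma} \lesssim \langle t\rangle^{-\sigma}$. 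Inserting this into $\langle x\rangle^{-\sigma}|\frac{\sin(t\sqrt A)}{\sqrt A}g(x)| \leq \int \langle x\rangle^{-\sigma}|K_N|\langle y\rangle^{-\sigma}\cdot\langle y\rangle^{\sigma}|g(y)|\,dy$ and letting $N \to \infty$ gives the sine contribution to the desired bound.

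The cosine evolution is handled identically: the proof of Lemma \ref{lem:cosL1Linf} reduces the large-frequency piece of the cosine kernel to an integral of sine-type via a single integration by parts, which is precisely the mechanism producing the $\|\langle \cdot\rangle^\sigma f'\|_{L^1}$ term; the small-frequency piece is already covered by the same pointwise bounds used above. Both endpoint kernel estimates are thus available pointwise, so the same geometric-mean argument interpolates them and yields the cosine contribution. Summing the two pieces completes the proof.

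I do not anticipate any real obstacle. The argument is essentially a geometric-mean interpolation of pointwise kernel bounds that are already present (though not emphasized as such) in the endpoint proofs. The only subtle point is to verify that those proofs genuinely produce pointwise bounds on $\langle x\rangle^{-a}|K_N(x,y;t)|\langle y\rangle^{-a}$, rather than merely the $L^1 \to L^\infty$ operator estimates they state; a careful reading of the case-by-case analysis of $K_{N,1},\dots,K_{N,5}$ in the proof of Lemma \ref{lem:sinL1Linf} (and the corresponding analysis behind Lemma \ref{lem:sinL1Linfw}) confirms that this is indeed what is achieved.
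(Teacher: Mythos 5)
Your argument is correct and is essentially the paper's: the paper's proof of this corollary simply cites Lemmas \ref{lem:sinL1Linf}--\ref{lem:cosL1Linfw} and ``a simple interpolation argument,'' and the geometric-mean interpolation of the pointwise weighted kernel bounds $\langle x\rangle^{-a}|K_N(x,y;t)|\langle y\rangle^{-a}\lesssim\langle t\rangle^{-a}$ for $a\in\{\tfrac12,1\}$ is precisely that argument made explicit. Your reading that the endpoint proofs establish genuinely pointwise kernel bounds (uniformly in $N,x,y,t$), and that these interpolate multiplicatively in the weight exponent, is accurate.
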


\begin{proof}
Since the solution $u$ is given by
\[ u(t,\cdot)=\cos(t\sqrt{A})f+\frac{\sin(t\sqrt{A})}{\sqrt{A}}g, \]
the statement is a consequence of
Lemmas \ref{lem:sinL1Linf}, \ref{lem:cosL1Linf}, 
\ref{lem:sinL1Linfw}, \ref{lem:cosL1Linfw}, and a simple interpolation argument.
\end{proof}

\subsection{Comparison with the free case}
\label{sec:free}

As we have seen, the
decisive quantity that determines the decay of the wave evolution is 
$\phi(x,\lambda)\phi(y,\lambda)\rho(\lambda)$ for $\lambda>0$ small.
For simplicity we set $x=y=1$ and recall the estimate
$|\phi(1,\lambda)^2\rho(\lambda)|\lesssim 1$ for $\lambda$ small.
It is instructive to compare this behavior with the free case.
In fact, there are two different ``free'' cases one might consider.
For the free one-dimensional case we replace the operator $A$ by $A_1 f:=-f''$
with $\mc D(A_1)=\mc D(A)$, i.e., we retain the Neumann condition at $x=0$.
Note that this is in fact a resonant case (with $f(x)=1$ being the resonance function).
If we denote the spectral quantities associated to $A_1$ by a subscript ``1'', we infer
\begin{align*}
\phi_1(x,\lambda)&=-\cos(x\lambda^\frac12) \\
\theta_1(x,\lambda)&=\lambda^{-\frac12}\sin(x\lambda^\frac12) \\
f_{+,1}(x,\xi)&=e^{ix\xi},
\end{align*} which yields 
\[ \rho_1(\lambda)=\tfrac{1}{\pi}\Im \frac{W(\theta_1(\cdot,\lambda),f_{+,1}(\cdot,\lambda^\frac12))}{
W(f_{+,1}(\cdot,\lambda^\frac12),\phi_1(\cdot,\lambda))}=\tfrac{1}{\pi}\Im \frac{-1}{i\lambda^\frac12}
=\tfrac{1}{\pi}\lambda^{-\frac12} \]
and thus, 
$|\phi_1(1,\lambda)^2 \rho_1(\lambda)|\simeq \lambda^{-\frac12}$ for $\lambda$ small.
Consequently, $\phi_1(1,\lambda)^2\rho_1(\lambda)$ is much more singular than $\phi(1,\lambda)^2\rho(\lambda)$
and there is no way to compensate for this singularity by introducing weights in $x$.
This, of course, is a manifestation of the fact that there is no dispersive decay for free one-dimensional
wave evolution.

In order to compare $\phi(1,\lambda)^2\rho(\lambda)$ with the free two-dimensional case, we set 
\[ A_2 f(x):=-f''(x)-\tfrac{1}{4x^2}f(x) \]
which is a strongly singular Schr\"odinger operator in the sense of \cite{GesZin06}.
A fundamental system for $A_2 f(x)=0$ is given by $x^\frac12$ and $x^\frac12 \log x$.
Consequently, the endpoint $x=0$ is in the limit-circle case.
Recall that if $A_2 f=0$ then $|\cdot|^{-\frac12}f$ satisfies the radial two-dimensional
Laplace equation and thus, the correct boundary condition is $f(x)\simeq x^\frac12$ as $x\to 0+$
which leads to the domain
\[ \mc D(A_2)=\{f\in L^2(\R_+): f,f' \in \mathrm{AC}_\mathrm{loc}(\R_+), W(|\cdot|^\frac12, f)=0, f'' \in L^2(\R_+)\}. \]
The operator $A_2$ can be studied by Weyl-Titchmarsh theory \cite{EveKal07} but for our purposes 
it is easier to use the (self-inverse) radial two-dimensional Fourier transform
\[ \tilde F(R)=2\pi \int_0^\infty J_0(2\pi r R)\tilde f(r)r dr, \]
see e.g.~\cite{SteSha03}.
Setting 
\[ f(x):=(2\pi)^{-\frac14}x^\frac12 \tilde f((2\pi)^{-\frac12}x),\quad F(\xi):=(2\pi)^{-\frac14}\xi^\frac12 \tilde F((2\pi)^{-\frac12}\xi) \]
we find
\begin{equation}
\label{eq:Hankel}
 F(\xi)=\int_0^\infty (x\xi)^\frac12 J_0(x\xi)f(x)dx
\end{equation}
which is known as the Hankel transform. The map $f\mapsto F$ defined by Eq.~\eqref{eq:Hankel} is again 
self-inverse which means that $f$ can be reconstructed from $F$ by the formula
\[ f(x)=\int_0^\infty (x\xi)^\frac12 J_0(x\xi)F(\xi)d\xi. \]
Since $\phi_2(x,\xi^2):=(x\xi)^\frac12 J_0(x\xi)$ satisfies
\[ (-\partial_x^2-\tfrac{1}{4x^2})\phi_2(x,\xi^2)=\xi^2 \phi_2(x,\xi^2),\qquad W(|\cdot|^\frac12, \phi_2(\cdot,\xi))=0, \]
the Hankel transform Eq.~\eqref{eq:Hankel} yields the
spectral transformation $\mc U_2$ associated to $A_2$, upon switching from $\xi$
to the spectral parameter $\lambda=\xi^2$.
Explicitly, we obtain
\[ \mc U_2 f(\lambda)=\int_0^\infty \phi_2(x,\lambda)f(x)dx \]
with inverse
\[ \mc U_2^{-1} \hat f(x)=\int_0^\infty \phi_2(x,\lambda)\hat f(\lambda)\rho_2(\lambda)d\lambda \]
where $\rho_2(\lambda)=\frac12 \lambda^{-\frac12}$.
Consequently, we infer $\phi_2(1,\lambda)^2 \rho_2(\lambda)\simeq \langle \lambda \rangle^{-\frac12}$ 
which shows that $\phi(1,\lambda)^2 \rho(\lambda)$ behaves exactly like in the free
two-dimensional case.

\section{Energy bounds}
\label{sec:energy}
Next, we turn to energy bounds. 
Of course, we have energy conservation for the solution $u$ of Eq.~\eqref{eq:init} in the sense that
\[ \|\sqrt A u(t,\cdot)\|_{L^2(\R_+)}^2+\|\partial_t u(t,\cdot)\|_{L^2(\R_+)}^2=\|\sqrt A f\|_{L^2(\R_+)}^2
+\|g\|_{L^2(\R_+)}^2 \]
for all $t\geq 0$.
In the following, we prove some generalizations of this basic energy bound.

\subsection{Properties of the distorted Fourier transform}

We start with a simple result that relates the derivative on the physical side
to a weight on the distorted Fourier side.
\begin{lemma}
\label{lem:derwei}
Let $f\in \mc S(\R_+)$ satisfy $f'(0)=0$. Then we have
\[ \|f\|_{H^2(\R_+)}\simeq \|\langle \cdot \rangle^2 \mc F f\|_{L^2_{\tilde \rho}(\R_+)}. \]
\end{lemma}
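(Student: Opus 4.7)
The plan is to reduce everything to Plancherel's theorem for $\mc F$ together with the intertwining identity $\mc F (Af)(\xi) = \xi^2 \mc F f(\xi)$ from Lemma \ref{lem:Fourier}. The Neumann condition $f'(0)=0$ is exactly what ensures $f \in \mc D(A)$, so the intertwining formula applies. Since Hypothesis \ref{hyp:A} implies $V \in L^\infty(\R_+)$ (it is smooth on $[0,\infty)$ and decays like $-\tfrac14 x^{-2}$ at infinity), the operator $A = -\partial_x^2 + V$ differs from $-\partial_x^2$ by a bounded multiplication operator.

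First I would show the equivalence
\[ \|f\|_{L^2(\R_+)}^2 + \|Af\|_{L^2(\R_+)}^2 \simeq \|f\|_{H^2(\R_+)}^2. \]
From $Af = -f'' + Vf$ and $\|Vf\|_{L^2} \lesssim \|f\|_{L^2}$, the triangle inequality yields both $\|f''\|_{L^2} \lesssim \|Af\|_{L^2} + \|f\|_{L^2}$ and the reverse bound $\|Af\|_{L^2} \lesssim \|f''\|_{L^2} + \|f\|_{L^2}$. To control the missing $\|f'\|_{L^2}$, I integrate by parts: since $f \in \mc S(\R_+)$ with $f'(0)=0$,
\[ \int_0^\infty (f'(x))^2\,dx = -\int_0^\infty f(x) f''(x)\,dx \leq \|f\|_{L^2} \|f''\|_{L^2}, \]
so $\|f'\|_{L^2}^2 \lesssim \|f\|_{L^2}^2 + \|f''\|_{L^2}^2$ by the arithmetic-geometric mean inequality. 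Combining these gives the claimed equivalence.

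Next, apply Plancherel (Lemma \ref{lem:Fourier}) and the intertwining property. Since $f \in \mc D(A)$, we have
\[ \|f\|_{L^2(\R_+)}^2 + \|Af\|_{L^2(\R_+)}^2 = \|\mc F f\|_{L^2_{\tilde\rho}(\R_+)}^2 + \|\xi^2 \mc F f\|_{L^2_{\tilde\rho}(\R_+)}^2 = \|(1+\xi^4)^{1/2} \mc F f\|_{L^2_{\tilde\rho}(\R_+)}^2, \]
and $(1+\xi^4)^{1/2} \simeq \langle \xi \rangle^2$ gives the desired equivalence with $\|\langle\cdot\rangle^2 \mc F f\|_{L^2_{\tilde\rho}}$. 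Chaining this with the first step concludes the proof.

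There is really no obstacle here; the only thing to verify carefully is that the Schwartz condition plus $f'(0)=0$ places $f$ in $\mc D(A)$, which is immediate from the definition of $\mc D(A)$ recalled at the beginning of Section 2. Boundedness of $V$ is what prevents any subtlety from the $-\tfrac14 x^{-2}$ asymptotics — in contrast with the free two-dimensional operator $A_2$ discussed in Section \ref{sec:free}, whose potential is singular at the origin, here the smoothness in Hypothesis \ref{hyp:A} makes $V$ genuinely $L^\infty$.
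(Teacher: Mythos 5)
Your proof is correct and follows essentially the same route as the paper: both rest on Plancherel together with the identity $\mc F(Af)(\xi)=\xi^2\mc F f(\xi)$ (the paper rederives it by integrating by parts against $\phi(\cdot,\xi^2)$ and invoking the eigenvalue equation, while you cite it directly from Lemma \ref{lem:Fourier} after checking $f\in\mc D(A)$) and on the boundedness of $V$. The one place you add detail is the explicit control of $\|f'\|_{L^2(\R_+)}$ via $\int_0^\infty (f')^2=-\int_0^\infty f\,f''$, which fills in an interpolation step the paper leaves implicit when passing from bounds on $\|f\|_{L^2(\R_+)}$ and $\|f''\|_{L^2(\R_+)}$ to the full $H^2$-norm.
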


\begin{proof}
First of all, by Plancherel we have
\[ \|f\|_{L^2(\R_+)}= \|\mc F f\|_{L^2_{\tilde \rho}(\R_+)}\lesssim
\|\langle \cdot \rangle^2 \mc F f\|_{L^2_{\tilde \rho}(\R_+)}. \]
Furthermore, an integration by parts yields
\begin{align*} \mc F(f'')(\xi)&=\int_0^\infty \phi(x,\xi^2)f''(x)dx=\int_0^\infty \partial_x^2 \phi(x,\xi^2)f(x)dx
\end{align*}
where the boundary term at zero vanishes thanks to $f'(0)=\phi'(0,\xi^2)=0$.
Now recall that $[-\partial_x^2+V(x)]\phi(x,\xi^2)=\xi^2 \phi(x,\xi^2)$ and thus,
\begin{equation}
\label{eq:derwei} 
\mc F(f'')(\xi)=-\xi^2 \mc F f(\xi)+\mc F(Vf). 
\end{equation}
Since $V \in L^\infty(\R_+)$ we infer by Plancherel 
\[ \|f''\|_{L^2(\R_+)}\lesssim \|| \cdot |^2 \mc F f\|_{L^2_{\tilde \rho}(\R_+)}
+\|Vf\|_{L^2(\R_+)}\lesssim \|\langle \cdot\rangle^2 \mc F f\|_{L^2_{\tilde \rho}(\R_+)} \]
and this shows $\|f\|_{H^2(\R_+)}\lesssim \|\langle \cdot \rangle^2\mc F f\|_{L^2_{\tilde \rho}(\R_+)}$.
For the reverse inequality we proceed analogously and estimate
\begin{align*}
\|\langle \cdot \rangle^2 \mc F f\|_{L^2_{\tilde \rho}(\R_+)}&\lesssim \||\cdot|^2 \mc F f\|_{L^2_{\tilde \rho}(\R_+)}
+\|\mc F f\|_{L^2_{\tilde \rho}(\R_+)} \\
&\lesssim \|\mc F(f'')\|_{L^2_{\tilde \rho}(\R_+)}+\|\mc F (Vf)\|_{L^2_{\tilde \rho}(\R_+)}+\|\mc F f\|_{L^2_{\tilde \rho}(\R_+)} \\
&\lesssim \|f\|_{H^2(\R_+)}
\end{align*}
by Eq.~\eqref{eq:derwei}, Plancherel, and $V\in L^\infty(\R_+)$.
\end{proof}

Lemma \ref{lem:derwei} easily generalizes to higher Sobolev spaces.
Note carefully that we need an additional assumption on the potential $V$ here.

\begin{lemma}
\label{lem:derweis}
Let $f\in \mc S(\R)$ be even and assume that $V^{(2j+1)}(0)=0$ for all $j\in \N_0$. 
Then, for any $s\geq 0$, we have
\[ \|f\|_{H^s(\R_+)}\simeq \|\langle \cdot \rangle^s \mc F f\|_{L^2_{\tilde \rho}(\R_+)}. \]
\end{lemma}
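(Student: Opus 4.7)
The plan is to reduce the general $s\geq 0$ case to integer $s$ by interpolation, and to handle the integer case by iterating the proof of Lemma \ref{lem:derwei}. The iteration will be built around the operator $A$ itself rather than repeated differentiation, because $A$ diagonalizes under $\mc F$ via Lemma \ref{lem:Fourier}.

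First I would establish the following elementary but crucial closure property: if $f \in \mc S(\R)$ is even, then so is $Af=-f''+Vf$. Here the evenness of $V$ (which is precisely the content of Hypothesis \ref{hyp:B}) is essential, since $V$ itself is being multiplied against $f$. By induction, $A^kf$ is even Schwartz for every $k\in\N_0$. In particular $(A^k f)^{(2j+1)}(0)=0$ for all $j$, so each $A^kf$ lies in $\mc D(A)$. Consequently, iterating Lemma \ref{lem:Fourier} yields the key identity
\[ \mc F(A^k f)(\xi)=\xi^{2k}\mc F f(\xi). \]
\emph{Without} the hypothesis $V^{(2j+1)}(0)=0$ this step already fails at $k=2$: one would find $(Af)'(0)=V'(0)f(0)\neq 0$ in general, so $Af\notin\mc D(A)$, and the integration by parts underlying Lemma \ref{lem:derwei} could not be repeated.

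Given the identity, Plancherel (Lemma \ref{lem:Fourier}) gives $\|A^j f\|_{L^2(\R_+)}=\||\cdot|^{2j}\mc F f\|_{L^2_{\tilde\rho}}$ for $0\leq j\leq k$, whence
\[ \sum_{j=0}^{k}\|A^j f\|_{L^2(\R_+)}\simeq \|\langle\cdot\rangle^{2k}\mc F f\|_{L^2_{\tilde\rho}}. \]
Next I would compare this with $\|f\|_{H^{2k}(\R_+)}$ by a standard elliptic-regularity induction. Writing $f''=Vf-Af$ and exploiting the fact that the symbol asymptotics in Hypothesis \ref{hyp:A} together with smoothness at the origin force $V\in W^{m,\infty}(\R_+)$ for every $m$, one obtains $\|f\|_{H^{m+2}(\R_+)}\lesssim \|Af\|_{H^m(\R_+)}+\|f\|_{H^m(\R_+)}$ and, by iteration,
\[ \|f\|_{H^{2k}(\R_+)}\simeq \sum_{j=0}^k \|A^j f\|_{L^2(\R_+)}. \]
Combining the last two displays settles the even-integer case $s=2k$.

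For arbitrary $s\geq 0$, I would pass from even integers to all $s$ by complex interpolation. Both the scale $H^{2k}(\R_+)$ (restricted to even functions) and the weighted scale $L^2_{\langle\cdot\rangle^{4k}\tilde\rho}(\R_+)$ form interpolation scales, and the distorted Fourier transform $\mc F$ is an isometry $L^2(\R_+)\to L^2_{\tilde\rho}(\R_+)$ that maps the former onto the latter boundedly at the integer endpoints $s=2k$ and $s=2(k+1)$, uniformly in $k$. Interpolation then yields the equivalence at every intermediate $s$, covering $[0,\infty)$. The main obstacle is really the boundary-condition bookkeeping in the first step: the whole proof hinges on $A^kf$ remaining in $\mc D(A)$, which is what forces the new hypothesis on $V$. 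The interpolation and elliptic-regularity steps are essentially routine once that closure property is in hand.
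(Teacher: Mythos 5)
Your proof is correct and follows essentially the same path as the paper's: both reduce to $s=2k$ by complex interpolation, both exploit the evenness hypothesis on $V$ to make the boundary contributions at $x=0$ vanish, and both iterate the argument of Lemma \ref{lem:derwei} to relate $\|f\|_{H^{2k}(\R_+)}$ with $\|\langle\cdot\rangle^{2k}\mc F f\|_{L^2_{\tilde\rho}(\R_+)}$. The only cosmetic difference is that the paper unfolds $[-\partial_x^2+V]^k\phi(\cdot,\xi^2)=\xi^{2k}\phi(\cdot,\xi^2)$ on the kernel side while you keep $A^k f$ on the function side and invoke elliptic regularity to identify $\sum_{j\le k}\|A^jf\|_{L^2}$ with $\|f\|_{H^{2k}}$; these are equivalent bookkeeping.
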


\begin{proof}
By complex interpolation it suffices to prove the statement for $s=2k$ where $k\in \N$.
From the equation $[-\partial_x^2+V(x)]\phi(x,\xi^2)=\xi^2 \phi(x,\xi^2)$ we obtain by 
repeated differentiation that $\phi^{(2j+1)}(0,\xi^2)=0$ for all $j\in \N_0$ (here
we use $\phi'(0,\xi^2)=0$ and the assumption on $V$).
Consequently, integration by parts yields
\[ \mc F(f^{(2k)})(\xi)=\int_0^\infty \partial_x^{2k}\phi(x,\xi^2)f(x)dx \]
where all boundary terms vanish since $f^{(2j+1)}(0)=\phi^{(2j+1)}(0,\xi^2)=0$ for all $j\in \N_0$.
Furthermore, we have
\[ [-\partial_x^2+V(x)]^{k}\phi(x,\xi^2)=\xi^{2k} \phi(x,\xi^2) \]
and since $V\in C^\infty(\R_+)$ with $\|V^{(j)}\|_{L^\infty(\R_+)}\leq C_j$ for all $j\in \N_0$,
the statement is proved inductively by following the reasoning in the proof of Lemma \ref{lem:derwei}.
\end{proof}

We also need a slight variant of Lemma \ref{lem:derweis} with the same conclusion but slightly
different assumptions.
First, however, we make the following simple observation.

\begin{lemma}
\label{lem:F-1Cinfc}
Let $f\in C^\infty_c(0,\infty)$. Then $\mc F^{-1}f \in \mc S(\R_+)$.
If, in addition, $V^{(2j+1)}(0)=0$ for all $j\in \N_0$, then we also have
\[ \left (\mc F^{-1}f\right )^{(2j+1)}(0)=0 \] 
for all $j\in \N_0$.
\end{lemma}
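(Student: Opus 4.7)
The plan is to prove smoothness via differentiation under the integral sign, Schwartz decay via repeated integration by parts in $\xi$, and the boundary conditions via an induction on the ODE satisfied by $\phi(\cdot,\xi^2)$.

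For the smoothness claim, since $\phi(x,\xi^2)$ solves the linear ODE $-\phi''+V\phi=\xi^2\phi$ with smooth coefficients and initial data $\phi(0,\xi^2)=-1$, $\phi'(0,\xi^2)=0$ that depend smoothly on $\xi$, standard ODE theory yields $\phi\in C^\infty([0,\infty)\times[0,\infty))$. Because $f\in C_c^\infty(0,\infty)$, the integral defining $\mc F^{-1}f$ is over a compact set $[a,b]\subset(0,\infty)$, so differentiation under the integral sign is legitimate and $\mc F^{-1}f\in C^\infty(\R_+)$.

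For the Schwartz decay, I would split into $x\in[0,1/a]$ and $x\geq 1/a$. On the first (compact) range, $\partial_x^\ell\phi(x,\xi^2)$ is continuous on $[0,1/a]\times[a,b]$, hence bounded, so $\partial_x^\ell\mc F^{-1}f$ is automatically bounded there. For $x\geq 1/a$ we have $x\xi\geq 1$ on $\supp f$, and the third and fourth cases of Lemma \ref{lem:phi} (equivalently, the Jost representation $\phi(\cdot,\xi^2)=a(\xi)f_+(\cdot,\xi)+\overline{a(\xi)}\,\overline{f_+(\cdot,\xi)}$ combined with Lemmas \ref{lem:Jostsm}, \ref{lem:Jostlg}) provide a representation
\[ \phi(x,\xi^2)=c_+(x,\xi)e^{i\xi x}+c_-(x,\xi)e^{-i\xi x} \]
with $c_\pm$ smooth in $(x,\xi)\in[1/a,\infty)\times[a,b]$ and of symbol type in both variables. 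Derivatives $\partial_x^\ell$ either hit $c_\pm$ (preserving symbol bounds) or the exponential (producing bounded factors $\pm i\xi$ with $\xi\in[a,b]$). Then repeated integration by parts in $\xi$ using $e^{\pm i\xi x}=\pm(ix)^{-1}\partial_\xi e^{\pm i\xi x}$, with no boundary contributions since $f$ vanishes near $\xi=a$ and $\xi=b$, yields
\[ |\partial_x^\ell \mc F^{-1}f(x)|\leq C_{\ell,N}\,x^{-N} \]
for every $N\in\N_0$. Together with the bound on $[0,1/a]$ this gives $\mc F^{-1}f\in\mc S(\R_+)$.

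For the final assertion, I would show by induction on $n$ that under the hypothesis $V^{(2j+1)}(0)=0$ for all $j$, one has $\partial_x^{2j+1}\phi(0,\xi^2)=0$ for all $j\in\N_0$. Indeed, Leibniz applied to the ODE $\phi''=(V-\xi^2)\phi$ expresses $\partial_x^n\phi(0,\xi^2)$ as a polynomial in $V^{(k)}(0)$ and in the $\partial_x^m\phi(0,\xi^2)$ with $m<n$. Starting from $\phi(0,\xi^2)=-1$, $\phi'(0,\xi^2)=0$ and noting that only even-order derivatives of $V$ at $0$ appear nontrivially, a straightforward parity argument shows that all odd-order derivatives of $\phi$ at $0$ vanish. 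Substituting into
\[ (\mc F^{-1}f)^{(2j+1)}(0)=\int_0^\infty \partial_x^{2j+1}\phi(0,\xi^2)\,f(\xi)\,\tilde\rho(\xi)\,d\xi \]
yields zero as claimed.

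The main technical point is the symbol-type control of the amplitudes $c_\pm(x,\xi)$ jointly in $x$ and $\xi$ on $[1/a,\infty)\times[a,b]$, since this is what powers the integration-by-parts scheme and its iterates. This, however, is exactly the symbol behavior already packaged into Lemmas \ref{lem:Jostsm} and \ref{lem:Jostlg} for $f_+$, and the Wronskian coefficient $a(\xi)$ is smooth and bounded on the compact set $[a,b]\subset(0,\infty)$ (its only singular behavior occurs as $\xi\to 0^+$, which is avoided by compact support of $f$ away from zero).
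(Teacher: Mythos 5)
Your proposal is correct and follows essentially the same approach as the paper's proof: smoothness by differentiating under the integral (using compact support of $f$ in $\xi$), Schwartz decay by splitting into a compact $x$-range and an oscillatory regime $x\xi\geq 1$ where one integrates by parts in $\xi$ against the phases from Lemma \ref{lem:phi}, and the vanishing of odd derivatives at $0$ via the parity induction on the ODE. Your write-up is somewhat more detailed in spelling out the amplitude bounds and the induction, but there is no substantive difference in method.
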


\begin{proof}
From Lemma \ref{lem:phi} and the equation
\[ -\phi''(x,\xi^2)+V(x)\phi(x,\xi^2)=\xi^2 \phi(x,\xi^2) \]
with $\|V^{(k)}\|_{L^\infty(\R_+)}\leq C_k$ for all $k\in \N_0$ it follows that
$|\partial_x^k \phi(x,\xi^2)|\leq C_k (\xi \langle \xi\rangle^{-1})^{-\frac12-}\langle \xi\rangle^k$ for all $k\in \N_0$.
We have
\[ \mc F^{-1}f(x)=\int_0^\infty \phi(x,\xi^2)f(\xi)\tilde \rho(\xi)d\xi \]
and thus, $\mc F^{-1}f \in C^\infty(\R_+)$ since $f\in C_c^\infty(0,\infty)$.
If $V^{(2j+1)}(0)=0$ for all $j\in \N_0$ it follows that $\phi^{(2j+1)}(0,\xi^2)=0$ and thus,
$(\mc F^{-1}f)^{(2j+1)}(0)=0$ for all $j\in \N_0$.
Furthermore, if $x\xi\leq 1$ we may trade $\langle x\rangle$ for $\xi^{-1}$ and we
immediately infer $|\mc F^{-1}f(x)|\leq C_N \langle x\rangle^{-N}$
for any $N\in \N$ since $f$ has support away from the origin.
In the case $x\xi\geq 1$ we exploit the oscillations in $\phi$ (see Lemma \ref{lem:phi})
and perform integrations by parts to arrive at the same conclusion.
Applying this logic to the derivatives as well yields $\mc F^{-1}f\in \mc S(\R_+)$.
\end{proof}

\begin{lemma}
\label{lem:mapF-1}
Let $f\in L^2_{\tilde \rho}(\R_+)$, $s\geq 0$, and assume that
$\|\langle \cdot \rangle^s f\|_{L^2_{\tilde \rho}(\R_+)}\lesssim 1$.
Furthermore, assume that $V^{(2j+1)}(0)=0$ for all $j\in \N_0$. 
Then $\mc F^{-1} f\in H^s(\R_+)$ and we have
\[ \|\mc F^{-1}f\|_{H^s(\R_+)}\simeq \|\langle \cdot\rangle^s f\|_{L^2_{\tilde \rho}(\R_+)}. \]
\end{lemma}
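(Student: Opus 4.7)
The plan is to reduce the statement to Lemma \ref{lem:derweis} by a density/approximation argument, exploiting Lemma \ref{lem:F-1Cinfc} which tells us that $\mc F^{-1}$ sends $C_c^\infty(0,\infty)$ into a class of functions to which Lemma \ref{lem:derweis} applies.

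First, I would observe that $C_c^\infty(0,\infty)$ is dense in the weighted Hilbert space $L^2_{(1+\xi^2)^s\tilde\rho}(\R_+)$ (which is the natural norm appearing in the hypothesis). Hence I can choose a sequence $f_n \in C_c^\infty(0,\infty)$ with $\langle\cdot\rangle^s f_n \to \langle\cdot\rangle^s f$ in $L^2_{\tilde\rho}(\R_+)$; in particular $f_n \to f$ in $L^2_{\tilde\rho}(\R_+)$ as well. By Lemma \ref{lem:F-1Cinfc}, each $\mc F^{-1}f_n$ lies in $\mc S(\R_+)$ and satisfies $(\mc F^{-1}f_n)^{(2j+1)}(0)=0$ for all $j\in\N_0$; consequently the even extension of $\mc F^{-1}f_n$ to $\R$ lies in $\mc S(\R)$, so Lemma \ref{lem:derweis} applies and yields
\[
\|\mc F^{-1}f_n\|_{H^s(\R_+)} \simeq \|\langle\cdot\rangle^s \mc F\mc F^{-1}f_n\|_{L^2_{\tilde\rho}(\R_+)} = \|\langle\cdot\rangle^s f_n\|_{L^2_{\tilde\rho}(\R_+)}.
\]

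Applied to differences $f_n-f_m$, this equivalence shows that $\{\mc F^{-1}f_n\}$ is Cauchy in $H^s(\R_+)$, hence converges to some $h \in H^s(\R_+)$. On the other hand, since $f_n \to f$ in $L^2_{\tilde\rho}(\R_+)$, Plancherel (Lemma \ref{lem:Fourier}) gives $\mc F^{-1}f_n \to \mc F^{-1}f$ in $L^2(\R_+)$. By uniqueness of limits in $L^2(\R_+)$ we conclude $h=\mc F^{-1}f$, so $\mc F^{-1}f\in H^s(\R_+)$, and passing to the limit in the displayed equivalence produces $\|\mc F^{-1}f\|_{H^s(\R_+)}\simeq \|\langle\cdot\rangle^s f\|_{L^2_{\tilde\rho}(\R_+)}$.

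The only point that requires a little care is the density claim and the smoothness of the even extension. Density is standard for weighted $L^2$-spaces on $(0,\infty)$ whose weight $(1+\xi^2)^s\tilde\rho(\xi)$ is locally integrable and positive on $(0,\infty)$; truncation and mollification away from the endpoints $\xi=0$ and $\xi=\infty$ produces the required approximants. The smoothness of the even extension of $\mc F^{-1}f_n$ follows because the odd-order derivatives at $0$ vanish, which is precisely the content of the second assertion of Lemma \ref{lem:F-1Cinfc} and is the reason why the hypothesis $V^{(2j+1)}(0)=0$ is needed. I do not anticipate a serious obstacle; the essential content has already been established in Lemmas \ref{lem:derweis} and \ref{lem:F-1Cinfc}, and the present lemma is a completion/density statement extending the norm equivalence from a dense subspace to the full weighted $L^2$-space.
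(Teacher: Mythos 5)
Your proof is correct and is essentially identical to the paper's argument: approximate $f$ by $f_n\in C_c^\infty(0,\infty)$ in the weighted norm, invoke Lemmas \ref{lem:F-1Cinfc} and \ref{lem:derweis} to obtain the equivalence for each $f_n$ and Cauchyness in $H^s(\R_+)$, and identify the $H^s$-limit with $\mc F^{-1}f$ via unitarity of $\mc F$ on $L^2$. The extra remarks you make about density and the even extension are exactly the implicit steps in the paper's proof spelled out in more detail.
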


\begin{proof}
We approximate $f$ by a sequence $(f_n)\subset C^\infty_c(0,\infty)$, i.e.,
\[ \|\langle \cdot \rangle^s (f-f_n)\|_{L^2_{\tilde \rho}(\R_+)}\to 0 \] as $n\to\infty$.
By Lemma \ref{lem:F-1Cinfc} we infer $\mc F^{-1}f_n \in \mc S(\R_+)$ for all $n\in \N$
and by the assumption on $V$, $\mc F^{-1}f_n$ extends to an even function in $\mc S(\R)$.
Consequently, Lemma \ref{lem:derweis} yields
\[ \|\mc F^{-1}f_n-\mc F^{-1}f_k\|_{H^s(\R_+)}\simeq \|\langle \cdot \rangle^s (f_n-f_k)\|_{L^2_{\tilde \rho}(\R_+)}
\to 0 \]
as $n,k \to \infty$.
Thus, there exists a $g\in H^s(\R_+)$ such that $\mc F^{-1}f_n \to g$ in $H^s(\R_+)$ and we have
$\|g\|_{H^s(\R_+)}\simeq \|\langle \cdot\rangle^s f\|_{L^2_{\tilde \rho}(\R_+)}$.
On the other hand, by the unitarity of $\mc F$ we have $\mc F^{-1}f_n \to \mc F^{-1}f$ in $L^2(\R_+)$
and thus, $g=\mc F^{-1}f$.
\end{proof}

\subsection{Generalized energy bounds}
After these preliminaries we turn to the energy bounds.
First, we prove a simple result which shows that under appropriate assumptions
one may formally differentiate the operators $\cos(t\sqrt A)$ and $\sin(t\sqrt A)$
with respect to $t$.
In the following, we will make freely use of this result.

\begin{lemma}
\label{lem:difft}
Let $f\in \mc S(\R)$ be even and assume $V^{(2j+1)}(0)=0$ for all $j\in \N_0$.
Then $\cos(t\sqrt A)f(x)$ and $\sin(t\sqrt A)f(x)$ can be differentiated with respect to $t$
and the derivatives are given by the respective formal expressions.
For instance, we have
\[ \partial_t [\cos(t\sqrt A)f(x)]=-\sqrt A \sin(t\sqrt A)f(x) \]
and analogously for the higher derivatives.
\end{lemma}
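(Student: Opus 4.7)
The plan is to pass to the distorted Fourier side via Lemma \ref{lem:Fourier} and justify differentiation under the integral sign by dominated convergence, where the required integrability follows from the estimates on $\phi$ in Lemma \ref{lem:phi}/Corollary \ref{cor:phi}, on $\rho$ in Lemma \ref{lem:rho}, and on $\mc F f$ in Lemma \ref{lem:derweis}. For fixed $x\geq 0$ write
\[ \cos(t\sqrt A)f(x)=\int_0^\infty \phi(x,\xi^2)\cos(t\xi)\mc F f(\xi)\tilde\rho(\xi)\,d\xi, \]
and candidate $t$-derivative
\[ -\int_0^\infty \phi(x,\xi^2)\xi\sin(t\xi)\mc F f(\xi)\tilde\rho(\xi)\,d\xi. \]
Pointwise evaluation of both integrals is legitimate once I show absolute integrability, uniformly for $t$ in compact intervals.

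For the absolute integrability I would apply Cauchy--Schwarz with a weight $\langle\xi\rangle^{-s}$: picking $s>\tfrac12$, the factor $\langle\xi\rangle^{s+1}\mc F f(\xi)$ is in $L^2_{\tilde\rho}(\R_+)$ by Lemma \ref{lem:derweis}, since $f\in\mc S(\R)$ even with $V^{(2j+1)}(0)=0$ implies $\|\langle\cdot\rangle^N\mc F f\|_{L^2_{\tilde\rho}}<\infty$ for every $N\in\N_0$; and on the other side I need
\[ \int_0^\infty |\phi(x,\xi^2)|^2\langle\xi\rangle^{-2s}\tilde\rho(\xi)\,d\xi<\infty \]
for the fixed $x$. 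Near $\xi=0$ this is controlled by $|\phi(x,\xi^2)|\lesssim\langle x\rangle^{\frac12}(|\log\xi|+1)$ from Corollary \ref{cor:phi} together with $\tilde\rho(\xi)\lesssim \xi$ (from Lemma \ref{lem:rho}); for $\xi\geq 1$ the representation of $\phi$ in Lemma \ref{lem:phi} gives $|\phi(x,\xi^2)|\lesssim 1$ and $\tilde\rho(\xi)\lesssim 1$ by Lemma \ref{lem:rho}, so $\langle\xi\rangle^{-2s}$ with $s>\tfrac12$ makes the tail integrable. This dominating function is independent of $t$, so dominated convergence yields the differentiability of the integrand-in-$\xi$ in $t$ and interchange with $\partial_t$.

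It remains to identify the result with the operator expression $-\sqrt A\,\sin(t\sqrt A)f(x)$. By the functional calculus and Lemma \ref{lem:Fourier}, the $L^2(\R_+)$-function $\sqrt A\,\sin(t\sqrt A)f$ equals $\mc F^{-1}\bigl(\xi\sin(t\xi)\mc F f(\xi)\bigr)$, and its pointwise representative is exactly the absolutely convergent integral above thanks to the same Cauchy--Schwarz bound (here Lemma \ref{lem:mapF-1} guarantees that $\mc F^{-1}$ of the weighted function indeed lies in $H^s(\R_+)$ and hence is continuous). Iterating the argument gives all higher derivatives: each additional $\partial_t$ brings down one more factor of $\xi$, which is harmless because $\mc F f$ decays faster than any polynomial in $L^2_{\tilde\rho}$. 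The sine evolution is treated identically.

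The main obstacle is book-keeping the pointwise/$L^2$ identification: the functional calculus delivers elements of $L^2(\R_+)$, but the claimed formulas are pointwise in $x$. This is resolved by the dominating bound above, which makes every integral absolutely convergent at each $x$ and uniformly continuous in $t$, so that pointwise and $L^2$-representatives coincide and differentiation under the integral is unambiguous.
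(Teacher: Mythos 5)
Your proposal is correct and follows essentially the same route as the paper's proof: write $\cos(t\sqrt A)f(x)$ as an integral over $\xi$ via the distorted Fourier transform, invoke Lemma~\ref{lem:derweis} (together with the evenness of $f$ and $V^{(2j+1)}(0)=0$) to get $\|\langle\cdot\rangle^s\mc Ff\|_{L^2_{\tilde\rho}}<\infty$ for all $s$, and then differentiate under the integral sign by dominated convergence. You simply spell out the $t$-uniform majorant via Cauchy--Schwarz and the pointwise bounds on $\phi$ and $\tilde\rho$, which the paper leaves implicit.
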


\begin{proof}
The assumptions on $f$ and $V$ as well as Lemma \ref{lem:derweis} imply 
\begin{equation}
\label{eq:Ffs} 
\|\langle \cdot \rangle^s \mc F f\|_{L^2_{\tilde \rho}(\R_+)}\lesssim 1 
\end{equation}
for any $s\geq 0$.
By definition, the operator $\cos(t\sqrt A)$ is given by
\begin{align*} 
\cos(t\sqrt A)f(x)&=\mc F^{-1}\left [\cos(t|\cdot|)\mc F f\right ](x) \\
&=\int_0^\infty \phi(x,\xi^2)\cos(t\xi)\mc F f(\xi)\tilde \rho(\xi)d\xi.
\end{align*} 
Consequently, by Eq.~\eqref{eq:Ffs} and dominated convergence we infer
\begin{align*} \partial_t [\cos(t\sqrt A)f(x)]&=\int_0^\infty 
\phi(x,\xi^2)[-\xi\sin(t\xi)]\mc F f(\xi)\tilde \rho(\xi)d\xi \\
&=-\sqrt A \sin(t\sqrt A)f(x). 
\end{align*}
The corresponding results for the higher derivatives and the sine evolution follow accordingly.
\end{proof}

\begin{lemma}
\label{lem:energy}
Let $f,g \in \mc S(\R)$ be even and assume that $V^{(2j+1)}(0)=0$ for all $j\in \N_0$.
Then the solution $u$ of the initial value problem \eqref{eq:init} satisfies the bounds
\begin{align*} 
\|\partial_t^\ell \sqrt A u(t,\cdot)\|_{H^k(\R_+)}&\leq C_{k,\ell} \left (
\|\sqrt A f\|_{H^{k+\ell}(\R_+)}+\|g\|_{H^{k+\ell}(\R_+)} \right ) \\
\|\partial_t^\ell u_t(t,\cdot)\|_{H^k(\R_+)}&\leq C_{k,\ell}\left (\|\sqrt A f\|_{H^{k+\ell}(\R_+)}
+\|g\|_{H^{k+\ell}(\R_+)} \right ) 
\end{align*}
for all $t\geq 0$ and all $k,\ell \in \N_0$.
\end{lemma}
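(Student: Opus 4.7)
The plan is to transfer everything to the distorted Fourier side, where both $\partial_t$ and $\sqrt A$ act as multiplication operators, and then invoke Lemmas \ref{lem:derweis}--\ref{lem:mapF-1} to recover the physical-side Sobolev norms. Concretely, Lemma \ref{lem:difft} justifies pulling time derivatives inside $\mc F^{-1}$, giving on the Fourier side
\[
\mc F(\partial_t^\ell \sqrt A u(t,\cdot))(\xi) \;=\; \xi\,\partial_t^\ell\!\cos(t\xi)\,\mc F f(\xi) \;+\; \partial_t^\ell\!\sin(t\xi)\,\mc F g(\xi),
\]
together with the analogous expression for $\partial_t^\ell u_t$ coming from $u_t = -\sqrt A\sin(t\sqrt A)f+\cos(t\sqrt A)g$. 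In either case the resulting multiplier obeys the uniform pointwise bound $\xi^{\ell+1}|\mc F f(\xi)| + \xi^\ell|\mc F g(\xi)|$.

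Since $f,g \in \mc S(\R)$ are even and $V$ extends to a smooth even function, Lemma \ref{lem:derweis} gives rapid decay of $\mc F f$ and $\mc F g$ in weighted $L^2_{\tilde\rho}$, so Lemma \ref{lem:mapF-1} applies to $\mc F(\partial_t^\ell\sqrt A u(t,\cdot))$ and yields
\[
\|\partial_t^\ell \sqrt A u(t,\cdot)\|_{H^k(\R_+)} \;\simeq\; \big\|\langle\cdot\rangle^k \mc F(\partial_t^\ell \sqrt A u(t,\cdot))\big\|_{L^2_{\tilde\rho}(\R_+)}.
\]
Combining this with the pointwise bound above and the trivial inequality $\xi^\ell \leq \langle\xi\rangle^\ell$ reduces matters to the estimate
\[
\big\|\langle\cdot\rangle^{k+\ell}\xi\,\mc F f\big\|_{L^2_{\tilde\rho}} + \big\|\langle\cdot\rangle^{k+\ell}\mc F g\big\|_{L^2_{\tilde\rho}} \;\lesssim\; \|\sqrt A f\|_{H^{k+\ell}(\R_+)} + \|g\|_{H^{k+\ell}(\R_+)}.
\]
The $g$-term is an immediate consequence of Lemma \ref{lem:derweis}. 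For the $f$-term, the functional calculus gives $\xi\,\mc F f(\xi)=\mc F(\sqrt A f)(\xi)$, so a second application of Lemma \ref{lem:mapF-1} (with Fourier-side input $\xi\,\mc F f$, whose weighted integrability is automatic from the Schwartz property of $f$ and Lemma \ref{lem:derweis}) identifies the weighted $L^2_{\tilde\rho}$-norm with $\|\sqrt A f\|_{H^{k+\ell}(\R_+)}$, as desired. The second inequality of the lemma, for $\partial_t^\ell u_t$, follows by running the same argument with the corresponding Fourier-side expression.

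The only genuinely subtle point is the last step: the physical-side function $\sqrt A f$ need not itself be Schwartz, and yet we want to invoke Lemma \ref{lem:mapF-1} to compare its $H^{k+\ell}$-norm with $\|\langle\cdot\rangle^{k+\ell}\xi\,\mc F f\|_{L^2_{\tilde\rho}}$. This is admissible precisely because that lemma is phrased purely on the Fourier side, requiring only $\langle\cdot\rangle^{k+\ell}$-weighted $L^2_{\tilde\rho}$-integrability of the multiplier together with the hypothesis on $V$, both of which are already available; no independent regularity of $\sqrt A f$ on the physical side enters. Everything else is routine manipulation on the distorted Fourier side.
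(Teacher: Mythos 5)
Your proof is correct and follows essentially the same route as the paper: express the evolution operators as multipliers on the distorted Fourier side, bound the multiplier pointwise by $\langle\xi\rangle^{k+\ell}$, and then invoke Lemmas \ref{lem:derweis} and \ref{lem:mapF-1} to pass between weighted $L^2_{\tilde\rho}$ norms and physical Sobolev norms. The only cosmetic difference is that you treat the cosine and sine parts together while the paper handles them separately; the paper also deals with the regularity issue for $\sqrt A f$ (your ``subtle point'') in a footnote pointing to Lemma \ref{lem:F-1Cinfc}, whereas you resolve it by noting that Lemma \ref{lem:mapF-1} only needs Fourier-side hypotheses --- both observations are valid and amount to the same thing.
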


\begin{proof}
We start with the sine evolution.
By definition, we have
\[ \mc F\left [\partial_t \frac{\sin(t\sqrt A)}{\sqrt A}g \right ](\xi)=\partial_t \frac{\sin(t\xi)}{\xi}\mc F g(\xi)
=\cos(t\xi)\mc F g(\xi) \]
and $\mc F[\sin(t\sqrt A)g](\xi)=\sin(t\xi)\mc F g(\xi)$.
This yields
\[ \left |\mc F\left (\partial_t^{\ell}\partial_t  \frac{\sin(t\sqrt A)}{\sqrt A}g \right )(\xi)\right |
\leq \langle \xi \rangle^{\ell}|\mc F g(\xi)| \]
and $|\mc F[\partial_t^\ell \sin(t\sqrt A)g](\xi)|\lesssim \langle \xi\rangle^\ell |\mc F g(\xi)|$.
Consequently, by Lemma \ref{lem:derweis} we infer
\begin{align*} \|g\|_{H^{k+\ell}(\R_+)}&\simeq \|\langle \cdot \rangle^{k+\ell}\mc Fg\|_{L^2_{\tilde \rho}(\R_+)}
\gtrsim \left \|\langle \cdot\rangle^k 
\mc F\left (\partial_t^{\ell}\partial_t  \frac{\sin(t\sqrt A)}{\sqrt A}g \right ) \right \|_{L^2_{\tilde \rho}(\R_+)}
\end{align*}
and Lemma \ref{lem:mapF-1} yields
\[ \|g\|_{H^{k+\ell}(\R_+)}\gtrsim \left \|\partial_t^{\ell}\partial_t  \frac{\sin(t\sqrt A)}{\sqrt A}g\right \|_{H^k(\R_+)}. \]
Analogously, we obtain $\|\partial_t^\ell \sin(t\sqrt A)g\|_{L^2(\R_+)}\lesssim \|g\|_{H^{k+\ell}(\R_+)}$.
The cosine evolution is treated in the same fashion\footnote{Note carefully that $f\in \mc S(\R)$ even and the assumption on $V$
imply $(\sqrt A f)^{(2j+1)}(0)=0$. This is a consequence of $\sqrt A f=\mc F^{-1}(|\cdot| \mc F f)$ and
Lemma \ref{lem:F-1Cinfc} combined with a standard approximation argument.}.
\end{proof}

A combination of the $L^\infty$ bound from Remark \ref{rem:sinL1Linf} with the energy bound
in Lemma \ref{lem:energy} allows us to control the \emph{free} energy as well.

\begin{lemma}
\label{lem:energyfree}
Let $f,g \in \mc S(\R)$ be even and assume that $V^{(2j+1)}(0)=0$ for all $j\in \N_0$.
Then the solution $u$ of the initial value problem \eqref{eq:init} satisfies the bounds
\[
\|\nabla u(t,\cdot)\|_{H^k(\R_+)}\leq C_{k} \left (
\|f\|_{H^{1+k}(\R_+)}+\|g\|_{H^{k}(\R_+)}+\|g\|_{L^1(\R_+)} \right ) 
\]
for all $t\geq 0$ and $k \in \N_0$.
\end{lemma}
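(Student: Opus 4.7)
The plan is to handle the case $k = 0$ first, which is where the $L^1$ norm of $g$ actually enters, and then argue by induction on $k$. Writing $u(t,\cdot) = \cos(t\sqrt A)f + \frac{\sin(t\sqrt A)}{\sqrt A}g$, I observe that $u(t,\cdot)$ inherits the Neumann condition at the origin from $f$ and $g$, so the integration by parts defining the quadratic form of $A$ yields the pointwise (in $t$) identity
\[
\|\nabla u(t,\cdot)\|_{L^2(\R_+)}^2 = \|\sqrt A u(t,\cdot)\|_{L^2(\R_+)}^2 - \int_0^\infty V(x) u(t,x)^2 dx.
\]
The first term is controlled by Lemma \ref{lem:energy} (or energy conservation), giving $\|\sqrt A u(t,\cdot)\|_{L^2} \leq \|\sqrt A f\|_{L^2} + \|g\|_{L^2} \lesssim \|f\|_{H^1} + \|g\|_{L^2}$. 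For the second, the decay $|V(x)| \lesssim \langle x\rangle^{-2}$ from Hypothesis \ref{hyp:A} gives, for any $\epsilon \in (0,\tfrac12)$,
\[
\left|\int_0^\infty V u^2 dx\right| \lesssim \|\langle\cdot\rangle^{-\epsilon} u(t,\cdot)\|_{L^\infty(\R_+)}^2,
\]
since $\int_0^\infty \langle x\rangle^{-2+2\epsilon}dx$ converges. I bound the weighted $L^\infty$ norm on the two halves of $u$ separately: for the sine piece, Remark \ref{rem:sinL1Linf} gives $\|\langle\cdot\rangle^{-\epsilon}\frac{\sin(t\sqrt A)}{\sqrt A}g\|_{L^\infty} \lesssim \|g\|_{L^1}$; for the cosine piece I combine the one-dimensional Sobolev embedding $H^1(\R_+) \hookrightarrow L^\infty(\R_+)$ with the uniform-in-$t$ bound $\|\cos(t\sqrt A)f\|_{H^1} \lesssim \|f\|_{H^1}$, which follows from Lemmas \ref{lem:derweis} and \ref{lem:mapF-1} since $|\cos(t\xi)| \leq 1$ on the distorted Fourier side. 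This completes the $k = 0$ estimate.

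For $k \geq 1$ I induct on $k$. Splitting $\|\nabla u\|_{H^k}^2 = \|\nabla u\|_{H^{k-1}}^2 + \|\partial_x^{k+1} u\|_{L^2}^2$, the first summand is covered by the inductive hypothesis. For the second, the equation $\partial_x^2 u = \partial_t^2 u + V u$ gives
\[
\partial_x^{k+1}u = \partial_t^2 \partial_x^{k-1}u + \partial_x^{k-1}(V u).
\]
The $L^2$ norm of $\partial_t^2 \partial_x^{k-1} u$ is bounded by $\|\partial_t^2 u\|_{H^{k-1}} \lesssim \|f\|_{H^{k+1}} + \|g\|_{H^k}$ via Lemma \ref{lem:energy} (with $\ell = 1$). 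Expanding $\partial_x^{k-1}(V u)$ by Leibniz produces a sum of terms $V^{(k-1-j)} \partial_x^j u$. For $j \geq 1$, $V^{(k-1-j)}$ is uniformly bounded and $\|\partial_x^j u\|_{L^2} \leq \|\nabla u\|_{H^{k-1}}$, which is controlled by the inductive hypothesis. For $j = 0$, Hypothesis \ref{hyp:A} yields the symbol bound $|V^{(k-1)}(x)| \lesssim \langle x\rangle^{-k-1}$, so
\[
\|V^{(k-1)} u\|_{L^2}^2 \lesssim \|\langle\cdot\rangle^{-\epsilon} u(t,\cdot)\|_{L^\infty}^2 \int_0^\infty \langle x\rangle^{-2k-2+2\epsilon}dx,
\]
where the integral converges for $k \geq 1$ and $\epsilon < \tfrac12$; combining with the weighted $L^\infty$ estimate from the $k = 0$ argument gives $\|V^{(k-1)} u\|_{L^2} \lesssim \|f\|_{H^1} + \|g\|_{L^1}$. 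Collecting the estimates closes the induction.

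The substantive step is the $k = 0$ case. There the essential input is Remark \ref{rem:sinL1Linf}, which converts the $L^1$ norm of $g$ into a weighted $L^\infty$ bound whose spatial weight is matched exactly by the decay $|V(x)| \lesssim \langle x\rangle^{-2}$, allowing one to absorb the ``wrong sign'' contribution $\int V u^2 dx$ in the energy identity. I expect the inductive step for $k \geq 1$ to be a routine bookkeeping exercise using the equation and the Leibniz rule once this mechanism is in place.
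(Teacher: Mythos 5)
Your proof is correct and follows essentially the same route as the paper's: the energy identity reduces the problem to controlling $\int V u^2$, the weighted $L^\infty$ bound from Remark \ref{rem:sinL1Linf} for the sine piece is the key input whose weight is matched by the $\langle x\rangle^{-2}$ decay of $V$, and the higher $k$ cases follow by induction via the equation together with the Fourier-side energy estimates. Your presentation differs only cosmetically from the paper (Sobolev embedding $H^1\hookrightarrow L^\infty$ for the cosine piece, where the paper uses the simpler $\|V\|_{L^\infty(\R_+)}$ together with an $L^2$ bound; Lemma \ref{lem:energy} applied with $\ell=1$ in place of the paper's direct Fourier-side estimate of $\|Au(t,\cdot)\|_{H^{k-1}(\R_+)}$, which are equivalent since $\partial_t^2 u = -Au$), and your explicit handling of the $V^{(k-1)}u$ Leibniz term in the inductive step is actually slightly more careful than what the paper spells out.
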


\begin{proof}
We start with the case $k=0$.
For the sine evolution we have
\begin{align*} \left \|\sqrt A\frac{\sin(t\sqrt A)}{\sqrt A}g\right \|_{L^2(\R_+)}^2=&\left \|\nabla \frac{\sin(t\sqrt A)}{\sqrt A}
g \right \|_{L^2(\R_+)}^2 \\
&+\int_0^\infty V(x)\left |\frac{\sin(t\sqrt A)}{\sqrt A}g(x)\right |^2 dx. 
\end{align*}
By Remark \ref{rem:sinL1Linf} and $|V(x)|\lesssim \langle x\rangle^{-2}$ we have
\begin{align*} 
\left \||V|^\frac12 \frac{\sin(t\sqrt A)}{\sqrt A}g\right \|_{L^2(\R_+)}& 
\lesssim \left \|\langle \cdot \rangle^\epsilon |V|^\frac12 \right \|_{L^2(\R_+)}\left \|\langle \cdot\rangle^{-\epsilon} 
\frac{\sin(t\sqrt A)}{\sqrt A}g\right \|_{L^\infty(\R_+)} \\
&\lesssim \|g\|_{L^1(\R_+)}
\end{align*}
and thus, Lemma \ref{lem:energy} yields
\[ \left \|\nabla \frac{\sin(t\sqrt A)}{\sqrt A}
g \right \|_{L^2(\R_+)}\lesssim \|g\|_{L^1(\R_+)}+\|g\|_{L^2(\R_+)}. \]
For the cosine evolution we infer $\|\nabla \cos(t\sqrt A)f\|_{L^2(\R_+)}\lesssim \|f\|_{H^1(\R_+)}$ by Lemma \ref{lem:energy}
since
\begin{align*} 
\int_0^\infty |V(x)||\cos(t\sqrt A)f(x)|^2 dx&\lesssim \|V\|_{L^\infty(\R_+)}\|\cos (t\sqrt A)f\|_{L^2(\R_+)}^2 \\
&\lesssim \|f\|_{L^2(\R_+)}^2.
\end{align*}

For the higher derivatives we note that
\[ \nabla^2 u(t,\cdot)=-A u(t,\cdot)+V u(t,\cdot) \]
and, more generally,
\[ \nabla^k \nabla u(t,\cdot)=-\nabla^{k-1}Au(t,\cdot)+\nabla^{k-1}[Vu(t,\cdot)] \]
for $k\in \N$.
Thus, the claim follows inductively provided we obtain a suitable bound for 
$\|Au(t,\cdot)\|_{H^{k-1}(\R_+)}$.
This, however, is easy since we have
\[ \|A\cos(t\sqrt A)f\|_{H^{k-1}(\R_+)}\simeq \|\langle \cdot \rangle^{k-1} |\cdot|^2 \cos(t|\cdot|)
\mc F f\|_{L^2_{\tilde \rho}(\R_+)}\lesssim \|f\|_{H^{1+k}(\R_+)} \]
as well as
\[ \left \|A\frac{\sin(t\sqrt A)}{\sqrt A}g \right \|_{H^{k-1}(\R_+)}
\simeq \|\langle \cdot \rangle^{k-1}|\cdot|\sin(t|\cdot|)\mc F g\|_{L^2_{\tilde \rho}(\R_+)}
\lesssim \|g\|_{H^k(\R_+)} \]
by Lemmas \ref{lem:derweis} and \ref{lem:mapF-1}.
\end{proof}

\section{Vector field bounds}
\label{sec:vf}

In this section we develop a vector field method on the distorted Fourier side. 
In order to motivate our approach, we consider the free wave equation on ($1+d$)-dimensional Minkowski space
$\R^{1+d}$.
Recall the well-known scaling vector field\footnote{Here we use the Einstein summation convention with latin indices running from $1$ to $d$
and greek indices running from $0$ to $d$ where $d$ denotes the spatial dimension.
As usual, we set $x^0=t$.
Furthermore, we use the Minkowski metric 
$(\eta^{\mu \nu})=(\eta_{\mu \nu})=\mathrm{diag}(
-1,1,\dots,1)$ and write $\partial_\mu=\frac{\partial}{\partial x^\mu}$.
We also set $x_\mu=\eta_{\mu \nu}x^\nu$ and $\partial^\mu=\eta^{\mu \nu}\partial_\nu$.}
$S=t\partial_t + x^j\partial_j$
which, in four-dimensional notation, may be written as 
$S=x^\mu \partial_\mu$. 
We have $\partial_\nu (x^\mu \partial_\mu)=\partial_\nu+x^\mu \partial_\mu \partial_\nu$
and thus,
\begin{align*} 
\Box S=\partial^\nu \partial_\nu+\partial^\nu(x^\mu \partial_\mu \partial_\nu)=\partial^\nu\partial_\nu
+\eta^{\mu \nu}\partial_\mu \partial_\nu+x^\mu \partial_\mu \partial^\nu \partial_\nu
=2\Box+S \Box
\end{align*}
which implies the crucial commutator relation $[S,\Box]=-2\Box$.
Observe that the vector field $S$ has a nice representation on the Fourier side as well.
Indeed, if
\[ \mc F_d f(x):=\int_{\R^d}e^{-2\pi i \xi_k x^k}f(x)dx \]
denotes that standard Fourier transform on $\R^d$, we obtain
\[ [\mc F_d Su(t,\cdot)](\xi)=(t\partial_t-\xi_j \partial_{\xi_j}-d)[\mc F_d u(t,\cdot)](\xi). \]
In sloppy (but probably more intuitive) notation this may be written as
\[ \mc F_d(t\partial_t+x^j\partial_{x^j})=(t\partial_t-\xi_j \partial_{\xi_j}-d)\mc F_d. \]
The point is that the (standard) Fourier transform of $-\Delta$ looks \emph{formally} 
the same as the distorted Fourier transform of $-\Delta+V$ (provided there is no negative spectrum).
Thus, it is natural to \emph{define} a ``vector field'' $\Gamma$ by its action on the distorted Fourier side
given by $t\partial_t-\xi_j \partial_{\xi_j}-d$.
In order for this to be useful, the difference between $S$ and $\Gamma$ must be well-behaved
(and, in a sense, negligible). 
In the following we develop these ideas rigorously.

\subsection{The operator $B$}
The operator $B$ is defined as the error one makes if one replaces $x\partial_x$ on the distorted
Fourier side by $-\xi\partial_\xi-1$. 
More precisely, we write $Df(x):=x f'(x)$ and define $B$ by
\begin{equation}
\label{def:B}
 \mc F D f=-D\mc F f-\mc F f+B \mc F f. 
 \end{equation}
Thus, $B$ acts on the distorted Fourier side and it measures the deviation from the 
free case. 
Since
\begin{align*} 
\mc FD f(\xi)&=\int_0^\infty \phi(x,\xi^2)xf'(x)dx \\
&=-\int_0^\infty x\partial_x \phi(x,\xi^2)f(x)dx-\mc F f(\xi)
\end{align*}
for $f\in \mc S(\R_+)$, say, we obtain the explicit representation
\[ B\mc F f(\xi)=\int_0^\infty (\xi\partial_\xi-x\partial_x)\phi(x,\xi^2)f(x)dx. \]
It is important to realize that the operator $\xi\partial_\xi-x\partial_x$ annihilates any 
function of the form $f(x\xi)$, in particular $e^{\pm ix\xi}$.
Our first goal is to show that
$B$ is bounded on $L^2_{\tilde \rho}(\R_+)$.

\subsection{Preliminaries from distribution theory}
For the following we use some elements of distribution theory.
To fix notation, we write $J:=(0,\infty)$, $\mc D(J):=C_c^\infty(J)$, and, as usual, we equip
$\mc D(J)$ with the inductive limit topology, i.e., $\varphi_n\to \varphi$ in $\mc D(J)$
means that there exists a compact set $K\subset J$ such that $\supp(\varphi_n)\subset K$
for all $n$ and $\|\varphi_n^{(k)}-\varphi^{(k)}\|_{L^\infty(K)}\to 0$ as $n\to\infty$
for any $k \in \N_0$.
The space of distributions (i.e., bounded linear functionals on $\mc D(J)$) is denoted
by $\mc D'(J)$ and equipped with the weak topology, i.e., $u_n\to u$ in $\mc D'(J)$ means
$|u_n(\varphi)-u(\varphi)|\to 0$ as $n\to\infty$ for any $\varphi \in \mc D(J)$.
Finally, the tensor product $\varphi \otimes \psi \in \mc D(J\times J)$ 
of two test functions $\varphi,\psi \in \mc D(J)$
is defined by $(\varphi \otimes \psi)(x,y):=\varphi(x)\psi(y)$.
Note that $B$ may be viewed as a map from $\mc D(J)$ to $\mc D'(J)$.

\begin{lemma}
\label{lem:Schwartzkernel}
There exists a (unique) distribution $K\in \mc D'(J\times J)$, the Schwartz kernel
of $B: \mc D(J)\to \mc D'(J)$, such that
\[ (Bf)(\varphi)=K(\varphi \otimes f) \]
for all $f, \varphi \in \mc D(J)$. 
\end{lemma}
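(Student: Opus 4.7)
My approach is to deduce the statement from the Schwartz kernel theorem, which guarantees existence and uniqueness of $K\in\mc D'(J\times J)$ as soon as $B$ is known to be a (sequentially) continuous linear map $\mc D(J)\to\mc D'(J)$. The work therefore reduces to checking that $B$ has this property; the kernel $K$ then drops out by the standard abstract argument. To make sense of $Bg$ for $g\in\mc D(J)$ I would first note that Lemma~\ref{lem:F-1Cinfc} gives $\mc F^{-1}g\in\mc S(\R_+)$, so $D\mc F^{-1}g\in\mc S(\R_+)$ as well, and the defining identity \eqref{def:B} applied to $h=\mc F^{-1}g$ yields the explicit representation $Bg=\mc F(D\mc F^{-1}g)+Dg+g$. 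The last two terms lie in $\mc D(J)$; the first is an $L^2_{\tilde\rho}$-function by Plancherel; all three pair against test functions, so $Bg$ is a bona fide element of $\mc D'(J)$.

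Next I would verify sequential continuity. Suppose $g_n\to g$ in $\mc D(J)$, so all supports are contained in a fixed compact $K\subset J$ and $g_n^{(k)}\to g^{(k)}$ uniformly for every $k\in\N_0$. The key step is the quantitative continuity of $\mc F^{-1}:\mc D(J)\to\mc S(\R_+)$: inspecting the proof of Lemma~\ref{lem:F-1Cinfc}, polynomial decay in $x$ comes from integrating by parts in $\xi$ on the region $x\xi\geq 1$ (where $\phi(x,\xi^2)$ is oscillatory), while derivatives in $x$ are traded for powers of $\xi$ using the eigenfunction equation $-\phi''+V\phi=\xi^2\phi$. This yields control of each Schwartz seminorm of $\mc F^{-1}(g_n-g)$ by finitely many $C^N(K)$-norms of $g_n-g$, so $\mc F^{-1}g_n\to\mc F^{-1}g$ in $\mc S(\R_+)$. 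Then $D\mc F^{-1}g_n\to D\mc F^{-1}g$ in $\mc S(\R_+)$ too, and Plancherel upgrades this to $\mc F(D\mc F^{-1}g_n)\to\mc F(D\mc F^{-1}g)$ in $L^2_{\tilde\rho}(\R_+)$. Together with the trivial convergence $Dg_n+g_n\to Dg+g$ in $\mc D(J)$, this gives $Bg_n\to Bg$ in $\mc D'(J)$.

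With these two ingredients in hand, the Schwartz kernel theorem (see e.g.\ H\"ormander, \emph{The Analysis of Linear Partial Differential Operators I}, Theorem~5.2.1) produces the required $K\in\mc D'(J\times J)$, uniqueness being automatic from that theorem. The only genuinely non-trivial point is the quantitative continuity of $\mc F^{-1}:\mc D(J)\to\mc S(\R_+)$; this amounts to a careful reading of Lemma~\ref{lem:F-1Cinfc} and is where the specific structure of $\phi(x,\xi^2)$ from Lemma~\ref{lem:phi} enters the argument.
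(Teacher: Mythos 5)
Your proof follows essentially the same route as the paper: both invoke the Schwartz kernel theorem and reduce the statement to verifying that $B:\mc D(J)\to\mc D'(J)$ is sequentially continuous, with Lemma~\ref{lem:F-1Cinfc} supplying the needed mapping property of $\mc F^{-1}$. You spell out the continuity step (via the decomposition $Bg=\mc F(D\mc F^{-1}g)+Dg+g$ and Plancherel) more explicitly than the paper's terse appeal to dominated convergence, but this is a clarification of the same argument rather than a different one.
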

 
\begin{proof}
By the Schwartz Kernel Theorem (see e.g.~\cite{Hor03}, p.~128, Theorem 5.2.1) it suffices
to prove that $B: \mc D(J)\to \mc D'(J)$ is continuous, i.e., that $f_n \to 0$ in $\mc D(J)$
implies $Bf_n \to 0$ in $\mc D'(J)$ but this follows immediately by Lemma \ref{lem:F-1Cinfc}
and dominated convergence. 
\end{proof} 
 
The next simple observation will allow us to separate the diagonal from the off-diagonal
behavior of the Schwartz kernel.
For $j=1,2$ we write $\pi_j: J\times J\to J$, $\pi_1(x,y):=x$, $\pi_2(x,y):=y$.
Note that $\pi_j \in C^\infty(J\times J)$. 

\begin{lemma}
\label{lem:Schwartzod}
Let $B': \mc D(J)\to \mc D'(J)$ be defined by
\[ B'f:=|\cdot|^2 Bf-B(|\cdot|^2 f). \]
Then $B'$ is continuous and its Schwartz kernel $K' \in \mc D'(J\times J)$ is given by
\[ K'=(\pi_1^2-\pi_2^2) K \]
where $K$ is the Schwartz kernel of $B$.
\end{lemma}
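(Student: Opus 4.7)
The plan is to reduce the lemma to a straightforward computation against tensor product test functions, using the Schwartz Kernel Theorem (as in Lemma \ref{lem:Schwartzkernel}) for both existence and uniqueness.

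First I would verify that $B'\colon\mc D(J)\to\mc D'(J)$ is continuous, which yields the existence of a Schwartz kernel $K'\in\mc D'(J\times J)$ by the Schwartz Kernel Theorem. This is easy: multiplication by the smooth function $|\cdot|^2$ maps $\mc D(J)$ continuously into itself, and it also acts continuously on $\mc D'(J)$ by the usual dual definition $(|\cdot|^2 u)(\psi):=u(|\cdot|^2\psi)$. Combined with the continuity of $B$ established in the proof of Lemma \ref{lem:Schwartzkernel}, both maps $f\mapsto |\cdot|^2 Bf$ and $f\mapsto B(|\cdot|^2 f)$ are continuous $\mc D(J)\to\mc D'(J)$, hence so is their difference $B'$.

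Next I would identify the kernel by testing on tensor products. For $\varphi,f\in\mc D(J)$, direct unpacking of the definitions gives
\[
(B'f)(\varphi)=(|\cdot|^2 Bf)(\varphi)-\bigl(B(|\cdot|^2 f)\bigr)(\varphi)
=(Bf)(|\cdot|^2\varphi)-\bigl(B(|\cdot|^2 f)\bigr)(\varphi),
\]
so by Lemma \ref{lem:Schwartzkernel},
\[
(B'f)(\varphi)=K\bigl((|\cdot|^2\varphi)\otimes f\bigr)-K\bigl(\varphi\otimes(|\cdot|^2 f)\bigr).
\]
Since $(|\cdot|^2\varphi)\otimes f=\pi_1^2\,(\varphi\otimes f)$ and $\varphi\otimes(|\cdot|^2 f)=\pi_2^2\,(\varphi\otimes f)$ as elements of $\mc D(J\times J)$, this rewrites as
\[
(B'f)(\varphi)=K\bigl((\pi_1^2-\pi_2^2)(\varphi\otimes f)\bigr)=\bigl[(\pi_1^2-\pi_2^2)K\bigr](\varphi\otimes f),
\]
where in the last step I use that $\pi_1^2-\pi_2^2\in C^\infty(J\times J)$, so multiplication by it on $\mc D'(J\times J)$ is defined by the dual pairing.

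Finally, the uniqueness part of the Schwartz Kernel Theorem shows that the Schwartz kernel of $B'$ is determined by its values on tensor products $\varphi\otimes f$, so the identity above forces $K'=(\pi_1^2-\pi_2^2)K$ in $\mc D'(J\times J)$. There is no real obstacle here beyond the bookkeeping of how $|\cdot|^2$ transfers between the two factors of the tensor product; the whole content of the lemma is this transfer, repackaged in kernel form.
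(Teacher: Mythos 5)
Your proof is correct and follows essentially the same route as the paper: test $B'$ against tensor products, use the kernel identity from Lemma \ref{lem:Schwartzkernel}, and transfer $|\cdot|^2$ between the two factors via $\pi_1^2,\pi_2^2$. You spell out the continuity of $B'$ and the uniqueness step a bit more explicitly than the paper does, but the argument is the same.
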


\begin{proof}
Let $f, \varphi \in \mc D(J)$. Then
\begin{align*} 
(|\cdot|^2 Bf)(\varphi)&=(Bf)(|\cdot|^2 \varphi)=K(|\cdot|^2 \varphi \otimes f)
=K(\pi_1^2(\varphi \otimes f)) \\
&=\pi_1^2 K(\varphi \otimes f)
\end{align*}
and
\begin{align*}
[B(|\cdot|^2 f)](\varphi)&=K(\varphi \otimes |\cdot|^2 f)=K(\pi_2^2(\varphi \otimes f)) \\
&=\pi_2^2 K(\varphi\otimes f).
\end{align*}
\end{proof}

\subsection{The kernel of $B$ away from the diagonal}
After these preliminaries we turn to more substantial issues.
The next result yields an explicit expression for 
the Schwartz kernel $K'$ from Lemma \ref{lem:Schwartzod}.

\begin{lemma}
\label{lem:F}
For $f\in C_c^\infty(0,\infty)$ we have
\[ \xi^2 Bf(\xi)-B(|\cdot|^2 f)(\xi)=2 \int_0^\infty 
F(\xi,\eta)\eta \rho(\eta^2)f(\eta)d\eta \]
where
\[ F(\xi,\eta)=\int_0^\infty \phi(x,\xi^2)\phi(x,\eta^2)U(x)dx \]
and $U(x)=-2V(x)-xV'(x)$.
\end{lemma}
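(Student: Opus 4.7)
The plan is to use the explicit representation $Bf(\xi) = \int_0^\infty h(x,\xi)\,g(x)\,dx$ with $h(x,\xi):=(\xi\partial_\xi-x\partial_x)\phi(x,\xi^2)$ and $g:=\mc F^{-1}f$, which by Lemma \ref{lem:F-1Cinfc} belongs to $\mc S(\R_+)$. The algebraic heart of the argument is a commutator identity. Differentiating $A_x\phi(x,\xi^2)=\xi^2\phi(x,\xi^2)$ in $\xi$ gives
\[ A_x(\xi\partial_\xi\phi)=2\xi^2\phi+\xi^2(\xi\partial_\xi\phi), \]
while a direct calculation with $D_x:=x\partial_x$ yields $[A,D_x]=2A+U$, so
\[ A_x(D_x\phi)=\xi^2 D_x\phi+2\xi^2\phi+U\phi. \]
Subtracting the two relations produces the key identity
\[ \xi^2 h(x,\xi)=A_x h(x,\xi)+U(x)\phi(x,\xi^2). \]

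Next I would multiply the integrand of $Bf(\xi)$ by $\xi^2$ and substitute this identity, obtaining
\[ \xi^2 Bf(\xi)=\int_0^\infty A_x h(x,\xi)\,g(x)\,dx+\int_0^\infty U(x)\phi(x,\xi^2)g(x)\,dx. \]
Two integrations by parts move $A_x$ onto $g$. The boundary terms at $x=0$ vanish because $\phi(0,\xi^2)=-1$ and $\phi_x(0,\xi^2)=0$ force $h(0,\xi)=h_x(0,\xi)=0$, while those at infinity vanish since $g,g'$ are Schwartz and $h$, $h_x$ have at most polynomial growth. Since $\mc F(Ag)=|\cdot|^2 f$, the integral against $Ag$ is precisely $B(|\cdot|^2 f)(\xi)$, yielding
\[ \xi^2 Bf(\xi)-B(|\cdot|^2 f)(\xi)=\int_0^\infty U(x)\phi(x,\xi^2)g(x)\,dx. \]

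Finally I would insert the inverse distorted Fourier representation $g(x)=\int_0^\infty\phi(x,\eta^2)f(\eta)\cdot 2\eta\rho(\eta^2)\,d\eta$ and interchange the order of integration. Fubini is legitimate for each fixed $\xi$: Hypothesis \ref{hyp:A} gives $|U(x)|\lesssim\langle x\rangle^{-2}$, while $f\in C^\infty_c(0,\infty)$ confines $\eta$ to a compact subset of $(0,\infty)$ on which $\eta\rho(\eta^2)$ is bounded; Corollary \ref{cor:phi} controls $\phi(x,\eta^2)$ by $\langle x\rangle^{1/2}$ uniformly in $\eta$ on that set, and Lemma \ref{lem:phi} shows $\phi(x,\xi^2)$ stays bounded in the oscillatory regime $x\xi\gtrsim 1$, so the double integral is absolutely convergent. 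This produces precisely the kernel $F(\xi,\eta)$ and the claimed formula. The main delicate point is the boundary-term bookkeeping in the integration by parts (which hinges on the two vanishing identities $h(0,\xi)=h_x(0,\xi)=0$); once these are observed, the rest is essentially an application of Fubini together with the commutator $[A,D_x]=2A+U$.
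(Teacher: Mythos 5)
Your proof is correct and follows essentially the same route as the paper: the engine is the commutator $[A,x\partial_x]=2A+U$, an integration by parts transferring $A$ between $(\xi\partial_\xi-x\partial_x)\phi$ and $\mc F^{-1}f$ (with vanishing boundary terms from $\phi'(0,\xi^2)=0$), and Fubini. Your reorganization around the pointwise identity $\xi^2 h=A_x h+U\phi$ before integrating is a slightly cleaner bookkeeping of the same algebra; in particular the observation that $h$ vanishes to second order at $x=0$ makes the boundary-term argument independent of $g$, which is a small improvement in presentation over the paper's terser justification.
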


\begin{proof}
Explicitly, we have
\[ Bf(\xi)=2\int_0^\infty (\xi\partial_\xi-x\partial_x)\phi(x,\xi^2)\int_0^\infty \phi(x,\eta^2)f(\eta)\eta \rho(\eta^2)
d\eta dx. \]
Since $f\in C^\infty_c(0,\infty)$ implies $\mc F^{-1}f \in \mc S(\R_+)$, $Bf(\xi)$ is well-defined
for all $\xi> 0$.
By recalling that $[-\partial_x^2+V(x)]\phi(x,\eta^2)=\eta^2 \phi(x,\eta^2)$, we infer
\begin{align*} 
B(|\cdot|^2 f)(\xi)&=2\int_0^\infty (\xi\partial_\xi-x\partial_x)\phi(x,\xi^2)
\int_0^\infty \phi(x,\eta^2)\eta^2 f(\eta)\eta \rho(\eta^2)
d\eta dx \\
&=2\int_0^\infty (\xi\partial_\xi-x\partial_x)\phi(x,\xi^2) \\
&\quad \times \int_0^\infty [-\partial_x^2+V(x)]\phi(x,\eta^2)f(\eta)\eta \rho(\eta^2)
d\eta dx.
\end{align*}
Furthermore, by dominated convergence we obtain
\begin{align*} 
\int_0^\infty &[-\partial_x^2+V(x)]\phi(x,\eta^2)f(\eta)\eta \rho(\eta^2)d\eta \\
&=[-\partial_x^2+V(x)]\int_0^\infty \phi(x,\eta^2)f(\eta)\eta \rho(\eta^2)
d\eta 
\end{align*}
and integration by parts with respect to $x$ yields
\begin{align*}
B(|\cdot|^2 f)(\xi)&=\int_0^\infty [-\partial_x^2+V(x)](\xi\partial_\xi-x\partial_x)\phi(x,\xi^2)\mc F^{-1}f(x)dx
\end{align*}
where the boundary terms vanish thanks to $\mc F^{-1}f\in \mc S(\R_+)$ and $\phi'(0,\xi^2)=0$
(which implies $(\mc F^{-1}f)'(0)=0$).
Now observe that 
\begin{align*}
[-\partial_x^2+V(x)]x\partial_x&=x\partial_x [-\partial_x^2+V(x)]-2\partial_x^2-xV'(x) \\
&=x\partial_x [-\partial_x^2+V(x)]+2[-\partial_x^2+V(x)]+U(x)
\end{align*}
where $U(x):=-2V(x)-xV'(x)$.
Consequently, we infer
\begin{align}
\label{eq:Beta} 
B(|\cdot|^2 f)(\xi)=&\int_0^\infty (\xi\partial_\xi-x\partial_x)[\xi^2 \phi(x,\xi^2)]\mc F^{-1}f(x)dx \nonumber \\
&-2\int_0^\infty \xi^2 \phi(x,\xi^2)\mc F^{-1}f(x)dx \nonumber \\
&-\int_0^\infty U(x)\phi(x,\xi^2)\mc F^{-1}f(x)dx.
\end{align}
On the other hand, we have
\begin{align}
\label{eq:Bxi} 
\xi^2 Bf(\xi)&=\int_0^\infty \xi^2 (\xi\partial_\xi-x\partial_x)\phi(x,\xi^2)\mc F^{-1}f(x)dx \nonumber \\
&=\int_0^\infty (\xi\partial_\xi-x\partial_x)[\xi^2 \phi(x,\xi^2)]\mc F^{-1}f(x)dx \nonumber \\
&\quad -2\int_0^\infty \xi^2 \phi(x,\xi^2)\mc F^{-1}f(x)dx
\end{align}
and subtracting Eq.~\eqref{eq:Beta} from Eq.~\eqref{eq:Bxi} we find
\[ \xi^2 Bf(\xi)-B(|\cdot|^2 f)(\xi)=2\int_0^\infty U(x)\phi(x,\xi^2)\int_0^\infty \phi(x,\eta^2)
f(\eta)\eta \rho(\eta^2)d\eta dx. \]
Since $U(x)=O(\langle x\rangle^{-2-\alpha})$ (note the ``magic'' cancellation) and $|\phi(x,\xi^2)|\lesssim \langle x\rangle^{\frac12+}$ 
for fixed $\xi$ by Lemma \ref{lem:phi}, we may apply Fubini to change the order of integration
and this yields
\[  \xi^2 Bf(\xi)-B(|\cdot|^2 f)(\xi)=2\int_0^\infty F(\xi,\eta)\eta \rho(\eta^2)f(\eta)d\eta \]
where
\[ F(\xi,\eta)=\int_0^\infty \phi(x,\xi^2)\phi(x,\eta^2)U(x)dx. \]
\end{proof}

As a consequence of Lemmas \ref{lem:F} and \ref{lem:Schwartzod}, the Schwartz kernel
$K$ of $B$ away from the diagonal is a function and given by
\begin{equation}
\label{eq:Schwartzkernel}
 (\xi^2-\eta^2)K(\xi,\eta)=2F(\xi,\eta)\eta\rho(\eta^2),\quad \xi \not= \eta. 
 \end{equation}

\subsection{Bounds for $F$}

Next, we establish bounds for the function $F$ in Lemma \ref{lem:F}.

\begin{lemma}
\label{lem:boundsF}
The function $F$ given in Lemma \ref{lem:F} satisfies the bounds
\[ |F(\xi,\eta)|\lesssim 1,\quad |\partial_\xi F(\xi,\eta)|+|\partial_\eta F(\xi,\eta)|
\lesssim \xi^{-1}\langle \xi\rangle+\eta^{-1}\langle \eta \rangle \]
for all $\xi,\eta>0$.
\end{lemma}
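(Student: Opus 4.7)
The plan is to verify the cancellation $|U(x)|\lesssim \langle x\rangle^{-2-\alpha}$ and then bound $F$ and its derivatives by a careful case split of the $x$-integral across the oscillatory/nonoscillatory thresholds $\xi^{-1}$, $\eta^{-1}$ dictated by Lemmas \ref{lem:ensm} and \ref{lem:phi}, exploiting the symbol behavior of the $O$-terms. For $U$: from Hypothesis \ref{hyp:A}, $V(x)=-\tfrac14 x^{-2}[1+O(x^{-\alpha})]$ with symbol-type $O$-term, so $V'(x)=\tfrac12 x^{-3}[1+O(x^{-\alpha})]+O(x^{-3-\alpha})$ and hence $xV'(x)=\tfrac12 x^{-2}[1+O(x^{-\alpha})]$. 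The leading $\tfrac12 x^{-2}$ contributions in $-2V(x)$ and $-xV'(x)$ cancel, leaving $U(x)=O(x^{-2-\alpha})$ for $x\geq 1$; since $V\in C^\infty([0,\infty))$, $U$ is smooth on $[0,1]$, so $|U(x)|\lesssim \langle x\rangle^{-2-\alpha}$ throughout.

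Next, to prove $|F(\xi,\eta)|\lesssim 1$, the symmetry $F(\xi,\eta)=F(\eta,\xi)$ allows me to assume $\xi\leq \eta$. The case $\eta\geq \tfrac12$ is easy since $|\phi(x,\eta^2)|\lesssim 1$ uniformly by Lemma \ref{lem:phi}; combined with Corollary \ref{cor:phi} and the decay of $U$, a short sub-split into $\xi<\tfrac12$ and $\xi\geq\tfrac12$ gives the bound. For $\xi\leq \eta\leq \tfrac12$, I split at $2$, $\eta^{-1}$, and $\xi^{-1}$. On $[0,2]$, $|\phi(x,\cdot)|\lesssim 1$ by Lemma \ref{lem:ensm}. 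On $[2,\eta^{-1}]$ both factors are nonoscillatory; here I use the \emph{pointwise} bound $|\phi(x,\cdot)|\lesssim x^{1/2}(|\log x|+1)$ from Lemmas \ref{lem:en0}--\ref{lem:ensm} (sharper than Corollary \ref{cor:phi} in this regime), so the $x$-dependent logarithm is tamed by $x^{-2-\alpha}$ and the integral is $O(1)$. On $[\eta^{-1},\xi^{-1}]$, the same nonoscillatory bound for $\phi(x,\xi^2)$ together with the oscillatory bound $|\phi(x,\eta^2)|\lesssim \eta^{-1/2}(|\log\eta|+1)$ from Lemma \ref{lem:phi} produce a contribution of order $\eta^\alpha(\log\eta)^2\lesssim 1$. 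On $[\xi^{-1},\infty)$, both factors are oscillatory of amplitude $\lesssim \xi^{-1/2}\eta^{-1/2}(\log\xi)(\log\eta)$, and $\int_{\xi^{-1}}^\infty x^{-2-\alpha}dx\lesssim \xi^{1+\alpha}$ yields $\xi^{1/2+\alpha}\eta^{-1/2}(\log\xi)(\log\eta)\leq \xi^\alpha(\log\xi)^2\lesssim 1$ using $\xi\leq \eta$.

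For the derivative bound, by symmetry I only need to control $|\partial_\xi F(\xi,\eta)|\lesssim \xi^{-1}\langle\xi\rangle$. Differentiating under the integral gives
\[ \partial_\xi F(\xi,\eta)=\int_0^\infty [\partial_\xi\phi(x,\xi^2)]\,\phi(x,\eta^2)\,U(x)\,dx. \]
Since all $O$-terms in Lemmas \ref{lem:ensm} and \ref{lem:phi} are of symbol type in $\xi$, differentiating each pointwise bound of the previous step costs at most one factor of $\xi^{-1}$ for small $\xi$. On the oscillatory region $x\geq \xi^{-1}$, the two new contributions are $a'(\xi)e^{i\xi x}$ with $|a'(\xi)|\lesssim \xi^{-3/2}(|\log\xi|+1)$ (one additional $\xi^{-1}$), and $ix\cdot a(\xi)e^{i\xi x}$, in which the extra $x$ is absorbed by the decay of $U$: $\int_{\xi^{-1}}^\infty x\cdot x^{-2-\alpha}dx\lesssim \xi^\alpha$, which again yields at worst a $\xi^{-1}$ loss. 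For large $\xi\geq \tfrac12$, Lemma \ref{lem:phi} gives $|\partial_\xi\phi(x,\xi^2)|\lesssim \langle x\rangle$ (the phase derivative $ix\cdot e^{i\xi x}$ dominates), and $\int\langle x\rangle\langle x\rangle^{-2-\alpha}dx\lesssim 1$ matches the target $\xi^{-1}\langle\xi\rangle\simeq 1$.

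The main obstacle throughout is the careful bookkeeping of logarithmic factors in the asymptotics of $\phi(x,\xi^2)$ for small $\xi$. The key observation is that one must use the $x$-dependent pointwise bound $x^{1/2}(|\log x|+1)$ on the nonoscillatory region, rather than the uniform-in-$x$ estimate of Corollary \ref{cor:phi}, whose $|\log\xi|$ factor would fail to produce a bound uniform as $\xi\to 0+$.
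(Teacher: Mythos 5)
The first bound, $|F(\xi,\eta)|\lesssim 1$, is handled correctly, by a route essentially equivalent to the paper's: where the paper inserts smooth cut-offs $\chi(x\xi)$, $\chi(x\eta)$ and catalogues nine cases, you split the $x$-integral at $2$, $\eta^{-1}$, $\xi^{-1}$ and carry the pointwise $x$-dependent bound $|\phi(x,\cdot)|\lesssim \langle x\rangle^{1/2}\langle\log\langle x\rangle\rangle$ through the nonoscillatory regime. Your observation that Corollary \ref{cor:phi} with its $|\log\xi|$ factor would not close the estimate is exactly right, and your arithmetic ($\eta^{\alpha}\log^2\eta$, $\xi^{\alpha}\log^2\xi$) is correct.

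The derivative estimate, however, has a genuine gap. You reduce, ``by symmetry,'' to proving $|\partial_\xi F(\xi,\eta)|\lesssim \xi^{-1}\langle\xi\rangle$. This is a \emph{strictly stronger} claim than what the lemma asserts, and it is not what your subsequent computation establishes. The issue is that after differentiating $\phi(x,\xi^2)$ in $\xi$, the factor $\phi(x,\eta^2)$ is still present in the integrand and you systematically drop it: you estimate $\int_{\xi^{-1}}^\infty x\cdot x^{-2-\alpha}\,dx$ and, for $\xi\geq\tfrac12$, $\int \langle x\rangle \langle x\rangle^{-2-\alpha}\,dx$, with no $\phi(x,\eta^2)$ contribution at all. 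When $\eta$ is small, $|\phi(x,\eta^2)|$ grows like $\langle x\rangle^{1/2}\langle\log\langle x\rangle\rangle$ for $x\lesssim\eta^{-1}$, and once this factor is restored, the contributions from the regions where $x$ is comparable to $\eta^{-1}$ are only $O(\eta^{-1})$, not $O(\xi^{-1}\langle\xi\rangle)$. Concretely, when $\eta<\xi\leq\tfrac12$, the range $[\xi^{-1},\eta^{-1}]$ contributes $\simeq \xi^{-1/2}\eta^{-1/2+\alpha}\langle\log\xi\rangle\langle\log\eta\rangle$ from your ``$ix\,a(\xi)e^{i\xi x}$'' term, which blows up as $\eta\to 0$ with $\xi$ fixed; and when $\xi\geq\tfrac12$, $\eta\to 0$, the paper itself only proves $|\partial_\xi F_5|,|\partial_\xi F_7|\lesssim \xi^{-1}+\eta^{-1}$, not $\lesssim 1$. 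The correct symmetry reduction is to prove $|\partial_\xi F(\xi,\eta)|\lesssim \xi^{-1}\langle\xi\rangle+\eta^{-1}\langle\eta\rangle$ (the right-hand side is symmetric, so this does yield the same bound for $\partial_\eta F$), and one must then carry the $\phi(x,\eta^2)$ factor through every region. You could rescue the stronger one-variable bound only by a genuinely different argument (integration by parts in $x$ exploiting the $e^{i\xi x}$ oscillation against the nonoscillatory $\phi(x,\eta^2)$), but that is not what you wrote, and it is more than the lemma asks for. As written, the derivative part of your proof does not close.
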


\begin{proof}
We distinguish different cases and to this end we use the cut-off $\chi$ from the proof of
Lemma \ref{lem:sinL1Linf}.
We set
\[ F_1(\xi,\eta):=\int_0^\infty \chi(\xi)\chi(\eta)\chi(x\xi)\chi(x\eta)\phi(x,\xi^2)\phi(x,\eta^2)U(x)dx. \]
By Lemma \ref{lem:phi} we have $\chi(\xi)\chi(x\xi)\phi(x,\xi^2)=O(\langle x \rangle^{\frac12+}\xi^0)$
where the $O$-term behaves like a symbol.
Thus, since $U(x)=O(\langle x\rangle^{-2-\alpha})$, we infer
\[ F_1(\xi,\eta)=\int_0^\infty O(\langle x\rangle^{-1-}\xi^0 \eta^0)dx \]
with an $O$-term of symbol type.
This yields $|F_1(\xi,\eta)|\lesssim 1$ and
\[ |\partial_\xi F_1(\xi,\eta)|+|\partial_\eta F_1(\xi,\eta)|\lesssim \xi^{-1}+\eta^{-1}. \]

Next, we consider
\[ F_2(\xi,\eta):=\int_0^\infty \chi(\xi)\chi(\eta)[1-\chi(x\xi)]\chi(x\eta)\phi(x,\xi^2)\phi(x,\eta^2)U(x)dx. \]
By Lemma \ref{lem:phi}, $F_2$ is composed of terms of the form
\[ I_\pm(\xi,\eta)=\int_0^\infty e^{\pm ix\xi}[1-\chi(x\xi)]\chi(x\eta)O_\C(\langle x\rangle^{-1-}\xi^0 \eta^0)dx \]
where the $O_\C$-term behaves like a symbol.
Consequently, we infer 
\[ |I_\pm(\xi,\eta)|\lesssim 1,\quad |\partial_\eta I_\pm(\xi,\eta)|\lesssim \eta^{-1} \] as well
as $|\partial_\xi I_\pm(\xi,\eta)|\lesssim \xi^{-1}+\eta^{-1}$.
By symmetry, we also obtain the desired bounds for $F_3(\xi,\eta):=F_2(\eta,\xi)$.

We continue with
\[ F_4(\xi,\eta):=\int_0^\infty \chi(\xi)\chi(\eta)[1-\chi(x\xi)][1-\chi(x\eta)]\phi(x,\xi^2)\phi(x,\eta^2)U(x)dx. \]
According to Lemma \ref{lem:phi}, $F_4$ is composed of $I_\pm$ and $\overline{I_\pm}$ where
\[ I_\pm(\xi,\eta):=\int_0^\infty e^{ix(\xi\pm\eta)}[1-\chi(x\xi)][1-\chi(x\eta)]O_\C(\langle x\rangle^{-2-}\xi^{-\frac12}
 \eta^{-\frac12})dx \]
with an $O_\C$-term of symbol type.
This yields $|I_\pm(\xi,\eta)|\lesssim 1$ and
\[ |\partial_\xi I_\pm(\xi,\eta)|+|\partial_\eta I_\pm(\xi,\eta)|\lesssim \xi^{-1}+\eta^{-1} \]
as desired.

Next, we study
\[ F_5(\xi,\eta):=\int_0^\infty [1-\chi(\xi)]\chi(\eta)\chi(x\eta)\phi(x,\xi^2)\phi(x,\eta^2)U(x)dx. \]
By Lemma \ref{lem:phi} this reduces to estimate
\[ I_\pm(\xi,\eta):=\int_0^\infty e^{\pm ix\xi}\chi(\eta)\chi(x\eta)O_\C(\langle x\rangle^{-\frac32-}\xi^0\eta^0)dx \]
with an $O_\C$-term of symbol type.
We obtain $|I_\pm(\xi,\eta)|\lesssim 1$ and
\[ |\partial_\xi I_\pm(\xi,\eta)|\lesssim \xi^{-1}+\eta^{-1},\quad |\partial_\eta I_\pm(\xi,\eta)|\lesssim \eta^{-1}
\lesssim \xi^{-1}+\eta^{-1}. \]
By symmetry, we have the same bounds for $F_6(\xi,\eta):=F_5(\eta,\xi)$.

By Lemma \ref{lem:phi}, the case
\[ F_7(\xi,\eta):= \int_0^\infty [1-\chi(\xi)]\chi(\eta)[1-\chi(x\eta)]
\phi(x,\xi^2)\phi(x,\eta^2)U(x)dx \]
reduces to the study of
\[ I_\pm(\xi,\eta):=\int_0^\infty e^{ix(\xi\pm \eta)}\chi(\eta)[1-\chi(x\eta)]O_\C(\langle x\rangle^{-2-}\xi^0\eta^{-\frac12})dx \]
with an $O_\C$-term of symbol type.
Consequently, we infer $|I_\pm(\xi,\eta)|\lesssim 1$ and
\[ |\partial_\xi I_\pm(\xi,\eta)|\lesssim \xi^{-1}+\eta^{-1},\quad
|\partial_\eta I_\pm(\xi,\eta)|\lesssim \eta^{-1}\lesssim \xi^{-1}+\eta^{-1} \]
and the same bounds are true for $F_8(\xi,\eta):=F_7(\eta,\xi)$.

Finally, it remains to estimate
\[ F_9(\xi,\eta):=\int_0^\infty [1-\chi(\xi)][1-\chi(\eta)]
\phi(x,\xi^2)\phi(x,\eta^2)U(x)dx \]
which reduces to
\[ I_\pm(\xi,\eta):=\int_0^\infty e^{ix(\xi\pm \eta)}O_\C(\langle x\rangle^{-2-}\xi^0\eta^0)dx \]
where the $O_\C$-term behaves like a symbol.
We infer
\[ |I_\pm(\xi,\eta)|\lesssim 1,\quad |\partial_\xi I_\pm(\xi,\eta)|\lesssim 1, \quad 
|\partial_\eta I_\pm(\xi,\eta)|\lesssim 1 \]
and, since $F=\sum_{k=1}^9 F_k$, this finishes the proof.
\end{proof}

\subsection{Representation as a singular integral operator}
Our next goal is to show that the ``off-diagonal part'' of $B$ can be realized
as a singular integral operator which is bounded on $L^2_{\tilde \rho}(\R_+)$.
We will make use of the following result.

\begin{lemma}
\label{lem:L2bound}
Suppose $T: \mc D(T)\subset L^2(\R_+)\to L^2(\R_+)$ is given by
\[ \mc D(T):=C_c^\infty(0,\infty),\quad Tf(x):=\int_0^\infty K(x,y)f(y)dy \]
where the kernel $K \in L^1_{\mathrm{loc}}((0,\infty)\times (0,\infty))$ satisfies the pointwise bound
\[ |K(x,y)|\lesssim \min\{x^{-1+\delta}y^{-\delta},x^{-\delta}y^{-1+\delta}\} \]
for all $x,y>0$ and some fixed $\delta \in [0,\frac12)$.
Then $T$ extends to a bounded operator on $L^2(\R_+)$.
\end{lemma}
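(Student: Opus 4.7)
The plan is to invoke Schur's test with the self-dual weight $w(x):=x^{-1/2}$. Since the kernel bound is symmetric in the roles of $x$ and $y$ (swapping them interchanges the two quantities in the $\min$), it suffices to verify only one of the two Schur inequalities; the other follows by symmetry. The assumption $\delta<\tfrac12$ will be exactly what makes the relevant integrals converge both at $0$ and at $\infty$.

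Concretely, I would split the defining integral at $y=x$ and use the first branch of the $\min$ for $y\leq x$ and the second for $y\geq x$. This gives
\begin{align*}
\int_0^\infty |K(x,y)|\,y^{-1/2}\,dy
&\lesssim x^{-1+\delta}\int_0^x y^{-\delta-1/2}\,dy
+x^{-\delta}\int_x^\infty y^{-3/2+\delta}\,dy \\
&= \frac{x^{-1/2}}{\tfrac12-\delta}+\frac{x^{-1/2}}{\tfrac12-\delta}
=\frac{2}{1-2\delta}\,x^{-1/2},
\end{align*}
where both integrations rely on $\delta\in[0,\tfrac12)$ to guarantee integrability at $0$ and $\infty$ respectively. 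By the obvious symmetry of the pointwise kernel bound under $(x,y)\mapsto(y,x)$, the analogous estimate with the roles of the variables exchanged also holds with the same constant.

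Schur's test (see, e.g., any standard reference on integral operators) then asserts that $T$ extends to a bounded operator on $L^2(\R_+)$ with $\|T\|_{L^2\to L^2}\lesssim \frac{1}{1-2\delta}$. To make the application rigorous at the level of the stated domain $\mc D(T)=C_c^\infty(0,\infty)$, I would first note that the pointwise bound together with Tonelli yields absolute convergence of the double integral defining $\langle Tf,g\rangle$ for $f,g\in C_c^\infty(0,\infty)$, then apply Cauchy--Schwarz to the factorization $|K(x,y)|=(|K|^{1/2}w^{1/2}w^{-1/2})(|K|^{1/2}w^{-1/2}w^{1/2})$ inherent to Schur's test.

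There is essentially no obstacle here beyond checking the arithmetic of the exponents; the only point requiring care is that the upper threshold $\delta<\tfrac12$ is used twice (once at each endpoint of the split integral), which is precisely why this is the sharp range for the argument to go through with the weight $x^{-1/2}$.
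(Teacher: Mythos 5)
Your proof is correct, but it takes a genuinely different route from the paper. You apply Schur's test with the self-dual power weight $w(x)=x^{-1/2}$: splitting the Schur integral at $y=x$ and using the appropriate branch of the $\min$ on each piece gives $\int_0^\infty |K(x,y)|\,y^{-1/2}\,dy\lesssim\frac{1}{1-2\delta}\,x^{-1/2}$, with the companion inequality following by the symmetry of the kernel bound under $(x,y)\mapsto(y,x)$. The paper instead dyadically decomposes $\R_+$ into annuli $I_j=[2^{j-1},2^{j+1}]$, bounds the Hilbert--Schmidt norm of each block $T_{jk}$ by $\|T_{jk}\|_{L^2}\lesssim 2^{-(\frac12-\delta)|j-k|}$, and closes the argument with Young's convolution inequality on $\ell^2(\Z)$; the paper's opening remark that ``a naive application of Cauchy--Schwarz leads to logarithmic divergencies'' motivates that decomposition, and your weighted Schur test is precisely the standard device for killing the same divergence without dyadics. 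Both arguments use $\delta<\frac12$ in exactly the same way (convergence at both endpoints), and both yield a norm bound blowing up like $(\frac12-\delta)^{-1}$ as $\delta\to\frac12^-$. Your route is shorter and gives the constant explicitly; the paper's route is more mechanical and generalizes more readily to situations where no single global weight works. Your final paragraph correctly addresses the domain issue and the Tonelli/Cauchy--Schwarz bookkeeping implicit in Schur's test, so no gaps remain.
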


\begin{proof}
A naive application of Cauchy-Schwarz leads to logarithmic divergencies and thus, we need
to introduce a dyadic decomposition.
We set $I_j:=[2^{j-1},2^{j+1}]$, $j\in \Z$, and write $1_j$ for the characteristic function
of $I_j$.
Furthermore, we abbreviate $L^2:=L^2(\R_+)$.
Now observe that $T f=\frac12 \sum_{k\in\Z}T(1_k f)$ and thus,
\begin{align*}
\|T f\|_{L^2}^2 &=\tfrac12 \sum_{j\in\Z}\|1_j T f\|_{L^2}^2
=\tfrac18 \sum_{j\in\Z}\left \|\sum_{k\in\Z}1_j T(1_k f) \right \|_{L^2}^2 \\
&\leq \sum_{j\in\Z} \left (\sum_{k\in \Z} \|1_j T(1_k f)\|_{L^2}\right )^2.
\end{align*}
This yields
\[ \|T f\|_{L^2}\leq \left ( \sum_{j\in \Z} \left |\sum_{k\in\Z}
\|T_{jk}\|_{L^2}\|1_k f\|_{L^2} \right |^2 \right )^{1/2} \]
where $T_{jk}$ has the kernel $1_j(x)K(x,y)1_k(y)$.
If $j\geq k$ we use $|K(x,y)|\lesssim x^{-1+\delta}y^{-\delta}$
to conclude
\begin{align*} 
\|T_{jk}\|_{L^2}^2&\leq \int_0^\infty \int_0^\infty 1_j(x)|K(x,y)|^2 1_k(y)dy dx \\
&\lesssim 2^{j+k}2^{2(-1+\delta)j}2^{-2\delta k}=2^{-(1-2\delta)(j-k)}.
\end{align*}
If $j<k$ we use $|K(x,y)|\lesssim x^{-\delta}y^{-1+\delta}$ and infer $\|T_{jk}\|_{L^2}^2\lesssim 2^{-(1-2\delta)(k-j)}$.
Consequently, we obtain
\[ \|T f\|_{L^2}\lesssim \left ( \sum_{j\in \Z} \left |\sum_{k\in\Z}
2^{-(\frac12-\delta)|j-k|}\|1_k f\|_{L^2} \right |^2 \right )^{1/2}. \]
The expression on the right-hand side is the $\ell^2(\Z)$-norm of the convolution
of $(2^{-(\frac12-\delta)|k|})_k$ with $(\|1_k f\|_{L^2})_k$ and by assumption on $\delta$,
$(2^{-(\frac12-\delta) |k|})_k$
belongs to $\ell^1(\Z)$. Consequently, Young's inequality yields
\[ \|T f\|_{L^2}\lesssim \left (\sum_{k\in \Z} \|1_k f\|_{L^2}^2 \right )^{1/2}=\sqrt 2 \|f\|_{L^2}. \]
\end{proof}

\begin{proposition}
\label{prop:B0}
The singular integral operator
\[ B_0 f(\xi):=2\int_0^\infty \frac{F(\xi,\eta)}{\xi^2-\eta^2}\eta \rho(\eta^2)f(\eta)d\eta \]
exists for $f\in C_c^\infty(0,\infty)$ in the principal value sense and extends to a bounded
operator on $L^2_{\tilde \rho}(\R_+)$.
\end{proposition}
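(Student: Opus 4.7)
The plan is to exhibit $B_0$ as a Hilbert-transform-type singular integral along the diagonal $\xi = \eta$, plus a regular remainder amenable to Lemma \ref{lem:L2bound}. Using $\xi^2 - \eta^2 = (\xi-\eta)(\xi+\eta)$, I absorb the smooth factor $(\xi+\eta)^{-1}$ together with the measure weight into
\[ G(\xi,\eta) := \frac{2\, F(\xi,\eta)\, \eta\,\rho(\eta^2)}{\xi + \eta}, \]
so that $B_0 f(\xi) = \mathrm{p.v.}\int_0^\infty G(\xi,\eta)(\xi-\eta)^{-1} f(\eta)\, d\eta$. From Lemma \ref{lem:boundsF} and the asymptotics of $\rho$ in Lemma \ref{lem:rho}, one has $|G(\xi,\eta)| \lesssim 1$ while both partial derivatives $\partial_\xi G, \partial_\eta G$ grow at worst like $\xi^{-1}+\eta^{-1}$ near the origin; in particular $G(\xi,\cdot)$ is Lipschitz on every compact subinterval of $(0,\infty)$.

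To make sense of the principal value for $f \in C_c^\infty(0,\infty)$, decompose $G(\xi,\eta) = G(\xi,\xi) + [G(\xi,\eta) - G(\xi,\xi)]$ and write $B_0 = B_0^{\mathrm{H}} + B_0^{\mathrm{reg}}$, where
\begin{align*}
B_0^{\mathrm{H}} f(\xi) &:= G(\xi,\xi)\, \mathrm{p.v.}\!\int_0^\infty \frac{f(\eta)}{\xi - \eta}\, d\eta, \\
B_0^{\mathrm{reg}} f(\xi) &:= \int_0^\infty \frac{G(\xi,\eta) - G(\xi,\xi)}{\xi - \eta}\, f(\eta)\, d\eta.
\end{align*}
The Lipschitz bound on $G(\xi,\cdot)$ renders $B_0^{\mathrm{reg}} f$ an absolutely convergent integral, while the principal value defining $B_0^{\mathrm{H}} f$ exists by the classical theory of the Hilbert transform applied to the zero-extension of $f$ to $\R$.

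For the $L^2_{\tilde\rho}$ boundedness, I would pass to the unweighted side via $h(\xi) := \tilde\rho(\xi)^{1/2} f(\xi)$, so that $B_0$ corresponds on $L^2(\R_+)$ to the operator with kernel
\[ \widetilde K(\xi,\eta) := \frac{\tilde\rho(\xi)^{1/2}}{\tilde\rho(\eta)^{1/2}} \cdot \frac{G(\xi,\eta)}{\xi - \eta}. \]
For $B_0^{\mathrm{H}}$, I combine the classical $L^2(\R)$ boundedness of the Hilbert transform with the fact that the weight ratio $[\tilde\rho(\xi)/\tilde\rho(\eta)]^{1/2}$ is comparable to $1$ on the dyadic annuli $|\xi-\eta|\lesssim \max(\xi,\eta)$ where the singular kernel concentrates (this uses $\tilde\rho(\xi)\simeq \min(\xi,1)$, up to possible logarithmic corrections, from Lemma \ref{lem:rho}); the far off-diagonal contribution, where the weight ratio is unbounded, has no Hilbert singularity and can be folded into the regular piece. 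For $B_0^{\mathrm{reg}}$, the estimate $|G(\xi,\eta)-G(\xi,\xi)|/|\xi-\eta|\lesssim \sup_\zeta |\partial_\eta G(\xi,\zeta)|$ and the off-diagonal bound $|G(\xi,\eta)/(\xi-\eta)|\lesssim|\xi-\eta|^{-1}$, combined with the weight factors, yield a kernel satisfying the hypothesis of Lemma \ref{lem:L2bound} for some $\delta\in(0,\tfrac12)$.

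The main obstacle is the fine bookkeeping in the low-frequency regime $\xi,\eta\lesssim 1$, where $\tilde\rho$ degenerates linearly (with a potential $(\log \xi)^{-2}$ factor in the resonant case) and the derivative bound on $G$ inherits a $\xi^{-1}$ singularity from Lemma \ref{lem:boundsF}. The remedy is to treat the regimes $\max(\xi,\eta)\le 1$ and $\max(\xi,\eta)\ge 1$ separately, verifying in each the Schur-type pointwise bound required by Lemma \ref{lem:L2bound}, and to exploit the extra cancellation $F(\xi,\eta)-F(\xi,\xi) = O(|\xi-\eta|(\xi^{-1}+\eta^{-1}))$ to gain the needed $\delta$ of decay on the small side.
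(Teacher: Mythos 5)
Your proposal is essentially the paper's own strategy, just with the two basic moves (Taylor-expansion of the smooth factor along the diagonal, and the near/far-diagonal split) applied in the opposite order. The paper first introduces the cutoff $\chi\bigl(4(\xi-\eta)^2/(\xi+\eta)^2\bigr)$ to restrict the singular kernel to the near-diagonal region $|\xi-\eta|\lesssim\xi+\eta$, bounds the far part via Lemma \ref{lem:L2bound} (with $\delta=0$), and only on the near part writes $G(\xi,\eta)=G(\xi,\xi)+(\xi-\eta)G_1(\xi,\eta)$; the near part is then organized into dyadic blocks $I_j\times I_j$ with $I_j=[2^{j-1},2^{j+1}]$, and each block is bounded uniformly as a Hilbert transform piece plus a Hilbert--Schmidt piece. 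You instead perform the Taylor expansion globally and only then split. That works, but two of your phrases need to be made precise to close the argument. First, the far-off-diagonal part of $B_0^{\mathrm H}$ is \emph{not} individually Schur-bounded: with the weight $\tilde\rho(\xi)^{1/2}/\tilde\rho(\eta)^{1/2}$ it blows up like $\eta^{-1/2}|\log\eta|$ as $\eta\to 0$ with $\xi\simeq 1$, which fails the hypothesis of Lemma \ref{lem:L2bound} for every admissible $\delta<\tfrac12$. ``Folding it into the regular piece'' only works if you literally recombine it with the far-off-diagonal part of $B_0^{\mathrm{reg}}$, so that the two pieces reassemble to the far-diagonal kernel of $B_0$, which \emph{is} Schur-bounded by $\xi^{1/2}\eta^{1/2}/(\xi^2+\eta^2)\lesssim\min\{\xi^{-1},\eta^{-1}\}$ — exactly the bound the paper records. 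Second, ``weight ratio comparable to $1$ on the near-diagonal'' is a pointwise equivalence, not an identity; to pass to the classical Hilbert transform you either need the dyadic-block argument of the paper (operator norm uniform in $j$ on $I_j\times I_j$, then almost-orthogonality in $j$), or you should split $\tilde\rho(\xi)^{1/2}/\tilde\rho(\eta)^{1/2}=1+O(|\xi-\eta|/\eta)$ using the symbol behavior of $\tilde\rho$ and absorb the correction, together with the remaining near-diagonal part of $B_0^{\mathrm{reg}}$, into the Schur-bounded regular part. With those two steps spelled out, your proof and the paper's coincide.
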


\begin{proof}
We attach the weight $\tilde \rho$ to the kernel
and distinguish between diagonal and off-diagonal behavior. To this end we set\footnote{$B_d f$ is interpreted in the principal value sense.}
\begin{align*} B_d f(\xi)&:=\int_0^\infty \chi\left (\frac{4(\xi-\eta)^2}{(\xi+\eta)^2}\right )
\tilde \rho(\xi)^\frac12 \frac{F(\xi,\eta)}{\xi^2-\eta^2}\tilde \rho(\eta)^\frac12 f(\eta)d\eta  \\
B_{nd}f(\xi)&:=\int_0^\infty \left [1- \chi\left (\frac{4(\xi-\eta)^2}{(\xi+\eta)^2}\right ) \right ]
\tilde \rho(\xi)^\frac12 \frac{F(\xi,\eta)}{\xi^2-\eta^2}\tilde \rho(\eta)^\frac12 f(\eta)d\eta
\end{align*}
and our goal is to prove that $B_d$ and $B_{nd}$, initially defined on $C_c^\infty(0,\infty)$, extend to bounded operators on $L^2(\R_+)$.
We start with the off-diagonal part.
On the support of the cut-off in the definition of $B_{nd}$ we have $|\xi-\eta|\gtrsim \xi+\eta$ and thus,
from Lemma \ref{lem:boundsF} we infer
\[ \left [1- \chi\left (\frac{4(\xi-\eta)^2}{(\xi+\eta)^2}\right ) \right ]
\left |\tilde \rho(\xi)^\frac12 \frac{F(\xi,\eta)}{\xi^2-\eta^2}\tilde \rho(\eta)^\frac12 \right | 
\lesssim \frac{\xi^\frac12 \eta^\frac12}{\xi^2+\eta^2}. \]
If $\xi\geq \eta$ we have
\[ \frac{\xi^\frac12 \eta^\frac12}{\xi^2+\eta^2}
\leq \xi^{-\frac32}\eta^\frac12\leq \xi^{-1}\leq \min\{\xi^{-1},\eta^{-1}\} \]
and this estimate is symmetric in $\xi$ and $\eta$.
Consequently, Lemma \ref{lem:L2bound} yields $\|B_{nd}f\|_{L^2(\R_+)}\lesssim \|f\|_{L^2(\R_+)}$.

Thus, it remains to study the diagonal part.
To this end we employ a dyadic covering of the diagonal given by $I_j \times I_j$
with $I_j=[2^{j-1},2^{j+1}]$, $j\in \Z$, and we write $1_j$ for the characteristic function of $I_j$.
We set 
\[ G(\xi,\eta):=\chi\left (\frac{4(\xi-\eta)^2}{(\xi+\eta)^2}\right )
\frac{\tilde \rho(\xi)^\frac12 F(\xi,\eta)\tilde \rho(\eta)^\frac12}{\xi+\eta}. \]
With this notation the kernel of $B_d$ equals $\frac{G(\xi,\eta)}{\xi-\eta}$.
Note that $G$ is supported in $\bigcup_{j\in\Z}I_j\times I_j$.
Consequently, we have
\begin{align*}
\|B_d f\|_{L^2}^2&\simeq \sum_{j\in\Z}\|1_j B_d f\|_{L^2}^2\simeq \sum_{j\in\Z}\|1_j B_d(1_j f)\|_{L^2}^2 \\
&\lesssim \sum_{j\in\Z}\|B_{d,j}\|_{L^2}^2\|1_jf\|_{L^2}^2
\end{align*}
where $B_{d,j}$ has the kernel $1_j(\xi)\frac{G(\xi,\eta)}{\xi-\eta}1_j(\eta)$ and $L^2:=L^2(\R_+)$.
Thus, it suffices to bound the operator norm $\|B_{d,j}\|_{L^2}$, uniformly in $j\in\Z$.
By Lemmas \ref{lem:rho} and \ref{lem:boundsF} we have
\[ |G(\xi,\eta)|\lesssim 1,\quad |\partial_\xi G(\xi,\eta)|\lesssim \xi^{-1},\quad
|\partial_\eta G(\xi,\eta)|\lesssim \xi^{-1} \]
where we used the fact that $G(\xi,\eta)$ is supported on $\xi \simeq \eta$.
We write
\[ G(\xi,\eta)=g(\xi)+(\xi-\eta)G_1(\xi,\eta) \]
where $g(\xi):=G(\xi,\xi)$ and $G_1(\xi,\eta):=-\int_0^1 \partial_2 G(\xi,\xi+s(\eta-\xi))ds$.
We have the bounds $|g(\xi)|\lesssim 1$ and $|G_1(\xi,\eta)|\lesssim \xi^{-1}$.
Hence, we obtain the decomposition
$B_{d,j}=B_{1,j}+B_{2,j}$ where
\begin{align*} 
B_{1,j}f(\xi)&=\int_0^\infty 1_j(\xi)\frac{g(\xi)}{\xi-\eta}1_j(\eta)f(\eta)d\eta \\
B_{2,j}f(\xi)&=\int_0^\infty 1_j(\xi)G_1(\xi,\eta)1_j(\eta)f(\eta)d\eta.
\end{align*}
We have $1_j(\xi)|G_1(\xi,\eta)|1_j(\eta)\lesssim 2^{-j}$ and thus,
\[ \|B_{2,j}\|_{L^2}^2=\int_0^\infty \int_0^\infty 1_j(\xi)|G_1(\xi,\eta)|^2 1_j(\eta)d\eta d\xi \lesssim 1 \]
for all $j\in \Z$.
Furthermore, we may write $B_{1,j}f=\pi g 1_j H(1_j f)$ where $H$ is the Hilbert transform. 
Consequently, we infer
\begin{align*} 
\|B_{1,j}f\|_{L^2(\R_+)}&\lesssim \|g\|_{L^\infty(\R_+)}\|H(1_j f)\|_{L^2(\R)}\lesssim
\|1_j f\|_{L^2(\R)} \\
&\lesssim \|f\|_{L^2(\R_+)}
\end{align*}
and this shows $\|B_{1,j}\|_{L^2}\lesssim 1$ for all $j\in\Z$.
\end{proof}

\subsection{The diagonal part}
It remains to study the ``diagonal part'' of $B$.
To this end we consider the regularization
\[ B_\epsilon^d f(\xi):=2\int_0^\infty (\xi\partial_\xi-x\partial_x)\phi(x,\xi^2)
\int_0^\infty \chi\left (\tfrac{(\xi-\eta)^2}{\epsilon^2}\right )\phi(x,\eta^2)f(\eta)\eta\rho(\eta^2)d\eta dx\]
with the usual cut-off introduced in the proof of Lemma \ref{lem:sinL1Linf}.
We begin with a preliminary result which will allow us to discard certain contributions 
to $B_\epsilon^d$ from the onset.

\begin{lemma}
\label{lem:conv0}
Let $a \in C^1(\R^3)$ and assume the bounds
\[ |a(\xi,x,\eta)|\lesssim \langle x\rangle^{-1},\quad |\partial_\eta a(\xi, x,\eta)|\lesssim \langle x\rangle^{-1}
\]
for all $\xi,x,\eta \in \R$. For $\epsilon>0$ define operators $S_\epsilon^\pm$ on $C_c^\infty(\R)$ by
\[ S_\epsilon^\pm f(\xi):=\int_\R e^{ix\xi}\int_\R e^{\pm ix\eta}
\chi\left (\tfrac{(\xi-\eta)^2}{\epsilon^2}\right )a(\xi,x,\eta)f(\eta)d\eta dx. \]
Then, for any $\xi\in \R$, we have
\[ \lim_{\epsilon\to 0+}S_\epsilon^\pm f(\xi)=0. \]
\end{lemma}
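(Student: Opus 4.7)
The plan is to substitute $\eta = \xi + \epsilon\tau$ in the inner integral so the factor $\chi((\xi-\eta)^2/\epsilon^2) = \chi(\tau^2)$ becomes a fixed compactly supported cutoff at the cost of a Jacobian $\epsilon$. Writing
\[ H_\epsilon(\xi,x) := \int_\R e^{\pm ix\eta}\chi\!\left(\tfrac{(\xi-\eta)^2}{\epsilon^2}\right) a(\xi,x,\eta) f(\eta)\, d\eta = \epsilon\, e^{\pm ix\xi}\int_\R e^{\pm i\epsilon x\tau} G_\epsilon(x,\tau)\,d\tau, \]
with $G_\epsilon(x,\tau) := \chi(\tau^2) a(\xi,x,\xi+\epsilon\tau) f(\xi+\epsilon\tau)$ supported in $|\tau|\le 1$, the hypotheses on $a$ together with $f \in C_c^\infty(\R)$ yield the uniform bounds $|G_\epsilon(x,\tau)| \lesssim \langle x\rangle^{-1}$ and $|\partial_\tau G_\epsilon(x,\tau)| \lesssim \langle x\rangle^{-1}$. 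So the goal reduces to showing $\|H_\epsilon(\xi,\cdot)\|_{L^1(\R)} \to 0$, whence $|S_\epsilon^\pm f(\xi)| \le \|H_\epsilon(\xi,\cdot)\|_{L^1(\R)} \to 0$.

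Directly one gets the trivial estimate $|H_\epsilon(\xi,x)| \lesssim \epsilon \langle x\rangle^{-1}$, which is too weak to yield integrability at infinity. To gain decay in $x$, I would integrate by parts in $\tau$ (the boundary terms vanish by compact support of $G_\epsilon$) to obtain, for $x\ne 0$,
\[ H_\epsilon(\xi,x) = \mp \frac{e^{\pm ix\xi}}{ix} \int_\R e^{\pm i\epsilon x\tau}\,\partial_\tau G_\epsilon(x,\tau)\, d\tau. \]
Since $\int_\R \partial_\tau G_\epsilon(x,\tau)\,d\tau = 0$, I may replace $e^{\pm i\epsilon x\tau}$ by $e^{\pm i\epsilon x\tau} - 1$, and the estimate $\|e^{\pm i\epsilon x\tau} - 1\|_{L^\infty(|\tau|\le 1)} \le \min(|\epsilon x|, 2)$ gives the combined bound
\[ |H_\epsilon(\xi,x)| \lesssim \frac{\min(|\epsilon x|,1)}{|x|}\langle x\rangle^{-1} = \min\bigl(\epsilon,\, |x|^{-1}\bigr)\langle x\rangle^{-1}. \]

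Splitting $\R$ at $|x| = 1/\epsilon$, the contribution from $|x| \le 1/\epsilon$ is bounded by $\epsilon\int_0^{1/\epsilon}\langle x\rangle^{-1}dx \lesssim \epsilon\log(1/\epsilon)$, and that from $|x| \ge 1/\epsilon$ by $\int_{1/\epsilon}^\infty x^{-2}\,dx \lesssim \epsilon$, giving $\|H_\epsilon(\xi,\cdot)\|_{L^1(\R)} \lesssim \epsilon\log(1/\epsilon) \to 0$ as $\epsilon \to 0^+$. This also justifies \emph{a posteriori} that the iterated integral defining $S_\epsilon^\pm f(\xi)$ is absolutely convergent, so no subtlety about the order of integration remains.

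The main obstacle is precisely that $a$ is only assumed to decay like $\langle x\rangle^{-1}$, which is not integrable at infinity, so one cannot apply Fubini or a crude absolute estimate. The fix is to exploit the compact $\tau$-support of the cutoff together with the cancellation $\int \partial_\tau G_\epsilon\,d\tau = 0$; without this cancellation the single integration by parts in $\tau$ would yield only $|x|^{-1}\langle x\rangle^{-1}$, barely non-integrable. Note that only one derivative on $a$ is available (no $\partial_\eta^2 a$ bound is assumed), so the argument must succeed after a \emph{single} integration by parts, which is exactly the regularity budget used above.
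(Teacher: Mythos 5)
Your proof is correct and follows the same overall strategy as the paper: substitute $\eta = \xi + \epsilon\tau$, observe the trivial bound $\lesssim\epsilon\langle x\rangle^{-1}$ plus the one-integration-by-parts bound $\lesssim |x|^{-1}\langle x\rangle^{-1}$, then combine and split the $x$-integral. The paper performs the cosmetic additional rescaling $x\mapsto x/\epsilon$ and then \emph{interpolates geometrically} between the two bounds to get $\langle x/\epsilon\rangle^{-1}|x|^{-1/2}$, yielding the rate $\epsilon^{1/2}$. You instead exploit the cancellation $\int_\R \partial_\tau G_\epsilon\,d\tau=0$ (since $G_\epsilon$ is compactly supported in $\tau$) to replace $e^{\pm i\epsilon x\tau}$ by $e^{\pm i\epsilon x\tau}-1$, which directly upgrades the integration-by-parts estimate to $\min(\epsilon,|x|^{-1})\langle x\rangle^{-1}$; this is exactly the pointwise minimum of the two bounds, so the cancellation trick and the interpolation are interchangeable here, but your version gives the sharper rate $\epsilon\log(1/\epsilon)$. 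Both fit the same regularity budget of a single $\eta$-derivative on $a$, and both verify absolute convergence of the iterated integral in passing, so the arguments are of equal strength for the purposes of the lemma.
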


\begin{proof}
Let $f\in C_c^\infty(\R)$.
First of all, integration by parts with respect to $\eta$ yields decay in $x$ and this shows
that for any $\epsilon>0$ and $\xi\in \R$, $S^\pm_\epsilon f(\xi)$ is well-defined.
Furthermore, a change of variables yields
\[ S^\pm_\epsilon f(\xi)=\int_\R e^{i\frac{x}{\epsilon}(\xi\pm \xi)}
A_\epsilon(\xi,\tfrac{x}{\epsilon}, \eta)dx \]
where
\[ A_\epsilon(\xi,y,\eta):=\int_\R e^{\pm i\epsilon y\eta}
\chi(\eta^2)a(\xi, y, \xi+\epsilon\eta)f(\xi+\epsilon \eta)d\eta. \]
Clearly, we have $|A_\epsilon(\xi,\frac{x}{\epsilon},\eta)|\lesssim \langle \tfrac{x}{\epsilon}\rangle^{-1}$
and an integration by parts yields
\[ |A_\epsilon(\xi,\tfrac{x}{\epsilon},\eta)|\lesssim \langle \tfrac{x}{\epsilon} \rangle^{-1}|x|^{-1}. \]
By interpolation we infer $|A_\epsilon(\xi,\tfrac{x}{\epsilon},\eta)|\lesssim \langle \tfrac{x}{\epsilon}\rangle^{-1}|x|^{-\frac12}$
and thus,
\[ |S^\pm_\epsilon f(\xi)|\lesssim \int_{|x|\leq \epsilon}dx+\epsilon \int_{|x|>\epsilon}|x|^{-\frac32}dx
\lesssim \epsilon^\frac12. \]
\end{proof}

Other types of operators we will encounter are handled by the following lemma.

\begin{lemma}
\label{lem:delta}
For $\epsilon>0$ define operators $T_\epsilon^\pm$ on $C_c^\infty(\R)$ by
\[ T_\epsilon^\pm f(\xi):=\int_\R e^{ix\xi} \int_\R e^{\pm ix\eta}
\chi\left (\tfrac{(\xi-\eta)^2}{\epsilon^2}\right )f(\eta)d\eta dx. \]
Then we have
\begin{align*} 
\lim_{\epsilon \to 0+}B_\epsilon^- f(\xi)&=2\pi f(\xi) \\
\lim_{\epsilon \to 0+} B_\epsilon^+ f(\xi)&=\left \{ \begin{array}{ll}
2\pi f(0), & \xi=0 \\
0, & \xi \in \R\backslash \{0\} \end{array} \right . .
\end{align*}
\end{lemma}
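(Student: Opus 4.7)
The plan is to recognize both $T_\epsilon^\pm$ as iterated integrals in which the inner integral produces a Schwartz function in $x$, so that the outer integral is absolutely convergent and can be evaluated via Fourier inversion. For fixed $\xi\in\R$ and $\epsilon>0$, set
\[ g_\xi^\epsilon(\eta):=\chi\!\left(\tfrac{(\xi-\eta)^2}{\epsilon^2}\right)f(\eta). \]
Since $f\in C_c^\infty(\R)$ and $\chi$ is smooth with compact support, $g_\xi^\epsilon\in C_c^\infty(\R)$, hence its Fourier transform
$\widehat{g_\xi^\epsilon}(x):=\int_\R e^{-ix\eta}g_\xi^\epsilon(\eta)\,d\eta$
lies in $\mathcal S(\R)$. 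This legitimizes interpreting the definition of $T_\epsilon^\pm$ as an iterated integral: the inner $\eta$-integral is performed first, yielding $\widehat{g_\xi^\epsilon}(\mp x)$, and the outer $x$-integral is then absolutely convergent.

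First I would handle the minus sign. Here the inner integral equals $\widehat{g_\xi^\epsilon}(x)$, so by Fourier inversion
\[ T_\epsilon^- f(\xi)=\int_\R e^{ix\xi}\widehat{g_\xi^\epsilon}(x)\,dx=2\pi g_\xi^\epsilon(\xi)=2\pi\chi(0)f(\xi)=2\pi f(\xi), \]
using $\chi(0)=1$. Note this identity is \emph{exact} for every $\epsilon>0$, so the limit is immediate.

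For the plus sign, the inner integral is $\widehat{g_\xi^\epsilon}(-x)$, and the change of variable $y=-x$ followed by Fourier inversion evaluated at the point $-\xi$ gives
\[ T_\epsilon^+ f(\xi)=\int_\R e^{-iy\xi}\widehat{g_\xi^\epsilon}(y)\,dy=2\pi g_\xi^\epsilon(-\xi)=2\pi\chi\!\left(\tfrac{4\xi^2}{\epsilon^2}\right)f(-\xi). \]
For the final step I would take $\epsilon\to 0^+$: at $\xi=0$ the cutoff argument is $0$ and $\chi(0)=1$, giving $2\pi f(0)$; for $\xi\neq 0$, once $\epsilon<2|\xi|$ we have $4\xi^2/\epsilon^2>1$, so $\chi(4\xi^2/\epsilon^2)=0$ and the expression vanishes identically for all sufficiently small $\epsilon$.

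There is no real obstacle here; the only subtlety is justifying the order of integration, which is taken care of by observing that $g_\xi^\epsilon\in C_c^\infty$ makes its Fourier transform Schwartz in $x$. The mild asymmetry between the two cases is the heart of the matter: the $-$ sign produces a resonance ($e^{ix\xi}\cdot e^{-ix\eta}$ oscillates trivially on the diagonal $\xi=\eta$ selected by the cutoff), while the $+$ sign only resonates when $\xi+\eta=0$, forcing $\xi=0$ after the cutoff concentrates on $\eta\approx \xi$.
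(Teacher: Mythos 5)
Your argument is correct, and it takes a genuinely cleaner route than the paper's. The paper's proof ``throws in the convergence factor $e^{-\delta^2 x^2}$'' to regularize the outer $x$-integral, interchanges the order of integration, evaluates the Gaussian $x$-integral explicitly, and then passes to $\delta\to 0^+$ by an approximate-identity argument. Your observation short-circuits all of this: since $g_\xi^\epsilon=\chi((\xi-\cdot)^2/\epsilon^2)f\in C_c^\infty(\R)$, its Fourier transform $\widehat{g_\xi^\epsilon}$ is Schwartz, so the outer $x$-integral is already absolutely convergent and is \emph{itself} an instance of Fourier inversion. This gives the exact identities $T_\epsilon^- f(\xi)=2\pi f(\xi)$ and $T_\epsilon^+ f(\xi)=2\pi\chi(4\xi^2/\epsilon^2)f(-\xi)$ without any regularization or order-of-integration swap. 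The paper's version reaches the same two formulas before taking $\epsilon\to 0^+$, so the two proofs agree at the intermediate stage; what the Gaussian method buys is uniformity in its presentation with the companion Lemma \ref{lem:conv0} and robustness if one later wanted to weaken the $C_c^\infty$ hypothesis, but for the lemma as stated your approach is more direct. Your closing heuristic about the $-$ sign resonating on the diagonal $\eta=\xi$ while the $+$ sign only resonates at $\eta=-\xi$ (forcing $\xi=0$ once the cutoff localizes $\eta$ near $\xi$) is exactly the right picture.
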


\begin{proof}
We argue as in the classical proof of Fourier inversion, i.e., we throw in the convergence
factor $e^{-\delta^2 x^2}$ and exploit the fact that the Fourier transform of Gaussians
is explicit,
\begin{align*}
T_\epsilon^\pm f(\xi)&=\lim_{\delta\to 0+}\int_\R e^{ix\xi}e^{-\delta^2 x^2}\int_\R
e^{\pm i\eta x}\chi\left (\tfrac{(\xi-\eta)^2}{\epsilon^2}\right )f(\eta)d\eta dx \\
&=\lim_{\delta\to 0+}\int_\R \int_\R e^{ix(\xi\pm \eta)}e^{-\delta^2 x^2}dx 
\chi\left (\tfrac{(\xi-\eta)^2}{\epsilon^2}\right )f(\eta) d\eta \\
&=\sqrt \pi \lim_{\delta \to 0+}\int_\R \tfrac{1}{\delta}e^{-\frac{(\xi\pm \eta)^2}{4\delta^2}}
\chi\left (\tfrac{(\xi-\eta)^2}{\epsilon^2}\right )f(\eta) d\eta. 
\end{align*}
Consequently, we infer
\begin{align*} 
T_\epsilon^+ f(\xi)&=\sqrt \pi 
\lim_{\delta\to 0+} \int_\R \tfrac{1}{\delta}e^{-\frac{\eta^2}{4\delta^2}}
\chi \left (\tfrac{(2\xi-\eta)^2}{\epsilon^2} \right )f(\eta-\xi)d\eta \\
&=2\sqrt \pi \lim_{\delta \to 0+}\int_\R e^{-\eta^2}\chi \left (\tfrac{4(\xi-\delta \eta)^2}{\epsilon^2}
\right )f(2\delta \eta-\xi)d\eta \\
&=2\pi \chi \left (\tfrac{4\xi^2}{\epsilon^2}\right )f(-\xi)
\end{align*}
and this yields the claim for $T_\epsilon^+$.
The same calculation shows
\[ T_\epsilon^- f(\xi)=2\pi \chi(0)f(\xi)=2\pi f(\xi). \] 
\end{proof}

\begin{lemma}
\label{lem:Bdiag}
For any $f\in C_c^\infty(0,\infty)$,
we have
\[ \lim_{\epsilon\to 0+}B_\epsilon^d f(\xi)=h(\xi) f(\xi),\quad \xi> 0 \]
where $h\in C^\infty(0,\infty)$ satisfies $\|h\|_{L^\infty(0,\infty)}\lesssim 1$
and behaves like a symbol under differentiation.
\end{lemma}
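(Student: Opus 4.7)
The plan is to compute the limit by exploiting the key observation already noted after the definition of $B$: the operator $\xi\partial_\xi-x\partial_x$ annihilates any function of the product $x\xi$, and in particular annihilates $e^{\pm ix\xi}$. Thus, in regions where $\phi(x,\xi^2)$ is asymptotically an oscillatory term $e^{\pm i x\xi}$ times a slowly varying prefactor, the application of $\xi\partial_\xi-x\partial_x$ only hits the prefactor and yields something with extra decay in $x$.

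First I would insert a partition of unity on the $x$-integration using the cutoff $\chi$ from the proof of Lemma \ref{lem:sinL1Linf}, separating the nonoscillatory region $x\xi\lesssim 1$ (where we also split $\xi$ near vs.~away from $0$) from the oscillatory region $x\xi\gtrsim 1$, and similarly for $x\eta$. By Lemma \ref{lem:phi}, in the nonoscillatory region $\phi(x,\xi^2)$ and its derivative in $\xi$ are bounded by $\langle x\rangle^{1/2}$ times a symbol in $\xi$, so $(\xi\partial_\xi-x\partial_x)\phi(x,\xi^2)=O(\langle x\rangle^{1/2+}\xi^0)$, and the same for $\phi(x,\eta^2)$. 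Combined with the cutoff enforcing $x\xi\lesssim 1$ or $x\eta \lesssim 1$, the resulting $x$-integral is absolutely integrable uniformly in $\epsilon$, the inner $\eta$-integral has width $\epsilon$, and dominated convergence forces these contributions to vanish as $\epsilon \to 0+$.

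In the remaining (genuinely oscillatory) region $x\xi, x\eta\gtrsim 1$, I would substitute the oscillatory representation from Lemma \ref{lem:phi}: $\phi(x,\xi^2)=a(\xi)e^{ix\xi}[1+r_+(x,\xi)]+\overline{a(\xi)}e^{-ix\xi}[1+r_-(x,\xi)]$ (with analogous representations in the remaining spectral ranges), where the remainders $r_\pm$ are symbols with extra decay $\langle x\rangle^{-1}$ or $x^{-1-\alpha}$. Applying $\xi\partial_\xi-x\partial_x$ to the ``purely oscillatory'' pieces $a(\xi)e^{\pm ix\xi}$ kills the exponential and leaves $\xi a'(\xi)e^{\pm ix\xi}$; applying it to the error correction pieces produces factors with the additional $\langle x\rangle^{-1}$-type decay, hence $\langle x\rangle^{-1}$ prefactors. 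Multiplying against $\phi(x,\eta^2)$, which in this regime is itself $O(e^{\pm ix\eta})$, we obtain four sign combinations $e^{ix(\pm\xi\pm\eta)}$. Since the cutoff $\chi((\xi-\eta)^2/\epsilon^2)$ localizes to $\eta\simeq\xi>0$, the ``$++$'' and ``$--$'' combinations have a nonzero phase $\pm(\xi+\eta)\simeq\pm 2\xi$, and Lemma \ref{lem:conv0} (applied to the amplitude $a(\xi)\overline{a(\eta)}\eta\rho(\eta^2)$ together with the $\langle x\rangle^{-1}$-factor produced by acting with $\xi\partial_\xi-x\partial_x$ on the slow factor) shows that their contribution vanishes in the limit.

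The only surviving contributions are the ``$+-$'' and ``$-+$'' pieces with phase $e^{ix(\xi-\eta)}$, for which Lemma \ref{lem:delta} applies and produces a $2\pi\delta(\xi-\eta)$. Evaluating the amplitude on the diagonal $\eta=\xi$ yields the expression $h(\xi)f(\xi)$, where $h(\xi)$ is (up to constants) of the schematic form $\xi\, \partial_\xi(a(\xi)\overline{a(\xi)})\,\xi\rho(\xi^2)$ together with analogous terms coming from the high- and low-energy regimes. The bound $|a(\xi)|^2\xi\rho(\xi^2)\lesssim 1$ at both small and large $\xi$ is already encoded in Lemmas \ref{lem:rho} and \ref{lem:phi}, and the symbol behavior of $h$ follows from the symbol behavior of $a$, $\rho$, and their derivatives (for $\xi\geq\tfrac12$ this is immediate; for small $\xi$ the $\log\xi$ factors in $a(\xi)$ and $\rho(\xi)$ cancel because $h$ depends only on $\partial_\xi|a|^2\cdot \xi\rho$). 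The main obstacle will be bookkeeping: verifying that the partition-of-unity remainders (where cutoff derivatives fall on $\chi(x\xi)$, $\chi(x\eta)$, $\chi(\xi)$, etc.) all fit into either the Lemma \ref{lem:conv0} framework or the dominated-convergence argument, and carefully tracking the cancellations that make $h$ bounded in the borderline small-$\xi$ regime where both $a(\xi)$ and $\rho(\xi^2)$ carry logarithmic weights.
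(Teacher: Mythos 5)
Your proposal is correct and uses the same key tools as the paper, namely the Jost representation $\phi(x,\xi^2)=a(\xi)f_+(x,\xi)+\overline{a(\xi)f_+(x,\xi)}$, the annihilation of $e^{\pm ix\xi}$ by $\xi\partial_\xi-x\partial_x$, and Lemmas \ref{lem:conv0} and \ref{lem:delta}, so the heart of the argument matches. The one place your route is more laborious than necessary: you introduce a partition of unity in $x$ (splitting $x\xi\lesssim 1$ versus $x\xi\gtrsim 1$) and dispose of the nonoscillatory piece by dominated convergence. The paper sidesteps this by noting that, since $\xi>0$ is fixed and $f\in C^\infty_c(0,\infty)$ is supported away from the origin, the cut-off $\chi\big(\tfrac{(\xi-\eta)^2}{\epsilon^2}\big)$ eventually confines $\eta$ to a compact subset of $(0,\infty)$; on such a set the Jost asymptotics $f_+(x,\xi)=e^{ix\xi}[1+O_\C(\langle x\rangle^{-1}\xi^{-1})]$ of Lemma \ref{lem:Jostlg} hold uniformly for \emph{all} $x\geq 0$, so the oscillatory representation you invoke only for $x\xi\gtrsim 1$ is in fact valid globally and the $x$-cutoff is unnecessary. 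Both routes land on $h(\xi)=2\pi\,\Re\big[\xi a'(\xi)\overline{a(\xi)}\big]\xi\rho(\xi^2)$. One further small point: the bound $|h(\xi)|\lesssim 1$ follows directly from the crude estimates $|a(\xi)|,\ |\xi a'(\xi)|\lesssim\xi^{-1/2}(1+|\log\xi|)$ and $\xi\rho(\xi^2)\lesssim\xi/(1+\log^2\xi)$ of Lemmas \ref{lem:rho} and \ref{lem:phi}; there is no logarithmic cancellation to track, contrary to what you anticipate at the end of your argument.
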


\begin{proof}
We use the representation
\[ \phi(x,\xi^2)=a(\xi)f_+(x,\xi)+\overline{a(\xi)f_+(x,\xi)} \]
where $a(\xi)=-\frac12 i \xi^{-1}\overline{W(f_+(\cdot,\xi), \phi(\cdot,\xi^2))}$.
Furthermore, since $\xi>0$ is fixed and $f\in C^\infty_c(0,\infty)$ is supported
away from the origin, only the large frequency asymptotics  
\[ f_+(x,\xi)=e^{ix\xi}[1+O_\C(\langle x\rangle^{-1}\xi^{-1})] \] from Lemma \ref{lem:Jostlg}
are relevant.
Consequently, by Lemmas \ref{lem:conv0} and \ref{lem:delta} the only nonzero contribution (up to complex conjugates)
is given by
\[ B^d_{\epsilon,1} f(\xi)=\tfrac12 \xi a'(\xi)\int_\R e^{ix\xi}\int_\R \overline{a(\eta)}e^{-ix\eta}
 \chi\left (\tfrac{(\xi-\eta)^2}{\epsilon^2}\right )f(\eta)|\eta|\rho(\eta^2)d\eta dx \]
 where we have extended $a$, $f$, and $\rho$ from $[0,\infty)$ to $\R$ as even functions.
 Lemma \ref{lem:delta} yields
 \[ \lim_{\epsilon\to 0+}B^d_{\epsilon,1}f(\xi)=\pi \xi a'(\xi)\overline{a(\xi)}\xi \rho(\xi^2)f(\xi),\quad \xi>0 \]
 and from Lemmas \ref{lem:rho} and \ref{lem:phi} we have the bound
 \[ |\xi a'(\xi)\overline{a(\xi)}\xi \rho(\xi^2)|\lesssim 1 \]
 for all $\xi>0$.
\end{proof}

As a corollary we finally obtain the desired boundedness of $B$.

\begin{corollary}
\label{cor:B}
The operator $B$ extends to a bounded operator on $L^2_{\tilde \rho}(\R_+)$.
\end{corollary}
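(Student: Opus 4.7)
The plan is to decompose $B$, on the dense subspace $C_c^\infty(0,\infty) \subset L^2_{\tilde\rho}(\R_+)$, as
\[ B = B_0 + M_h, \]
where $B_0$ is the principal-value singular integral from Proposition \ref{prop:B0} and $M_h$ denotes multiplication by the function $h$ produced in Lemma \ref{lem:Bdiag}. Since $B_0$ is bounded on $L^2_{\tilde\rho}(\R_+)$ by Proposition \ref{prop:B0} and $M_h$ is bounded because $\|h\|_{L^\infty(0,\infty)} \lesssim 1$, this decomposition immediately yields the desired $L^2_{\tilde\rho}$-boundedness of $B$, and the general case follows by density since $C_c^\infty(0,\infty)$ is dense in $L^2_{\tilde\rho}(\R_+)$.

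To establish the decomposition, I would fix $g \in C_c^\infty(0,\infty)$ and, for each $\epsilon > 0$, insert the partition of unity
\[ 1 = \chi\!\left(\tfrac{(\xi-\eta)^2}{\epsilon^2}\right) + \left[1 - \chi\!\left(\tfrac{(\xi-\eta)^2}{\epsilon^2}\right)\right] \]
into the $\eta$-integral in the explicit representation of $Bg(\xi)$. This splits $Bg = B_\epsilon^d g + B_\epsilon^{nd} g$, where $B_\epsilon^d$ is exactly the regularized diagonal operator treated in Lemma \ref{lem:Bdiag}, so $B_\epsilon^d g(\xi) \to h(\xi)\,g(\xi)$ pointwise on $(0,\infty)$.

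For the off-diagonal piece, the factor $1 - \chi((\xi-\eta)^2/\epsilon^2)$ localizes the integrand to the region where the Schwartz kernel of $B$ is an honest function, explicitly identified by Eq.~\eqref{eq:Schwartzkernel} as $K(\xi,\eta) = 2F(\xi,\eta)\eta\rho(\eta^2)/(\xi^2-\eta^2)$. Consequently,
\[ B_\epsilon^{nd} g(\xi) = 2\int_0^\infty \left[1 - \chi\!\left(\tfrac{(\xi-\eta)^2}{\epsilon^2}\right)\right] \frac{F(\xi,\eta)}{\xi^2-\eta^2}\, g(\eta)\, \eta\rho(\eta^2)\, d\eta. \]
Letting $\epsilon \to 0+$, and using the symmetry of the cutoff in $\xi - \eta$ together with the regularity of $F$ from Lemma \ref{lem:boundsF} (which gives H\"older-type cancellation across the diagonal), a standard principal-value argument yields $B_\epsilon^{nd} g(\xi) \to B_0 g(\xi)$. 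Combining the two limits produces $Bg = B_0 g + M_h g$ on $C_c^\infty(0,\infty)$, whence the $L^2_{\tilde\rho}$-boundedness follows.

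The main obstacle is the identification of $B_\epsilon^{nd}$ with the cutoff singular integral displayed above. This rests on interpreting $B_\epsilon^{nd} g(\xi)$ as the pairing of the Schwartz kernel of $B$ from Lemma \ref{lem:Schwartzkernel} with the smooth, compactly $\eta$-supported test function $\eta \mapsto [1 - \chi((\xi-\eta)^2/\epsilon^2)]g(\eta)$, which vanishes in a neighbourhood of $\eta = \xi$ and is therefore tested against the function-valued kernel given by Eq.~\eqref{eq:Schwartzkernel}; the distributional pairing then collapses to ordinary Lebesgue integration. Once this identification is in place, everything else is routine application of Proposition \ref{prop:B0}, Lemma \ref{lem:Bdiag}, and the triangle inequality.
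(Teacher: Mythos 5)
Your proposal is correct and follows essentially the same route as the paper: split $B = B_\epsilon^d + B_\epsilon^{nd}$ via the same cut-off, identify the off-diagonal piece with the truncated singular integral by means of the Schwartz kernel formula Eq.~\eqref{eq:Schwartzkernel}, pass to the limit to obtain $B = B_0 + M_h$, and invoke Proposition~\ref{prop:B0} and Lemma~\ref{lem:Bdiag}. The extra detail you provide about the distributional pairing collapsing to a Lebesgue integral away from the diagonal is a reasonable elaboration of a step the paper leaves implicit.
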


\begin{proof}
Let $f\in C_c^\infty(0,\infty)$.
For $\epsilon>0$ we set
\begin{align*}
B_\epsilon^{nd}f(\xi):=&2\int_0^\infty (\xi\partial_\xi-x\partial_x)\phi(x,\xi^2) \\
&\times \int_0^\infty \left [1-\chi\left (\tfrac{(\xi-\eta)^2}{\epsilon^2}\right )\right ]
\phi(x,\eta^2)f(\eta)\eta\rho(\eta^2)d\eta. 
\end{align*}
Then we have $Bf=B_\epsilon^d f+B_\epsilon^{nd}f$ for all $f\in C_c^\infty(0,\infty)$ and all $\epsilon>0$.
By Eq.~\eqref{eq:Schwartzkernel} the operator $B_\epsilon^{nd}$ has the kernel
\[ 2 \left [1-\chi\left (\tfrac{(\xi-\eta)^2}{\epsilon^2}\right )\right ]\frac{F(\xi,\eta)}{\xi^2-\eta^2}\eta \rho(\eta^2) \]
and Proposition \ref{prop:B0} shows that
\[ \lim_{\epsilon\to 0+}B_\epsilon^{nd}f(\xi)=B_0 f(\xi) \]
for any $f\in C^\infty_c(0,\infty)$ and $B_0$ is bounded on $L^2_{\tilde \rho}(\R_+)$.
Consequently, the claim follows from Lemma \ref{lem:Bdiag}.
\end{proof}

\subsection{Boundedness on weighted spaces}

We also need $B$ to be bounded on weighted $L^2_{\tilde \rho}$ spaces.
As a matter of fact, the diagonal part is not at all affected by the introduction
of a weight.
However, for the off-diagonal part we need more refined estimates for the function $F$.

\begin{lemma}
\label{lem:Fref}
Suppose $V^{(2j+1)}(0)=0$ for all $j\in \N_0$.
Then the function $F$ from Lemma \ref{lem:F} satisfies the bounds
\[ |F(\xi,\eta)|\leq \frac{C_k}{\xi^k+\eta^k} \]
for all $\xi,\eta >0 $ with $|\xi-\eta|\gtrsim \xi+\eta$ and all $k\in \N_0$.
\end{lemma}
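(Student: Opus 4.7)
The plan is to proceed by induction on $k$, with the case $k \leq 2$ handled by a single integration by parts. The driving identity, derived directly from the spectral equations $L\phi(\cdot,\lambda) = \lambda\phi(\cdot,\lambda)$ with $L = -\partial_x^2 + V$, is
\[ (\xi^2 - \eta^2)\phi(x,\xi^2)\phi(x,\eta^2) = \partial_x W(\phi(\cdot,\xi^2),\phi(\cdot,\eta^2))(x). \]
Integrating against $U$ by parts one finds
\[ (\xi^2-\eta^2)F(\xi,\eta) = -\int_0^\infty U'(x)\,W(\phi(\cdot,\xi^2),\phi(\cdot,\eta^2))(x)\,dx, \]
whose boundary terms vanish: at $x = 0$ thanks to $W(0) = 0$ (from $\phi'(0,\lambda) = 0$ in Lemma \ref{lem:ensm}), and at $x = \infty$ from the decay of $U$. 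Arguing as in the proof of Lemma \ref{lem:boundsF} one sees that the right-hand integral is $O(1)$, and since $|\xi^2-\eta^2| \simeq (\xi+\eta)^2$ in the off-diagonal regime this already proves $|F(\xi,\eta)| \lesssim (\xi+\eta)^{-2}$, covering $k \in \{0,1,2\}$.

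For $k \geq 3$ I would bootstrap via an auxiliary induction on a larger class of weights. Let $\mathcal{C}$ be the class of even $g \in C^\infty(\R)$ satisfying $|g^{(j)}(x)| \lesssim \langle x\rangle^{-2-\alpha-j}$ for every $j \in \N_0$; this class contains $U$ (by Hypothesis \ref{hyp:A} combined with the evenness in Hypothesis \ref{hyp:B}) and is stable under the maps $g \mapsto g''$, $g \mapsto g^{(4)}$, $g \mapsto Vg''$, and $g \mapsto V'g'$. I claim that $F_g(\xi,\eta) := \int_0^\infty g\,\phi(\cdot,\xi^2)\phi(\cdot,\eta^2)\,dx$ satisfies $|F_g(\xi,\eta)| \lesssim (\xi+\eta)^{-2m}$ on the off-diagonal set for every $g \in \mathcal{C}$ and every $m \in \N_0$. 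Iterating the IBP procedure---first using $(\xi^2-\eta^2)W = \partial_x W_2 - 2V'\phi(\cdot,\xi^2)\phi(\cdot,\eta^2)$ with $W_2 = (2V-\xi^2-\eta^2)\phi(\cdot,\xi^2)\phi(\cdot,\eta^2) - 2\partial_x\phi(\cdot,\xi^2)\partial_x\phi(\cdot,\eta^2)$, then eliminating the $\partial_x\phi(\cdot,\xi^2)\partial_x\phi(\cdot,\eta^2)$ factor through
\[ 2\partial_x\phi(\cdot,\xi^2)\partial_x\phi(\cdot,\eta^2) = \partial_x^2[\phi(\cdot,\xi^2)\phi(\cdot,\eta^2)] - (2V-\xi^2-\eta^2)\phi(\cdot,\xi^2)\phi(\cdot,\eta^2), \]
and integrating by parts twice more---should yield the master identity
\[ (\xi^2-\eta^2)^2 F_g(\xi,\eta) = -2(\xi^2+\eta^2)F_{g''} + 4F_{Vg''} + 2F_{V'g'} - F_{g^{(4)}}. \]
Since each new weight on the right lies in $\mathcal{C}$, the inductive hypothesis bounds each $F_h$ by $(\xi+\eta)^{-2(m-1)}$; the right-hand side is therefore $O((\xi+\eta)^{-2m+4})$, and dividing by $|\xi^2-\eta^2|^2 \simeq (\xi+\eta)^4$ closes the induction. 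The bound for odd $k$ follows from the adjacent even case, and since $\xi^k+\eta^k \simeq (\xi+\eta)^k$ the statement follows.

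The main obstacle I expect to grapple with is verifying that the boundary terms at $x = 0$ vanish in every integration by parts of the iteration. This is precisely where Hypothesis \ref{hyp:B} enters: its assumption $V^{(2j+1)}(0) = 0$ forces $V$ (and hence every $g \in \mathcal{C}$, and $U$ itself) to extend evenly to $\R$, while the boundary conditions $\phi(0,\lambda) = -1$, $\phi'(0,\lambda) = 0$ combined with even $V$ force $\phi(\cdot,\lambda)$ to be even in $x$. Every polynomial combination of these functions and their derivatives is therefore even, so its odd-order derivatives vanish at the origin, annihilating the boundary contributions that would otherwise obstruct the iteration.
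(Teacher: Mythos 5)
Your approach is genuinely different from the paper's, and its core is correct. The paper's proof uses Hypothesis~\ref{hyp:B} to extend $\phi(\cdot,\lambda)$, $V$, and $U$ to even functions on $\R$, rewrites $F(\xi,\eta)=\tfrac12\int_\R\phi(x,\xi^2)\phi(x,\eta^2)U(x)\,dx$, substitutes the oscillatory asymptotics $\phi(x,\xi^2)=e^{\pm ix\xi}\cdot(\text{symbol})$ from Lemma~\ref{lem:phi} in the various frequency regimes, and integrates by parts against $e^{ix(\xi\pm\eta)}$ to extract arbitrary negative powers of $\xi\pm\eta\simeq\xi+\eta$ off-diagonal. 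Your argument instead runs on the ODE structure alone: the master identity
\[ (\xi^2-\eta^2)^2 F_g = -2(\xi^2+\eta^2)F_{g''}+4F_{Vg''}+2F_{V'g'}-F_{g^{(4)}} \]
is algebraically correct (I verified it via $\partial_x W_2 = 2V'\phi(\cdot,\xi^2)\phi(\cdot,\eta^2)+(\xi^2-\eta^2)W$ and $2\partial_x\phi(\cdot,\xi^2)\partial_x\phi(\cdot,\eta^2)=\partial_x^2[\phi(\cdot,\xi^2)\phi(\cdot,\eta^2)]-(2V-\xi^2-\eta^2)\phi(\cdot,\xi^2)\phi(\cdot,\eta^2)$); the class $\mathcal C$ is stable under the four maps and contains $U$ thanks to the cancellation in $-2V-xV'$; and each pass of the identity gains $(\xi+\eta)^{-2}$ once the $m=0$ base case $|F_g|\lesssim1$ for $g\in\mathcal C$ is available, which indeed follows by rerunning the proof of Lemma~\ref{lem:boundsF} with $g$ in place of $U$. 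Both proofs invoke Hypothesis~\ref{hyp:B}, but differently: the paper symmetrizes to $\R$ to avoid the endpoint $x=0$ altogether; you stay on $\R_+$ and use evenness to kill the boundary terms through $W(0)=\partial_x[\phi(\cdot,\xi^2)\phi(\cdot,\eta^2)](0)=g'(0)=g'''(0)=0$. The paper's route needs the full pointwise asymptotics of Lemma~\ref{lem:phi}; yours uses only the spectral equation, boundary data, and decay of $V$, and is in that sense more structural.

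One flaw in the write-up: the warm-up claim that a single integration by parts already yields $|F|\lesssim(\xi+\eta)^{-2}$ is too strong. After the first IBP one has $(\xi^2-\eta^2)F=-\int_0^\infty U'\,W(\phi(\cdot,\xi^2),\phi(\cdot,\eta^2))\,dx$, and the Wronskian contains a factor $\partial_x\phi(\cdot,\eta^2)=O(\eta)$ when $\eta$ is large; taking absolute values inside gives the integral a bound $O(\max(\xi,\eta))$, not $O(1)$, so this step only produces $|F|\lesssim(\xi+\eta)^{-1}$. This does not damage the argument: the master-identity induction, initialized at $m=0$, already gives $(\xi+\eta)^{-2m}$ for every $m\in\N_0$, hence all $k$ by comparison with the trivial bound for $\xi+\eta\lesssim1$. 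Simply drop the separate treatment of $k\le2$ and let the induction carry the whole range.
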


\begin{proof}
The assumption on $V$ implies that $V$ and $\phi(\cdot,\xi^2)$ extend
to smooth even functions on $\R$, for any $\xi\geq 0$.
As a consequence, the function $U(x)=-2V(x)-xV'(x)$ from the definition of $F$ also extends
to a smooth even function on $\R$ and we obtain
\[ F(\xi,\eta)=\tfrac12 \int_\R \phi(x,\xi^2)\phi(x,\eta^2)U(x)dx. \]

If $\xi,\eta \leq 1$ the stated bound follows from Lemma \ref{lem:boundsF}.
Thus, it suffices to consider the cases $\xi\geq 1\geq \eta$ and $\xi,\eta\geq 1$.
We start with the former and set
\[ F_1(\xi,\eta):=\int_\R [1-\chi(\xi)]\chi(\eta)\chi(|x\eta|)\phi(x,\xi^2)\phi(x,\eta^2)U(x)dx. \]
Lemma \ref{lem:phi} implies $|\partial_x \phi(x,\eta^2)|\lesssim \langle \eta\rangle$
for all $x,\eta\geq 0$. Consequently, the equation
\[ \partial_x^2 \phi(x,\eta^2)=V(x)\phi(x,\eta^2)-\eta^2 \phi(x,\eta^2) \]
and an induction yields the bounds 
\[ |\partial_x^k \phi(x,\eta^2)|\leq C_k \langle x\rangle^{\frac12+} \]
for all $x\in \R$ and $0<\eta\leq 1$.
By the large frequency asymptotics of $\phi$ in Lemma \ref{lem:phi} we see that
$F_1$ is composed of $I$ and $\overline{I}$ where
\[ I(\xi,\eta)=\int_\R e^{ix\xi}O_\C(\langle x\rangle^0 \xi^0)[1-\chi(\xi)]\chi(\eta)\chi(|x\eta|)\phi(x,\eta^2)
U(x)dx \]
and the $O_\C$-term behaves like a symbol.
Consequently, repeated integration by parts yields the bounds
\[ |I(\xi,\eta)|\leq \frac{C_k}{\xi^k}\leq \frac{C_k}{\xi^k+1}\leq \frac{C_k}{\xi^k+\eta^k} \]
for all $k\in \N_0$.

Next, we consider the case $\xi\geq 1\geq \eta$ and $|x\eta| \geq 1$, i.e., we set
\[ F_2(\xi,\eta):=\int_\R [1-\chi(\xi)]\chi(\eta)[1-\chi(|x\eta|)]\phi(x,\xi^2)\phi(x,\eta^2)U(x)dx. \]
By Lemma \ref{lem:phi} it follows that $F_2$ is composed of $I_\pm$ and $\overline{I_\pm}$ where
\[ I_\pm(\xi,\eta)=\int_\R e^{ix(\xi\pm \eta)}O_\C(\langle x\rangle^{\frac12+}\xi^0 \eta^0)U(x)dx \]
with an $O_\C$-term of symbol type. Consequently, for $|\xi-\eta|\gtrsim \xi+\eta$ we infer
by repeated integration by parts the bounds
\[ |I_\pm (\xi,\eta)|\leq \frac{C_k}{(\xi+\eta)^k}\leq \frac{C_k}{\xi^k+\eta^k} \]
for all $k\in \N_0$.

Finally, we consider
\[ F_3(\xi,\eta):=\int_\R [1-\chi(\xi)][1-\chi(\eta)]\phi(x,\xi^2)\phi(x,\eta^2)U(x)dx. \]
By the large frequency asymptotics of $\phi$ from Lemma \ref{lem:phi} it follows that
$F_3$ is composed of $I_\pm$ and $\overline{I_\pm}$ where
\[ I_\pm(\xi,\eta):=\int_\R e^{ix(\xi\pm \eta)}O_\C(\langle x\rangle^{-2}\xi^0\eta^0)dx \]
and the $O_\C$-term behaves like a symbol.
Thus, repeated integration by parts in conjunction with the assumption $|\xi-\eta|\gtrsim \xi+\eta$
yields the bound
\[ |I_\pm (\xi,\eta)|\leq \frac{C_k}{(\xi+\eta)^k}\leq 
\frac{C_k}{\xi^k+\eta^k}. \]
\end{proof}

\begin{lemma}
\label{lem:Bweighted}
Let $s\in \R$ and assume $V^{(2j+1)}(0)=0$ for all $j\in \N_0$. Then we have the bound
\[ \|\langle \cdot \rangle^s B f\|_{L^2_{\tilde \rho}(\R_+)}\lesssim \|\langle \cdot \rangle^s f\|_{L^2_{\tilde \rho}(\R_+)} \]
for all $f\in C_c^\infty(0,\infty)$.
\end{lemma}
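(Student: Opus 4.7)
The plan is to revisit the decomposition $B = B^d + B^{nd}$ established in the proof of Corollary \ref{cor:B}, where $B^d$ is the diagonal piece (given by Lemma \ref{lem:Bdiag}) and $B^{nd} = B_0$ is the off-diagonal singular integral operator from Proposition \ref{prop:B0}, and to verify that each piece respects the weight $\langle\cdot\rangle^s$. The diagonal piece is trivial: by Lemma \ref{lem:Bdiag}, $B^d f = h\cdot f$ for some $h\in L^\infty(0,\infty)$, so multiplication by $\langle\cdot\rangle^s$ commutes with $B^d$ up to the harmless factor $\|h\|_{L^\infty}$, and the weighted bound is immediate.

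The substance is in the off-diagonal part. Following the argument of Proposition \ref{prop:B0}, I would reduce to showing that the kernel
\[
\tilde K_s(\xi,\eta) := \langle\xi\rangle^s\,\tilde\rho(\xi)^{1/2}\,\frac{F(\xi,\eta)}{\xi^2-\eta^2}\Bigl[1-\chi\Bigl(\tfrac{4(\xi-\eta)^2}{(\xi+\eta)^2}\Bigr)\Bigr]\,\tilde\rho(\eta)^{1/2}\,\langle\eta\rangle^{-s}
\]
defines a bounded operator on $L^2(\R_+)$. On the sub-region where both $\xi,\eta\leq 2$, the weight factor $\langle\xi\rangle^s\langle\eta\rangle^{-s}$ is comparable to $1$, so the unweighted argument of Proposition \ref{prop:B0} carries over verbatim.

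Outside this sub-region, at least one of $\xi,\eta$ exceeds $1$, and the cutoff forces $|\xi-\eta|\gtrsim \xi+\eta$, so in particular $|\xi^2-\eta^2|\gtrsim (\xi+\eta)^2$. Here I would invoke the refined estimate \emph{Lemma \ref{lem:Fref}}, choosing $k := \lceil|s|\rceil + 3$ to obtain $|F(\xi,\eta)|\leq C_k/(\xi^k+\eta^k)$. In each of the three remaining sub-cases ($\xi,\eta\geq 1$, or $\xi\geq 1\geq\eta$, or $\eta\geq 1\geq\xi$), the rapid polynomial decay of $F$ absorbs the ratio $\langle\xi\rangle^s/\langle\eta\rangle^s$ (which is bounded by $\max(\xi,\eta)^{|s|}$ in the off-diagonal regime) together with the $\tilde\rho^{1/2}$ factors from Lemma \ref{lem:rho}, leaving a kernel that satisfies the pointwise bound $|\tilde K_s(\xi,\eta)|\lesssim \min\{\xi^{-1+\delta}\eta^{-\delta},\,\xi^{-\delta}\eta^{-1+\delta}\}$ for some $\delta\in[0,\tfrac12)$. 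Lemma \ref{lem:L2bound} then yields $L^2$-boundedness.

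The main obstacle is the bookkeeping in this off-diagonal estimate: one has to match the polynomial weight ratio against the decay of $F$ in three asymmetric sub-regions and, in the mixed case $\xi\geq 1\geq\eta$, also handle the small-frequency behavior $\tilde\rho(\eta)^{1/2}\simeq \eta^{1/2}\langle\log\eta\rangle^{-1}$. The key structural input making this possible is that Lemma \ref{lem:Fref} provides decay of \emph{arbitrary} polynomial order—this is precisely where the hypothesis $V^{(2j+1)}(0)=0$ enters, allowing one to beat any finite power $|s|$ of the weight.
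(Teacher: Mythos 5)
Your proposal follows the paper's own proof almost exactly: reduce to the off-diagonal kernel with the weight $\langle\xi\rangle^s\langle\eta\rangle^{-s}$ attached (the multiplication-by-$h$ piece and the near-diagonal singular integral piece both respect polynomial weights trivially since there $\langle\xi\rangle\simeq\langle\eta\rangle$), handle $\xi,\eta\lesssim 1$ as in Proposition \ref{prop:B0}, invoke the arbitrary-order decay from Lemma \ref{lem:Fref} in the remaining regimes to dominate the weight ratio, and finish with Lemma \ref{lem:L2bound}. The one small remark is that the log-correction $\tilde\rho(\eta)^{1/2}\simeq\eta^{1/2}\langle\log\eta\rangle^{-1}$ you mention holds only in the nonresonant case $a_1\neq 0$ (which is not assumed here), but all that is actually needed is $\tilde\rho(\eta)^{1/2}\lesssim\eta^{1/2}$, which holds regardless; otherwise the argument is sound and matches the paper's.
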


\begin{proof}
It suffices to prove $L^2(\R_+)$-boundedness of the operator
\[ \tilde B_{nd}f(\xi):=\int_0^\infty \tilde K(\xi,\eta) f(\eta)d\eta \]
where
\[ \tilde K(\xi,\eta):=\left [1- \chi\left (\frac{4(\xi-\eta)^2}{(\xi+\eta)^2}\right ) \right ]
\langle \xi\rangle^s \tilde \rho(\xi)^\frac12 \frac{F(\xi,\eta)}{\xi^2-\eta^2}\tilde \rho(\eta)^\frac12
\langle \eta\rangle^{-s}, \]
cf.~the proof of Proposition \ref{prop:B0}.
In the case $\xi,\eta\leq 1$ we have $|\tilde K(\xi,\eta)|\lesssim \min\{\xi^{-1},\eta^{-1}\}$, as
in the proof of Proposition \ref{prop:B0}.
If $0<\eta\leq 1\leq \xi$ we use the bound from Lemma \ref{lem:Fref} to conclude
\[ |\tilde K(\xi,\eta)|\lesssim \frac{\xi^s}{\xi^k+\eta^k}\lesssim \xi^{-1}\lesssim \eta^{-1} \]
by taking $k$ large enough and we obtain $|\tilde K(\xi,\eta)|\lesssim \min\{\xi^{-1},\eta^{-1}\}$.
The same bound is true in the case $0<\xi\leq 1\leq \eta$.
Similarly, if $\xi,\eta\geq 1$ we infer
\[ |\tilde K(\xi,\eta)|\lesssim \frac{\xi^s \eta^{-s}}{\xi^k+\eta^k}\lesssim
\min\{\xi^{-1},\eta^{-1}\} \]
by taking $k$ large enough.
Consequently, Lemma \ref{lem:L2bound} yields the claim.
\end{proof}

The corresponding result on the physical side reads as follows.

\begin{corollary}
\label{cor:E}
Let $f\in \mc S(\R)$ be even, $s\geq 0$, and assume $V^{(2j+1)}(0)=0$ for all $j\in \N_0$.
Then the operator $E:=\mc F^{-1}B\mc F$ satisfies the bound
\[ \|E f\|_{H^s(\R_+)}\lesssim \|f\|_{H^s(\R_+)}. \]
\end{corollary}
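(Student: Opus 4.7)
The plan is to reduce the claimed $H^s$-boundedness of $E$ on the physical side to the weighted $L^2_{\tilde\rho}$-boundedness of $B$ already proved in Lemma \ref{lem:Bweighted}, using the Sobolev-norm equivalences from Lemmas \ref{lem:derweis} and \ref{lem:mapF-1}. Since $f\in \mc S(\R)$ is even and $V^{(2j+1)}(0)=0$ for all $j\in \N_0$, Lemma \ref{lem:derweis} gives
\[ \|\langle \cdot\rangle^s \mc F f\|_{L^2_{\tilde\rho}(\R_+)} \simeq \|f\|_{H^s(\R_+)}, \]
and this holds for every $s\geq 0$, so $\mc F f$ lies in every weighted space under consideration.

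Next I would extend $B$, which Lemma \ref{lem:Bweighted} only handles on $C_c^\infty(0,\infty)$, to a bounded operator on the weighted space $\{g: \langle\cdot\rangle^s g\in L^2_{\tilde\rho}(\R_+)\}$. Pick a sequence $(g_n)\subset C_c^\infty(0,\infty)$ with $\|\langle\cdot\rangle^s(\mc F f - g_n)\|_{L^2_{\tilde\rho}(\R_+)}\to 0$; by Lemma \ref{lem:Bweighted} the sequence $(Bg_n)$ is Cauchy in the norm $\|\langle\cdot\rangle^s\,\cdot\,\|_{L^2_{\tilde\rho}(\R_+)}$, hence converges to some $h\in L^2_{\tilde\rho}(\R_+)$ satisfying
\[ \|\langle\cdot\rangle^s h\|_{L^2_{\tilde\rho}(\R_+)} \lesssim \|\langle\cdot\rangle^s \mc F f\|_{L^2_{\tilde\rho}(\R_+)}. \]
This limit is taken as the definition of $B\mc F f$; it agrees with the original definition on test functions and does not depend on the approximating sequence, by another application of Lemma \ref{lem:Bweighted}.

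Finally, since $\langle\cdot\rangle^s h \in L^2_{\tilde\rho}(\R_+)$, Lemma \ref{lem:mapF-1} produces $\mc F^{-1}h\in H^s(\R_+)$ with $\|\mc F^{-1}h\|_{H^s(\R_+)}\simeq \|\langle\cdot\rangle^s h\|_{L^2_{\tilde\rho}(\R_+)}$. Identifying $Ef=\mc F^{-1}B\mc F f = \mc F^{-1}h$ and chaining the three norm equivalences with the one inequality gives $\|Ef\|_{H^s(\R_+)}\lesssim \|f\|_{H^s(\R_+)}$, as desired. The only genuine subtlety is the density argument showing that the $C_c^\infty(0,\infty)$-bound from Lemma \ref{lem:Bweighted} transfers to a well-defined bounded extension of $B$ on the weighted space containing $\mc F f$; once that is in place, the corollary is a short composition of previously established equivalences and requires no new estimates.
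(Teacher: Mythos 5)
Your proof takes essentially the same route as the paper: chain Lemma \ref{lem:derweis} (to pass from $H^s$ to the weighted Fourier norm), Lemma \ref{lem:Bweighted} (to bound $B$), and Lemma \ref{lem:mapF-1} (to return to the physical side). The density discussion you add is a reasonable way of filling in the small gap that Lemma \ref{lem:Bweighted} is only stated for $C_c^\infty(0,\infty)$ while $\mc F f$ need not be compactly supported; to be fully rigorous one should also verify that the bounded extension so obtained coincides with $B\mc F f$ as given by the explicit formula on $\mc F(\mc S(\R_+))$, e.g.\ via the decomposition of $B$ into the singular integral part $B_0$ and the multiplication part from Lemma \ref{lem:Bdiag}, but this is a minor technicality that the paper also passes over silently.
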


\begin{proof}
By Lemmas \ref{lem:derweis}, \ref{lem:Bweighted}, and \ref{lem:mapF-1} we infer
\[ \|f\|_{H^s(\R_+)}\simeq \|\langle \cdot \rangle^s \mc F f\|_{L^2_{\tilde \rho}(\R_+)}
\gtrsim \|\langle \cdot \rangle^s B\mc F f\|_{L^2_{\tilde \rho}(\R_+)}\simeq \|\mc F^{-1}B\mc F f\|_{H^s(\R_+)}. \]
\end{proof}

\subsection{Basic vector field bounds}
Now we are ready to prove a first estimate involving the scaling vector field $S=t\partial_t+x\partial_x$.
We start with a basic commutator result.

\begin{lemma}
\label{lem:S}
We have the identities
\begin{align*}
(t\partial_t+x\partial_x)\left [\cos(t\sqrt A)f(x)\right ]&=\cos(t\sqrt A)\left (|\cdot|f'\right )(x) \\
&\quad +\left [E, \cos(t\sqrt A) \right ]f(x) \\
(t\partial_t+x\partial_x)\left [\frac{\sin(t\sqrt A)}{\sqrt A}g(x)\right ]&=
\frac{\sin(t\sqrt A)}{\sqrt A}\left (|\cdot|g' + g\right )(x) \\
&\quad +\left [E, \frac{\sin(t\sqrt A)}{\sqrt A} \right ]g(x) 
\end{align*}
where $E:=\mc F^{-1}B\mc F$.
\end{lemma}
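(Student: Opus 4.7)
The plan is to read the defining relation Eq.~\eqref{def:B} for $B$ as an identity for the operator $D$ on the physical side, namely
\[ Df = \mc F^{-1}(-D - 1)\mc F f + Ef, \qquad E := \mc F^{-1} B \mc F. \]
Here, by abuse of notation, $D$ denotes both the physical scaling $Df(x) = xf'(x)$ and the Fourier-side operator $\xi\partial_\xi$; the meaning will be clear from context. This single identity, combined with the functional-calculus formulas $\mc F[\cos(t\sqrt A) f](\xi) = \cos(t\xi)\mc F f(\xi)$ and $\mc F[\tfrac{\sin(t\sqrt A)}{\sqrt A} g](\xi) = \tfrac{\sin(t\xi)}{\xi}\mc F g(\xi)$, will reduce both assertions to a Leibniz-rule calculation on the Fourier side.

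For the cosine evolution I first compute $t\partial_t[\cos(t\sqrt A)f] = -t\sqrt A \sin(t\sqrt A) f$ via Lemma~\ref{lem:difft}. Next, applying the displayed identity with $\cos(t\sqrt A)f$ in place of $f$,
\[ D[\cos(t\sqrt A) f] = \mc F^{-1}\bigl(-D - 1\bigr)\bigl[\cos(t\xi) \mc F f(\xi)\bigr] + E[\cos(t\sqrt A) f]. \]
The Leibniz rule gives $-\xi\partial_\xi[\cos(t\xi)\mc F f(\xi)] = t\xi \sin(t\xi)\mc F f(\xi) - \cos(t\xi)\, D \mc F f(\xi)$, and adding the $-\cos(t\xi)\mc F f(\xi)$ contribution and re-invoking Eq.~\eqref{def:B} in the form $-D\mc F f - \mc F f = \mc F(Df) - B\mc F f$ produces
\[ \mc F^{-1}(-D - 1)[\cos(t\xi)\mc F f(\xi)] = t\sqrt A \sin(t\sqrt A) f + \cos(t\sqrt A)(Df) - \cos(t\sqrt A)\, E f. \]
The first term cancels $t\partial_t[\cos(t\sqrt A) f]$, and the remaining pieces combine into $\cos(t\sqrt A)(Df) + [E, \cos(t\sqrt A)]f$; since $Df(x) = xf'(x) = |x| f'(x)$ for $x \geq 0$, this is the first asserted identity.

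The sine evolution is handled in exactly the same way, the only difference being that the Leibniz rule applied to the product $\tfrac{\sin(t\xi)}{\xi}\mc F g(\xi)$ produces an additional term from $\xi\partial_\xi \tfrac{\sin(t\xi)}{\xi} = t\cos(t\xi) - \tfrac{\sin(t\xi)}{\xi}$. Tracking this extra $-\tfrac{\sin(t\xi)}{\xi}\mc F g(\xi)$ through the calculation is precisely what yields the lower-order $+\,g$ term inside $\tfrac{\sin(t\sqrt A)}{\sqrt A}(|\cdot|g' + g)$, with the commutator $[E, \tfrac{\sin(t\sqrt A)}{\sqrt A}]g$ arising in the same manner as before.

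The computations are elementary but bookkeeping-heavy; the main obstacle is not conceptual but a careful tracking of who acts on what, and the use of Fubini/dominated convergence to commute $\mc F^{-1}$ with differentiation in $t$ and $\xi$. All these manipulations are legitimate for even $f,g \in \mc S(\R)$ by the mapping properties of $\mc F$ (Lemma~\ref{lem:Fourier}), Lemmas~\ref{lem:derweis} and~\ref{lem:mapF-1}, the boundedness of $B$ on weighted $L^2_{\tilde\rho}$ and of $E$ on Sobolev spaces (Corollary~\ref{cor:B}, Lemma~\ref{lem:Bweighted}, Corollary~\ref{cor:E}), and the $t$-differentiation result of Lemma~\ref{lem:difft}.
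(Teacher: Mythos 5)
Your proposal is correct and follows essentially the same route as the paper: both read the defining relation for $B$ as an operator identity relating $D$ to $-\xi\partial_\xi-1$ plus $B$ on the Fourier side, apply it first to the evolved function and then a second time to the datum, and exploit the cancellation that comes from $\cos(t\xi)$ and $\frac{\sin(t\xi)}{\xi}$ being functions of $t\xi$ alone. The only cosmetic difference is that the paper observes $(t\partial_t-\xi\partial_\xi)[\cos(t\xi)\mc F f(\xi)]=-\cos(t\xi)\xi(\mc F f)'(\xi)$ directly, while you track the $t\sqrt{A}\sin(t\sqrt{A})f$ cross-term separately and cancel it against $t\partial_t[\cos(t\sqrt{A})f]$ at the end; these are the same computation in a slightly different order.
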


\begin{proof}
We have
\[ \mc F\left (S\cos(t\sqrt A)f \right )(\xi)=(t\partial_t-\xi\partial_\xi-1)\left [\cos(t\xi)\mc F f(\xi) \right ]
+B\mc F \cos(t\sqrt A)f(\xi) \]
and
\begin{align*} (t\partial_t-\xi\partial_\xi-1)\left [\cos(t\xi)\mc F f(\xi) \right ]&=-\cos(t\xi)\left [\xi(\mc F f)'(\xi)
+\mc F f(\xi) \right ] \\
&=\cos(t\xi)\left [\mc F(|\cdot|f')(\xi)-B\mc F f(\xi)\right ]
\end{align*}
which yields the stated expression for the cosine evolution.
For the sine evolution it suffices to note that
\[ (t\partial_t-\xi\partial_\xi-1)\left (\frac{\sin(t\xi)}{\xi}\mc F g(\xi)\right )
=-\frac{\sin(t\xi)}{\xi}\xi (\mc F g)'(\xi). \]
\end{proof}

We need one more commutator estimate.
Note carefully the smoothing effect at small frequencies which requires the nonresonant
condition $a_1\not= 0$.
The point is that only if $a_1\not=0$ we have 
\[ \int_0^1 \xi^{-2}\tilde \rho(\xi)d\xi \lesssim \int_0^1 \frac{1}{\xi(1+|a_1| \log^2 \xi)}d\xi \lesssim 1, \]
see Lemma \ref{lem:rho}. 

\begin{lemma}
\label{lem:commAB}
Let $f\in \mc S(\R)$ be even, $s\geq 0$, and assume $V^{(2j+1)}(0)=0$ for all $j\in \N_0$. 
Furthermore, assume that the constant $a_1$ from Lemma \ref{lem:en0} is nonzero.
Then the commutator $[\sqrt A,E]$ satisfies the estimate
\[ \left \|[\sqrt A,E]f \right \|_{H^s(\R_+)}\lesssim \|\sqrt A f\|_{H^{s-1}(\R_+)}. \]
\end{lemma}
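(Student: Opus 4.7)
The plan is to transfer the estimate to the distorted Fourier side and bound the resulting integral operator by combining Lemma \ref{lem:L2bound} with Hilbert--Schmidt estimates across separated frequency regions.

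First, since $E = \mc F^{-1}B\mc F$ and $\mc F\sqrt A = M_\xi \mc F$ on even Schwartz functions (with $M_\xi$ denoting multiplication by $\xi$), we have $\mc F([\sqrt A, E]f)(\xi) = [M_\xi, B]\mc Ff(\xi)$. By Lemmas \ref{lem:derweis} and \ref{lem:mapF-1} the desired inequality is equivalent to
\[\left\|\langle\cdot\rangle^s [M_\xi, B]h\right\|_{L^2_{\tilde\rho}(\R_+)} \lesssim \left\|\langle\cdot\rangle^{s-1}|\cdot|\, h\right\|_{L^2_{\tilde\rho}(\R_+)},\qquad h := \mc F f.\]

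Next I identify the commutator kernel. Writing $B = B_0 + B^d$ as in Corollary \ref{cor:B}, the diagonal piece $B^d$ is multiplication by a bounded function (Lemma \ref{lem:Bdiag}) and hence commutes with $M_\xi$. Therefore $[M_\xi, B] = [M_\xi, B_0]$ and, by Eq.~\eqref{eq:Schwartzkernel}, this commutator has the regular Schwartz kernel
\[(\xi-\eta) K_B(\xi,\eta) = \frac{2F(\xi,\eta)\eta\rho(\eta^2)}{\xi+\eta}.\]
Absorbing the weights $\langle\xi\rangle^s\tilde\rho(\xi)^{1/2}$ and $\langle\eta\rangle^{s-1}\tilde\rho(\eta)^{1/2}$ into the kernel reduces the claim to $L^2(d\xi\,d\eta)$-boundedness of the integral operator with Lebesgue kernel
\[\tilde K(\xi,\eta) := \frac{2\langle\xi\rangle^s\langle\eta\rangle^{1-s}\sqrt{\xi\rho(\xi^2)\rho(\eta^2)}\,F(\xi,\eta)}{\sqrt\eta\,(\xi+\eta)}.\]

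Third, I estimate $|\tilde K|$ in each regime. When $\xi,\eta\leq \tfrac12$, Lemmas \ref{lem:boundsF} and \ref{lem:rho} give $|F|\lesssim 1$ and $\rho^{1/2}\lesssim 1$, so the trivial inequality $(\eta/\xi)^{1/2-\delta}\leq 1$ for $\eta\leq\xi$ yields $|\tilde K|\lesssim \min\{\xi^{-1+\delta}\eta^{-\delta},\,\xi^{-\delta}\eta^{-1+\delta}\}$ for any $\delta\in (0,\tfrac12)$, so Lemma \ref{lem:L2bound} applies. When $\xi,\eta\geq \tfrac12$, the near-diagonal kernel is $\lesssim 1/(\xi+\eta)$ and handled by Lemma \ref{lem:L2bound} with $\delta=0$, while the off-diagonal contribution is Hilbert--Schmidt thanks to the rapid decay $|F|\lesssim (\xi^k+\eta^k)^{-1}$ from Lemma \ref{lem:Fref}. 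On the mixed region $\xi\leq\tfrac12\leq\eta$, this same refined bound on $F$ combined with the trivial estimate $\int_0^{1/2}\xi\rho(\xi^2)\,d\xi<\infty$ renders $\tilde K$ Hilbert--Schmidt.

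The main obstacle, and the only place where the nonresonance hypothesis genuinely enters, is the mixed region $\xi\geq \tfrac12\geq \eta$. Here $\rho(\xi^2)\simeq \xi^{-1}$ and $|F|\lesssim \xi^{-k}$ reduce the Hilbert--Schmidt integral for $\tilde K$ to a product of a convergent $\xi$-integral with
\[\int_0^{1/2}\frac{\rho(\eta^2)}{\eta}\,d\eta,\]
which diverges like $\int_0^{1/2}d\eta/\eta$ if $\rho(\eta^2)$ remains bounded below as $\eta\to 0$ (the resonant case), but converges thanks to the smoothing $\rho(\eta^2)\lesssim 1/\log^2\eta$ furnished by Lemma \ref{lem:rho} when $a_1\neq 0$. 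This is precisely the ``smoothing of the spectral measure at small frequencies'' alluded to in the remark preceding the lemma, and it is what makes the full $L^2$ bound on $\tilde K$ possible.
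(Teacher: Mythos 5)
Your argument is correct and takes the same overall route as the paper: pass to the distorted Fourier side, observe that the diagonal (multiplication) part of $B$ drops out of the commutator with $M_\xi$, identify the resulting regular kernel $2F(\xi,\eta)\eta\rho(\eta^2)/(\xi+\eta)$ via Eq.~\eqref{eq:Schwartzkernel}, and prove $L^2$-boundedness of the appropriately weighted kernel using the estimates from Lemmas \ref{lem:rho}, \ref{lem:boundsF}, \ref{lem:Fref} together with Lemma \ref{lem:L2bound}. The main difference is tactical but worth noting: the paper compresses the kernel analysis into a single asserted pointwise bound $|\hat K(\xi,\eta)|\lesssim \xi^{-1}\tilde\rho(\xi)^{1/2}\eta^{-1}\tilde\rho(\eta)^{1/2}$, which makes the dominating kernel a rank-one, Hilbert--Schmidt object (finite precisely when $a_1\neq 0$). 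You instead split into four regions and treat the near-diagonal pieces with the dyadic Schur-type Lemma \ref{lem:L2bound} and only the well-separated pieces by Hilbert--Schmidt. This is actually the more robust version: on the high-frequency near-diagonal $\xi\simeq\eta\gtrsim 1$ the kernel is $\simeq |F(\xi,\eta)|/\xi$, and since $F(\xi,\xi)\to -\tfrac12\int_0^\infty V \neq 0$ in general as $\xi\to\infty$, one only gets $\simeq \min\{\xi^{-1},\eta^{-1}\}$ there, not the stronger $\xi^{-1}\eta^{-1}$ the paper's global bound would require; your use of Lemma \ref{lem:L2bound} with $\delta=0$ handles exactly this case. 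Your decomposition also isolates more sharply where the nonresonance condition enters (the convergence of $\int_0 \rho(\eta^2)\,d\eta/\eta$ in the mixed regime), whereas the paper's rank-one majorant uses it for all of the low-frequency range. One cosmetic caveat: Lemma \ref{lem:Fref} gives $|F|\lesssim (\xi^k+\eta^k)^{-1}$ only under $|\xi-\eta|\gtrsim\xi+\eta$, so in the mixed regions you should either restrict to, say, $\xi\geq 2$ or $\eta\leq 1/4$ (the residual compact band near $\xi\simeq\eta\simeq\tfrac12$ being trivially controlled) or use overlapping cut-offs; the substance is unaffected.
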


\begin{proof}
We have 
\[ \mc F[\sqrt A ,E]\mc F^{-1}\hat f(\xi)=\xi B \hat f(\xi)-B(|\cdot|\hat f)(\xi). \]
Consequently, $\mc F[\sqrt A,B]\mc F^{-1}$ defines a continuous map from $\mc D(J)$ to $\mc D'(J)$
and the Schwartz kernel theorem implies that the kernel $\tilde K$ of $\mc F[\sqrt A,B]\mc F^{-1}$ 
is given by $\tilde K=(\pi_1-\pi_2)K$ where $K$ is the Schwartz kernel of $B$, cf.~Lemma \ref{lem:Schwartzod}.
By Eq.~\eqref{eq:Schwartzkernel} we see that the kernel $\tilde K$ is a function and given by
\[ \tilde K(\xi,\eta)=2\frac{F(\xi,\eta)}{\xi+\eta}\eta\rho(\eta^2) \]
for $\xi,\eta>0$. By Lemmas \ref{lem:boundsF} and \ref{lem:Fref} we infer the bound
\[ \left |\langle \xi\rangle^s \frac{\tilde \rho(\xi)^\frac12
F(\xi,\eta) \tilde \rho(\eta)^\frac12}{\xi+\eta} \eta^{-1}\langle \eta\rangle^{-s+1} \right |
\lesssim \xi^{-1}\tilde \rho(\xi)^\frac12 
\eta^{-1}\tilde \rho(\eta)^\frac12 \]
for all $\xi,\eta>0$ and by the assumption $a_1\not= 0$, 
we see that this kernel induces an operator which is bounded on $L^2(\R_+)$.
Consequently, we infer 
\begin{align*} 
\|\sqrt A f\|_{H^{s-1}(\R_+)}&\simeq \|\langle \cdot \rangle^{s-1}|\cdot| \mc F f\|_{L^2_{\tilde \rho}(\R_+)}
\gtrsim \|\langle \cdot \rangle^s \mc F [\sqrt A,E]f\|_{L^2_{\tilde \rho}(\R_+)} \\
&\simeq \|[\sqrt A,E]f\|_{H^s(\R_+)}
\end{align*}
by applying Lemmas \ref{lem:derweis} and \ref{lem:mapF-1}.
\end{proof}

The above considerations lead to the following estimate for the solution of the wave equation.

\begin{lemma}
\label{lem:vft}
Let $f,g \in \mc S(\R)$ be even and assume $V^{(2j+1)}(0)=0$ for all $j\in \N_0$.
Furthermore, assume that the constant $a_1$ from Lemma \ref{lem:en0} is nonzero.
Then the solution $u$ of the initial value problem \eqref{eq:init} satisfies the bounds
\begin{align*} 
\|\partial_t^\ell \partial_t S u(t,\cdot)\|_{H^k(\R_+)}\leq C_{k,\ell} 
\Big ( &\left \|\sqrt A \left (|\cdot| f'\right ) \right \|_{H^{k+\ell}(\R_+)} +
\left \|\sqrt A f \right \|_{H^{k+\ell}(\R_+)} \\
&+\||\cdot| g'\|_{H^{k+\ell}(\R_+)}+\|g\|_{H^{k+\ell}(\R_+)} \Big )
\end{align*}
for all $t\geq 0$ and all $k,\ell\in \N_0$.
\end{lemma}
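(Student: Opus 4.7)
My plan is to decompose $Su$ using Lemma \ref{lem:S} into a ``main part'' (the free wave action on the modified data $|\cdot|f'$ and $|\cdot|g'+g$) plus two commutator corrections:
\begin{align*}
Su &= \cos(t\sqrt A)(|\cdot|f') + \tfrac{\sin(t\sqrt A)}{\sqrt A}(|\cdot|g'+g) \\
&\quad + [E,\cos(t\sqrt A)]f + \bigl[E,\tfrac{\sin(t\sqrt A)}{\sqrt A}\bigr]g .
\end{align*}
Since $f,g$ are even, $|\cdot|f'$ and $|\cdot|g'+g$ extend to even Schwartz functions on $\R$, so I may apply $\partial_t^{\ell+1}$ to the main part and invoke the energy bound of Lemma \ref{lem:energy} with initial data $(|\cdot|f',0)$ and $(0,|\cdot|g'+g)$. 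This directly produces the contributions $\|\sqrt A(|\cdot|f')\|_{H^{k+\ell}}$, $\||\cdot|g'\|_{H^{k+\ell}}$ and part of $\|g\|_{H^{k+\ell}}$.

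The commutator terms require more care. Because $E=\mc F^{-1}B\mc F$ is a spatial operator, it commutes with $\partial_t$, hence
\[
\partial_t^{\ell+1}[E,\cos(t\sqrt A)]f = \bigl[E,\pm(\sqrt A)^{\ell+1}G(t\sqrt A)\bigr]f,\qquad
\partial_t^{\ell+1}\bigl[E,\tfrac{\sin(t\sqrt A)}{\sqrt A}\bigr]g = \bigl[E,\pm(\sqrt A)^{\ell}G(t\sqrt A)\bigr]g,
\]
where $G$ is $\cos$ or $\sin$ depending on the parity of $\ell$. I then split each commutator trivially, $[E,T] = ET - TE$. The sine commutator is the easy one: by Corollary \ref{cor:E} ($H^s$-boundedness of $E$) and Plancherel together with $|G|\leq 1$,
\[
\bigl\|[E,(\sqrt A)^{\ell}G(t\sqrt A)]g\bigr\|_{H^k} \lesssim \|(\sqrt A)^{\ell}G(t\sqrt A)g\|_{H^k} + \|(\sqrt A)^{\ell}G(t\sqrt A)Eg\|_{H^k} \lesssim \|g\|_{H^{k+\ell}} + \|Eg\|_{H^{k+\ell}} \lesssim \|g\|_{H^{k+\ell}},
\]
which supplies the remaining part of the $g$-term.

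The cosine commutator is the delicate one and is where the nonresonance hypothesis enters. After the trivial splitting, the piece $\|E(\sqrt A)^{\ell+1}G(t\sqrt A)f\|_{H^k}$ is immediately bounded by $\|\sqrt A f\|_{H^{k+\ell}}$ via Corollary \ref{cor:E} and Plancherel. The remaining piece is
\[
\|(\sqrt A)^{\ell+1}G(t\sqrt A)Ef\|_{H^k} \lesssim \|\sqrt A Ef\|_{H^{k+\ell}} \leq \|E\sqrt A f\|_{H^{k+\ell}} + \|[\sqrt A,E]f\|_{H^{k+\ell}} .
\]
The first summand is again handled by Corollary \ref{cor:E}, but the second requires the smoothing estimate of Lemma \ref{lem:commAB}, which yields $\|[\sqrt A,E]f\|_{H^{k+\ell}}\lesssim \|\sqrt A f\|_{H^{k+\ell-1}}$ precisely under the assumption $a_1\neq 0$. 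This gain of one derivative is exactly what compensates the extra factor of $\sqrt A$ produced by the $\ell+1$ time derivatives, and it is the main obstacle of the proof: the nonresonance condition is used nowhere else. Summing the four contributions gives the stated bound for $\|\partial_t^\ell \partial_t Su\|_{H^k}$.
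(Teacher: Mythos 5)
Your proposal is correct and follows essentially the same route as the paper's own proof: decompose via Lemma \ref{lem:S}, treat the free part with the energy bounds of Section \ref{sec:energy}, and control the commutator terms by the boundedness of $E$ (Corollary \ref{cor:E}) together with the smoothing estimate for $[\sqrt A,E]$ (Lemma \ref{lem:commAB}), which is the only place the nonresonance hypothesis $a_1\neq 0$ enters. The only cosmetic difference is that you apply $\partial_t^{\ell+1}$ in one step, whereas the paper does $\ell=0$ first and then promotes to $\ell\geq 1$ via $\|\sqrt A^{\ell}f\|_{H^k}\lesssim\|f\|_{H^{k+\ell}}$; both reduce to the identical estimates.
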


\begin{proof}
By Lemmas \ref{lem:S} and \ref{lem:difft} we obtain
\begin{align*} 
\partial_t (t\partial_t+x\partial_x)[\cos(t\sqrt A)f(x)]&=-\sin(t\sqrt A)\sqrt A\left (|\cdot|f'\right )(x) \\
&\quad -E \sin(t\sqrt A)\sqrt A f(x) \\
&\quad +\sin (t\sqrt A) \sqrt A E f(x)
\end{align*}
and
\[ \sin (t\sqrt A) \sqrt A E f(x)=\sin(t\sqrt A)E\sqrt A f(x)+\sin(t\sqrt A)[\sqrt A,E]f(x). \]
Similarly, for the sine evolution we infer
\begin{align*}
\partial_t (t\partial_t+x\partial_x)\left [\frac{\sin(t\sqrt A)}{\sqrt A}g(x) \right ]&=
\cos(t\sqrt A)\left (|\cdot|g'+g\right )(x) \\
&\quad +[E,\cos(t\sqrt A)]g(x).
\end{align*}
Consequently, Corollary \ref{cor:E} and Lemma \ref{lem:commAB} yield the stated bound
for $\ell=0$.
The case $\ell\geq 1$ follows accordingly by noting that
\[ \|\sqrt A^\ell f\|_{H^k(\R_+)}\simeq \|\langle \cdot \rangle^k |\cdot|^\ell \mc F f\|_{L^2_{\tilde \rho}(\R_+)}
\lesssim \|\langle \cdot\rangle^{k+\ell}\mc F f\|_{L^2_{\tilde \rho}(\R_+)}\simeq \|f\|_{H^{k+\ell}(\R_+)}, \]
see Lemmas \ref{lem:derweis} and \ref{lem:mapF-1}.
\end{proof}

As usual, the same bounds hold for $\sqrt A u(t,\cdot)$.

\begin{lemma}
\label{lem:vfA}
Under the assumptions of Lemma \ref{lem:vft}, 
the solution $u$ of the initial value problem \eqref{eq:init} satisfies the bounds
\begin{align*} 
\|\partial_t^\ell \sqrt AS u(t,\cdot)\|_{H^k(\R_+)}\leq C_{k,\ell} 
\Big ( &\left \|\sqrt A \left (|\cdot| f'\right ) \right \|_{H^{k+\ell}(\R_+)} +
\left \|\sqrt A f \right \|_{H^{k+\ell}(\R_+)} \\
&+\||\cdot| g'\|_{H^{k+\ell}(\R_+)}+\|g\|_{H^{k+\ell}(\R_+)} \Big )
\end{align*}
for all $t\geq 0$ and all $k,\ell\in \N_0$.
\end{lemma}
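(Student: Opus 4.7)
The plan is to apply $\sqrt A$ to the two identities furnished by Lemma \ref{lem:S} and then commute $\sqrt A$ past the error operator $E=\mc F^{-1}B\mc F$, so that everything reduces to objects we can already control, namely: pure functional-calculus evolutions acting on Sobolev data, the bounded operator $E$, and the gain-of-derivative commutator $[\sqrt A,E]$ from Lemma \ref{lem:commAB}. Throughout, $\partial_t^\ell$ will be pushed through $\sqrt A$, $E$, and $[\sqrt A,E]$ (all of which are $t$-independent) onto the cosine/sine factors, where Lemma \ref{lem:difft} converts it into $\sqrt A^\ell$.

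Concretely, for the cosine piece I would multiply the first identity of Lemma \ref{lem:S} by $\sqrt A$ and use the commutator expansion
\[
 \sqrt A\,[E,\cos(t\sqrt A)]=[E,\cos(t\sqrt A)]\sqrt A+[\sqrt A,E]\cos(t\sqrt A)-\cos(t\sqrt A)[\sqrt A,E],
\]
exploiting that $\sqrt A$ commutes with the functional calculus of $A$. The outcome is
\begin{align*}
 \sqrt A\,S\cos(t\sqrt A)f&=\cos(t\sqrt A)\sqrt A\bigl(|\cdot|f'\bigr)+[E,\cos(t\sqrt A)]\sqrt A f\\
 &\quad +[\sqrt A,E]\cos(t\sqrt A)f-\cos(t\sqrt A)[\sqrt A,E]f.
\end{align*}
For the sine piece, the algebra is even simpler because $\sqrt A$ kills the $\sqrt A^{-1}$ on the left of $\sin(t\sqrt A)$, yielding
\[
 \sqrt A\,S\,\frac{\sin(t\sqrt A)}{\sqrt A}g=\sin(t\sqrt A)\bigl(|\cdot|g'+g\bigr)+[E,\sin(t\sqrt A)]g+[\sqrt A,E]\frac{\sin(t\sqrt A)}{\sqrt A}g.
\]

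I would then estimate each resulting term in $H^k(\R_+)$. The ``clean'' terms $\cos(t\sqrt A)\sqrt A(|\cdot|f')$ and $\sin(t\sqrt A)(|\cdot|g'+g)$ are handled directly on the distorted Fourier side using Lemmas \ref{lem:derweis} and \ref{lem:mapF-1}, which give the required bounds after applying $\partial_t^\ell$ (which costs $\ell$ distorted-Fourier weights, i.e.\ $\ell$ ordinary derivatives). The commutators $[E,\cos(t\sqrt A)]\sqrt A f$ and $[E,\sin(t\sqrt A)]g$ lose nothing because $E$ is bounded on $H^k$ by Corollary \ref{cor:E} and commutes with $\partial_t$. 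The remaining terms carrying $[\sqrt A,E]$ are the delicate ones: Lemma \ref{lem:commAB} yields
\[
 \bigl\|[\sqrt A,E]\,\partial_t^\ell\cos(t\sqrt A)f\bigr\|_{H^k}\lesssim \bigl\|\sqrt A\,\partial_t^\ell\cos(t\sqrt A)f\bigr\|_{H^{k-1}}\lesssim\|\sqrt A f\|_{H^{k+\ell-1}},
\]
and analogously for the sine-based term, where $\sqrt A\cdot\frac{\sin(t\sqrt A)}{\sqrt A}=\sin(t\sqrt A)$ again produces the needed cancellation so that the right-hand side only costs $\|g\|_{H^{k+\ell}}$.

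The main (and only real) obstacle is the last family of estimates, where one must convert the factor $\sqrt A$ appearing in Lemma \ref{lem:commAB} into admissible data norms; this is precisely where the nonresonance assumption $a_1\neq 0$ from Lemma \ref{lem:en0} is used, through the integrability of $\xi^{-2}\tilde\rho(\xi)$ near the origin. Once this smoothing is in hand, summing the four (resp.\ three) contributions above and invoking $\|\sqrt A^\ell h\|_{H^k}\lesssim\|h\|_{H^{k+\ell}}$ for the higher $t$-derivatives gives exactly the stated bound.
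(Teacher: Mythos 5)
Your proof is correct and takes essentially the same approach the paper intends: the paper simply remarks that Lemma~\ref{lem:vfA} is ``completely analogous'' to Lemma~\ref{lem:vft}, and your argument is precisely the expected transcription of that proof with $\sqrt A$ in place of $\partial_t$, expanding $\sqrt A[E,\cdot]$ via $\sqrt A E=E\sqrt A+[\sqrt A,E]$ and then invoking Lemma~\ref{lem:S}, Corollary~\ref{cor:E}, Lemma~\ref{lem:commAB}, and Lemmas~\ref{lem:derweis}/\ref{lem:mapF-1}.
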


\begin{proof}
Completely analogous to the proof of Lemma \ref{lem:vft}.
\end{proof}

\subsection{Bounds involving the ordinary derivative}

In view of nonlinear applications we would like to substitute the operator
$\sqrt A$ in Lemma \ref{lem:vfA} by the simpler (but in this context unnatural)
ordinary derivative $\nabla$.
As before with the energy bounds, this is not possible within the $L^2$-framework
and we have to require suitable $L^1$-bounds for the data.
Only the sine evolution causes problems in this respect due to the singularity of $\frac{\sin(t\xi)}{\xi}$
at $\xi=0$.
As a preparation we need an $L^\infty$-bound for the commutator $[E,A^{-1/2}\sin(t A^{1/2})]$.
The following result should be compared to Remark \ref{rem:sinL1Linf}.

\begin{lemma}
\label{lem:comminf}
Let $\epsilon \in (0,1)$. Then we have the bound
\[ \left \|\langle \cdot \rangle^{-\epsilon}
\left [E,\frac{\sin(t\sqrt A)}{\sqrt A}\right ]g\right \|_{L^\infty(\R_+)} 
\leq \tfrac{C}{\epsilon} \left \|g \right \|_{L^1(\R_+)} \]
for all $t\geq 0$ and all $g\in \mc S(\R_+)$.
\end{lemma}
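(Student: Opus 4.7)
The plan is to reduce the estimate to a pointwise bound on an integral kernel, in the spirit of Remark \ref{rem:sinL1Linf}. By Lemma \ref{lem:S} we have the algebraic identity
\[
\left[E,\frac{\sin(t\sqrt A)}{\sqrt A}\right]g
= Su - \frac{\sin(t\sqrt A)}{\sqrt A}(Dg+g),\qquad u:=\frac{\sin(t\sqrt A)}{\sqrt A}g,
\]
but $Dg(y) = yg'(y)$ is not controlled by $\|g\|_{L^1(\R_+)}$, so Remark \ref{rem:sinL1Linf} cannot be applied to the second term directly. I would push the derivative onto the kernel by integrating by parts in $y$: with the kernel $G(x,y;t)=\int_0^\infty \phi(x,\xi^2)\phi(y,\xi^2)\tfrac{\sin(t\xi)}{\xi}\tilde\rho(\xi)\,d\xi$ of $\sin(t\sqrt A)/\sqrt A$, the boundary contributions vanish by virtue of $\phi'(0,\xi^2)=0$ and the Schwartz decay of $g$, giving
\[
\frac{\sin(t\sqrt A)}{\sqrt A}(Dg+g)(x)
= -\int_0^\infty y\,\partial_y G(x,y;t)\,g(y)\,dy.
\]
Since $Su(t,x)=\int_0^\infty(t\partial_t+x\partial_x)G(x,y;t)g(y)\,dy$, the commutator is the integral operator on $g$ with kernel $\tilde K(x,y;t):=(t\partial_t+x\partial_x+y\partial_y)G(x,y;t)$, and the lemma reduces to the uniform pointwise bound $|\langle x\rangle^{-\epsilon}\tilde K(x,y;t)|\lesssim 1/\epsilon$.

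To make $\tilde K$ tractable I would use the algebraic identity
\[
(x\partial_x+y\partial_y)[\phi(x,\xi^2)\phi(y,\xi^2)]
= \xi\partial_\xi[\phi(x,\xi^2)\phi(y,\xi^2)] - R(x,y;\xi),
\]
\[
R(x,y;\xi) := (\xi\partial_\xi-D_x)\phi(x,\xi^2)\,\phi(y,\xi^2) + \phi(x,\xi^2)\,(\xi\partial_\xi-D_y)\phi(y,\xi^2),
\]
combined with an integration by parts in $\xi$ applied to the $\xi\partial_\xi$-piece. As in the proof of Lemma \ref{lem:sinL1Linf}, one first truncates to $\xi\in[1/N,N]$ and passes to the limit in the sense of the distributional action on $g$; the boundary contribution at $\xi=0$ vanishes thanks to $\sin(t\xi)|_{\xi=0}=0$. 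The resulting $t\cos(t\xi)\tilde\rho(\xi)$-piece cancels exactly against $t\partial_t G$, and we arrive at the decomposition $\tilde K = \tilde K_A+\tilde K_B$ with
\[
\tilde K_A(x,y;t) = -\int_0^\infty \sin(t\xi)\phi(x,\xi^2)\phi(y,\xi^2)\tilde\rho'(\xi)\,d\xi,
\]
\[
\tilde K_B(x,y;t) = -\int_0^\infty \frac{\sin(t\xi)}{\xi}\,R(x,y;\xi)\,\tilde\rho(\xi)\,d\xi.
\]

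For $\tilde K_A$, direct differentiation of the explicit formula for $\rho$ in Lemma \ref{lem:rho} shows that $\tilde\rho'(\xi)$ shares the symbol-type bounds of $2\rho(\xi^2)=\tilde\rho(\xi)/\xi$ on $(0,\infty)$; hence $\tilde K_A$ has the same structural form as the kernel of $\sin(t\sqrt A)/\sqrt A$ treated in Lemma \ref{lem:sinL1Linf}. Re-running the dyadic case analysis of that proof with the weaker weight $\langle x\rangle^{-\epsilon}$ in place of $\langle x\rangle^{-1/2}$ — exactly as in Remark \ref{rem:sinL1Linf} — produces $|\langle x\rangle^{-\epsilon}\tilde K_A(x,y;t)|\lesssim 1/\epsilon$, the factor $1/\epsilon$ originating from $\int_0^1\xi^{-1+\epsilon}\,d\xi$ after $\langle x\rangle^\epsilon$ is traded for $(x\xi)^\epsilon\cdot\xi^{-\epsilon}$.

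The core new work is the estimate on $\tilde K_B$. The key structural input is that $\xi\partial_\xi-D_x$ annihilates every function of $x\xi$, in particular $e^{\pm ix\xi}$. Plugging in the global representations of $\phi(x,\xi^2)$ from Lemma \ref{lem:phi}, one derives symbol-type pointwise bounds on $R(x,y;\xi)$ in each matching region $x\xi\lessgtr 1$, $y\xi\lessgtr 1$, $\xi\lessgtr 1/2$: in the oscillatory regime $x\xi\gtrsim 1$, the operator $\xi\partial_\xi-D_x$ kills the $e^{\pm ix\xi}$-factors of $\phi$ and only sees the slowly varying coefficient $a(\xi)$ (controlled via Lemmas \ref{lem:Wsm}--\ref{lem:Wlg}) together with the $O_\C$-errors, whence $R$ inherits size bounds comparable to those of $\phi(x,\xi^2)\phi(y,\xi^2)$; in the non-oscillatory regime $x\xi\lesssim 1$ one has $(\xi\partial_\xi-D_x)\phi(x,\xi^2) = -D_x\phi_0(x)+O(\langle x\rangle^{5/2+}\xi^2)$, of size only $O(\langle x\rangle^{1/2+})$, which is again comparable to $\phi(x,\xi^2)$ itself in that regime, and the $1/\xi$ factor in $\sin(t\xi)/\xi$ is rendered harmless by $\tilde\rho(\xi)=O(\xi)$ at the origin (Lemma \ref{lem:rho}). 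With these inputs $\tilde K_B$ is pointwise majorized by the kernel of $\sin(t\sqrt A)/\sqrt A$ (up to log-factors), and rerunning the dyadic/matching-region case analysis of Lemma \ref{lem:sinL1Linf} — exploiting oscillations via Lemma \ref{lem:osc} where available and trading powers of $\langle x\rangle$ for powers of $\xi$ otherwise — yields $|\langle x\rangle^{-\epsilon}\tilde K_B(x,y;t)|\lesssim 1/\epsilon$. The main obstacle is precisely the careful bookkeeping of the bounds on $R$ across the transition zones $x\xi\simeq 1$, $y\xi\simeq 1$, $\xi\simeq 1/2$, where the two asymptotic representations of $\phi$ from Lemma \ref{lem:phi} must be matched and the size of $R$ verified uniformly; there, as in Remark \ref{rem:sinL1Linf}, the $1/\epsilon$ blow-up comes exactly from the small-frequency, non-oscillatory sector, where only the combination of the weight $\langle x\rangle^{-\epsilon}$ with the $O(\xi)$-smallness of $\tilde\rho$ near the origin produces the claimed dependence.
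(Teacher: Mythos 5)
Your argument is correct and, after the dust settles, produces the \emph{same} kernel estimate as the paper's own proof, but the derivation is organized differently. The paper treats the two halves of the commutator, $E\frac{\sin(t\sqrt A)}{\sqrt A}g$ and $\frac{\sin(t\sqrt A)}{\sqrt A}Eg$, separately, writing each one explicitly as an integral operator with kernel
\[
K(x,y;t)=\int_0^\infty \frac{\sin(t\xi)}{\xi}\Big[x\partial_x-\xi\partial_\xi
-\tfrac{\xi\tilde\rho'(\xi)}{\tilde\rho(\xi)}\Big]\phi(x,\xi^2)\,\phi(y,\xi^2)\,\tilde\rho(\xi)\,d\xi,
\qquad
\tilde K(x,y;t)=\int_0^\infty \frac{\sin(t\xi)}{\xi}\,\phi(x,\xi^2)\,(\xi\partial_\xi-y\partial_y)\phi(y,\xi^2)\,\tilde\rho(\xi)\,d\xi,
\]
and then observes that $\big[x\partial_x-\xi\partial_\xi-\tfrac{\xi\tilde\rho'}{\tilde\rho}\big]\phi(x,\xi^2)$ and $(\xi\partial_\xi-y\partial_y)\phi(y,\xi^2)$ obey the same pointwise bounds as $\phi$ itself, so that the case analysis of Lemma \ref{lem:sinL1Linf} applies verbatim with the $\langle x\rangle^{-\epsilon}$ weight (as in Remark \ref{rem:sinL1Linf}). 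You instead invoke Lemma \ref{lem:S} to write $[E,\sin(t\sqrt A)/\sqrt A]g = Su - \tfrac{\sin(t\sqrt A)}{\sqrt A}(Dg+g)$, move $D$ onto the kernel by a $y$-integration by parts, and then perform a $\xi$-integration by parts that cancels the $t\partial_t$-contribution; a short computation shows that your $\tilde K_A+\tilde K_B$ equals the paper's $K-\tilde K$ identically. So the two proofs share the key observations — that $\xi\partial_\xi-x\partial_x$ annihilates $e^{\pm ix\xi}$, that $\tilde\rho'(\xi)$ shares the symbol bounds of $\tilde\rho(\xi)/\xi$, and that the $1/\epsilon$ factor arises from trading $\langle x\rangle$ for $\xi^{-1}$ in the non-oscillatory small-frequency sector — and reduce to the same re-run of Lemma \ref{lem:sinL1Linf}; your route via $S$, two integrations by parts, and the cancellation of $t\partial_t G$ is slightly longer than the paper's direct kernel computation, but it is an honest and correct derivation of the same estimate.
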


\begin{proof}
By definition we have $E=\mc F^{-1}B\mc F$ and we recall that
\[ x f'(x)=-\mc F^{-1}\left [|\cdot|(\mc F f)'\right ](x)-f(x)+\mc F^{-1}B \mc F f(x). \]
We obtain
\begin{align*} 
B \mc F f(\xi)&=\mc F(|\cdot|f')(\xi)+\xi (\mc F f)'(\xi)+\mc F f(\xi) \\
&=\int_0^\infty \phi(x,\xi^2)xf'(x)dx+\int_0^\infty \xi\partial_\xi \phi(x,\xi^2)f(x)dx+\mc F f(\xi) \\
&=\int_0^\infty (\xi\partial_\xi-x\partial_x)\phi(x,\xi^2)f(x)dx
\end{align*}
for all $f\in \mc S(\R_+)$.
Similarly, we infer 
\begin{align*}
\mc F^{-1}B\hat f(x)&=x\left (\mc F^{-1}\hat f\right )'(x)+\mc F^{-1}\left (|\cdot| \hat f'\right )(x)+\mc F^{-1}\hat f(x) \\
&=\int_0^\infty x\partial_x \phi(x,\xi^2)\hat f(\xi)\tilde \rho(\xi)d\xi
+\int_0^\infty \phi(x,\xi^2)\xi \hat f'(\xi)\tilde \rho(\xi)d\xi \\
&\quad +\mc F^{-1}\hat f(x) \\
&=\int_0^\infty (x\partial_x-\xi\partial_\xi)\phi(x,\xi^2)\hat f(\xi)\tilde \rho(\xi)d\xi \\
&\quad -\int_0^\infty \phi(x,\xi^2)\frac{\xi\tilde \rho'(\xi)}{\tilde \rho(\xi)}\hat f(\xi)\tilde \rho(\xi)d\xi
\end{align*}
for $\hat f\in \mc S(\R_+)$.
Consequently, we obtain the explicit representation
\begin{align*}
E\frac{\sin(t\sqrt A)}{\sqrt A}g(x)&=\mc F^{-1}B\left (\frac{\sin(t|\cdot|)}{|\cdot|}\mc F g \right )(x) \\
&=\int_0^\infty \frac{\sin(t\xi)}{\xi}\left [x\partial_x-\xi\partial_\xi
-\frac{\xi\tilde \rho'(\xi)}{\tilde \rho(\xi)}\right ]\phi(x,\xi^2)\mc F g(\xi)\tilde \rho(\xi)d\xi \\
&=2\int_0^\infty \int_0^\infty \sin(t\xi)\left [x\partial_x-\xi\partial_\xi
-\frac{\xi\tilde \rho'(\xi)}{\tilde \rho(\xi)}\right ]\phi(x,\xi^2) \\
&\quad \times \phi(y,\xi^2)g(y)dy \rho(\xi^2)d\xi.
\end{align*}
By Fubini we infer
\[ E\frac{\sin(t\sqrt A)}{\sqrt A}g(x)=\lim_{N\to \infty}\int_0^\infty K_N(x,y;t)g(y)dy \]
where
\[ K_N(x,y;t)=2\int_{1/N}^N \sin(t\xi)\left [x\partial_x-\xi\partial_\xi
-\frac{\xi\tilde \rho'(\xi)}{\tilde \rho(\xi)}\right ]\phi(x,\xi^2)\phi(y,\xi^2)\rho(\xi^2)d\xi. \]
Thus, the situation is reminiscent of Lemma \ref{lem:sinL1Linf} and by noting that
\[\left [x\partial_x-\xi\partial_\xi
-\frac{\xi\tilde \rho'(\xi)}{\tilde \rho(\xi)}\right ]\phi(x,\xi^2) \]
satisfies the same pointwise bounds as $\phi(x,\xi^2)$, one obtains the estimate
\[ \left \|\langle \cdot \rangle^{-\epsilon}E\frac{\sin(t\sqrt A)}{\sqrt A}g \right \|_{L^\infty(\R_+)}
\lesssim \tfrac{1}{\epsilon} \|g\|_{L^1(\R_+)} \]
by following the logic of the proof of Lemma \ref{lem:sinL1Linf}.
Analogously, we infer
\begin{align*}
\frac{\sin(t\sqrt A)}{\sqrt A}E g(x)&=\mc F^{-1}\left [\frac{\sin(t|\cdot|)}{|\cdot|}B\mc F g \right ](x) \\
&=\int_0^\infty \int_0^\infty \phi(x,\xi^2)\frac{\sin(t\xi)}{\xi}(\xi\partial_\xi-y\partial_y)\phi(y,\xi^2)g(y)dy
\tilde \rho(\xi)d\xi
\end{align*}
and this leads to
\[ \frac{\sin(t\sqrt A)}{\sqrt A}E g(x)=\lim_{N\to\infty}\int_0^\infty \tilde K_N(x,y;t)g(y)dy \]
with
\[ \tilde K_N(x,y;t)=2\int_{1/N}^N \sin(t\xi)\phi(x,\xi^2)(\xi\partial_\xi-y\partial_y)\phi(y,\xi^2)\rho(\xi^2)d\xi \]
which can be treated in the same fashion.
\end{proof}

Based on the commutator bound Lemma \ref{lem:comminf} we can now conclude the following.

\begin{lemma}
\label{lem:vfn}
Under the assumptions of Lemma \ref{lem:vft} we have the bounds
\begin{align*} 
\|\nabla S u(t,\cdot)\|_{H^k(\R_+)}\leq C_k \Big ( &
\||\cdot|f'\|_{H^{1+k}(\R_+)}+\|f\|_{H^{1+k}(\R_+)} \\
&\||\cdot|g'\|_{H^{k}(\R_+)}+\|g\|_{H^{k}(\R_+)} \\
&\||\cdot|g'\|_{L^1(\R_+)}+\|g\|_{L^1(\R_+)} \Big )
\end{align*}
for all $t\geq 0$ and $k\in \N_0$.
\end{lemma}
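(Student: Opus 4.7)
The plan is to reduce the claim to Lemma \ref{lem:energyfree} (which already handles the pure wave evolution) combined with the weighted commutator bounds developed above. By Lemma \ref{lem:S} I split
\[ Su(t,\cdot) = v(t,\cdot) + w_{\cos}(t,\cdot) + w_{\sin}(t,\cdot), \]
where $v$ solves the free initial value problem Eq.~\eqref{eq:init} with the even Schwartz data $(|\cdot|f', |\cdot|g'+g)$, and $w_{\cos} := [E,\cos(t\sqrt A)]f$, $w_{\sin} := [E, \tfrac{\sin(t\sqrt A)}{\sqrt A}]g$ are the commutator errors. For $v$ I apply Lemma \ref{lem:energyfree} directly, which immediately produces the contributions $\||\cdot|f'\|_{H^{1+k}}$, $\||\cdot|g'\|_{H^k}$, $\|g\|_{H^k}$, $\||\cdot|g'\|_{L^1}$, and $\|g\|_{L^1}$ on the right-hand side. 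What remains is to bound $\|\nabla w_{\cos}\|_{H^k}$ and $\|\nabla w_{\sin}\|_{H^k}$; for these I imitate the proof of Lemma \ref{lem:energyfree}, based on $\|\nabla h\|_{L^2}^2 = \|\sqrt A h\|_{L^2}^2 - \int V |h|^2\,dx$ at $k=0$ and on $\nabla^2 = -A + V$ for $k\geq 1$.

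For $w_{\cos}$, pushing $\sqrt A$ through and using $[\sqrt A, \cos(t\sqrt A)]=0$ yields the identity
\[ \sqrt A w_{\cos} = [E,\cos(t\sqrt A)]\sqrt A f + [\sqrt A, E]\cos(t\sqrt A)f - \cos(t\sqrt A)[\sqrt A, E]f. \]
By Corollary \ref{cor:E}, the $L^2$-boundedness of $\cos(t\sqrt A)$, and Lemma \ref{lem:commAB} at $s=0$ (where $\|\sqrt A h\|_{H^{-1}} \lesssim \|h\|_{L^2}$ follows from Lemma \ref{lem:mapF-1}), each summand is controlled by $\|f\|_{H^1}$, so $\|\sqrt A w_{\cos}\|_{L^2} \lesssim \|f\|_{H^1}$. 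The potential piece is harmless because $V$ is bounded and $\|w_{\cos}\|_{L^2} \lesssim \|f\|_{L^2}$ uniformly in $t$, so $\int V|w_{\cos}|^2 \lesssim \|f\|_{L^2}^2$. Hence $\|\nabla w_{\cos}\|_{L^2} \lesssim \|f\|_{H^1}$, matching the $\|f\|_{H^{1+k}}$ term at $k=0$.

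The analogous calculation for $w_{\sin}$ gives
\[ \sqrt A w_{\sin} = [E,\sin(t\sqrt A)]g + [\sqrt A, E]\tfrac{\sin(t\sqrt A)}{\sqrt A}g, \]
and Corollary \ref{cor:E} together with Lemma \ref{lem:commAB} applied to $\frac{\sin(t\sqrt A)}{\sqrt A}g$ (whose $\sqrt A$ is simply $\sin(t\sqrt A)g$) yield $\|\sqrt A w_{\sin}\|_{L^2} \lesssim \|g\|_{L^2}$. The delicate term is $\||V|^{1/2} w_{\sin}\|_{L^2}$: since $\|w_{\sin}\|_{L^2}$ is not uniformly bounded in $t$, I trade the $L^2$ weight for a weighted $L^\infty$ norm using $|V|^{1/2}\lesssim \langle x\rangle^{-1}$ and Lemma \ref{lem:comminf} with some $\epsilon\in(0,\tfrac12)$:
\[ \||V|^{1/2} w_{\sin}\|_{L^2} \leq \|\langle\cdot\rangle^{\epsilon}|V|^{1/2}\|_{L^2}\,\|\langle\cdot\rangle^{-\epsilon}w_{\sin}\|_{L^\infty} \lesssim \|g\|_{L^1}. \]
This produces the $\|g\|_{L^1}$ on the right and closes the case $k=0$. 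For $k\geq 1$ I iterate exactly as in Lemma \ref{lem:energyfree}: writing $\nabla^{k+1}Su = \nabla^{k-1}(-ASu + VSu)$, the $ASu$ piece is controlled by $\|\sqrt A Su\|_{H^k}$, bounded by the right-hand side of Lemma \ref{lem:vfA} after using Lemma \ref{lem:derweis} to convert $\sqrt A$-norms of $|\cdot|f'$ and $f$ into $H^{k+1}$-norms; the $VSu$ piece expands by Leibniz and is dominated by lower-order norms of $Su$ handled by the inductive hypothesis together with the weighted $L^\infty$ estimates of Remark \ref{rem:sinL1Linf} and Lemma \ref{lem:comminf} paired with the decay of $V^{(j)}$.

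The hard part is this potential contribution to the sine commutator: the naive energy identity fails to be uniform in $t$ because $\|w_{\sin}\|_{L^2}$ grows, and the key input that rescues the argument is the weighted $L^\infty$ bound from Lemma \ref{lem:comminf}. This is precisely the mechanism by which $\|g\|_{L^1}$ (and, through the $v$-piece of the decomposition, $\||\cdot|g'\|_{L^1}$) enter the statement, in direct analogy with the appearance of $\|g\|_{L^1}$ in Lemma \ref{lem:energyfree}.
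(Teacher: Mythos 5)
Your proof is correct and takes essentially the same approach as the paper: both hinge on the energy identity relating $\|\nabla\cdot\|_{L^2}$ to $\|\sqrt A\cdot\|_{L^2}$, the decomposition of Lemma~\ref{lem:S}, the commutator bounds of Corollary~\ref{cor:E} and Lemma~\ref{lem:commAB}, and, crucially, the weighted $L^\infty$ estimate of Lemma~\ref{lem:comminf} (with Remark~\ref{rem:sinL1Linf}) to control the potential contribution to the sine commutator. The only difference is bookkeeping: you apply Lemma~\ref{lem:energyfree} and the energy identity piece-by-piece after decomposing $Su$, while the paper applies the identity to the full $Su$ and cites Lemma~\ref{lem:vfA} for the $\sqrt A Su$ term, but the underlying computations coincide.
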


\begin{proof}
We start with the case $k=0$.
In view of Lemma \ref{lem:vfA} and
\[ \|\sqrt A Su(t,\cdot)\|_{L^2(\R_+)}^2=\|\nabla S u(t,\cdot)\|_{L^2(\R_+)}^2+\int_0^\infty V(x)|Su(t,x)|^2 dx \]
it suffices to control $\||V|^\frac12 Su(t,\cdot)\|_{L^2(\R_+)}$.
First, we consider the sine evolution.
By Lemmas \ref{lem:S}, \ref{lem:comminf}, and Remark \ref{rem:sinL1Linf} we obtain
\begin{align*}
\left \||V|^\frac12 S \frac{\sin(t\sqrt A)}{\sqrt A}g \right \|_{L^2(\R_+)}
&\leq \left \|\langle \cdot \rangle^{\epsilon}|V|^\frac12 \right \|_{L^2(\R_+)}
\left \|\langle \cdot \rangle^{-\epsilon}S\frac{\sin(t\sqrt A)}{\sqrt A}g\right \|_{L^\infty(\R_+)} \\
&\lesssim \left \|\langle \cdot \rangle^{-\epsilon}\frac{\sin(t\sqrt A)}{\sqrt A}(|\cdot|g'+g)\right \|_{L^\infty(\R_+)} \\
&\quad +\left \|\langle \cdot \rangle^{-\epsilon}\left [E,\frac{\sin(t\sqrt A)}{\sqrt A}\right ]g\right \|_{L^\infty(\R_+)} \\
&\lesssim \||\cdot|g'\|_{L^1(\R_+)}+\|g\|_{L^1(\R_+)}.
\end{align*}
For the cosine evolution we infer
\begin{align*} 
\left \||V|^\frac12 S\cos(t\sqrt A)f \right \|_{L^2(\R_+)}&\leq \left \||V|^\frac12 \right\|_{L^\infty(\R_+)}
\left \|S\cos(t\sqrt A)f\right \|_{L^2(\R_+)} \\
&\lesssim \left \|\cos(t\sqrt A)(|\cdot|f')\right \|_{L^2(\R_+)} \\
&\quad +\left \|\left [E,\cos(t\sqrt A)\right ]f\right \|_{L^2(\R_+)} \\
&\lesssim \||\cdot|f'\|_{L^2(\R_+)}+\|f\|_{L^2(\R_+)}
\end{align*}
by Lemma \ref{lem:S} and Corollary \ref{cor:E}.

For the case $k\geq 1$ we proceed as in the proof of Lemma \ref{lem:energyfree} and use the
fact that
\[ \nabla^{k}\nabla S u(t,\cdot)=-\nabla^{k-1}ASu(t,\cdot)+\nabla^{k-1}[V Su(t,\cdot)]. \]
By induction it suffices to control $\nabla^{k-1}ASu(t,\cdot)$.
Applying Lemma \ref{lem:S} we see that we have to bound
\begin{align*}
&\nabla^{k-1}A\cos(t\sqrt A)(|\cdot|f'), & & \nabla^{k-1}A[E,\cos(t\sqrt A)]f, \\
&\nabla^{k-1}\sqrt A\sin(t\sqrt A)(|\cdot|g'+g), & & \nabla^{k-1}A\left [E,\frac{\sin(t\sqrt A)}{\sqrt A}\right ]g
\end{align*}
in $L^2(\R_+)$.
We have
\begin{align*} 
\left \| A\cos(t\sqrt A)(|\cdot|f') \right \|_{H^{k-1}(\R_+)}&\simeq 
\left \|\langle \cdot \rangle^{k-1}|\cdot|^2 \cos(t|\cdot|)\mc F(|\cdot|f') \right \|_{L^2_{\tilde \rho}(\R_+)} \\
&\lesssim \left \|\langle \cdot \rangle^{k+1}\mc F(|\cdot|f')\right \|_{L^2_{\tilde \rho}(\R_+)} \\
&\simeq \left \||\cdot|f' \right \|_{H^{1+k}(\R_+)}
\end{align*}
by Lemmas \ref{lem:derweis} and \ref{lem:mapF-1}.
Similarly, we obtain
\[ \left \|\sqrt A\sin(t\sqrt A)(|\cdot|g')\right \|_{H^{k-1}(\R_+)}\lesssim \left \||\cdot|g'\right \|_{H^k(\R_+)} \]
and, by using Corollary \ref{cor:E}, 
\begin{align*} 
\left \|A [E,\cos(t\sqrt A)]f\right \|_{H^{k-1}(\R_+)}&\lesssim 
\left \|[E,\cos(t\sqrt A)]f\right \|_{H^{1+k}(\R_+)} \\
&\lesssim \|f\|_{H^{1+k}(\R_+)}.
\end{align*}
Finally, for the commutator with the sine evolution we use
\begin{align*}
A\left [E,\frac{\sin(t\sqrt A)}{\sqrt A}\right ]g&=\sqrt A \left (\sqrt A E \frac{\sin(t\sqrt A)}{\sqrt A}g 
-\sin(t\sqrt A)Eg \right ) \\
&=\sqrt A \left ([E,\sin(t\sqrt A)]g+[\sqrt A,E]\frac{\sin(t\sqrt A)}{\sqrt A}g \right )
\end{align*}
and we have
\[ \left \|\sqrt A[E,\sin(t\sqrt A)]g\right \|_{H^{k-1}(\R_+)}\lesssim \|g\|_{H^k(\R_+)}. \]
For the last term we invoke Lemma \ref{lem:commAB} and infer
\begin{align*}
\left \|\sqrt A[\sqrt A,E]\frac{\sin(t\sqrt A)}{\sqrt A}g \right \|_{H^{k-1}(\R_+)}&\lesssim
\left \|[\sqrt A,E]\frac{\sin(t\sqrt A)}{\sqrt A}g \right \|_{H^k(\R_+)} \\
&\lesssim \left \|\sin(t\sqrt A)g\right \|_{H^{k-1}(\R_+)} \\
&\lesssim \|g\|_{H^{k-1}(\R_+)}.
\end{align*}
\end{proof}

\section{Higher order vector field bounds}

We generalize our previous results to allow for multiple applications of
the vector field $S$, i.e., we derive bounds for $S^m u$.

\subsection{More commutator estimates}
In order to proceed, we need to study the commutator of $E$ and $S$.
More precisely, we set $D f(\xi):=\xi f'(\xi)$ and consider the commutator
$[D,B]$.
The key result in this respect is the fact that $(\xi\partial_\xi+\eta\partial_\eta)^m F(\xi,\eta)$
satisfies the same bounds as $F(\xi,\eta)$.

\begin{lemma}
\label{lem:derF}
Suppose $V^{(2j+1)}(0)=0$ for all $j\in \N_0$.
Then the functions 
\[ F_m(\xi,\eta):=(\xi\partial_\xi+\eta\partial_\eta)^m F(\xi,\eta), \quad m\in \N_0, \] 
with $F$ from Lemma \ref{lem:F},
satisfy the bounds
\begin{align*}
|F_m(\xi,\eta)|&\leq C_m \\
|\partial_\xi F_m(\xi,\eta)|
+|\partial_\eta F_m(\xi,\eta)|&\leq C_m \left ( \xi^{-1}\langle \xi\rangle+\eta^{-1}\langle \eta\rangle \right )
\end{align*}
for all $\xi,\eta>0$ and all $m\in \N_0$.
In addition, we have the off-diagonal bounds
\[ |F_m(\xi,\eta)|\leq \frac{C_{m,k}}{\xi^k+\eta^k} \]
for all $\xi,\eta>0$ satisfying $|\xi-\eta|\gtrsim \xi+\eta$ and all $k,m\in \N_0$.
\end{lemma}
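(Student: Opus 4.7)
I would prove the lemma by induction on $m$, with $m = 0$ being exactly Lemmas \ref{lem:boundsF} and \ref{lem:Fref}. The idea is to derive a recursion that expresses $(\xi\partial_\xi + \eta\partial_\eta)F$ as a finite sum of integrals of the same structural shape as $F$ itself, so that iterated application of the recursion reduces $F_m$ to expressions to which the proofs of Lemmas \ref{lem:boundsF} and \ref{lem:Fref} apply verbatim.

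Setting $\mc C := \xi\partial_\xi - x\partial_x$, I split $\xi\partial_\xi\phi(x,\xi^2) = x\partial_x\phi(x,\xi^2) + \mc C\phi(x,\xi^2)$ and analogously in $\eta$. The two $x\partial_x$ contributions combine into $x\partial_x[\phi(x,\xi^2)\phi(x,\eta^2)]$, and a single integration by parts in $x$ (boundary terms vanish since $xU(x) \to 0$ and $\phi$ is bounded at infinity for $\xi, \eta > 0$) yields
\begin{align*}
(\xi\partial_\xi + \eta\partial_\eta)F(\xi,\eta)
&= -\int_0^\infty \phi(x,\xi^2)\phi(x,\eta^2)\widetilde U(x)\,dx \\
&\quad + \int_0^\infty \mc C\phi(x,\xi^2)\, \phi(x,\eta^2)\, U(x)\,dx \\
&\quad + \int_0^\infty \phi(x,\xi^2)\, \mc C\phi(x,\eta^2)\, U(x)\,dx,
\end{align*}
where $\widetilde U := U + xU'$ inherits from $U$ the smoothness, evenness (thanks to $V^{(2j+1)}(0) = 0$), and symbol-type decay $O(\langle x\rangle^{-2-\alpha})$. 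Since $\xi\partial_\xi$ and $x\partial_x$ commute, iterating the recursion $m$ times produces only finite sums of integrals
\begin{align*}
\int_0^\infty \mc C^j\phi(x,\xi^2)\, \mc C^{j'}\phi(x,\eta^2)\, W(x)\,dx, \qquad j + j' \leq m,
\end{align*}
with $W$ a smooth even symbol of order $-2-\alpha$ built from iterates of $U \mapsto U + xU'$.

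The core technical point is that $\mc C^j\phi(x,\xi^2)$ satisfies the same pointwise and symbol bounds as $\phi(x,\xi^2)$ itself in every regime of Lemma \ref{lem:phi}. In the non-oscillatory regions this is immediate, since $\xi\partial_\xi$ preserves the symbol-type error terms and maps the leading contribution $\phi_0(x)$ to $-x\phi_0'(x)$, which has the same $x^{1/2}\log x$ asymptotic structure at infinity. In the oscillatory regime $x \geq \xi^{-1}$, the identity $\mc C^j[a(\xi) e^{i\xi x}] = [(\xi\partial_\xi)^j a(\xi)]\, e^{i\xi x}$ exactly cancels the dangerous factors of $x\xi$ that would otherwise arise from differentiating the exponential; the amplitudes $(\xi\partial_\xi)^j a(\xi)$ retain the small-$\xi$ bound $\lesssim \xi^{-1/2}(|a_1\log\xi| + 1)$ via Lemmas \ref{lem:phi} and \ref{lem:Wsm}. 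Once this structural fact is in place, the nine-case regional decomposition and the oscillatory integrations by parts from the proofs of Lemmas \ref{lem:boundsF} and \ref{lem:Fref} apply essentially verbatim to each mixed integral above and yield the uniform, first-derivative, and off-diagonal bounds with new $m$-dependent constants. I expect the main obstacle to be precisely this oscillatory-regime verification: one must track that the cancellation of $e^{\pm ix\xi}$ persists under all iterates of $\mc C$ and that all subleading corrections from Lemma \ref{lem:phi} retain their symbol behavior under arbitrary powers of $\xi\partial_\xi$.
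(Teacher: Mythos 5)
Your argument is correct and reaches the same bounds, though it is organized differently from the paper's. The paper applies $(\xi\partial_\xi+\eta\partial_\eta)^m$ directly to each of the nine regional pieces of $F$ and verifies, region by region, that the result retains the structure to which Lemmas \ref{lem:boundsF} and \ref{lem:Fref} apply; the crux of its oscillatory-region argument is the identity $\xi\partial_\xi e^{ix\xi}=x\partial_x e^{ix\xi}$ (and $(\xi\partial_\xi+\eta\partial_\eta)e^{ix(\xi\pm\eta)}=x\partial_x e^{ix(\xi\pm\eta)}$) followed by integration by parts, which is precisely your $\mc C$-cancellation. You instead perform the split $\xi\partial_\xi = x\partial_x + \mc C$ globally, absorb the combined $x\partial_x$-contributions by a single integration by parts into $\widetilde U=U+xU'$ (which indeed stays an even, smooth, $O(\langle x\rangle^{-2-\alpha})$-symbol), and iterate to reduce $F_m$ to integrals of the same shape with $\mc C^j\phi$ in place of $\phi$ and iterates of $U\mapsto U+xU'$ in place of $U$. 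This isolates a single structural claim---$\mc C^j\phi$ obeys the same pointwise and derivative bounds as $\phi$ in every regime of Lemma \ref{lem:phi}---as the engine of the proof, which is cleaner and parallels how the paper introduces $B$ through the kernel $(\xi\partial_\xi-x\partial_x)\phi$; the trade-off is that this claim must be established uniformly in $j$, which is essentially the same work the paper does directly on the regional pieces. The point you flag as the potential obstacle is the right one: you need that the error terms $O_\C((x\xi)^{-1})$ and $O_\C(x^{-1-\alpha}\xi^{-1})$ in Lemma \ref{lem:phi} keep their symbol order under arbitrary powers of $\mc C$, and that $(\xi\partial_\xi)^j a(\xi)$ retains the small-$\xi$ bound $\lesssim\xi^{-1/2}(|a_1\log\xi|+1)$; both go through by the symbol form of the expansions in Lemmas \ref{lem:phi} and \ref{lem:Wsm}, so the argument is sound.
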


\begin{proof}
As in the proof of Lemma \ref{lem:Fref} we use the representation
\[ F(\xi,\eta)=\tfrac12 \int_\R \phi(x,\xi^2)\phi(x,\eta^2)U(x)dx. \]
We set
\[ F_1(\xi,\eta):=\tfrac12 \int_\R \chi(\xi)\chi(\eta)\chi(|x\xi|)\chi(|x\eta|)\phi(x,\xi^2)\phi(x,\eta^2)U(x)dx. \]
From Lemma \ref{lem:phi} we obtain the bounds
\begin{align*} 
|(\xi \partial_\xi)^m [\chi(\xi)\chi(|x\xi|)\phi(x,\xi^2)]|&\leq C_m \langle x\rangle^{\frac12+} \\
|\partial_\xi(\xi\partial_\xi)^m [\chi(\xi)\chi(|x\xi|)\phi(x,\xi^2)]|&\leq C_m \langle x\rangle^{\frac12+}\xi^{-1}
\end{align*}
and thus,
\begin{align*} 
|(\xi\partial_\xi+\eta\partial_\eta)^m F_1(\xi,\eta)|&\leq C_m \\
|\partial_\xi (\xi\partial_\xi+\eta\partial_\eta)^m F_1(\xi,\eta)|
+|\partial_\eta (\xi\partial_\xi+\eta\partial_\eta)^m F_1(\xi,\eta)|&\leq C_m \left (\xi^{-1}\langle \xi\rangle
+\eta^{-1}\langle \eta\rangle \right )
\end{align*}
since $|U(x)|\lesssim \langle x\rangle^{-2-\alpha}$.

Next, we consider
\[ F_2(\xi,\eta):=\tfrac12 \int_\R \chi(\xi)\chi(\eta)[1-\chi(|x\xi|)]\chi(|x\eta|)\phi(x,\xi^2)\phi(x,\eta^2)U(x)dx. \]
Observe that $\xi\partial_\xi e^{ix\xi}=x\partial_x e^{ix\xi}$ and thus,
\[ \xi\partial_\xi \int_\R e^{ix\xi}O_\C(\langle x\rangle^{-1-}\xi^0 \eta^0)dx
=\int_\R e^{ix\xi}O_\C(\langle x\rangle^{-1-}\xi^0\eta^0)dx \]
by integration by parts, provided the $O_\C$-term is of symbol type. 
Note that, by Lemma \ref{lem:phi}, we have 
\[ |\chi(\eta)\chi(|x\eta|)\partial_x \phi(x,\eta^2)|\lesssim \langle x\rangle^{-\frac12+} \]
and thus, the equation
\[ \partial_x^2 \phi(x,\eta^2)=V(x)\phi(x,\eta^2)-\eta^2 \phi(x,\eta^2) \]
yields inductively the bounds
\[ |\chi(\eta)\chi(|x\eta|)\partial_x^k \phi(x,\eta^2)|\leq C_k \langle x\rangle^{\frac12-k+} \]
for all $k\in \N_0$ since $|\chi(\eta)\chi(|x\eta|)\eta^2|\lesssim \langle x\rangle^{-2}$.
It follows that 
\[ \chi(\eta)\chi(|x\eta|)\phi(x,\eta^2)=\chi(\eta)\chi(|x\eta|)O(\langle x\rangle^{\frac12+}\eta^0) \]
where the $O$-term behaves like a symbol.
Consequently, we see that
$(\xi \partial_\xi+\eta \partial_\eta)^m F_2(\xi,\eta)$ is composed of terms of the form
\[ I_\pm(\xi,\eta)=\int_\R e^{\pm ix\xi}\chi(\xi)\chi(\eta)[1-\chi(|x\xi|)]
\chi(|x\eta|)O_\C(\langle x\rangle^{-1-}\xi^0 \eta^0)dx \]
and the corresponding bounds follow as in the proof of Lemma \ref{lem:boundsF}.
By symmetry, the function $F_3(\xi,\eta):=F_2(\eta,\xi)$ can be treated analogously.

We continue with
\[ F_4(\xi,\eta):=\tfrac12 \int_\R \chi(\xi)\chi(\eta)[1-\chi(|x\xi|)][1-\chi(|x\eta|)]
\phi(x,\xi^2)\phi(x,\eta^2)U(x)dx. \]
Here we use the observation that $(\xi\partial_\xi+\eta\partial_\eta)e^{ix(\xi\pm\eta)}=x\partial_x e^{ix(\xi\pm \eta)}$
and hence,
\[ (\xi\partial_\xi+\eta\partial_\eta)\int_\R e^{ix(\xi\pm \eta)}O_\C(\langle x\rangle^{-1-}\xi^0\eta^0)dx
=\int_\R e^{ix(\xi\pm \eta)}O_\C(\langle x\rangle^{-1-}\xi^0\eta^0)dx \]
provided the $O_\C$-term is of symbol type.
Consequently, Lemma \ref{lem:phi} shows that $(\xi\partial_\xi-\eta\partial_\eta)^m F_4(\xi,\eta)$ is composed
of terms $I_\pm$ and $\overline{I_\pm}$ where
\[ I_\pm(\xi,\eta)=\int_\R e^{ix(\xi\pm \eta)}[1-\chi(|x\xi|)][1-\chi(|x\eta|)]
O_\C(\langle x\rangle^{-2-}\xi^{-\frac12} \eta^{-\frac12})dx \]
where the $O_\C$-term behaves like a symbol.
Thus, the corresponding bounds follow as in the proof of Lemma \ref{lem:boundsF}.

The next contribution we study is
\[ F_5(\xi,\eta):=\tfrac12 \int_\R [1-\chi(\xi)]\chi(\eta)\chi(|x\eta|)\phi(x,\xi^2)\phi(x,\eta^2)U(x)dx. \]
As above, by using $\xi\partial_\xi e^{ix\xi}=x\partial_x e^{ix\xi}$, we infer that 
$(\xi\partial_\xi+\eta\partial_\eta)^m F_4(\xi,\eta)$ is composed of terms of the form
\[ I_\pm(\xi,\eta)=\int_\R e^{\pm ix\xi}\chi(\eta)\chi(|x\eta|)O_\C(\langle x\rangle^{-\frac32-}\xi^0 \eta^0)dx \]
with an $O_\C$-term of symbol type and the desired bounds follow from the proof of Lemma \ref{lem:boundsF}.
In addition, by repeated integrations by parts we obtain 
\[ |I(\xi,\eta)|\leq C_k \xi^{-k} \leq \frac{C_k}{\xi^k+\eta^k} \]
for all $k\in \N_0$.
The same bounds can be concluded for the function $F_6(\xi,\eta):=F_5(\eta,\xi)$ by symmetry.

Next, we consider
\[ F_7(\xi,\eta):=\tfrac12 \int_\R [1-\chi(\xi)]\chi(\eta)[1-\chi(|x\eta|)]\phi(x,\xi^2)\phi(x,\eta^2)U(x)dx. \]
As before, by $(\xi\partial_\xi+\eta\partial_\eta)e^{ix(\xi\pm \eta)}=x\partial_x e^{ix(\xi\pm \eta)}$, integration
by parts, and Lemma \ref{lem:phi} we see that $(\xi \partial_\xi+\eta \partial_\eta)^m F_7(\xi,\eta)$ is composed
of $I_\pm$ and $\overline{I_\pm}$ where
\[ I_\pm(\xi,\eta)=\int_\R e^{ix(\xi\pm \eta)}\chi(\eta)[1-\chi(x\eta)]O_\C(\langle x\rangle^{-2-}\xi^0 \eta^{-\frac12})
dx \]
with an $O_\C$-term of symbol type. Consequently, the respective bounds follow as in the proofs of Lemmas
\ref{lem:boundsF} and \ref{lem:Fref}.
The same is true for $F_8(\xi,\eta):=F_7(\eta,\xi)$.

The final contribution
\[ F_9(\xi,\eta):=\tfrac12 \int_\R [1-\chi(\xi)][1-\chi(\eta)]\phi(x,\xi^2)\phi(x,\eta^2)U(x)dx \]
is handled analogously.
\end{proof}

Lemma \ref{lem:derF} allows us to obtain the following commutator estimate.

\begin{lemma}
\label{lem:DB}
Let $s\in \R$ and assume $V^{(2j+1)}(0)=0$ for all $j\in \N_0$.
Then we have the bound
\[ \left \|\langle \cdot \rangle^s [D,B]f \right \|_{L^2_{\tilde \rho}(\R_+)}
\lesssim \|\langle \cdot \rangle^s f\|_{L^2_{\tilde \rho}(\R_+)} \]
for all $f\in C^\infty_c(0,\infty)$.
\end{lemma}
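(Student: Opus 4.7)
The strategy is to exploit the fact that the dilation $\xi\partial_\xi + \eta\partial_\eta$ preserves the structural bounds on $F$ established in Lemma \ref{lem:derF}, so that $[D,B]$ has essentially the same form as $B$ itself. The mapping estimate then follows by re-running the analysis of Proposition \ref{prop:B0} and Lemma \ref{lem:Bweighted}.

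More concretely, I would decompose $B = B_0 + B_d$ where $B_0$ is the off-diagonal singular integral operator from Proposition \ref{prop:B0} and $B_d$ is multiplication by the symbol $h(\xi)$ from Lemma \ref{lem:Bdiag}. The diagonal piece is immediate: $[D, B_d]f(\xi) = \xi h'(\xi) f(\xi)$, and because $h$ behaves like a symbol of order zero, $\xi h'$ is bounded, so multiplication by $\xi h'$ trivially preserves every $\langle\cdot\rangle^s L^2_{\tilde\rho}$.

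For the off-diagonal piece, write the kernel of $B_0$ with respect to $\tilde\rho$ as $\tilde K_1(\xi,\eta) = F(\xi,\eta)/(\xi^2-\eta^2)$. A formal integration by parts in $\eta$ shows that $[D, B_0]$ has kernel (again with respect to $\tilde\rho$)
\[ (\xi\partial_\xi + \eta\partial_\eta)\tilde K_1(\xi,\eta) + \tilde K_1(\xi,\eta)\Bigl[\tfrac{\eta\tilde\rho'(\eta)}{\tilde\rho(\eta)} + 1\Bigr]. \]
Since $(\xi\partial_\xi + \eta\partial_\eta)(\xi^2-\eta^2) = 2(\xi^2-\eta^2)$, the first summand equals $(F_1 - 2F)(\xi,\eta)/(\xi^2-\eta^2)$ with $F_1 = (\xi\partial_\xi + \eta\partial_\eta)F$ as in Lemma \ref{lem:derF}. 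From Lemma \ref{lem:rho}, the quantity $\eta\tilde\rho'(\eta)/\tilde\rho(\eta)$ is bounded on $(0,\infty)$: at $\eta\to 0^+$, the logarithmic factor in the denominator produces a bounded logarithmic derivative; at $\eta\to\infty$, $\tilde\rho$ tends to the nonzero constant $\tfrac{2}{\pi}$. Consequently, the kernel of $[D, B_0]$ has the same structural form as that of $B_0$, with $F$ replaced by a bounded linear combination of $F$ and $F_1$. Since $F_1$ satisfies both the pointwise bounds of Lemma \ref{lem:boundsF} and the off-diagonal decay of Lemma \ref{lem:Fref} (this is exactly what Lemma \ref{lem:derF} provides), the arguments of Proposition \ref{prop:B0} and Lemma \ref{lem:Bweighted} apply verbatim and deliver the claim.

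The main obstacle is justifying the integration by parts rigorously, because $B_0$ is only defined as a principal value, so one cannot naively commute $D$ through the integral sign. I would instead work with the regularized operator $B_\epsilon^{nd}$ introduced in Corollary \ref{cor:B}, where the separation provided by the cutoff $1-\chi((\xi-\eta)^2/\epsilon^2)$ makes the integration by parts legitimate, derive the kernel identity at the regularized level, and then pass to the limit $\epsilon\to 0^+$, using the bounds from Lemma \ref{lem:derF} together with Proposition \ref{prop:B0} to identify the limit as the stated kernel of $[D, B_0]$.
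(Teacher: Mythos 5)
Your proof is correct and follows essentially the same route as the paper: the same decomposition of $B$ into diagonal and off-diagonal pieces, the same integration by parts producing the kernel $(\xi\partial_\xi+\eta\partial_\eta)[F\tilde\rho]/(\xi^2-\eta^2)$ plus a bounded multiplier, and the same appeal to Lemma \ref{lem:derF} together with Proposition \ref{prop:B0} and Lemma \ref{lem:Bweighted}. The one addition is your explicit concern about justifying the integration by parts across the principal value, via the regularization $B_\epsilon^{nd}$; the paper performs the integration by parts directly without comment, so your version is slightly more careful on that point but otherwise identical in substance.
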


\begin{proof}
For the diagonal part $B_\Delta f:=hf$ (see Lemma \ref{lem:Bdiag}) it suffices to note that
\[ [D,B_\Delta]f=D(hf)-h Df=Dh f \]
and $Dh\in C^\infty(0,\infty)$, $\|Dh\|_{L^\infty(0,\infty)}\lesssim 1$ by Lemma \ref{lem:Bdiag}.
Thus, only the off-diagonal part requires work and it suffices to consider the singular
integral operator $B_0$ with kernel $K(\xi,\eta)=\frac{F(\xi,\eta)}{\xi^2-\eta^2}\tilde \rho(\eta)$
from Proposition \ref{prop:B0}.
Let $f\in C^\infty_c(0,\infty)$.
An integration by parts and the identity 
\[ (\xi\partial_\xi+\eta\partial_\eta)\frac{1}{\xi^2-\eta^2}=-\frac{2}{\xi^2-\eta^2} \]
yield
\begin{align*}
[D,B_0]f(\xi)&=\int_0^\infty \xi\partial_\xi K(\xi,\eta)f(\eta)d\eta-\int_0^\infty K(\xi,\eta)\eta f'(\eta)d\eta \\
&=\int_0^\infty (\xi\partial_\xi+\eta\partial_\eta)K(\xi,\eta)f(\eta)d\eta+B_0 f(\xi) \\
&=\int_0^\infty \frac{(\xi\partial_\xi+\eta\partial_\eta)[F(\xi,\eta)\tilde\rho(\eta)]}{\xi^2-\eta^2}f(\eta)d\eta
-B_0 f(\xi).
\end{align*}
Since $\tilde \rho(\eta)$ behaves like a symbol, it follows that $\eta\partial_\eta \tilde \rho(\eta)$
satisfies the same bounds as $\tilde \rho(\eta)$.
Consequently, Lemma \ref{lem:derF} shows that the operator $[D,B_0]$ has the same type of kernel
as $B_0$ and the claim follows from Proposition \ref{prop:B0} and Lemma \ref{lem:Bweighted}.
\end{proof}

Lemma \ref{lem:DB} is easily generalized to cover iterated commutators $[D,[D,B]]$, 
$[D,[D,[D,B]]]$, etc.
To this end, it is convenient to introduce the notation 
\[ \ad_D^0(B):=B,\quad \ad_D^m(B):=[D,\ad_D^{m-1}(B)], \quad m\in \N. \]

\begin{lemma}
\label{lem:adD}
Under the assumptions of Lemma \ref{lem:DB} we have the bound
\[ \|\langle \cdot \rangle^s \ad_D^m(B)f\|_{L^2_{\tilde \rho}(\R_+)}\leq C_m \|\langle \cdot \rangle^s f\|_{L^2_{\tilde \rho}(\R_+)} \]
for all $f\in C_c^\infty(0,\infty)$ and all $m\in \N_0$
\end{lemma}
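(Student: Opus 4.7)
The plan is to induct on $m$, with the base case $m=0$ given by Lemma \ref{lem:Bweighted} and the case $m=1$ given by Lemma \ref{lem:DB}. The strategy is to show that the \emph{structure} of $B$ is preserved under commutation with $D$, so that each $\ad_D^m(B)$ is again an operator of the same type already proved bounded in Lemmas \ref{lem:Bweighted} and \ref{lem:DB}. As in earlier proofs, decompose $B = B_\Delta + B_0$, where $B_\Delta f = h f$ is the diagonal part from Lemma \ref{lem:Bdiag} and $B_0$ is the singular integral operator from Proposition \ref{prop:B0}, and handle the two parts separately.

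For the diagonal piece one computes $[D, B_\Delta] f = D(hf) - h\, Df = (Dh)\,f$, so $[D, M_h] = M_{Dh}$ and iteratively $\ad_D^m(B_\Delta) = M_{D^m h}$. Since $h$ behaves like a symbol (Lemma \ref{lem:Bdiag}), $D^m h \in L^\infty(0,\infty)$ with a bound depending only on $m$, and multiplication by $D^m h$ is trivially bounded on $\langle \cdot \rangle^s L^2_{\tilde \rho}(\R_+)$.

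For the singular integral piece, the key observation is the following: if an operator $T_m$ has kernel $K_m(\xi,\eta) = G_m(\xi,\eta)/(\xi^2-\eta^2)$, then (modulo boundary terms, which vanish for test functions) the kernel of $[D, T_m]$ is $(\xi\partial_\xi + \eta\partial_\eta + 1) K_m$, and since $(\xi\partial_\xi + \eta\partial_\eta)(\xi^2-\eta^2) = 2(\xi^2-\eta^2)$ one obtains
\begin{equation*}
[D, T_m]f(\xi) = \int_0^\infty \frac{G_{m+1}(\xi,\eta)}{\xi^2-\eta^2}f(\eta)\,d\eta, \qquad G_{m+1} := (\xi\partial_\xi + \eta\partial_\eta - 1)G_m.
\end{equation*}
Starting from $G_0(\xi,\eta) = 2 F(\xi,\eta)\,\eta\rho(\eta^2) = F(\xi,\eta)\tilde\rho(\eta)$, an induction produces $G_m$ as a finite linear combination of products of the form $F_{j}(\xi,\eta)\,(\eta\partial_\eta)^k \tilde\rho(\eta)$ with $j,k \le m$, where $F_j = (\xi\partial_\xi + \eta\partial_\eta)^j F$ is the function from Lemma \ref{lem:derF}.

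By Lemma \ref{lem:derF} each $F_j$ enjoys the same pointwise and derivative bounds as $F$ itself, including the off-diagonal decay $|F_j(\xi,\eta)| \leq C_{j,k}/(\xi^k+\eta^k)$ on $|\xi-\eta|\gtrsim \xi+\eta$; and since $\tilde\rho$ is of symbol type by Lemma \ref{lem:rho}, every $(\eta\partial_\eta)^k \tilde\rho$ satisfies the same bounds as $\tilde\rho$. Hence $G_m$ satisfies precisely the bounds that $F(\xi,\eta)\tilde\rho(\eta)$ satisfies, uniformly up to a constant $C_m$. The diagonal/off-diagonal splitting and Schur-type argument used in Proposition \ref{prop:B0} and Lemma \ref{lem:Bweighted} then apply verbatim to the kernel $G_m/(\xi^2-\eta^2)$ and yield the required bound on weighted $L^2_{\tilde\rho}$; combining with the multiplication bound for $\ad_D^m(B_\Delta)$ completes the induction.

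The main obstacle is purely bookkeeping: keeping track of the finite family of ``building blocks'' $F_j\,(\eta\partial_\eta)^k\tilde\rho$ that appear in $G_m$ after $m$ commutations, and verifying that applying $(\xi\partial_\xi + \eta\partial_\eta - 1)$ once more does not leave this class. This follows mechanically from the Leibniz rule together with Lemma \ref{lem:derF}, which was designed precisely to ensure that the operator $\xi\partial_\xi + \eta\partial_\eta$ (which is absorbed into $x\partial_x$ and removed by integration by parts at the level of the defining integral for $F$) is compatible with the pointwise bounds driving Lemma \ref{lem:Bweighted}.
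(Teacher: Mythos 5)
Your proof is correct and takes precisely the approach the paper intends: the paper's entire proof is the single sentence ``The claim is proved inductively by following the reasoning in the proof of Lemma \ref{lem:DB}.'' You have supplied the inductive bookkeeping, showing that under commutation with $D$ the diagonal piece stays a multiplication operator by a bounded symbol and the off-diagonal kernel stays of the form $G_m(\xi,\eta)/(\xi^2-\eta^2)$ with $G_m$ built from the $F_j$ of Lemma \ref{lem:derF} and derivatives of $\tilde\rho$, so Proposition \ref{prop:B0} and Lemma \ref{lem:Bweighted} apply verbatim.
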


\begin{proof}
The claim is proved inductively by following the reasoning in the proof of Lemma \ref{lem:DB}.
\end{proof}

Finally, we also need the analogue of Lemma \ref{lem:commAB} for $\ad^m_D(B)$.

\begin{lemma}
\label{lem:commAEm}
For $m\in \N_0$ set $E_m:=\mc F^{-1}\ad_D^m(B)\mc F$.
Then, under the assumptions of Lemma \ref{lem:commAB}, we have the bound
\[ \left \|[\sqrt A,E_m] f \right \|_{H^s(\R_+)}\leq C_m \left \|\sqrt A f \right \|_{H^{s-1}(\R_+)} \]
for all $m\in \N_0$.
\end{lemma}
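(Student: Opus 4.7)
The plan is to identify the Schwartz kernel of $\ad_D^m(B)$ away from the diagonal, show it retains the same structural form as the kernel of $B$ itself, and then replay the argument of Lemma \ref{lem:commAB} verbatim with the numerator replaced. Recall from Lemma \ref{lem:Bdiag} and Eq.~\eqref{eq:Schwartzkernel} that $B$ decomposes as $B = B_\Delta + B_0$, where $B_\Delta f = h f$ is multiplication by a bounded symbol and $B_0$ is the singular integral operator with kernel $\frac{G_0(\xi,\eta)}{\xi^2-\eta^2}$, $G_0(\xi,\eta):=2F(\xi,\eta)\eta\rho(\eta^2)$. For any integral operator $T$ with kernel $K_T$, a direct calculation using integration by parts shows that $[D,T]$ has kernel $(\xi\partial_\xi+\eta\partial_\eta+1)K_T$. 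Combining this with the identity $(\xi\partial_\xi+\eta\partial_\eta)(\xi^2-\eta^2)^{-1} = -2(\xi^2-\eta^2)^{-1}$, one obtains inductively that $\ad_D^m(B_0)$ has kernel $\frac{G_m(\xi,\eta)}{\xi^2-\eta^2}$, where $G_m$ is a linear combination with coefficients in $\N_0$ of expressions $(\xi\partial_\xi+\eta\partial_\eta)^j G_0$, $0 \leq j \leq m$. For the diagonal piece, $\ad_D^m(B_\Delta)$ acts as multiplication by $D^m h$, which commutes with multiplication by $|\cdot|$ and hence contributes nothing to $[\sqrt A, E_m]$.

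Next, Lemma \ref{lem:derF} together with the symbol behavior of $\tilde\rho$ (so that the scaling operator $\eta\partial_\eta$ preserves the pointwise bounds of $\tilde\rho$) guarantees that $G_m$ satisfies the same pointwise estimates as $G_0 = F\tilde\rho$, namely $|G_m(\xi,\eta)|\lesssim \tilde\rho(\xi)^{1/2}\tilde\rho(\eta)^{1/2}$ on the diagonal regime and fast off-diagonal decay of the form $|G_m(\xi,\eta)| \leq C_{m,k}(\xi^k+\eta^k)^{-1}$ when $|\xi-\eta|\gtrsim \xi+\eta$. Transferring back to the physical side using $\mc F \sqrt A \mc F^{-1} = |\cdot|$, the operator $\mc F[\sqrt A,E_m]\mc F^{-1}$ has kernel
\begin{equation*}
(\xi-\eta)\frac{G_m(\xi,\eta)}{\xi^2-\eta^2} = \frac{G_m(\xi,\eta)}{\xi+\eta},
\end{equation*}
which is exactly the kernel appearing in the proof of Lemma \ref{lem:commAB} with $G_0$ replaced by $G_m$.

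At this point the proof of Lemma \ref{lem:commAB} applies essentially verbatim: the weighted kernel
\begin{equation*}
\left| \langle \xi\rangle^s \tilde\rho(\xi)^{1/2}\frac{G_m(\xi,\eta)}{\xi+\eta}\eta^{-1}\langle \eta\rangle^{-s+1}\tilde\rho(\eta)^{-1/2}\right| \lesssim \xi^{-1}\tilde\rho(\xi)^{1/2}\eta^{-1}\tilde\rho(\eta)^{1/2}
\end{equation*}
induces a bounded operator on $L^2(\R_+)$, where the crucial integrability of $\xi^{-1}\tilde\rho(\xi)^{1/2}$ near the origin is provided by the nonresonance condition $a_1\neq 0$ via $\int_0^1 \xi^{-2}\tilde\rho(\xi)\,d\xi \lesssim \int_0^1 \xi^{-1}(1+a_1^2\log^2\xi)^{-1}\,d\xi < \infty$. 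Invoking Lemmas \ref{lem:derweis} and \ref{lem:mapF-1} to translate the $L^2_{\tilde\rho}$-bound into the desired $H^s$-estimate concludes the proof.

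The main obstacle is bookkeeping rather than analysis: one must carefully verify that the kernel of $\ad_D^m(B_0)$ indeed retains the $\frac{(\cdot)}{\xi^2-\eta^2}$ form after each commutator, so that the factor $(\xi-\eta)$ coming from $[\sqrt A,\cdot]$ cancels against the denominator and leaves behind only the harmless $\frac{1}{\xi+\eta}$. Since Lemma \ref{lem:derF} already bounds all scaling derivatives of $F$, no new analytic input is required beyond the inductive kernel calculus.
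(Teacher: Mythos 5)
Your argument is correct and follows the same strategy the paper uses: the paper's proof of this lemma is a single sentence asserting that $\ad_D^m(B)$ has the same type of kernel as $B$, so the argument of Lemma \ref{lem:commAB} applies verbatim. You have made explicit the kernel calculus that the paper leaves implicit — showing $\ad_D^m(B_0)$ has kernel $G_m(\xi,\eta)/(\xi^2-\eta^2)$ with $G_m$ controlled by Lemma \ref{lem:derF}, and that the multiplication (diagonal) part drops out of $[\sqrt A,E_m]$ entirely since it commutes with $|\cdot|$ — with only the minor slip that the integer coefficients in the expansion of $G_m$ in terms of $(\xi\partial_\xi+\eta\partial_\eta)^j G_0$ need not lie in $\N_0$ (they alternate in sign), though this is harmless since only absolute values enter the estimates.
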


\begin{proof}
Since $E$ and $E_m$ have the same type of kernel, the proof of Lemma \ref{lem:commAB} can
be copied verbatim upon replacing $E$ by $E_m$.
\end{proof}

\subsection{Higher order vector field bounds}
Based on Lemmas \ref{lem:adD} and \ref{lem:commAEm} one can now follow the
reasoning in Section \ref{sec:vf} to prove generalizations
of Lemmas \ref{lem:vft} and \ref{lem:vfA} for $S^m u$.
Indeed, by repeated application of the operator $S$ to $\cos(t\sqrt A)f$ and $\frac{\sin(t\sqrt A)}{\sqrt A}g$,
we obtain expressions similar to the ones stated in Lemma \ref{lem:S} with commutators
and iterated commutators of $E_m$ and the evolution operators as error terms, e.g.
\begin{align*}
S^2 \frac{\sin(t\sqrt A)}{\sqrt A}g=&\frac{\sin(t\sqrt A)}{\sqrt A}(D^2 g+2 D g+g) \\
&+2\left [E,\frac{\sin(t\sqrt A)}{\sqrt A}\right ](Dg+g)+\left [E_1,\frac{\sin(t\sqrt A)}{\sqrt A}\right ]g \\
&+\left [E,\left [E,\frac{\sin(t\sqrt A)}{\sqrt A}\right ] \right ]g
\end{align*}
where, as before, $Dg(x)=xg'(x)$.

\begin{lemma}
\label{lem:vfhigh}
Under the assumptions of Lemma \ref{lem:vft} we have the bound
\[ \left \|\partial_t^\ell \partial_t S^m u(t,\cdot) \right \|_{H^k(\R_+)}\leq C_{k,\ell,m}
\sum_{j=0}^m \left [ \|\sqrt A D^j f\|_{H^{k+\ell}(\R_+)}+\|D^j g\|_{H^{k+\ell}(\R_+)} \right ] \]
for all $t\geq 0$ and $k,\ell,m\in \N_0$.
In addition, the same bounds hold for $\|\partial_t^\ell \sqrt A S^m u(t,\cdot)\|_{H^k(\R_+)}$.
\end{lemma}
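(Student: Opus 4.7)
The plan is to proceed by induction on $m$, the cases $m=0$ (Lemma \ref{lem:energy}) and $m=1$ (Lemmas \ref{lem:vft}, \ref{lem:vfA}) being already settled. The underlying strategy is to systematize the computation that produced the formula displayed just before the statement of the lemma. On the distorted Fourier side, $S$ is represented by $\hat S := \tilde S + B$, where $\tilde S := t\partial_t - D - 1$. Two elementary algebraic facts drive the expansion. First, $\tilde S$ annihilates the multipliers $\cos(t\xi)$ and $\sin(t\xi)/\xi$, so that $\tilde S$ acts on $\cos(t\xi)h(\xi)$ as $\cos(t\xi)(-Dh-h)$ when $h$ is $t$-independent, and similarly on the sine side. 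Second, $\tilde S$ and $B$ satisfy the commutation relation $[\tilde S,B]=-[D,B]=-\ad_D(B)$, because $B$ is $t$-independent.

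Iterating these two facts, $\hat S^m=(\tilde S+B)^m$ admits a finite expansion as a sum of words of the form $W_p \tilde S^{q}$, where $W_p$ is a nested composition of $\ad_D^{k_i}(B)$-operators of total length $p$ and $k_1+\cdots+k_p+p+q=m$. Applied to $\cos(t\xi)\mc F f(\xi)$ (resp.\ $\sin(t\xi)/\xi\cdot\mc Fg(\xi)$) and pulled back via $\mc F^{-1}$, this produces the decomposition
\[
S^m u = \sum_{j=0}^m c_j \cos(t\sqrt A) D^j f + \sum_{j=0}^m d_j \frac{\sin(t\sqrt A)}{\sqrt A} D^j g + \mc R_m,
\]
where $\mc R_m$ is a finite linear combination of nested commutators of the operators $E_k:=\mc F^{-1}\ad_D^k(B)\mc F$ with $\cos(t\sqrt A)$ and $\sin(t\sqrt A)/\sqrt A$, applied to $D^j f$ and $D^j g$ for various $j\leq m$. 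The two main sums are controlled in $H^k(\R_+)$ by Lemma \ref{lem:energy}. Each term of $\mc R_m$ is handled by expanding every commutator $[E_{k_i},\,\cdot\,]$ into its two summands, using the $H^s$-boundedness of the $E_{k_i}$ provided by Corollary \ref{cor:E} together with Lemma \ref{lem:adD} to absorb every $E_{k_i}$-factor, and then applying Lemma \ref{lem:energy} to the remaining evolution term acting on $D^j f$ or $D^j g$. Time derivatives $\partial_t^\ell$ are treated as in Lemma \ref{lem:vft}: on the Fourier side they produce a factor $\xi^\ell$, absorbed into the $H^{k+\ell}$-norm via Lemmas \ref{lem:derweis} and \ref{lem:mapF-1}.

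The same expansion drives the $\sqrt A S^m u$ estimate. The main sums are again immediate from Lemma \ref{lem:energy}; for $\sqrt A \mc R_m$ one must commute $\sqrt A$ inward through each of the nested $E_{k_i}$'s, picking up commutators $[\sqrt A,E_{k_i}]$. Lemma \ref{lem:commAEm} shows that each such commutator gains one level of $H^s$-regularity at the cost of an extra factor of $\sqrt A$ on the input, which is precisely what the claimed right-hand side absorbs. The main obstacle is the combinatorial bookkeeping of the $\hat S^m$-expansion and, in particular, verifying that the iterated use of Lemma \ref{lem:commAEm} inside deeply nested commutators chains together without accumulating additional derivative losses; a clean way to organize this is by a secondary induction on the depth $p$ of the nested commutator, invoking Corollary \ref{cor:E}, Lemma \ref{lem:adD}, and Lemma \ref{lem:commAEm} exactly once at each level.
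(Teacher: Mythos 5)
Your overall strategy---expanding $\hat S^m = (\tilde S + B)^m$ on the distorted Fourier side using $[\tilde S, B] = -\ad_D(B)$, moving the $\tilde S$'s to the right, and then absorbing the resulting $E_{k}$-type factors via Corollary~\ref{cor:E} and Lemma~\ref{lem:adD}---is sound and matches the paper's (very brief) sketch. However, there is a genuine gap in your treatment of the $\partial_t^\ell \partial_t S^m u$ estimate. You claim that for this estimate the $E_{k_i}$-factors can simply be absorbed and Lemma~\ref{lem:energy} applied, reserving Lemma~\ref{lem:commAEm} exclusively for the $\sqrt A\, S^m u$ estimate. That is not correct. After the expansion, $\mc R_m$ necessarily contains terms in which an $E_{k_i}$-factor sits \emph{between} the cosine evolution and the data, e.g.\ $\cos(t\sqrt A)\,E_{k_i}\,D^j f$ (these arise because $\mc F^{-1}(-D-1)^q\mc F = (D-E)^q$ places $E$-factors to the right of the evolution operator). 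Applying $\partial_t^{\ell+1}$ and Lemma~\ref{lem:energy} yields $\|\sqrt A\, E_{k_i}D^j f\|_{H^{k+\ell}}$, and to conclude you must commute $\sqrt A$ past $E_{k_i}$ via $\sqrt A\, E_{k_i} = E_{k_i}\sqrt A + [\sqrt A, E_{k_i}]$, which is exactly Lemma~\ref{lem:commAEm}. Without it, the best available bound is $\|D^j f\|_{H^{k+\ell+1}}$, which is strictly weaker than the claimed $\|\sqrt A\, D^j f\|_{H^{k+\ell}}$ at low frequencies (where $|\xi|$ vanishes but $\langle\xi\rangle$ does not). This is precisely why the nonresonance assumption $a_1\neq 0$ enters the hypotheses of Lemma~\ref{lem:vft}; the paper's proof of that lemma invokes Lemma~\ref{lem:commAB} at this very step, writing $\sin(t\sqrt A)\sqrt A\, Ef = \sin(t\sqrt A)E\sqrt A f + \sin(t\sqrt A)[\sqrt A, E]f$. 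Your ``secondary induction on the depth $p$'' must therefore be carried out for \emph{both} the $\partial_t^\ell\partial_t$ and the $\sqrt A$ estimates, using Lemma~\ref{lem:commAEm} once at each level in both.

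A smaller terminological point: the normal-ordering of $(\tilde S + B)^m$ produces \emph{products} $\ad_D^{k_1}(B)\cdots\ad_D^{k_p}(B)\,\tilde S^{q}$, not nested commutators; the nested-commutator form of the displayed $S^2$ identity in the paper is a subsequent repackaging. This does not affect your bounds (the $E_{k_i}$-factors are absorbed the same way either way), but describing $\mc R_m$ as ``a finite linear combination of nested commutators'' is not what your own expansion directly produces.
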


The bounds involving the ordinary derivative $\nabla$ instead of $\sqrt A$ also carry 
over to $S^m u$.
However, we do not study this systematically but restrict ourselves to the case $m=2$.

\begin{lemma}
\label{lem:vfodhigh}
Under the assumptions of Lemma \ref{lem:vft} we have the bound
\begin{align*}
\left \|\nabla S^2 u(t,\cdot)\right \|_{H^k(\R_+)}\leq C_k \sum_{j=0}^2
\Big [ &\|D^j f\|_{H^{1+k}(\R_+)}+\|D^j g\|_{H^k(\R_+)} \\
&+\|D^j g\|_{L^1(\R_+)} \Big ]
\end{align*}
for all $t\geq 0$ and $k\in \N_0$.
\end{lemma}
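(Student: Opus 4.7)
The plan is to follow the scheme of Lemma \ref{lem:vfn} verbatim, substituting $S^2$ for $S$ and importing higher-order analogues of the commutator estimate Lemma \ref{lem:comminf}. For $k=0$, I start from the identity
\[ \|\nabla S^2 u(t,\cdot)\|_{L^2(\R_+)}^2 = \|\sqrt A S^2 u(t,\cdot)\|_{L^2(\R_+)}^2 - \int_0^\infty V(x)|S^2 u(t,x)|^2\,dx. \]
The first term is controlled by Lemma \ref{lem:vfhigh} applied with $m=2$. For the second I bound $\||V|^{1/2}S^2 u(t,\cdot)\|_{L^2(\R_+)}$ by $\|\langle\cdot\rangle^\epsilon |V|^{1/2}\|_{L^2(\R_+)}\|\langle\cdot\rangle^{-\epsilon}S^2 u(t,\cdot)\|_{L^\infty(\R_+)}$, and use the explicit decomposition of $S^2\cos(t\sqrt A)f$ and $S^2 \sin(t\sqrt A)g/\sqrt A$ displayed just before Lemma \ref{lem:vfhigh}. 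The principal pieces $\cos(t\sqrt A)D^j f$ and $\sin(t\sqrt A)D^j g/\sqrt A$ are handled by Remark \ref{rem:sinL1Linf}; the cosine contribution to $\||V|^{1/2}S^2 u\|_{L^2}$ can alternatively be estimated by the trivial $\|V\|_{L^\infty}$ bound together with the $L^2$-estimate of Lemma \ref{lem:vfhigh}.

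The new ingredient is weighted $L^\infty$-control of the commutator terms. The first-order commutator $[E,\sin(t\sqrt A)/\sqrt A](Dg+g)$ is covered directly by Lemma \ref{lem:comminf}. The new terms are $[E_1,\sin(t\sqrt A)/\sqrt A]g$ and the iterated commutator $[E,[E,\sin(t\sqrt A)/\sqrt A]]g$. For the former, I observe that by Lemma \ref{lem:DB} the operator $E_1=\mc F^{-1}[D,B]\mc F$ has a Schwartz kernel of precisely the same structural type as $E$: its off-diagonal part is a singular integral operator whose kernel $(\xi\partial_\xi+\eta\partial_\eta)[F(\xi,\eta)\tilde\rho(\eta)]/(\xi^2-\eta^2)$ obeys the bounds of Lemma \ref{lem:derF}, while its diagonal part is multiplication by a symbol-type function (compare Lemma \ref{lem:Bdiag}). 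Consequently the proof of Lemma \ref{lem:comminf} carries through verbatim, with $E$ replaced by $E_1$, yielding
\[ \left\|\langle\cdot\rangle^{-\epsilon}[E_1,\sin(t\sqrt A)/\sqrt A]g\right\|_{L^\infty(\R_+)}\lesssim \tfrac{1}{\epsilon}\|g\|_{L^1(\R_+)}. \]
The double commutator is handled analogously: one composes two kernels of this type and inherits the same pointwise estimates on $\phi(x,\xi^2)$ and $\rho(\xi^2)$ from Lemmas \ref{lem:rho} and \ref{lem:phi} used in the proof of Lemma \ref{lem:comminf}.

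For $k\geq 1$ I argue inductively as in the final paragraph of the proof of Lemma \ref{lem:vfn}, via
\[ \nabla^k \nabla S^2 u(t,\cdot) = -\nabla^{k-1}A S^2 u(t,\cdot) + \nabla^{k-1}[V S^2 u(t,\cdot)]. \]
The potential contribution is disposed of by the induction hypothesis and the smoothness of $V$. For $\nabla^{k-1}A S^2 u$, I interchange $A$ with $S^2$ through iterated application of Lemma \ref{lem:S}, producing main terms $A\cos(t\sqrt A)D^j f$ and $\sqrt A\sin(t\sqrt A)D^j g$ that are bounded by the distorted-Fourier identities of Lemmas \ref{lem:derweis} and \ref{lem:mapF-1}, together with commutator remainders that are controlled by Corollary \ref{cor:E} and Lemmas \ref{lem:commAB} and \ref{lem:commAEm}.

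The principal obstacle is the double-commutator estimate for $[E,[E,\sin(t\sqrt A)/\sqrt A]]g$: unlike the single commutator, it is not literally subsumed by an earlier lemma, and one must verify that composing two kernels of the form analysed in Lemma \ref{lem:comminf} still produces an integrand amenable to the dyadic decomposition used there. However, no genuinely new phenomenon arises, since both the inner and outer operators have the same off-diagonal-plus-diagonal structure and the same pointwise bounds on $\phi$ and $\rho$ apply.
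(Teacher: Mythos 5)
Your route differs from the paper's in an important way, and the difference is exactly where the gap opens up. You expand $S^2\tfrac{\sin(t\sqrt A)}{\sqrt A}g$ via the identity displayed before Lemma~\ref{lem:vfhigh}, which produces the iterated commutator $\bigl[E,\bigl[E,\tfrac{\sin(t\sqrt A)}{\sqrt A}\bigr]\bigr]g$ as one of the four pieces, and you then claim this term obeys the same weighted $L^\infty$ bound as Lemma~\ref{lem:comminf}. That claim is not justified by ``composing two kernels.'' Unwinding the double commutator as $E^2\tfrac{\sin(t\sqrt A)}{\sqrt A}g-2E\tfrac{\sin(t\sqrt A)}{\sqrt A}Eg+\tfrac{\sin(t\sqrt A)}{\sqrt A}E^2g$, each term contains an occurrence of $B^2$ (or of $B$ sandwiched around the multiplier $\tfrac{\sin(t\xi)}{\xi}$) on the Fourier side, and $B$ is a genuine singular integral operator in the frequency variable. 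The proof of Lemma~\ref{lem:comminf} handles $B$ appearing exactly once because in that case there is still a clean $(x,\xi,y)$ triple integral whose kernel is a pointwise modification of $\phi(x,\xi^2)\phi(y,\xi^2)\rho(\xi^2)$; a second application of $B$ introduces a principal-value integral over a second Fourier variable, which is structurally outside the oscillatory-integral calculus set up in Lemmas~\ref{lem:sinL1Linf} and \ref{lem:comminf}. There is also no lemma that gives boundedness of $E$ on weighted $L^\infty$ (Corollary~\ref{cor:E} is Sobolev-based), so you cannot post-compose a known $L^1\to L^\infty$ estimate with $E$ either. The $E_1$ piece is much less problematic, since $[D,B]$ admits the same type of physical-side representation (via $\xi\partial_\xi-x\partial_x$ acting on $\phi$), but even there you should make that representation explicit rather than invoking Lemma~\ref{lem:Bdiag} loosely.

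The paper avoids the iterated commutator altogether. Rather than fully expanding $S^2$, it keeps one $S$ on the outside and reduces matters to the single estimate
\[ \left\|\langle\cdot\rangle^{-1}S\left[E,\tfrac{\sin(t\sqrt A)}{\sqrt A}\right]g\right\|_{L^2(\R_+)}\lesssim\sum_{j=0}^2\|D^jg\|_{L^1(\R_+)}, \]
exploiting $|V(x)|\lesssim\langle x\rangle^{-2}$ so that the weight $\langle\cdot\rangle^{-1}$ in $L^2$ directly controls $\||V|^{1/2}\,\cdot\|_{L^2}$ (no Hölder step and no $L^\infty$ commutator bound needed). It then writes $\bigl[E,\tfrac{\sin(t\sqrt A)}{\sqrt A}\bigr]g$ as an explicit double oscillatory integral using the representations $B\mc Ff(\xi)=\int_0^\infty(\xi\partial_\xi-y\partial_y)\phi(y,\xi^2)f(y)\,dy$ and its adjoint form, and applies $S$ to that representation. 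The key observations are $t\partial_t\sin(t\xi)=\xi\partial_\xi\sin(t\xi)$ and $(t\partial_t+x\partial_x)[e^{ix\xi}\sin(t\xi)]=\xi\partial_\xi[e^{ix\xi}\sin(t\xi)]$, which turn $S$ into a $\xi$-derivative that can be integrated by parts; the resulting weight $y$ is absorbed by another integration by parts to produce $Dg$. This circumvents any need for a double-commutator estimate. To repair your proposal along your own lines you would need a fresh oscillatory-integral argument for $E^2\tfrac{\sin(t\sqrt A)}{\sqrt A}g$ with the extra Fourier-side singular integration; it is cleaner to adopt the paper's organization.
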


\begin{proof}
Following the logic in the proof of Lemma \ref{lem:vfn}, we see by Lemma \ref{lem:S} that it suffices to establish
the bound
\[ \left \|\langle \cdot \rangle^{-1}S \left [E,\frac{\sin(t\sqrt A)}{\sqrt A}\right ]g\right \|_{L^2(\R_+)}
\lesssim \sum_{j=0}^2\|D^jg\|_{L^1(\R_+)}. \]
Recall that $E=\mc F^{-1}B\mc F$ and
\[ B\mc F f(\xi)=\int_0^\infty (\xi\partial_\xi-x\partial_x)\phi(x,\xi^2)f(x)dx. \]
Consequently, we infer the explicit expression
\begin{align*} 
\langle x\rangle^{-1}S\frac{\sin(t\sqrt A)}{\sqrt A}Eg(x)=&\langle x\rangle^{-1}(t\partial_t+x\partial_x)\int_0^\infty
\phi(x,\xi^2)\tilde \rho(\xi)\frac{\sin(t\xi)}{\xi} \\
&\int_0^\infty (\xi\partial_\xi-y\partial_y)\phi(y,\xi^2)g(y)dy d\xi 
\end{align*}
and we want to place this in $L^2(\R_+)$.
As always, one has to distinguish between small and large $\xi$ and oscillatory and nonoscillatory regimes.
In the nonoscillatory case the operator $(t\partial_t+x\partial_x)$ is harmless since $t\partial_t \sin(t\xi)=\xi\partial_\xi \sin(t\xi)$
and the stated bound follows immediately. 
If $\phi(y,\xi^2)$ oscillates, one picks up a factor $y$ from the integration by parts with respect to $\xi$ but this
weight can be transferred to $g$ (leading to $Dg$) by means of another integration by parts with respect to $y$. 
If $\phi(x,\xi^2)$ oscillates one may apply the same argument by using the identity
$(t\partial_t+x\partial_x)[e^{ix\xi}\sin(t\xi)]=\xi\partial_\xi[e^{ix\xi}\sin(t\xi)]$.
For the large frequency case we note that 
\[ (\xi\partial_\xi-y\partial_y)\phi(y,\xi^2)=O_\C(\langle \xi\rangle^{-1}) \]
and the stated bound follows by means of two integrations by parts (one with respect to $\xi$ and
one with respect to $y$).
The operator $SE\frac{\sin(t\sqrt A)}{\sqrt A}$ is handled analogously by noting that
\begin{align*} 
E\frac{\sin(t\sqrt A)}{\sqrt A}g(x)=&\int_0^\infty \left [x\partial_x-\xi\partial_\xi-\frac{\xi\tilde \rho'(\xi)}{\tilde \rho(\xi)}
\right ]\phi(x,\xi^2)\tilde \rho(\xi)\frac{\sin(t\xi)}{\xi} \\
&\times \int_0^\infty \phi(y,\xi^2)g(y)dy d\xi, 
\end{align*}
cf.~the proof of Lemma \ref{lem:comminf}.
\end{proof}

\subsection{The inhomogeneous problem}
We conclude this section by considering the inhomogeneous problem
\begin{equation}
\label{eq:inhom}
 \left \{ \begin{array}{l}\partial_t^2 u(t,\cdot)+Au(t,\cdot)=f(t,\cdot), \quad t\geq 0 \\
u(t_0,\cdot)=\partial_t u(t,\cdot)|_{t=0}=0 \end{array} \right . 
\end{equation}
where $f: [0,\infty)\times \R_+ \to \R$ is prescribed. 
By Duhamel's principle, the solution to Eq.~\eqref{eq:inhom} is given by
\begin{equation}
\label{eq:Duhamel} 
u(t,\cdot)=\int_{0}^t \frac{\sin((t-s)\sqrt A)}{\sqrt A}f(s,\cdot)ds 
\end{equation}
and the dispersive and energy bounds for the sine evolution from Sections \ref{sec:disp} and \ref{sec:energy}
can be directly applied to derive
corresponding bounds for the solution of Eq.~\eqref{eq:inhom}.
We leave this to the reader.
However, the vector field bounds require some modifications because for the inhomogeneous problem,
the right-hand sides of the corresponding estimates will depend on time.
Thus, we need to understand the action of the vector field $S$ on the Duhamel formula.

\begin{lemma}
\label{lem:SDuhamel}
Let $u$ be the solution of Eq.~\eqref{eq:inhom} given by the Duhamel formula Eq.~\eqref{eq:Duhamel}.
Then we have
\begin{align*} 
Su(t,\cdot)=&\int_{0}^t \frac{\sin((t-s)\sqrt A)}{\sqrt A}S f(s,\cdot)ds+2u(t,\cdot) \\
&+\int_{0}^t \left [E,\frac{\sin((t-s)\sqrt A)}{\sqrt A}\right ]f(s,\cdot)ds
\end{align*}
for $f \in \mc S([0,\infty)\times \R_+)$.
\end{lemma}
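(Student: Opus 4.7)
The plan is to differentiate Duhamel's formula carefully, split the temporal weight via $t=(t-s)+s$, and invoke Lemma~\ref{lem:S} under the integral sign. Concretely, from
\[ u(t,x)=\int_0^t \frac{\sin((t-s)\sqrt A)}{\sqrt A}f(s,x)\,ds \]
one first computes $t\partial_t u$. The boundary contribution at $s=t$ vanishes thanks to $\sin(0)=0$, so
\[ t\partial_t u(t,x)=\int_0^t t\cos((t-s)\sqrt A)f(s,x)\,ds. \]
Now I would split $t=(t-s)+s$ and bring the $x\partial_x$ derivative inside the integral in Duhamel's formula, to get
\[ Su(t,x)=\int_0^t\Big[(t-s)\partial_t+x\partial_x\Big]\frac{\sin((t-s)\sqrt A)}{\sqrt A}f(s,x)\,ds+\int_0^t s\cos((t-s)\sqrt A)f(s,x)\,ds. \]

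The first integral is now tailor-made for Lemma~\ref{lem:S}: applying that identity with $t$ replaced by $t-s$ and $g$ replaced by $f(s,\cdot)$ (which is $t$-independent) yields
\[ \int_0^t \frac{\sin((t-s)\sqrt A)}{\sqrt A}\bigl(|\cdot|\partial_x f(s,\cdot)+f(s,\cdot)\bigr)(x)\,ds+\int_0^t\left[E,\frac{\sin((t-s)\sqrt A)}{\sqrt A}\right]f(s,x)\,ds, \]
which already produces the commutator term in the statement and contributes one copy of $u(t,x)$ from the ``$+g$'' piece.

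The second integral I would rewrite using $\cos((t-s)\sqrt A)=-\partial_s\frac{\sin((t-s)\sqrt A)}{\sqrt A}$, followed by integration by parts in $s$:
\[ \int_0^t s\cos((t-s)\sqrt A)f(s,x)\,ds=\int_0^t\frac{\sin((t-s)\sqrt A)}{\sqrt A}f(s,x)\,ds+\int_0^t\frac{\sin((t-s)\sqrt A)}{\sqrt A}s\partial_s f(s,x)\,ds, \]
where the boundary term at $s=t$ vanishes due to $\sin(0)=0$ and at $s=0$ due to the factor $s$. This produces a second copy of $u(t,x)$ and recovers the missing $s\partial_s f$ contribution. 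Combining the $|\cdot|\partial_x f$ and $s\partial_s f$ pieces assembles into $Sf(s,\cdot)$ (where $S=s\partial_s+x\partial_x$ acts on $f$ viewed as a function of $(s,x)$), the two $u(t,x)$ contributions combine into $2u(t,x)$, and the commutator term is exactly the one in the statement.

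There is no real obstacle here beyond careful bookkeeping; the only point where one should be slightly cautious is justifying the differentiations under the integral sign and the integration by parts, which is immediate for $f\in\mc S([0,\infty)\times\R_+)$ by Lemma~\ref{lem:difft} and dominated convergence. The mechanism is the standard trick that $(t-s)\partial_t$ applied to a free ``translate'' of an evolution operator can be replaced by the dilation generator acting naturally on the convolution kernel, while the leftover $s\partial_t$ gets traded for $-s\partial_s$ and integrated by parts; the factor $2$ in front of $u(t,x)$ reflects the commutator relation $[S,\Box]=-2\Box$ on the free side.
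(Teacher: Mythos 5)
Your proof is correct, and it takes a slightly different route from the paper's. The paper's proof stays entirely on the distorted Fourier side: it writes $\mc F Su$ via the definition of $B$ as $(t\partial_t-\xi\partial_\xi-1)\mc Fu+B\mc Fu$, computes $\xi\partial_\xi$ of the kernel $\sin((t-s)\xi)/\xi$ explicitly, cancels $t\cos((t-s)\xi)-(t-s)\cos((t-s)\xi)=s\cos((t-s)\xi)$, integrates by parts in $s$, and only then translates back by pattern-matching $(s\partial_s-\xi\partial_\xi-1)\mc Ff$ with $\mc F(Sf)-B\mc Ff$. You instead work on the physical side, make the split $t=(t-s)+s$ explicit, and invoke Lemma~\ref{lem:S} as a black box for the $(t-s)\partial_t+x\partial_x$ piece (treating $\tau=t-s$ as the time variable at fixed $s$), then perform the same integration by parts in $s$ on the $s\cos((t-s)\sqrt A)$ piece. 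The underlying algebra is the same---the cancellation $t-(t-s)=s$ and the $s$-integration by parts appear in both---but your version is more modular: it reuses the already-proved commutator identity rather than rederiving it in Fourier variables, at the small cost of having to be a bit careful about what $(t-s)\partial_t$ means under the integral (which you handle correctly). One small caveat: your appeal to Lemma~\ref{lem:difft} is slightly off-target, since that lemma concerns $t$-differentiation of the free evolutions and formally carries parity assumptions on $f$ and $V$ that Lemma~\ref{lem:SDuhamel} does not state; the justification you actually need is the standard Leibniz rule plus dominated convergence for the Duhamel integral, which holds for Schwartz $f$ without those hypotheses (as the paper's Fourier-side argument makes visible). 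This is a presentational point, not a gap.
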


\begin{proof}
We have
\begin{align*} \mc F Su(t,\cdot)(\xi)=&(t\partial_t-\xi\partial_\xi-1)\int_{0}^t \frac{\sin((t-s)\xi)}{\xi}
\mc F f(s,\cdot)(\xi)ds+B\mc Fu(t,\cdot)(\xi).
\end{align*}
Since 
\[ \xi\partial_\xi \frac{\sin((t-s)\xi)}{\xi}=(t-s)\cos((t-s)\xi)-\frac{\sin((t-s)\xi)}{\xi} \]
and
\[ t\partial_t \int_{0}^t \frac{\sin((t-s)\xi)}{\xi}\mc F f(s,\cdot)(\xi)ds=
\int_{0}^t t\cos((t-s)\xi)\mc F f(s,\cdot)(\xi)ds, \]
we obtain
\begin{align*}
\mc F S u(t,\cdot)(\xi)=&\int_{0}^t s\cos((t-s)\xi)\mc F f(s,\cdot)(\xi)ds \\
&-\int_{0}^t \frac{\sin((t-s)\xi)}{\xi}\xi\partial_\xi \mc F f(s,\cdot)(\xi)ds+B\mc Fu(t,\cdot)(\xi).
\end{align*}
Consequently, an integration by parts yields
\begin{align*}
\mc F Su(t,\cdot)(\xi)=&\int_{0}^t \frac{\sin((t-s)\xi)}{\xi}(s\partial_s-\xi\partial_\xi-1)\mc F f(s,\cdot)(\xi)ds \\
&+2 \mc Fu(t,\cdot)(\xi)+B \mc F u(t,\cdot)(\xi)
\end{align*}
and the claim follows.
\end{proof}

Analogous expressions hold for $S^m u$ and we arrive at the desired estimate where we
assume, for notational convenience, that $\partial_t^\ell f(t,\cdot)|_{t=0}=0$ for all $\ell\in \N_0$.

\begin{lemma}
\label{lem:vfinhom}
Let $f \in \mc S([0,\infty)\times \R)$ with $f(t,\cdot)$ even for any $t\geq 0$ and 
$\partial_t^\ell f(t,\cdot)|_{t=0}=0$ for all $\ell\in \N_0$.
Furthermore, suppose that $V^{(2j+1)}(0)=0$ for all $j\in \N_0$ and assume that the constant $a_1$
from Lemma \ref{lem:en0} is nonzero.
Then the solution $u$ of Eq.~\eqref{eq:inhom} satisfies the bounds
\begin{align*} 
\left \|\partial_t^\ell \partial_t S^m u(t,\cdot)\right \|_{H^k(\R_+)}
\leq C_{k,\ell,m}\sum_{j=0}^m 
&\int_{0}^t 
\left \|\partial_s^\ell S^j f(s,\cdot) \right \|_{H^{k}(\R_+)}ds 
\end{align*}
for all $t\geq 0$ and $k,\ell,m \in \N_0$.
In addition, the same bounds hold for $\|\partial_t^\ell \sqrt A S^m u(t,\cdot)\|_{H^k(\R_+)}$.
\end{lemma}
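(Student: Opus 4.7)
The plan is to adapt the proof of Lemma \ref{lem:vfhigh} to the Duhamel representation
\[ u(t,\cdot)=\int_0^t K(t-s)f(s,\cdot)\,ds, \qquad K(\tau):=\frac{\sin(\tau\sqrt A)}{\sqrt A}. \]
First, I would iterate Lemma \ref{lem:SDuhamel}. Applied with $g$ in place of $f$, the lemma expresses $S$ acting on an integral of the form $\int_0^t K(t-s)g(s,\cdot)\,ds$ as another integral of the same shape with $Sg$ in place of $g$, plus a constant multiple of the original integral, plus a commutator integral $\int_0^t [E,K(t-s)]g(s,\cdot)\,ds$. The same Fourier-side computation that proved Lemma \ref{lem:SDuhamel} (integration by parts in $s$ plus the boundary vanishing $\sin(0)=0$) extends to the commutator integrals, introducing iterated commutators with the operators $E_k=\mc F^{-1}\ad_D^k(B)\mc F$ from Lemma \ref{lem:commAEm}. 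Iterating $m$ times yields an identity
\[ S^m u(t,\cdot)=\sum_\alpha \int_0^t T_\alpha(t-s)\, S^{j_\alpha}f(s,\cdot)\,ds,\qquad 0\le j_\alpha\le m, \]
in which each $T_\alpha(\tau)$ is a fixed composition of $K(\tau)$ with finitely many $E_{k_i}$'s arranged as iterated commutators.

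Next, I would handle the derivatives $\partial_t^\ell\partial_t$. Since the $E_k$'s are time-independent and $K(0)=0$, the identity $\partial_t K(t-s)=\cos((t-s)\sqrt A)=-\partial_s K(t-s)$ and its analogues allow one to integrate by parts in $s$ so as to shift every $\partial_t$ onto $f$ as a $\partial_s$. All boundary terms vanish thanks to the hypothesis $\partial_s^j f(s,\cdot)|_{s=0}=0$ for every $j\in\N_0$ together with $\sin(0\cdot\sqrt A)=0$. The outcome is
\[ \partial_t^\ell\partial_t S^m u(t,\cdot)=\sum_\alpha \int_0^t \widetilde{T}_\alpha(t-s)\,\partial_s^\ell S^{j_\alpha}f(s,\cdot)\,ds, \]
with $\widetilde{T}_\alpha(\tau)$ built from $\cos(\tau\sqrt A)$ in place of $K(\tau)$ (composed with the same iterated-commutator pattern in the $E_k$'s). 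Minkowski's integral inequality then reduces the claim to the uniform-in-$\tau$ $H^k\to H^k$ bound on each $\widetilde T_\alpha(\tau)$, which follows from the energy estimate Lemma \ref{lem:energy} for $\cos(\tau\sqrt A)$ and from Corollary \ref{cor:E} (applied repeatedly) for the $E_k$'s. The $\|\partial_t^\ell\sqrt A S^m u\|_{H^k}$ version is identical, with $\sin(\tau\sqrt A)$ replacing $\cos(\tau\sqrt A)$, upon commuting $\sqrt A$ past the $E_k$'s modulo the error controlled by Lemma \ref{lem:commAEm}.

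The main obstacle is proving the uniform-in-$\tau$ mapping property $\|\widetilde T_\alpha(\tau)h\|_{H^k}\lesssim \|h\|_{H^k}$ for the iterated-commutator operators arising at high orders $m$. This is addressed on the distorted Fourier side by combining Lemma \ref{lem:derweis} (relating $H^k$ to weighted $L^2_{\tilde\rho}$), the trivial multiplier boundedness of $\cos(\tau|\cdot|)$ and $\sin(\tau|\cdot|)$ on these weighted spaces, and Lemma \ref{lem:adD}, which guarantees that iterated commutators with $D=\xi\partial_\xi$ do not destroy the weighted $L^2_{\tilde\rho}$-boundedness of $B$. Once uniform boundedness is established, the remaining bookkeeping is routine: Minkowski in $s$ absorbs the time integral, and the flatness of $f$ at $s=0$ (all derivatives vanishing) guarantees that the integration-by-parts boundary terms contribute nothing.
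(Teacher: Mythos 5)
Your proposal is correct and follows the route the paper itself indicates: iterate Lemma~\ref{lem:SDuhamel} to expand $S^m u$ into Duhamel integrals of $S^j f$ plus iterated-commutator error terms involving the $E_k$'s, shift the time derivatives onto $f$ by integration by parts in $s$ (using $K(0)=0$ and the flatness hypothesis $\partial_s^j f(s,\cdot)|_{s=0}=0$ to kill all boundary terms), and then close with Minkowski's inequality together with the uniform $H^k$-boundedness of the evolution operators (via Lemmas~\ref{lem:derweis} and \ref{lem:mapF-1}), of the $E_k$'s (via Lemma~\ref{lem:adD} and Corollary~\ref{cor:E}), and of the commutator $[\sqrt A,E_k]$ (via Lemma~\ref{lem:commAEm}). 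The paper leaves this proof implicit, merely remarking that analogous expressions hold and lead to the estimate; your write-up supplies exactly the bookkeeping the authors had in mind, including the crucial observation that one time derivative (respectively one factor $\sqrt A$) is spent converting $\sin(\tau\sqrt A)/\sqrt A$ to $\cos(\tau\sqrt A)$ (respectively $\sin(\tau\sqrt A)$), which is why the $H^k$-norm on the right-hand side carries $\ell$ rather than $\ell+1$ derivatives on $f$.
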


As before, one may also obtain bounds on $\|\nabla S^2 u(t,\cdot)\|_{H^k(\R_+)}$ by requiring
corresponding $L^1$-bounds on the source function $f$.
To be more precise, we have the following estimate.

\begin{lemma}
\label{lem:vfinhomfree}
Under the assumptions of Lemma \ref{lem:vfinhom} we have the bounds
\begin{align*} 
\left \|\nabla S^m u(t,\cdot)\right \|_{H^k(\R_+)}
\leq C_k\sum_{j=0}^m 
&\int_{0}^t 
\left \|S^j f(s,\cdot) \right \|_{H^k(\R_+)\cap L^1(\R_+)}ds 
\end{align*}
for all $t\geq 0$, $k \in \N_0$, and $m\in \{0,1,2\}$.
\end{lemma}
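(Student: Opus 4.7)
The plan is to reduce the inhomogeneous estimates to the homogeneous ones in Lemmas \ref{lem:energyfree}, \ref{lem:vfn}, and \ref{lem:vfodhigh} by means of Duhamel's formula \eqref{eq:Duhamel} and the commutator identity in Lemma \ref{lem:SDuhamel}, proceeding by induction on $m\in\{0,1,2\}$.

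For the base case $m=0$, I would apply Minkowski's inequality directly to \eqref{eq:Duhamel} to reduce matters to the pointwise (in $s$) estimate
\[ \left\|\nabla \frac{\sin((t-s)\sqrt A)}{\sqrt A}f(s,\cdot)\right\|_{H^k(\R_+)}\lesssim \|f(s,\cdot)\|_{H^k(\R_+)\cap L^1(\R_+)}, \]
which is exactly the content of Lemma \ref{lem:energyfree} applied to the homogeneous Cauchy problem with initial data $(0,f(s,\cdot))$.

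For $m=1$, Lemma \ref{lem:SDuhamel} decomposes $Su(t,\cdot)$ into three pieces: the Duhamel solution with source $Sf$, the zeroth-order term $2u(t,\cdot)$, and the commutator integral $\int_0^t[E,\frac{\sin((t-s)\sqrt A)}{\sqrt A}]f(s,\cdot)\,ds$. The first two are estimated by the $m=0$ bound applied respectively to $Sf$ and $f$. For the commutator integral, Minkowski reduces matters to a pointwise bound in $s$ of the form
\[ \left\|\nabla\left[E,\frac{\sin(\tau\sqrt A)}{\sqrt A}\right]g\right\|_{H^k(\R_+)}\lesssim \|g\|_{H^k(\R_+)\cap L^1(\R_+)}, \]
which is precisely what is established (inside the proof of Lemma \ref{lem:vfn}) by combining the $L^\infty$ commutator estimate from Lemma \ref{lem:comminf} with the decay of $V$. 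For $k\geq 1$ one passes from $\sqrt A$ to $\nabla$ by the identity $\nabla^2 = -A + V$ and an induction in $k$, exactly as in the proof of Lemma \ref{lem:vfn}.

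For $m=2$, iterating Lemma \ref{lem:SDuhamel} yields the Duhamel integral with source $S^2f$, a term of the form $4Su(t,\cdot)$ (which falls under the $m=1$ case already treated), a commutator integral with source $Sf$, and the iterated-commutator integrals involving $[E,[E,\frac{\sin}{\sqrt A}]]$ and $[E_1,\frac{\sin}{\sqrt A}]$ acting on $f$, mirroring the expansion used in Lemma \ref{lem:vfhigh}. The single-commutator term is handled as in $m=1$; for the iterated pieces I would reuse the kernel analysis of Lemma \ref{lem:vfodhigh}, replacing $E$ by $E_m = \mc F^{-1}\ad_D^m(B)\mc F$ and invoking Lemmas \ref{lem:adD} and \ref{lem:commAEm}, both of whose proofs were set up precisely to cover these iterated situations. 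The analogue of Lemma \ref{lem:comminf} for $E_m$ follows by the same proof as Lemma \ref{lem:comminf} together with the refined kernel bounds of Lemma \ref{lem:derF}, since the kernels of $E_m$ satisfy the same pointwise estimates as the kernel of $E$.

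The main obstacle is the $k=0$ treatment of $\nabla$ applied to the commutator integrals: one cannot trade $\sqrt A$ for $\nabla$ by a direct appeal to Corollary \ref{cor:E}, which is why an $L^1$-norm on the source $f$ genuinely enters. This is resolved exactly as in the proof of Lemma \ref{lem:vfodhigh} by unwinding $E$ into its explicit kernel representation and running the same weighted, non-stationary-phase estimates that drive Section \ref{sec:vf}, with the extra $s$-integration absorbed by Minkowski.
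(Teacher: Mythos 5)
The paper leaves this lemma without an explicit proof (the text merely says that such bounds follow ``as before'' from the vector field machinery), and your expansion is exactly the intended argument: Duhamel's formula combined with Lemma \ref{lem:SDuhamel}, reduction to the homogeneous estimates of Lemmas \ref{lem:energyfree}, \ref{lem:vfn}, \ref{lem:vfodhigh}, the commutator machinery of Lemmas \ref{lem:comminf}, \ref{lem:adD}, \ref{lem:commAEm}, and Minkowski's inequality to absorb the $s$-integration. One small imprecision: the pointwise bound $\bigl\|\nabla[E,\tfrac{\sin(\tau\sqrt A)}{\sqrt A}]g\bigr\|_{H^k(\R_+)}\lesssim\|g\|_{H^k(\R_+)\cap L^1(\R_+)}$ is not literally isolated in the proof of Lemma \ref{lem:vfn}---what is established there is the combination of the $\sqrt A$-commutator estimate (via Corollary \ref{cor:E} and Lemma \ref{lem:commAB}) with the $|V|^{1/2}$-weighted $L^\infty$ bound from Lemma \ref{lem:comminf}---but the bound you invoke does follow directly from those same ingredients, so the argument is sound.
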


\section{Local energy decay}
Next, we prove Theorem \ref{thm:locen}.
For simplicity we first consider the case $m=0$.

\subsection{Basic local energy decay}

\begin{lemma}
\label{lem:locencos}
Assume that $V$ satisfies Hypothesis $\ref{hyp:B}$.
Then we have the bound
\[ \left \|\langle x \rangle^{-\frac12-}\partial_t^\ell e^{it\sqrt A}f(x)\right \|_{L^2_t(\R_+) 
H_x^{k}(\R_+)}
\leq C_{k,\ell}\left \|f\right \|_{H^{k+\ell}(\R_+)} \]
for all $k,\ell\in \N_0$ and all $f\in \mc S(\R)$ even.
\end{lemma}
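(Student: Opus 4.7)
The plan is to exploit the distorted Fourier representation of the propagator together with Plancherel in the time variable. Write
\[ \partial_t^\ell e^{\pm it\sqrt A}f(x)=\int_0^\infty \phi(x,\xi^2)(\pm i\xi)^\ell e^{\pm it\xi}\mc F f(\xi)\tilde\rho(\xi)\, d\xi. \]
For any fixed $x$, the right-hand side is (up to constants) the Fourier transform in $t$ of the function $G_{x,\ell}(\xi):=\phi(x,\xi^2)(\pm i\xi)^\ell \mc F f(\xi)\tilde\rho(\xi)\mathbf{1}_{\xi>0}$. Since $L^2_t(\R_+)\hookrightarrow L^2_t(\R)$, Plancherel in $t$ yields
\[ \int_{\R_+}\left|\partial_x^j\partial_t^\ell e^{\pm it\sqrt A}f(x)\right|^2 dt\lesssim \int_0^\infty |\partial_x^j\phi(x,\xi^2)|^2\,\xi^{2\ell}\,|\mc F f(\xi)|^2\tilde\rho(\xi)^2\, d\xi. \]
Integrating against $\langle x\rangle^{-1-}$ in $x$ and using Fubini, the claim reduces to the pointwise bound
\begin{equation}\label{eq:reduc}
 I_j(\xi):=\tilde\rho(\xi)\int_0^\infty \langle x\rangle^{-1-}|\partial_x^j\phi(x,\xi^2)|^2\, dx\lesssim \langle \xi\rangle^{2j},\qquad 0\le j\le k,
\end{equation}
because then summing in $j\leq k$ gives $\|\langle\cdot\rangle^{-\frac12-}\partial_t^\ell e^{\pm it\sqrt A}f\|_{L^2_t H^k_x}^2\lesssim \int_0^\infty \langle\xi\rangle^{2(k+\ell)}|\mc F f(\xi)|^2\tilde\rho(\xi)\,d\xi\simeq \|f\|_{H^{k+\ell}}^2$, where the equivalence is Lemma \ref{lem:derweis}.

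To establish \eqref{eq:reduc} I split into small and large frequencies. For $\xi\geq\frac12$ Lemma \ref{lem:phi} gives $|\phi(x,\xi^2)|\lesssim 1$, the ODE $\phi''=(V-\xi^2)\phi$ together with $|\partial_x\phi|\lesssim\xi$ inductively yields $|\partial_x^j\phi(x,\xi^2)|\lesssim \xi^j$, and $\tilde\rho(\xi)\simeq 1$ by Lemma \ref{lem:rho}; hence $I_j(\xi)\lesssim \xi^{2j}$. For $0<\xi<\frac12$ I split the $x$-integral at $x=\xi^{-1}$. On the nonoscillatory region $[0,\xi^{-1}]$, Corollary \ref{cor:phi} gives $|\phi(x,\xi^2)|\lesssim\langle x\rangle^{\frac12}(|a_1\log\xi|+1)$, and differentiating the representations of Lemma \ref{lem:phi} (together with the ODE to trade $\partial_x^2$ for $V-\xi^2$) yields
\[ |\partial_x^j\phi(x,\xi^2)|\lesssim \langle x\rangle^{\frac12-j+}(|a_1\log\xi|+1), \]
so the weighted integral contributes $(|a_1\log\xi|+1)^2\xi^{-1+2j+}$. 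On the oscillatory region $x\geq \xi^{-1}$, Lemma \ref{lem:phi} writes $\phi(x,\xi^2)=a(\xi)e^{i\xi x}[1+O_\C((x\xi)^{-1})]+\mathrm{c.c.}$ with $|a(\xi)|\lesssim \xi^{-\frac12}(|a_1\log\xi|+1)$, and each $\partial_x$ produces a factor of $\xi$ up to lower-order terms, giving $|\partial_x^j\phi|\lesssim \xi^{j-\frac12}(|a_1\log\xi|+1)$; the weighted integral then contributes $\xi^{2j-1+}(|a_1\log\xi|+1)^2$. Using the small-$\xi$ behavior of the spectral density from Lemma \ref{lem:rho}, $\tilde\rho(\xi)\lesssim \xi/(1+a_1^2\log^2\xi)$, both contributions combine to
\[ I_j(\xi)\lesssim \xi^{2j+}\cdot \frac{(|a_1\log\xi|+1)^2}{1+a_1^2\log^2\xi}\lesssim \xi^{2j+}\lesssim 1, \]
which is the desired bound.

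The main obstacle is the low-frequency regime, where $\phi$ carries a logarithmic singularity and the weight $\langle x\rangle^{-1-}$ is only marginally integrable out to $\xi^{-1}$; the saving comes from the fact that $\tilde\rho(\xi)$ itself vanishes like $\xi/(1+\log^2\xi)$ at the origin, which is exactly strong enough to absorb the logarithmic factors coming from $\phi$ and $a(\xi)$ and to balance the $\xi^{-1+}$ produced by the spatial integral. This logarithmic bookkeeping—made possible by the precise asymptotics already collected in Section \S 2—is the technical heart of the argument; once it is carried out, the rest of the proof is purely routine Plancherel.
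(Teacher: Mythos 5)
Your proof is correct, and it is a genuinely different route from the paper's. The paper expands the kernel
$\phi(x,\xi^2)\phi(y,\xi^2)\tilde\rho(\xi)$ and makes a three-way cut in $\xi$, $x\xi$, and $y\xi$, then
handles the resulting pieces with a mix of tools: Cauchy--Schwarz in $y$, the nonstationary phase Lemma~\ref{lem:osc}, interpolation
between $\langle t\rangle^0$ and $\langle t\rangle^{-1}$ decay, Minkowski's inequality (exploiting a convolution
structure in $t\pm y$, $t\pm x\pm y$), and Plancherel in $t$ only for the high-frequency pieces. You instead
apply Plancherel in $t$ \emph{globally}, which collapses the entire $y$-dependence into
$|\mc F f(\xi)|^2$ and reduces the lemma to the single pointwise bound
$\tilde\rho(\xi)\int_0^\infty\langle x\rangle^{-1-}|\partial_x^j\phi(x,\xi^2)|^2\,dx\lesssim\langle\xi\rangle^{2j}$,
verified from the asymptotics of Section~2 and closed by the Sobolev equivalence of Lemma~\ref{lem:derweis}.
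This is shorter and conceptually cleaner, and it isolates exactly where the logarithmic cancellation between
$\tilde\rho(\xi)$ and the $(|a_1\log\xi|+1)$ factors in $\phi$ occurs; it works equally well in the resonant
case $a_1=0$, where the $\log$-factors simply disappear on both sides. What the paper's more laborious
kernel decomposition buys is a template that transfers directly to the inhomogeneous version
(Lemma~\ref{lem:loceninhnabla}), where the Duhamel integrand depends on $s$ and one cannot collapse the
$s$-integral by a single Plancherel.

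One small caveat: your argument quietly uses derivative bounds $|\partial_x^j\phi(x,\xi^2)|\lesssim
\langle x\rangle^{\frac12-j+}(|a_1\log\xi|+1)$ on $[0,\xi^{-1}]$ and $|\partial_x^j\phi(x,\xi^2)|
\lesssim\xi^{j-\frac12}(|a_1\log\xi|+1)$ on $[\xi^{-1},\infty)$, which are natural extensions of
Corollary~\ref{cor:phi} but are not stated in the paper; they do follow by differentiating the symbol-type
$O$-terms of Lemma~\ref{lem:phi} (using the ODE to trade $\partial_x^2$ for $V-\xi^2$ and, for the
$\xi^{-\frac12}O(x^{-\alpha-\epsilon}\xi^{-\epsilon})$ remainder, the relations $x\geq\xi^{-\epsilon/\alpha}$ and
$x\leq\xi^{-1}$ to absorb the $\xi^{-\frac12-\epsilon}$), but this $\epsilon$-bookkeeping deserves a line or two.
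Also, your stated bound $(|a_1\log\xi|+1)^2\xi^{-1+2j+}$ for the nonoscillatory piece with $j\ge1$ is not what the
integral actually gives (it is $\lesssim(|a_1\log\xi|+1)^2$, not smaller); this is harmless since the weaker
bound still closes after multiplying by $\tilde\rho(\xi)$, but the displayed exponent is off.
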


\begin{proof}
We distinguish between low and high frequencies and start with the former.
By Lemma \ref{lem:derweis} we infer
\begin{align*}
\|\chi(A)f\|_{H^k(\R_+)}&\simeq \left \|\langle \cdot\rangle^k \mc F\left (\chi(A)f\right )\right\|
_{L^2_{\tilde \rho}(\R_+)}\lesssim \|\mc F(\chi(A) f)\|_{L^2_{\tilde \rho}(\R_+)} \\
&\simeq \|\chi(A)f\|_{L^2(\R_+)}
\end{align*}
and thus,
for low frequencies it suffices to consider the case $k=\ell=0$.
We have the representation
\[ e^{it\sqrt A}f(x)=\int_0^\infty \int_0^\infty e^{it\xi}\phi(x,\xi^2)\phi(y,\xi^2)\tilde \rho(\xi)
f(y)dy d\xi \]
and we decompose the integral as
\begin{align*}
A_1(t,x)&:=\int_0^\infty \int_0^\infty \chi(\xi)\chi(x\xi)\chi(y\xi)e^{it\xi}\phi(x,\xi^2)\phi(y,\xi^2)\tilde \rho(\xi)
f(y)dy d\xi \\
A_2(t,x)&:=\int_0^\infty \int_0^\infty \chi(\xi)[1-\chi(x\xi)]\chi(y\xi)\\
&\quad \times e^{it\xi}\phi(x,\xi^2)\phi(y,\xi^2)\tilde \rho(\xi)
f(y)dy d\xi \\
A_3(t,x)&:=\int_0^\infty \int_0^\infty \chi(\xi)\chi(x\xi)[1-\chi(y\xi)] \\
&\quad \times e^{it\xi}\phi(x,\xi^2)\phi(y,\xi^2)\tilde \rho(\xi)
f(y)dy d\xi \\
A_4(t,x)&:=\int_0^\infty \int_0^\infty \chi(\xi)[1-\chi(x\xi)][1-\chi(y\xi)]\\
&\quad \times e^{it\xi}\phi(x,\xi^2)\phi(y,\xi^2)\tilde \rho(\xi)
f(y)dy d\xi.
\end{align*}
From Lemmas \ref{lem:rho}, \ref{lem:phi}, Corollary \ref{cor:phi}, 
and $\tilde \rho(\xi)=2\xi\rho(\xi^2)$ we infer
\begin{equation}
\label{eq:prooflocencosA1} A_1(t,x)=\int_0^\infty \int_0^\infty \chi(\xi)\chi(x\xi)\chi(y\xi)e^{it\xi}
O(\langle x\rangle^{\frac12}\xi)O(\langle y\rangle^{\frac12}\xi^0)
 f(y)dy d\xi 
 \end{equation}
 where all $O$-terms behave like symbols.
Cauchy-Schwarz yields
\[ \chi(\xi)\int_0^\infty \chi(y\xi)O(\langle y\rangle^{\frac12}\xi^0)f(y)dy
=O(\xi^{-1})\|f\|_{L^2(\R_+)} \]
with an $O$-term of symbol type and thus,
\begin{align*} 
A_1(t,x)&=\|f\|_{L^2(\R_+)}\int_0^\infty \chi(\xi)\chi(x\xi)e^{it\xi}
O(\langle x\rangle^{\frac12}\xi^0)d\xi  \\
&=O(t^0\langle x\rangle^{-\frac12})\|f\|_{L^2(\R_+)}.
\end{align*}
On the other hand, Lemma \ref{lem:osc} yields 
$A_1(t,x)=O(\langle t\rangle^{-1}\langle x\rangle^\frac12)\|f\|_{L^2(\R_+)}$
and by interpolation we infer $|A_1(t,x)|\lesssim \langle t\rangle^{-\frac12-}
\langle x\rangle^{0+}\|f\|_{L^2(\R_+)}$ which implies
\[ \|\langle x \rangle^{-\frac12-}A_1(t,x)\|_{L^2_t(\R_+)L^2_x(\R_+)}\lesssim \|f\|_{L^2(\R_+)}. \]

We continue with $A_2$. Here we use
\begin{align*} 
\chi(\xi)[1-\chi(x\xi)]\chi(y\xi)\phi(x,\xi^2)\phi(y,\xi^2)\tilde \rho(\xi)&=
\xi^{\frac12}\langle y\rangle^\frac12 \Re[e^{ix\xi}O_\C(x^0\xi^0)]
\end{align*}
and thus, it suffices to control
\[ \tilde A_2(t,x):=\int_0^\infty \int_0^\infty \chi(\xi)[1-\chi(x\xi)]\chi(y\xi)e^{i\xi (t\pm x)}O_\C(\langle y\rangle^{\frac12}\xi^{\frac12})
f(y)dy d\xi \]
with an $O_\C$-term of symbol type.
As before, by Cauchy-Schwarz, we infer
\begin{align*} \tilde A_2(t,x)&=\|f\|_{L^2(\R_+)}\int_0^\infty \chi(\xi)[1-\chi(x\xi)]e^{i\xi(t\pm x)}
O_\C(x^0 \xi^{-\frac12})d\xi \\
&=\|f\|_{L^2(\R_+)}\int_0^\infty \chi(\xi)[1-\chi(x\xi)]e^{i\xi(t\pm x)}
O_\C(\langle x\rangle^{0+} \xi^{-\frac12+})d\xi 
\end{align*}
and Lemma \ref{lem:osc} yields the bound 
\[ |\tilde A_2(t,x)|\lesssim \langle t\pm x\rangle^{-\frac12-}
\langle x\rangle^{0+}\|f\|_{L^2(\R_+)}. \]
Consequently, by using the fact that $\|\langle t\pm x\rangle^{-\frac12-}\langle x\rangle^{-\frac12-}
\|_{L^2_t(\R)L^2_x(\R)}\lesssim 1$ (a simple consequence of Tonelli's theorem)
we find
\[ 
\|\langle x\rangle^{-\frac12-}\tilde A_2(t,x)
\|_{L^2_t(\R_+)L^2_x(\R_+)}\lesssim \|f\|_{L^2(\R_+)} \]
as desired.

For $A_3$ it suffices to consider
\begin{align*} \tilde A_3(t,x)&=\int_0^\infty \int_0^\infty \chi(\xi)\chi(x\xi)
[1-\chi(y\xi)]e^{i \xi(t\pm y)}O_\C(\langle x\rangle^{\frac12}y^0 \xi^{\frac12})f(y)dy d\xi \\
&=\int_0^\infty \underbrace{\int_0^\infty \chi(\xi)\chi(x\xi)
[1-\chi(y\xi)]e^{i \xi(t\pm y)}O_\C(\langle x\rangle^{0+}y^0 \xi^{0+})d\xi}_{=:I(t,x,y)} f(y) dy 
\end{align*}
where we have applied Fubini to interchange the order of integration.
From Lemma \ref{lem:osc}
we obtain the bound $|I(t,x,y)|\lesssim \langle x\rangle^{0+}\langle t\pm y\rangle^{-1-}$
and thus,
\begin{align*} |\tilde A_3(t,x)|&\lesssim 
\langle x\rangle^{0+} \int_\R \langle t\pm y\rangle^{-1-} |f(y)|dy \\
&=\langle x\rangle^{0+}\int_\R \langle y\rangle^{-1-}|f(y\mp t)|dy.
\end{align*}
Consequently, Minkowski's integral inequality implies
\begin{align*} 
\|\tilde A_3(t,x)\|_{L^2_t(\R_+)}&\lesssim \langle x\rangle^{0+}\left \|\int_\R
\langle y\rangle^{-1-}|f(y\mp t)|dy \right \|_{L^2_t(\R)} \\
&\leq\langle x\rangle^{0+}\int_\R \left \|\langle y\rangle^{-1-}f(y\mp t)\right \|_{L^2_t(\R)}dy \\
&\lesssim \langle x\rangle^{0+}\|f\|_{L^2(\R)}\simeq \langle x\rangle^{0+}\|f\|_{L^2(\R_+)}
\end{align*}
and by Fubini we obtain
\begin{align*} 
\|\langle x\rangle^{-\frac12-}\tilde A_3(t,x)\|_{L^2_t(\R_+)L^2_x(\R_+)}&=
\|\langle x\rangle^{-\frac12-}\tilde A_3(t,x)\|_{L^2_x(\R_+)L^2_t(\R_+)} \\
&\lesssim \|\langle\cdot\rangle^{-\frac12-}\|_{L^2(\R_+)}\|f\|_{L^2(\R_+)}.
\end{align*}

For $A_4$ we have to control expressions of the form
\begin{align*} \tilde A_4(t,x)&=\int_0^\infty \int_0^\infty \chi(\xi)[1-\chi(x\xi)][1-\chi(y\xi)]
\\
&\quad \times e^{i\xi(t\pm x\pm y)}O_\C(x^0 y^0 \xi^0)f(y)dy d\xi \\
&=\int_0^\infty J(t,x,y) f(y)dy
\end{align*}
where
\[ J(t,x,y):=\int_0^\infty \chi(\xi)[1-\chi(x\xi)][1-\chi(y\xi)]
e^{i\xi(t\pm x\pm y)}O_\C(x^{0+} y^0 \xi^{0+})d\xi \]
and all $O_\C$-terms are of symbol type.
As before, by Lemma \ref{lem:osc}, we infer the bound 
\[ |J(t,x,y)|\lesssim \langle x\rangle^{0+}\langle t\pm x\pm y\rangle^{-1-} \]
and this yields
\begin{align*} |\tilde A_4(t,x)|&\lesssim \langle x\rangle^{0+}\int_\R \langle t\pm x\pm y\rangle^{-1-}
|f(y)|dy \\
&=\langle x\rangle^{0+}\int_\R \langle y\rangle^{-1-}|f(y\mp t\mp x)|dy.
\end{align*}
Thus, Minkowski's inequality implies
\begin{align*} \|A_4(t,x)\|_{L^2_t(\R_+)}&\lesssim \langle x\rangle^{0+}\int_\R \langle y\rangle^{-1-}
\left \|f(y\mp t \mp x)\right \|_{L^2_t(\R)}dy \\
&\lesssim \langle x\rangle^{0+}\|f\|_{L^2(\R_+)}
\end{align*}
and by Fubini we arrive at the desired
\[ \|\langle x\rangle^{-\frac12-}\tilde A_4(t,x)\|_{L^2_t(\R_+)L^2_x(\R_+)}\lesssim \|f\|_{L^2(\R_+)}. \]
This completes the proof for the low-frequency case.

For high frequencies we have to deal with integrals of the form
\begin{align*} B(t,x):&=\int_0^\infty [1-\chi(\xi)]e^{it\xi}e^{\pm i x \xi}O_\C(x^0\xi^0) \\
&\quad \times\int_0^\infty e^{\pm iy\xi}[1+O_\C(\langle y\rangle^{-1}\xi^{-1})]f(y)dy d\xi  \\
&=B_1(t,x)+B_2(t,x)
\end{align*}
where
\begin{align*}
B_1(t,x)&:=\int_0^\infty [1-\chi(\xi)]e^{it\xi}e^{\pm i x\xi}O_\C(x^0 \xi^0)
\int_0^\infty e^{\pm i y\xi}f(y)dy d\xi \\
B_2(t,x)&:=\int_0^\infty [1-\chi(\xi)]e^{it\xi}e^{\pm i x\xi}O_\C(x^0 \xi^0) \\
&\quad \times \int_0^\infty O_\C(\langle y\rangle^{-1}\xi^{-1})e^{\pm i y\xi}f(y)dy d\xi.
\end{align*}
By Plancherel's theorem we find
\[ \|B_1(t,x)\|_{L^2_t(\R_+)}\lesssim \|\mc F_1(1_{[0,\infty)}f)\|_{L^2(\R)}\simeq \|f\|_{L^2(\R_+)}\]
and Fubini yields
\[ \|\langle x\rangle^{-\frac12-}B_1(t,x)\|_{L^2_t(\R_+)L^2_x(\R_+)}\lesssim
\|\langle \cdot\rangle^{-\frac12-}\|_{L^2(\R_+)}\|f\|_{L^2(\R_+)}\lesssim \|f\|_{L^2(\R_+)}. \]
For $B_2$ we simply apply Cauchy-Schwarz to obtain
\[ B_2(t,x)=\|f\|_{L^2(\R_+)}\int_0^\infty [1-\chi(\xi)]e^{it\xi}O_\C(x^0 \xi^{-1})d\xi \]
and as before, by Plancherel and Fubini, we infer
\[ \left \|\langle x\rangle^{-\frac12-}B_2(t,x)\right \|_{L^2_t(\R_+)L^2_x(\R_+)}\lesssim 
\|f\|_{L^2(\R_+)}. \]
This completes the proof for $k=\ell=0$.
If $k+\ell\geq 1$ we obtain an additional weight $O(\xi^{k+\ell})$ and Lemma \ref{lem:derweis}
yields the stated bound.
\end{proof}

\begin{lemma}
\label{lem:locensin}
Under the assumptions of Lemma \ref{lem:locencos} we have the bounds
\begin{align*}
\left \|\langle x \rangle^{-1}
\frac{\sin(t\sqrt A)}{\sqrt A}g(x)\right \|_{L^2_t(\R_+)L^2_x(\R_+)}
&\leq C \left \|\langle \cdot \rangle^{\frac12+} g \right\|_{L^2(\R_+)} \\
\left \|\langle x \rangle^{-1}\frac{\sin(t\sqrt A)}{\sqrt A}g(x)\right \|_{L^2_t(\R_+)H^k_x(\R_+)}
&\leq C_k \left \|\langle \cdot \rangle^{\frac12+} g \right \|_{H^{k-1}(\R_+)}
\end{align*}
for all $k\in \N$ and all $g\in \mc S(\R)$ even.
\end{lemma}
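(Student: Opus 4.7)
My plan is to run the argument of Lemma \ref{lem:locencos} with two structural modifications dictated by the presence of $1/\sqrt A$. The starting point is the cancellation
\[
\frac{\sin(t\sqrt A)}{\sqrt A}g(x)
 = 2\int_0^\infty\!\!\int_0^\infty \sin(t\xi)\,\phi(x,\xi^2)\phi(y,\xi^2)\rho(\xi^2)\,g(y)\,dy\,d\xi,
\]
which follows from $\tfrac{1}{\xi}\tilde\rho(\xi) = 2\rho(\xi^2)$ and removes the low-frequency singularity of $1/\sqrt A$. The second observation is that Corollary \ref{cor:phi} together with Lemma \ref{lem:rho} yields the uniform pointwise bound
\[
|\phi(x,\xi^2)\phi(y,\xi^2)\rho(\xi^2)|\lesssim \langle x\rangle^{1/2}\langle y\rangle^{1/2}
\]
for $x,y\in[0,\xi^{-1}]$ and $0<\xi\leq \tfrac12$, in both the resonant and the nonresonant case: in the nonresonant case the $(|a_1\log\xi|+1)^2$ growth of $|\phi(x,\xi^2)\phi(y,\xi^2)|$ is cancelled by the $(|a_1\log\xi|+1)^{-2}$ decay of $\rho(\xi^2)$; in the resonant case $a_1=0$ kills the logs outright. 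No nonresonance hypothesis is therefore needed.

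For the first estimate ($k=0$), I split $\xi$ into low and high frequencies and, at low frequency, further into the four subregions $\chi(x\xi)\chi(y\xi)$, $[1-\chi(x\xi)]\chi(y\xi)$, $\chi(x\xi)[1-\chi(y\xi)]$, $[1-\chi(x\xi)][1-\chi(y\xi)]$, as in the proof of Lemma \ref{lem:locencos}. In the three subregions containing an oscillatory $\phi$-factor, the phase $e^{i\xi(t\pm x\pm y)}$ lets Lemma \ref{lem:osc} produce $\langle t\pm x\pm y\rangle^{-1-}$ time decay that integrates against $\langle x\rangle^{-1}$ by the argument already used for $A_3,A_4$ of Lemma \ref{lem:locencos}. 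In the remaining all-non-oscillatory region, the kernel lacks the extra $\xi$ present in Lemma \ref{lem:locencos}, so oscillation alone does not suffice; instead I extend $t\mapsto \sin(t\xi)$ to be odd on $\R$ and apply Plancherel in $t$,
\[
\Bigl\|\int_0^\infty \sin(t\xi)\, K(x,y,\xi)\,d\xi\Bigr\|_{L^2_t}^2 \lesssim \int_0^\infty |K(x,y,\xi)|^2\,d\xi,
\]
followed by Cauchy-Schwarz in $y$ with the split $g(y) = \langle y\rangle^{-1/2-\epsilon}\cdot \langle y\rangle^{1/2+\epsilon}g(y)$; the bound $\|\langle y\rangle^{-1/2-\epsilon}\phi(y,\xi^2)\chi(y\xi)\|_{L^2_y}\lesssim (|\log\xi|+1)\,\xi^{-1/2+\epsilon}$ combines with $\rho(\xi^2)\lesssim (|\log\xi|+1)^{-2}$ and the cut-off $\chi(x\xi)$ (which restricts $\xi\lesssim 1/x$) to give a convergent $dx\,d\xi$ integral. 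The high-frequency piece is easier: $\rho(\xi^2)\sim\xi^{-1}$ supplies an extra $\xi^{-1}$ and the Jost asymptotics reduce matters to exactly the Plancherel argument used for the high-frequency part of Lemma \ref{lem:locencos}.

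For the $H^k$-bound with $k\geq 1$ I use the Leibniz expansion
\[
\bigl\|\langle x\rangle^{-1}h\bigr\|_{H^k_x(\R_+)} \lesssim \sum_{j=0}^k \bigl\|\langle x\rangle^{-1}\nabla^j h\bigr\|_{L^2_x(\R_+)}
\]
with $h=\sin(t\sqrt A)g/\sqrt A$, and trade spatial derivatives for time derivatives via the identities $\nabla^2 = V-A$ (on the even functions at hand) and
\[
A\,\tfrac{\sin(t\sqrt A)}{\sqrt A}g = \sqrt A\sin(t\sqrt A)g = -\partial_t\cos(t\sqrt A)g.
\]
Each application trades two spatial derivatives of $h$ either for (i)~$\partial_t e^{\pm it\sqrt A}g$, which is controlled by Lemma \ref{lem:locencos} with $\ell\geq 1$, or for (ii)~a factor of $V=O(\langle x\rangle^{-2})$, which reduces to a lower-order instance handled inductively. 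An odd $k$ leaves a single extra $\nabla$ which is absorbed via $\|\nabla u\|_{L^2}^2 \leq \|\sqrt A u\|_{L^2}^2 + C\|u\|_{L^2}^2$. The weight loss in $g$ is at most $\langle\cdot\rangle^{1/2+}$ at each step, matching the stated right-hand side.

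The main technical obstacle is the first estimate in the all-non-oscillatory low-frequency region: the absence of the extra factor of $\xi$ present in Lemma \ref{lem:locencos} forces a careful simultaneous tracking of the three logarithmic balances $|\phi|\sim(|\log\xi|+1)$, $|\phi|\sim(|\log\xi|+1)$, $\rho\sim(|\log\xi|+1)^{-2}$, together with a quantitative exchange between the weight $\langle y\rangle^{-1/2-\epsilon}$ absorbed into $\phi$ and the weight $\langle y\rangle^{1/2+\epsilon}$ kept on $g$. It is exactly this balance that dictates the hypothesis $\langle\cdot\rangle^{1/2+}g\in L^2$ on the right-hand side.
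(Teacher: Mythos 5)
Your $k=0$ argument is sound. The cancellation $\xi^{-1}\tilde\rho(\xi)=2\rho(\xi^2)$, the split $g=\langle\cdot\rangle^{-1/2-\epsilon}\cdot\langle\cdot\rangle^{1/2+\epsilon}g$ inside Cauchy--Schwarz, and the log-balance among $\phi$, $\phi$, $\rho$ are exactly the compensations the paper is invoking when it says the result "follows by inspection"; and you are right that no nonresonance hypothesis is needed. Your Plancherel-in-$t$ treatment of the all-non-oscillatory region $A_1$ is a legitimate alternative to the paper's implicit route, which would re-run the two-sided ("absolute values in" plus Lemma \ref{lem:osc}) estimate and interpolate exactly as for $A_1$ in the proof of Lemma \ref{lem:locencos}; both methods close.

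For $k\geq1$ you diverge from the paper and this is where a gap appears. The paper's route is short: after the frequency split, the low-frequency $H^k$ estimate reduces to the $k=0$ case via Lemma \ref{lem:derweis} exactly as at the start of the proof of Lemma \ref{lem:locencos}, while at high frequency the sine kernel carries an extra factor $\xi^{-1}$ relative to $e^{it\xi}\tilde\rho(\xi)$, which upgrades the conclusion from $\|g\|_{H^k}$ to $\|g\|_{H^{k-1}}\lesssim\|\langle\cdot\rangle^{1/2+}g\|_{H^{k-1}}$. Your trade $\nabla^2=V-A$ together with $Ah=-\partial_t\cos(t\sqrt A)g$ is fine for even $k$, but for odd $k$ the proposed absorption
\[
\|\nabla u\|_{L^2}^2\leq\|\sqrt A u\|_{L^2}^2+C\|u\|_{L^2}^2
\]
is an unweighted $L^2_x$ identity. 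It does not control the quantity actually required, namely $\|\langle x\rangle^{-1}\nabla u\|_{L^2_t(\R_+)L^2_x(\R_+)}$: the weight $\langle x\rangle^{-1}$ cannot be pushed through, since $\sqrt A$ and multiplication by $\langle x\rangle^{-1}$ do not commute, and the identity makes no reference to the outer $L^2_t$ integration. A correct substitute is the \emph{weighted} intermediate-derivative bound
\[
\left\|\langle\cdot\rangle^{-1}\nabla u\right\|_{L^2(\R_+)}^2\lesssim
\left\|\langle\cdot\rangle^{-1}\nabla^2 u\right\|_{L^2(\R_+)}^2
+\left\|\langle\cdot\rangle^{-1}u\right\|_{L^2(\R_+)}^2,
\]
obtained by integrating by parts and absorbing the cross term using $|(\langle x\rangle^{-2})'|\lesssim\langle x\rangle^{-2}$ (the boundary term at $x=0$ vanishes by the parity hypothesis), applied pointwise in $t$ before integrating. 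As written, however, your odd-$k$ step does not go through, and the paper's frequency-split argument sidesteps the issue entirely.
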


\begin{proof}
Compared to the situation in the proof of Lemma \ref{lem:locencos}, we are missing a factor
$\xi$ now. However, this is exactly compensated by the additional weights and the 
result for the low-frequency case follows from the proof of Lemma \ref{lem:locencos}
by inspection.
For high frequencies the sine evolution here is better by a factor of $\xi^{-1}$ than the
exponential considered in
Lemma \ref{lem:locencos} and the stated result follows.
\end{proof}

\subsection{Bounds involving the scaling vector field}
Next, we consider the estimate involving the scaling vector field $S=t\partial_t+
x\partial_x$.
To this end, it is useful to introduce the operator $\tilde S$, defined by
\[ \mathcal F\left (\tilde S u(t,\cdot)\right )(\xi)=(t\partial_t-\xi\partial_\xi)
\mathcal F(u(t,\cdot))(\xi). \]
Furthermore, from Corollary \ref{cor:E} it follows that 
$S-\tilde S$ is bounded on $H^k(\R_+)$ for any $k\in \N_0$.
Note also that
\[ \tilde S e^{it\sqrt A}f=e^{it\sqrt A}\tilde S f. \]

\begin{lemma}
\label{lem:locencosvf}
Suppose the potential $V$ satisfies Hypothesis \ref{hyp:B}. 
Furthermore, set $Df(x):=xf'(x)$.
Then we have the bound
\[ \left \|\langle x\rangle^{-\frac12-}\partial_t^\ell
S^m e^{it\sqrt A}f(x)\right \|_{L^2_t(\R_+)H^k_x(\R_+)}\leq C_{k,\ell,m}
\sum_{j=0}^m \|D^j f\|_{H^{k+\ell}(\R_+)} \]
for all $k,\ell,m\in \N_0$ and $f\in \mc S(\R)$ even.
\end{lemma}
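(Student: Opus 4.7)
The plan is to iterate the base case $m=0$ (Lemma \ref{lem:locencos}) by commuting the $S^m$-action through $e^{it\sqrt A}$ using the distorted scaling $\tilde S$, engineered so that $\mc F(\tilde S u)(\xi) = (t\partial_t - D)\mc F u(\xi)$ with $D=\xi\partial_\xi$. Since $S = \tilde S + (E-1)$ for $E=\mc F^{-1}B\mc F$ (as one reads off from the defining identity \eqref{def:B} of $B$), I would first verify the key iterated identity
\[ \tilde S^j e^{it\sqrt A}f = e^{it\sqrt A}(D+1-E)^j f, \qquad j\in \N_0, \]
by a direct Fourier computation: $(t\partial_t - D)[e^{it\xi}\mc F f(\xi)] = -e^{it\xi}D\mc F f(\xi) = e^{it\xi}\mc F((D+1-E)f)(\xi)$, where the last step uses $-D\mc F f = \mc F Df + \mc F f - B\mc F f$. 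Thus each application of $\tilde S$ to $e^{it\sqrt A}f$ reduces, on the data side, to the operator $D+1-E$, and $j$ iterations pull out at most $j$ factors of $D$.

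Next I would expand $S^m = (\tilde S + (E-1))^m$ and push every $\tilde S$-factor to the right of every $(E-1)$-factor. The basic commutator $[\tilde S, E]$ corresponds, after $\mc F$-conjugation, to $-[D,B]$, which is bounded on weighted $L^2_{\tilde \rho}(\R_+)$ by Lemma \ref{lem:DB}; iterated commutators $\ad_{\tilde S}^k(E)$ give $(-1)^k\mc F^{-1}\ad_D^k(B)\mc F$, bounded by Lemma \ref{lem:adD}. Via the isomorphisms in Lemmas \ref{lem:derweis} and \ref{lem:mapF-1} these translate to bounded operators on every $H^s(\R_+)$. The outcome is a finite decomposition $S^m = \sum_{j=0}^m T_{m,j}\,\tilde S^j$ in which each $T_{m,j}$ is a $t$-independent bounded operator on every $H^s(\R_+)$. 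Combined with the iterated identity, this yields
\[ S^m e^{it\sqrt A}f = \sum_{j=0}^m T_{m,j}\,e^{it\sqrt A}(D+1-E)^j f. \]

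To extract the desired bound, I would further commute $T_{m,j}$ through $e^{it\sqrt A}$, writing $T_{m,j}e^{it\sqrt A} = e^{it\sqrt A}T_{m,j} + [T_{m,j}, e^{it\sqrt A}]$. The main term $e^{it\sqrt A}T_{m,j}(D+1-E)^j f$ is handled by Lemma \ref{lem:locencos}, giving the weighted $L^2_t H^k_x$-norm bounded by $\|T_{m,j}(D+1-E)^j f\|_{H^{k+\ell}}$; Corollary \ref{cor:E} together with the $H^s$-boundedness of the iterated commutators between $E$ and $D$ (moved across each other using Lemma \ref{lem:adD}) then reduces this to $\sum_{i=0}^m \|D^i f\|_{H^{k+\ell}}$, which is the right-hand side of the lemma.

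The main obstacle is controlling the commutator $[T_{m,j}, e^{it\sqrt A}]$ in the weighted local energy norm, since the shortcut through Lemma \ref{lem:commAB} is unavailable here (no nonresonance hypothesis is imposed). For the prototypical case $[E, e^{it\sqrt A}]g$ the Fourier-side kernel contains the factor $K(\xi,\eta)(e^{it\eta}-e^{it\xi})$; I would combine the mean-value identity
\[ \frac{e^{it\eta}-e^{it\xi}}{\xi-\eta} = -it\int_0^1 e^{it((1-s)\eta+s\xi)}\,ds \]
with the off-diagonal representation $(\xi^2-\eta^2)K(\xi,\eta) = 2F(\xi,\eta)\eta\rho(\eta^2)$ from \eqref{eq:Schwartzkernel} to cancel the apparent $1/(\xi-\eta)$ singularity (the diagonal part of $B$ is multiplication by a bounded function by Lemma \ref{lem:Bdiag}, which commutes with $e^{it\xi}$ and contributes nothing). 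This produces a nonsingular oscillatory integral with kernel proportional to $F(\xi,\eta)/(\xi+\eta)\cdot \eta\rho(\eta^2)$ and phase $e^{it((1-s)\eta+s\xi)}$, with $F$ controlled by Lemmas \ref{lem:boundsF} and \ref{lem:Fref}. The extra $t$-factor is absorbed by phase decay, and the resulting integral is estimated by the same low/high-frequency and oscillatory/non-oscillatory case distinctions (Lemma \ref{lem:osc}, Cauchy--Schwarz, Minkowski's integral inequality, Plancherel) already used in the proof of Lemma \ref{lem:locencos}. The general case $[T_{m,j}, e^{it\sqrt A}]$, involving iterated $\ad_D^k(B)$-type contributions, is treated by the same mean-value device applied to the corresponding kernel, which I expect to be the crux of the argument.
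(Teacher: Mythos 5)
Your algebraic decomposition is sound: the identity $S = \tilde S + (E-1)$, the iterated formula $\tilde S^j e^{it\sqrt A}f = e^{it\sqrt A}(D+1-E)^j f$, and the commutator package $\ad_D^k(B)$ from Lemmas \ref{lem:DB} and \ref{lem:adD} all correctly track the algebra and correctly yield $S^m = \sum_{j\le m}T_{m,j}\tilde S^j$ with $T_{m,j}$ bounded on every $H^s(\R_+)$. The gap is in the final step, where you commute $T_{m,j}$ through $e^{it\sqrt A}$ and attack $[E,e^{it\sqrt A}]$ via the mean-value identity. That identity produces a kernel of the form $-it\,\frac{2F(\xi,\eta)\eta\rho(\eta^2)}{\xi+\eta}\int_0^1 e^{it((1-s)\eta+s\xi)}\,ds$, and the extra factor of $t$ is fatal: the local-energy estimate in Lemma \ref{lem:locencos} already lives at the borderline of $L^2_t$-integrability (the low-frequency term $A_1$ only decays like $\langle t\rangle^{-\frac12-}$), and a factor of $t$ multiplies any such decay out of $L^2_t(\R_+)$. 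The asserted ``absorbed by phase decay'' does not happen, because the new phase frequency $(1-s)\eta+s\xi$ degenerates precisely in the same low-frequency region $\xi,\eta\to 0$ where the rest of the estimate is already saturated, so integration by parts cannot manufacture an extra $\langle t\rangle^{-1}$.

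The paper avoids the commutator entirely. Rather than expanding $E$ through its Fourier-side Schwartz kernel $K(\xi,\eta)$, it uses the mixed (physical-frequency) representation $\mc F^{-1}B\hat f(x)=\int_0^\infty\left[x\partial_x-\xi\partial_\xi-\tfrac{\xi\tilde\rho'(\xi)}{\tilde\rho(\xi)}\right]\phi(x,\xi^2)\hat f(\xi)\tilde\rho(\xi)\,d\xi$, so that $(S-\tilde S)e^{it\sqrt A}f$ becomes a distorted-Fourier integral against the kernel $\bigl[\xi\partial_\xi-x\partial_x+\tfrac{\xi\tilde\rho'(\xi)}{\tilde\rho(\xi)}\bigr]\phi(x,\xi^2)e^{it\xi}\mc Ff(\xi)$ plus $e^{it\sqrt A}f$; the crucial term $(\xi\partial_\xi-t\partial_t)e^{it\xi}$ is identically zero, and no power of $t$ ever appears. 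The remaining kernel satisfies exactly the same pointwise and symbol bounds as $\phi(x,\xi^2)$ (because $\xi\partial_\xi-x\partial_x$ annihilates $e^{\pm ix\xi}$ and $\tfrac{\xi\tilde\rho'}{\tilde\rho}$ is a bounded symbol), so the entire proof of Lemma \ref{lem:locencos} is transplanted verbatim. This is precisely the mechanism that makes the no-nonresonance version possible; your route would need a substitute for Lemma \ref{lem:commAB}, which is not available here. If you insist on your decomposition, the fix is to estimate $T_{m,j}e^{it\sqrt A}g$ directly through its mixed kernel rather than writing it as $e^{it\sqrt A}T_{m,j}g$ plus a commutator.
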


\begin{proof}
We start with $m=1$ and decompose as $S=\tilde S+(S-\tilde S)$.
Consequently, we infer
\begin{align*} 
Se^{it\sqrt A}f&=e^{it\sqrt A}\tilde S f+(S-\tilde S)e^{it\sqrt A}f \\
&=e^{it\sqrt A}Sf+e^{it\sqrt A}(\tilde S-S)f+(S-\tilde S)e^{it\sqrt A}f
\end{align*}
and Lemma \ref{lem:locencos} and Corollary \ref{cor:E} yield
\[ \left \|\langle x\rangle^{-\frac12-}\partial_t^\ell e^{it\sqrt A}Sf(x)
\right \|_{L^2_t(\R_+)H^k_x(\R_+)}\leq C_{k,\ell}\|Sf\|_{H^{k+\ell}(\R_+)}=\|Df\|_{H^{k+\ell}(\R_+)}
\]
as well as
\begin{align*}
\left \|\langle x\rangle^{-\frac12-}\partial_t^\ell e^{it\sqrt A}(\tilde S-S)f(x)
\right \|_{L^2_t(\R_+)H^k_x(\R_+)}&\leq C_{k,\ell}\|(\tilde S-S)f\|_{H^{k+\ell}(\R_+)} \\
&\leq C_{k,\ell}\|f\|_{H^{k+\ell}(\R_+)}.
\end{align*}
It remains to bound the term $(\tilde S-S)e^{it\sqrt A}f$.
To this end we use 
\begin{align*} 
\tilde S e^{it\sqrt A}f(x)&=\int_0^\infty \phi(x,\xi^2)\tilde \rho(\xi)(t\partial_t-\xi\partial_\xi)
[e^{it\xi}\mc F f(\xi)]d\xi \\
&=\int_0^\infty \phi(x,\xi^2)\tilde \rho(\xi)e^{it\xi}(-\xi\partial_\xi)\mc F f(\xi)d\xi \\
&=\int_0^\infty \left [\xi\partial_\xi+\tfrac{\xi\tilde \rho'(\xi)}{\tilde \rho(\xi)}\right ]
\phi(x,\xi^2)e^{it\xi}\mc F f(\xi)\tilde \rho(\xi)d\xi \\
&\quad +\int_0^\infty \phi(x,\xi^2)\xi\partial_\xi e^{it\xi}\mc F f(\xi)\tilde \rho(\xi)d\xi
+e^{it\sqrt A}f(x) 
\end{align*}
which yields the representation
\begin{align*} (\tilde S-S)e^{it\sqrt A}f(x)&=\int_0^\infty \left [
\xi\partial_\xi-x\partial_x+\tfrac{\xi\tilde \rho'(\xi)}{\tilde \rho(\xi)}\right ]\phi(x,\xi^2)e^{it\xi}
\mc F f(\xi)\tilde \rho(\xi)d\xi \\
&\quad +\int_0^\infty \phi(x,\xi^2)
\underbrace{(\xi\partial_\xi-t\partial_t)e^{it\xi}}_{=0}\mc F f(\xi)\tilde \rho(\xi)d\xi
+e^{it\sqrt A}f(x).
\end{align*}
Note that the function $[\xi\partial_\xi-x\partial_x
+\frac{\xi\tilde \rho'(\xi)}{\tilde \rho(\xi)}]\phi(x,\xi^2)$ satisfies the same 
bounds as $\phi(x,\xi^2)$ and thus, by copying verbatim
the proof of Lemma \ref{lem:locencos},
we find the desired
\[ \left \|\langle x\rangle^{-\frac12-}\partial_t^\ell (\tilde S-S)e^{it\sqrt A}f(x)
\right \|_{L^2_t(\R_+)H^k_x(\R_+)}\leq C_{k,\ell}\|f\|_{H^{k+\ell}(\R_+)}. \]

The general result for arbitrary $m\in \N_0$ follows inductively.
\end{proof}

We conclude the proof of Theorem \ref{thm:locen} by the following estimates for
the sine evolution.

\begin{lemma}
\label{lem:locensinvf}
Suppose the potential $V$ satisfies Hypothesis \ref{hyp:B}. Then,
with $Dg(x):=xg'(x)$, we have
the bounds
\begin{align*}
\left \|\langle x \rangle^{-1}
S^m\frac{\sin(t\sqrt A)}{\sqrt A}g(x)\right \|_{L^2_t(\R_+)L^2_x(\R_+)}
&\leq C_m \sum_{j=0}^m \left \|\langle \cdot \rangle^{\frac12+}D^j g \right\|_{L^2(\R_+)} \\
\left \|\langle x \rangle^{-1}S^m\frac{\sin(t\sqrt A)}{\sqrt A}g(x)\right \|_{L^2_t(\R_+)H^k_x(\R_+)}
&\leq C_{k,m} \sum_{j=0}^m \left \|\langle \cdot \rangle^{\frac12+}D^j g \right \|_{H^{k-1}(\R_+)}
\end{align*}
for all $k,m\in \N$ and all $g\in \mc S(\R)$ even.
\end{lemma}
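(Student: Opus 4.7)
The proof will be by induction on $m$, with the base case $m=0$ furnished by Lemma \ref{lem:locensin}. For the inductive step I will mimic the strategy of Lemma \ref{lem:locencosvf}: decompose $S=\tilde S+(E-1)$, where $\tilde S$ is defined by $\mc F(\tilde Su(t,\cdot))(\xi)=(t\partial_t-\xi\partial_\xi)\mc Fu(t,\cdot)(\xi)$. A short computation on the distorted Fourier side shows
\[ \tilde S\frac{\sin(t\sqrt A)}{\sqrt A}g=\frac{\sin(t\sqrt A)}{\sqrt A}(g+\tilde Sg),\qquad \tilde Sg=Dg+g-Eg, \]
which, combined with $S-\tilde S=E-1$, reproduces the operator identity of Lemma \ref{lem:S}. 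Iterating as in the discussion preceding Lemma \ref{lem:vfhigh} expresses $S^m\frac{\sin(t\sqrt A)}{\sqrt A}g$ as a finite linear combination of terms of the schematic form
\[ T_{\mb p}\frac{\sin(t\sqrt A)}{\sqrt A}D^jg,\qquad j+N+\textstyle\sum_i p_i\leq m, \]
where $T_{\mb p}$ is either the identity or a nested iterated commutator of the operators $E_{p_i}=\mc F^{-1}\ad_D^{p_i}(B)\mc F$ with the propagator.

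For $T_{\mb p}=\id$ the desired bound is precisely Lemma \ref{lem:locensin} applied with data $D^jg$, and these contributions account for the sum $\sum_{j=0}^m\|\langle\cdot\rangle^{1/2+}D^jg\|_{L^2}$ on the right-hand side. For a commutator term I will expand each nested commutator and, following the computation in the proof of Lemma \ref{lem:comminf}, write the result as an explicit double integral against $g$ in which one of the factors $\phi(x,\xi^2)$ or $\phi(y,\xi^2)$ is replaced by a modified kernel built from $\phi$ by repeatedly applying $x\partial_x-\xi\partial_\xi$ (respectively $\xi\partial_\xi-y\partial_y$) together with insertions of the symbol factors $\xi\tilde\rho'(\xi)/\tilde\rho(\xi)$ and their derivatives. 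The estimates underlying Lemmas \ref{lem:derF}, \ref{lem:adD}, and \ref{lem:commAEm} ensure that each such modified kernel obeys exactly the same pointwise and symbol-type bounds as $\phi(x,\xi^2)$.

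Since the proof of Lemma \ref{lem:locensin} used only these pointwise bounds on $\phi$, the bounds on $\tilde\rho$ from Lemma \ref{lem:rho}, and the non-stationary-phase estimate of Lemma \ref{lem:osc}, it applies verbatim to each of the modified integrals and yields the same $\|\langle\cdot\rangle^{1/2+}D^jg\|_{L^2}$-type estimate. The $H^k_x$-version is obtained by tracking an additional factor $\langle\xi\rangle^k$ in the symbol bounds, exactly as in the second assertion of Lemma \ref{lem:locensin}; the loss of one derivative (so $H^{k-1}$ rather than $H^k$ on the right-hand side) comes from the factor $\sin(t\xi)/\xi$, as in the base case.

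The main difficulty will not be analytic but combinatorial: organising the nested-commutator expansion of $S^m$ and verifying that every generated kernel indeed remains in the same symbol class as $\phi$. Once this bookkeeping has been carried out with the help of Lemmas \ref{lem:derF}, \ref{lem:adD}, and \ref{lem:commAEm}, no new analytic input beyond Lemma \ref{lem:locensin} is required.
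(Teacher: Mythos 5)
Your proposal is correct and follows essentially the same approach as the paper: decompose $S=\tilde S+(S-\tilde S)$, use the commutation of $\tilde S$ with the sine propagator, and reduce the commutator terms to explicit double integrals whose modified kernels (built by applying $x\partial_x-\xi\partial_\xi$ and insertions of $\xi\tilde\rho'(\xi)/\tilde\rho(\xi)$ to $\phi$) satisfy the same symbol bounds as $\phi$, so the argument of Lemma \ref{lem:locensin} applies verbatim. The induction on $m$ and the organization of the nested-commutator terms via Lemmas \ref{lem:derF}, \ref{lem:adD}, and \ref{lem:commAEm} likewise mirror the paper's treatment, so no new ideas beyond the paper's are introduced or needed.
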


\begin{proof}
We start with the case $m=1$.
As before, we write $S=\tilde S+(S-\tilde S)$ and note that
\begin{align*} 
\tilde S\frac{\sin(t\sqrt A)}{\sqrt A}g&=\frac{\sin(t\sqrt A)}{\sqrt A}\tilde S g
+\frac{\sin(t\sqrt A)}{\sqrt A}g \\
&=\frac{\sin(t\sqrt A)}{\sqrt A}Sg+\frac{\sin(\sqrt A)}{\sqrt A}(\tilde S-S)g
+\frac{\sin(t\sqrt A)}{\sqrt A}g.
\end{align*}
The terms
\[ \frac{\sin(t\sqrt A)}{\sqrt A}Sg,\quad (S-\tilde S)\frac{\sin(t\sqrt A)}{\sqrt A} \]
are treated as in the proof of Lemma \ref{lem:locencosvf}.
Consequently, it suffices to control
\[ \left \|\langle x\rangle^{-1}\partial_t^\ell \frac{\sin(t\sqrt A)}{\sqrt A}(S-\tilde S)g(x)
\right \|_{L^2_t(\R_+)H^k_x(\R_+)}. \] 
Note that
\begin{align*} 
\mc F\left [(S-\tilde S)g\right ](\xi)&=\int_0^\infty \phi(y,\xi^2)y\partial_y g(y)dy
+\xi\partial_\xi \int_0^\infty \phi(y,\xi^2)g(y)dy  \\
&=\int_0^\infty (\xi\partial_\xi-y\partial_y)\phi(y,\xi^2)g(y)dy+\mc Fg(\xi)
\end{align*}
and thus, we have the representation
\begin{align*} \frac{\sin(t\sqrt A)}{\sqrt A}(S-\tilde S)g(x)&=
\int_0^\infty \int_0^\infty \phi(x,\xi^2)(\xi\partial_\xi-y\partial_y)
\phi(y,\xi^2)\frac{\sin(t\xi)}{\xi} \\
&\quad \times g(y)dy \tilde \rho(\xi)d\xi 
 +\frac{\sin(t\sqrt A)}{\sqrt A}g(x). 
\end{align*}
Since $(\xi\partial_\xi-y\partial_y)\phi(y,\xi^2)$ satisfies the same bounds
as $\phi(y,\xi^2)$, we may apply the logic from the proof of Lemma \ref{lem:locensin}
to bound this term.

The statement for general $m\in \N_0$ is proved inductively.
\end{proof}

\subsection{The inhomogeneous case}
We conclude this section by proving similar estimates for the solution of the 
inhomogeneous problem 
\[
 \left \{ \begin{array}{l}\partial_t^2 u(t,x)-\partial_x^2 u(t,x)+V(x)u(t,x)=\tilde f(t,x),\quad t\geq 0 \\
u(0,\cdot)=\partial_t u(t,\cdot)|_{t=0}=0 \end{array} \right . 
\]
given by the Duhamel formula
\[ u(t,\cdot)=\int_0^t \frac{\sin((t-s)\sqrt A)}{\sqrt A}\tilde f(s,\cdot)ds \]
where $\tilde f(t,\cdot)=\nabla f(t,\cdot)$ or $\tilde f(t,\cdot)=\partial_t f(t,\cdot)$.
These bounds are used in \cite{DonKriSzeWon13}.

\begin{lemma}
\label{lem:loceninhnabla}
Suppose $V$ satisfies Hypothesis \ref{hyp:B} and let $f\in \mc S([0,\infty)\times \R)$.
Furthermore, assume that $f(t,\cdot)$ is odd for any $t\geq 0$ and suppose that
$\partial_t^\ell f(t,\cdot)|_{t=0}=0$ for all $\ell\in \N_0$. Then we have the bound
\begin{align*} 
&\left \|\langle x\rangle^{-\frac12-}\partial_t^\ell S^m \int_0^t \frac{\sin((t-s)\sqrt A)}{\sqrt A}\nabla f(s,\cdot)(x)ds
\right\|_{L^2_t(\R_+)H^k_x(\R_+)} \\
&\qquad \leq C_{k,\ell,m}\sum_{j=0}^m \left \|\partial_t^\ell 
S^j f(t,x)\right \|_{L^1_t(\R_+)H^k_x(\R_+)} 
\end{align*}
for all $k,\ell\in \N_0$.
\end{lemma}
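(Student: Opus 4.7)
My plan is to reduce, via Duhamel's formula and Minkowski's integral inequality, to a homogeneous local energy decay estimate for $\tfrac{\sin(\tau\sqrt A)}{\sqrt A}\nabla h$ with $h$ odd Schwartz, which in turn I deduce from Lemma~\ref{lem:locencos} through an integration by parts that uses the oddness of $h$ to trade the singular factor $1/\sqrt A$ against an extra derivative falling on $\phi(\cdot,\xi^2)$.

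First I would apply $S^m$ to
\[ u(t,\cdot) = \int_0^t \frac{\sin((t-s)\sqrt A)}{\sqrt A}\nabla f(s,\cdot)\,ds \]
by iterating Lemma~\ref{lem:SDuhamel}. Since $[S,\nabla]=-\nabla$, so that $S(\nabla g) = \nabla(Sg) - \nabla g$, this produces $S^m u$ as a finite linear combination (plus a term proportional to $S^{m-1}u$ handled by induction on $m$) of the form
\[ \sum_{j=0}^m \int_0^t \mathcal{K}_{p,q}((t-s)\sqrt A)\,\nabla(S^j f)(s,\cdot)\,ds, \]
where $\mathcal{K}_{p,q}(\cdot\sqrt A)$ is either $\tfrac{\sin(\cdot\sqrt A)}{\sqrt A}$ or a nested commutator with operators $E_r := \mc F^{-1}\ad_D^r(B)\mc F$ (cf.~Lemma~\ref{lem:adD}). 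Applying $\partial_t^\ell$ and integrating by parts in $s$ effectively replaces $f(s,\cdot)$ by $\partial_s^\ell f(s,\cdot)$, with boundary contributions killed by the hypothesis $\partial_t^\ell f(t,\cdot)|_{t=0}=0$. By Minkowski's inequality and translation invariance of the $L^2_t$-norm, the proof then reduces to the uniform-in-$s$ bound
\[ \left\|\langle x\rangle^{-\frac12-}\mathcal{K}_{p,q}(\tau\sqrt A)\nabla h\right\|_{L^2_\tau(\R_+) H^k_x(\R_+)} \lesssim \|h\|_{H^k(\R_+)} \]
for every odd $h\in \mc S(\R)$ (so $\nabla h$ is even, compatible with the Neumann setting).

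To prove this bound for $\mathcal{K}=\tfrac{\sin(\cdot\sqrt A)}{\sqrt A}$, I exploit the oddness of $h$: since $h(0)=0$ and $\phi(0,\xi^2)=-1$, integration by parts on the distorted Fourier side gives
\[ \mc F(\nabla h)(\xi) = -\int_0^\infty \partial_y\phi(y,\xi^2)\,h(y)\,dy, \]
so that the spectral representation becomes
\[ \tfrac{\sin(\tau\sqrt A)}{\sqrt A}\nabla h(x) = -\int_0^\infty\!\!\int_0^\infty \phi(x,\xi^2)\,\partial_y\phi(y,\xi^2)\,\tfrac{\sin(\tau\xi)}{\xi}\,h(y)\,\tilde\rho(\xi)\,dy\,d\xi. \]
In each oscillatory regime of Lemma~\ref{lem:phi}, differentiation in $y$ produces a factor of order $\xi$, i.e.\ $\partial_y\phi(y,\xi^2)=O(\xi)\phi(y,\xi^2)$ in a symbol sense, so the offending $1/\xi$ is cancelled and the resulting integrand has exactly the structure of the cosine/exponential kernel analyzed in Lemma~\ref{lem:locencos}. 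In the remaining non-oscillatory window $\xi\le 1$, $y\xi\le 1$, the bounds for $\partial_y\phi$ come from differentiating the expansions in Lemmas~\ref{lem:en0} and \ref{lem:ensm}, and the $\xi$-integration near $\xi=0$ is controlled by Lemma~\ref{lem:rho}. Consequently, the case-by-case decomposition in the proof of Lemma~\ref{lem:locencos} can be copied verbatim with $\partial_y\phi(y,\xi^2)/\xi$ in place of $\phi(y,\xi^2)$. The commutator kernels $\mathcal{K}_{p,q}$ with $E_r$ are handled identically, with the derivative absorbed into the $\phi(y,\xi^2)$ factor inside $F(\xi,\eta)$ of Lemma~\ref{lem:F}, and the resulting modified $F$ satisfies the same pointwise and symbol-type bounds as $F$ itself (Lemmas~\ref{lem:boundsF}, \ref{lem:Fref}).

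The main obstacle will be this final verification: confirming that $\xi^{-1}\partial_y\phi(y,\xi^2)$ and its derivatives in $\xi$ really do obey the symbol-type bounds required by each case of Lemma~\ref{lem:locencos}, uniformly across the transition between the small-energy inner regime (Lemma~\ref{lem:ensm}) and the Jost regime (Lemmas~\ref{lem:Jostsm}, \ref{lem:Jostlg}), together with the parallel verification for the commutator kernels once a derivative has landed on the function $F$ from Lemma~\ref{lem:F}.
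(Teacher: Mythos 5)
Your overall plan is the same as the paper's: reduce to $\ell=0$ via integration by parts in $s$, move $\nabla$ onto $\phi(y,\xi^2)$ via integration by parts in $y$ using $f(s,0)=0$, and then run the regime decomposition from Lemma~\ref{lem:locencos} on the resulting kernel. Two places where your reasoning is imprecise or would need to be reworked:

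\textbf{The cancellation mechanism in the inner regime.} You assert that $\partial_y\phi(y,\xi^2)=O(\xi)\phi(y,\xi^2)$ and that the proof of Lemma~\ref{lem:locencos} can be copied ``verbatim with $\partial_y\phi(y,\xi^2)/\xi$ in place of $\phi(y,\xi^2)$.'' This is correct for the oscillatory pieces (where $\phi\sim a(\xi)e^{i\xi y}$ so $\partial_y\sim i\xi$), but the statement fails in the non-oscillatory window $\xi\leq 1$, $y\xi\lesssim 1$. There, by Lemmas~\ref{lem:en0} and \ref{lem:ensm}, $\phi(y,\xi^2)=O(\langle y\rangle^{\frac12+})$ while $\partial_y\phi(y,\xi^2)=O(\langle y\rangle^{-\frac12+})$; the ratio is $O(\langle y\rangle^{-1})$, \emph{not} $O(\xi)$. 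In fact the $1/\xi$ from $\sin(t\xi)/\xi$ is already cancelled by the factor $\xi$ in $\tilde\rho(\xi)=2\xi\rho(\xi^2)$ independently of the divergence structure; what the integration by parts actually buys is the replacement of the \emph{growing} factor $O(\langle y\rangle^{\frac12}\xi^0)$ (Eq.~\eqref{eq:prooflocencosA1}) by the \emph{decaying} $O(\langle y\rangle^{-\frac12}\xi^0)$, which changes the Cauchy--Schwarz cost in $y\lesssim\xi^{-1}$ from $\xi^{-1}$ to a harmless $|\log\xi|^{\frac12}=O(\xi^{0-})$. The end result after Lemma~\ref{lem:osc} and interpolation is the same up to an $\epsilon$, so the conclusion stands, but the argument is not a verbatim substitution and the ``$\partial_y\phi=O(\xi)\phi$'' heuristic as stated would lead you astray in the inner regime.

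\textbf{Handling the commutator terms for $m\geq 1$.} Your claim that the derivative gets ``absorbed into the $\phi(y,\xi^2)$ factor inside $F(\xi,\eta)$'' does not parse: $F(\xi,\eta)=\int_0^\infty \phi(x,\xi^2)\phi(x,\eta^2)U(x)\,dx$ is an integral in $x$ with no $y$-dependence, so $\partial_y$ has nowhere to land inside it. The paper instead decomposes $S=\tilde S+(S-\tilde S)$ and uses the \emph{physical-space} spectral representations of $(S-\tilde S)$ applied to the evolution operators (with modified factors $[\xi\partial_\xi-x\partial_x+\xi\tilde\rho'/\tilde\rho]\phi(x,\xi^2)$ and $(\xi\partial_\xi-y\partial_y)\phi(y,\xi^2)$ that obey the same pointwise/symbol bounds as $\phi$), exactly as in Lemmas~\ref{lem:locencosvf} and \ref{lem:locensinvf}; the $\nabla$ integration by parts then composes cleanly with these. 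Routing through the $F(\xi,\eta)$-based operators $E_r$ of Lemma~\ref{lem:adD} is not obviously wrong, but you would need the explicit kernel representations of $E\tfrac{\sin(t\sqrt A)}{\sqrt A}$ and $\tfrac{\sin(t\sqrt A)}{\sqrt A}E$ from the proof of Lemma~\ref{lem:comminf}, not the frequency-side function $F$.

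Apart from these two points your reduction via Minkowski and translation invariance, and the observation that the divergence-form sine obeys an estimate of the same strength as Lemma~\ref{lem:locencos}, are both correct and are the essential content of the paper's argument.
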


\begin{proof}
First of all, we note that
\[ \partial_t \int_0^t \frac{\sin((t-s)\sqrt A)}{\sqrt A}\nabla f(s,\cdot)ds
=\int_0^t \frac{\sin((t-s)\sqrt A)}{\sqrt A}\nabla \partial_s f(s,\cdot)ds \]
as follows by means of an integration by parts and the assumption $\nabla f(0,\cdot)=0$.
In general, we obtain from $\partial_t^\ell f(t,\cdot)|_{t=0}=0$ the identity
\[ \partial_t^\ell \int_0^t \frac{\sin((t-s)\sqrt A)}{\sqrt A}\nabla f(s,\cdot)ds
=\int_0^t \frac{\sin((t-s)\sqrt A)}{\sqrt A}\nabla \partial_s^\ell f(s,\cdot)ds \]
and it suffices to consider the case $\ell=0$.

As in the proof of Lemma \ref{lem:locencos} we use the explicit representation
\begin{align*}
\int_0^t &\frac{\sin((t-s)\sqrt A)}{\sqrt A}\nabla f(s,\cdot)(x)ds \\
&=\int_0^t \int_0^\infty \int_0^\infty \phi(x,\xi^2)\tilde \rho(\xi)\frac{\sin(t\xi)}{\xi}
\mc F\left (\nabla f(s,\cdot)\right )(\xi)d\xi ds \\
&=-\sum_{j=1}^4 A_j(t,x)-B(t,x)
\end{align*}
where
\begin{align*}
A_1(t,x)&:=\int_0^t \int_0^\infty \int_0^\infty \chi(\xi)\chi(x\xi)\chi(y\xi)
\phi(x,\xi^2)\partial_y \phi(y,\xi^2)\tilde \rho(\xi)\\
&\quad \times \frac{\sin(t\xi)}{\xi}f(s,y)dy d\xi ds \\
A_2(t,x)&:=\int_0^t \int_0^\infty \int_0^\infty \chi(\xi)[1-\chi(x\xi)]\chi(y\xi)
\phi(x,\xi^2)\partial_y \phi(y,\xi^2)\tilde \rho(\xi)\\
&\quad \times \frac{\sin(t\xi)}{\xi}f(s,y)dy d\xi ds \\
A_3(t,x)&:=\int_0^t \int_0^\infty \int_0^\infty \chi(\xi)\chi(x\xi)[1-\chi(y\xi)]
\phi(x,\xi^2)\partial_y \phi(y,\xi^2)\tilde \rho(\xi)\\
&\quad \times \frac{\sin(t\xi)}{\xi}f(s,y)dy d\xi ds \\
A_4(t,x)&:=\int_0^t \int_0^\infty \int_0^\infty \chi(\xi)[1-\chi(x\xi)][1-\chi(y\xi)]
\phi(x,\xi^2)\partial_y \phi(y,\xi^2)\tilde \rho(\xi)\\
&\quad \times \frac{\sin(t\xi)}{\xi}f(s,y)dy d\xi ds
\end{align*}
and
\begin{align*} B(t,x)&:=\int_0^t \int_0^\infty \int_0^\infty [1-\chi(\xi)]
\phi(x,\xi^2)\partial_y \phi(y,\xi^2)\tilde \rho(\xi)\\
&\quad \times \frac{\sin(t\xi)}{\xi}f(s,y)dy d\xi ds.
\end{align*}
Note that we have performed one integration by parts with respect to $y$ in order to move
the derivative from $f$ to $\phi$. The boundary term vanishes since $f(t,0)=0$ for
all $t\geq 0$ by assumption.
For the low-frequency components $A_j$ it suffices to consider $k=0$.
We start with $A_1$.
By Lemmas \ref{lem:rho}, \ref{lem:phi}, and Corollary \ref{cor:phi} we see that it suffices
to bound
\begin{align*} 
\tilde A_1(t,x):=&\int_0^t \int_0^\infty \chi(\xi)\chi(x\xi)e^{i\xi(t-s)}
O(\langle x\rangle^\frac12 \xi^0) \\
&\times \int_0^\infty \chi(y\xi)O(\langle y\rangle^{-\frac12}\xi^0)f(s,y)dy d\xi ds
\end{align*}
and by Cauchy-Schwarz we infer
\[ \tilde A_1(t,x)=\int_0^t \int_0^\infty \chi(\xi)\chi(x\xi)e^{i\xi(t-s)}
O(\langle x\rangle^\frac12 \xi^{0-})\|f(s,\cdot)\|_{L^2(\R_+)}d\xi ds \]
with an $O$-term that behaves like a symbol.
By putting absolute values inside we infer
\[ \tilde A_1(t,x)=\int_0^t O_\C(t^0 s^0 \langle x\rangle^{-\frac12+})\|f(s,\cdot)\|_{L^2(\R_+)}
ds. \]
On the other hand, Lemma \ref{lem:osc} yields
\[ \tilde A_1(t,x)=\int_0^t O_\C(\langle t-s \rangle^{-1+}\langle x\rangle^\frac12)
\|f(s,\cdot)\|_{L^2(\R_+)}ds \]
and by interpolation we obtain
\[ \tilde A_1(t,x)=\int_0^t O_\C(\langle t-s \rangle^{-\frac12-}\langle x\rangle^{0+})
\|f(s,\cdot)\|_{L^2(\R_+)}ds. \]
Thus, Minkowski's inequality yields
\begin{align*}
\|\tilde A_1(t,x)\|_{L^2_t(\R_+)}&\lesssim \langle x\rangle^{0+}\left \|\int_0^\infty \langle t-s\rangle^{-\frac12-}
\|f(s,\cdot)\|_{L^2(\R_+)} ds \right \|_{L^2_t(\R)} \\
&\leq \langle x\rangle^{0+}\int_0^\infty \left \|\langle t-s\rangle^{-\frac12-}\|f(s,\cdot)\|_{L^2(\R_+)}
\right \|_{L^2_t(\R)}ds \\
&\lesssim \langle x\rangle^{0+}\|f(s,x)\|_{L^1_s(\R_+)L^2_x(\R_+)}
\end{align*}
This implies
\[ \left \|\langle x\rangle^{-\frac12-}\tilde A_1(t,x)\right \|_{L^2_t(\R_+)L^2_x(\R_+)}\lesssim
\|f(t,x)\|_{L^1_t(\R_+)L^2_x(\R_+)}. \]
The other terms are handled analogously, along the lines of the proof of Lemma \ref{lem:locencos}.
This settles the case $m=0$.

For $m=1$ one decomposes $S=\tilde S+(S-\tilde S)$ and uses the fact that the operator
$\tilde S$ essentially commutes with the Duhamel integral, cf.~Lemma \ref{lem:SDuhamel}.
One then proceeds analogously to the proof of Lemma \ref{lem:locencosvf}.
The general case $m\geq 1$ is handled inductively.
\end{proof}

\begin{remark}
\label{rem:loceninh}
The same bounds hold if one replaces $\nabla f(s,\cdot)$ by $\partial_s f(s,\cdot)$
(for $f(s,\cdot)$ even).
In this case one first performs an integration by parts with respect to $s$ in order to cancel
the singular $A^{-\frac12}$.
\end{remark}

\section{Bounds for data in divergence form}

\noindent Recall that we had to require $L^1$-bounds on the initial datum $g$ in order to be able
to replace the nonlocal operator $\sqrt A$ by the ordinary derivative $\nabla$, see Lemma \ref{lem:vfodhigh}.
It turns out that this is not necessary if the data are in divergence form, i.e., if $g=\tilde g'$.
In this section we derive the corresponding estimates for this case.
Throughout this section we assume that the potential $V$ satisfies Hypothesis \ref{hyp:B} and, in 
addition, that the coefficient $a_1$ in Lemma \ref{lem:en0} is nonzero (i.e, we restrict ourselves
to the nonresonant case).

\subsection{Bounds for the sine evolution}

It is a general feature of wave equations that the $L^2$-norm of the sine evolution
may grow like $t$ as $t\to \infty$.
This is clearly a low-frequency effect since for high frequencies we immediately obtain
\begin{align*} 
\left \|[1-\chi(A)]\frac{\sin(t\sqrt{A})}{\sqrt{A}}g\right \|_{L^2(\R_+)}&\lesssim
\left \|\frac{\sin(t|\cdot|)}{|\cdot|}\mathcal F g\right \|_{L^2_{\tilde \rho}(1,\infty)}
\lesssim \|\mc{F} g\|_{L^2_{\tilde \rho}(\R_+)}.
\end{align*}
We would like to bound the operator $\nabla \frac{\sin(t\sqrt{A})}{\sqrt{A}}$
in $L^2$ (and, more generally, $H^k$).
In other words, we would like to show that the operator $\nabla A^{-\frac12}$ is bounded
on $L^2$.
This raises problems at small frequencies.
However, it turns out that if the data are in divergence of form, i.e., if one considers
\[ \nabla \frac{\sin(t\sqrt{A})}{\sqrt{A}}(g'), \]
there exists a sufficiently good substitute.
The key observation in this respect is the following result (which is by no means sharp,
but sufficient for our purposes).

\begin{lemma}
\label{lem:lowfreq}
For all $f\in \mathcal S(\R)$ odd and $g\in \mathcal S(\R)$ even we have the bounds
\begin{align*}
\left \|\chi(A)A^{-\frac14}\nabla f\right \|_{L^2(\R_+)}&\lesssim \|f\|_{L^2(\R_+)} \\
\left \|\nabla \chi(A)A^{-\frac14} g\right \|_{L^2(\R_+)}&\lesssim \|g\|_{L^2(\R_+)}.
\end{align*}
\end{lemma}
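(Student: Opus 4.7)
My plan is to establish the second estimate (for even $g$) first and deduce the first from it by duality. Observe that for $f\in\mc S(\R)$ odd we have $f(0)=0$ and $\nabla f$ is even; since $\chi(A)$ and $A^{-1/4}$ preserve evenness via the distorted Fourier calculus with Neumann condition, $\chi(A)A^{-1/4}\nabla f$ is even, so its $L^2(\R_+)$-norm is realized by pairings with even test functions $g$. An integration by parts on $\R_+$ (with no boundary contribution, using $f(0)=0$ and the decay of $h:=\chi(A)A^{-1/4}g$ at infinity, which follows from the inverse Fourier representation and the asymptotics in Lemma \ref{lem:phi}) yields
\[ (\chi(A)A^{-1/4}\nabla f,\,g)_{L^2(\R_+)}=(\nabla f,\,\chi(A)A^{-1/4}g)_{L^2(\R_+)}=-(f,\,\nabla\chi(A)A^{-1/4}g)_{L^2(\R_+)}, \]
so the first bound is a consequence of the second.

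For the second bound I would write the operator as an integral kernel,
\[ \nabla\chi(A)A^{-1/4}g(x)=\int_0^\infty K(x,y)g(y)\,dy,\qquad K(x,y)=\int_0^\infty \chi(\xi^2)\xi^{-1/2}\partial_x\phi(x,\xi^2)\phi(y,\xi^2)\tilde\rho(\xi)\,d\xi, \]
and follow the template of Lemma \ref{lem:sinL1Linf}: split the $\xi$-integral into the four regions determined by $x\xi\lessgtr 1$ and $y\xi\lessgtr 1$, and in each insert the appropriate asymptotics of $\phi(y,\xi^2)$ and $\partial_x\phi(x,\xi^2)$ obtained from (differentiating) Lemma \ref{lem:phi}. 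In the nonoscillatory regions, after putting absolute values inside, one should obtain a Schur-type pointwise bound of the form $|K(x,y)|\lesssim\min\{x^{-1+\delta}y^{-\delta},x^{-\delta}y^{-1+\delta}\}$ for some $\delta\in[0,\tfrac12)$, so that Lemma \ref{lem:L2bound} yields $L^2$-boundedness. In the oscillatory regions ($x\xi$ or $y\xi$ large), integration by parts in $\xi$ together with Lemma \ref{lem:osc} produces the requisite additional decay.

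The main obstacle is the genuine low-frequency regime. Both $\phi(\cdot,\xi^2)$ and $\partial_x\phi(\cdot,\xi^2)$ inherit logarithmic growth from the zero-energy fundamental system of Lemma \ref{lem:en0} (whose leading behavior is governed by $a_1 x^{1/2}\log x$), producing a factor of $(\log\xi)^2$ in the product $\phi(x,\xi^2)\phi(y,\xi^2)$ at small $\xi$. Combined with the singular symbol $\xi^{-1/2}$ coming from $A^{-1/4}$, a naive $L^1_\xi$ estimate on $K(x,y)$ would diverge logarithmically at the origin. The crux of the argument is that the nonresonance hypothesis $a_1\neq 0$ intervenes precisely here: by Lemma \ref{lem:rho} one has $\tilde\rho(\xi)\lesssim \xi/(\log^2\xi+1)$ for small $\xi$, and the $\log^{-2}\xi$ factor in the spectral measure cancels exactly the $(\log\xi)^2$ accumulated from the Jost-Titchmarsh pair, reducing the remaining integral to a convergent $\int_0^1\xi^{1/2}\,d\xi$-type expression. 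Tracking these logarithmic cancellations throughout every regime, and confirming that the resulting pointwise kernel bound falls strictly inside the range $\delta<\tfrac12$ demanded by Lemma \ref{lem:L2bound}, is the chief technical task.
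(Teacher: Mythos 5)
Your duality reduction of the first estimate to the second is a legitimate shortcut that the paper does not use (the paper proves both bounds directly and in parallel), and the formula $(\chi(A)A^{-1/4}\nabla f,g)=-(f,\nabla\chi(A)A^{-1/4}g)$ is indeed justified by self-adjointness of $\chi(A)A^{-1/4}$ and $f(0)=0$. However, the proposal has two substantive problems.

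First, the pointwise-kernel/Schur approach breaks down in the doubly oscillatory regime $x\xi\gtrsim 1$, $y\xi\gtrsim 1$ with $x\simeq y$. In that regime one has $|\partial_x\phi(x,\xi^2)|\lesssim \xi^{\frac12}\langle\log\xi\rangle$, $|\phi(y,\xi^2)|\lesssim\xi^{-\frac12}\langle\log\xi\rangle$, and $\tilde\rho(\xi)\lesssim\xi/\langle\log\xi\rangle^2$, so the integrand of $K(x,y)$ is of size $\xi^{\frac12}$ and the only decay in $x,y$ comes from the phase $e^{i\xi(\pm x\pm y)}$. When the signs conspire to give the phase $e^{i\xi(x-y)}$ and $x\simeq y$, there is no oscillation to exploit and $K(x,y)$ is $O(1)$; in fact the leading term of the kernel behaves like a (cut-off) convolution kernel $\simeq\int_0^1\xi^{\frac12}e^{i\xi(x-y)}d\xi\simeq\langle x-y\rangle^{-\frac32}$. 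Such a kernel is \emph{not} dominated by $\min\{x^{-1+\delta}y^{-\delta},x^{-\delta}y^{-1+\delta}\}$ for any $\delta<\tfrac12$ (take $x=y\to\infty$), so Lemma \ref{lem:L2bound} cannot close the estimate. The operator is still $L^2$-bounded, but by Young/Plancherel, not by a pointwise Schur bound, and that is precisely what the paper does: it isolates the leading oscillatory contribution $\Re\int_0^\infty e^{ix\xi}f(x)\,dx$ and estimates it by ordinary one-dimensional Plancherel, using the Schur-type reasoning only for the nonoscillatory and remainder terms. Your plan as written has a genuine gap here.

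Second, the claim that the nonresonance condition $a_1\neq 0$ is ``the crux'' of the low-frequency estimate is a misdiagnosis. The only property of the spectral measure used in this lemma is $\tilde\rho(\xi)=2\xi\rho(\xi^2)\lesssim\xi$ for $\xi\lesssim 1$, which holds in both cases: if $a_1\neq 0$ then $\rho(\lambda)\simeq\log^{-2}\lambda$ cancels the $\log^2\xi$ from $\phi(x,\xi^2)\phi(y,\xi^2)$, while if $a_1=0$ then the $\log$ factors in $\phi$ are absent and $\rho(\lambda)\simeq 1$; either way $\phi(x,\xi^2)\phi(y,\xi^2)\rho(\xi^2)\lesssim\langle x\rangle^{\frac12}\langle y\rangle^{\frac12}$ (this is exactly the observation underlying Lemma \ref{lem:sinL1Linf}, which imposes no nonresonance hypothesis). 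Section 8 does impose $a_1\neq 0$ globally, but it is not used in the proof of Lemma \ref{lem:lowfreq}; it enters elsewhere (e.g.\ through Lemma \ref{lem:commAB}).
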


\begin{proof}
On the Fourier side, $\chi(A)A^{-\frac14}\nabla f$ reads
$\chi(\xi^2)\xi^{-\frac12}\mc{F}(f')(\xi)$
and by Plancherel it suffices to control the $L^2_{\tilde \rho}(\R_+)$-norm of this expression
in terms
of $\|f\|_{L^2(\R_+)}$.
An integration by parts (using $f(0)=0$) yields
\[ \mc{F}(f')(\xi)=-\int_0^\infty \partial_x \phi(x,\xi^2)f(x)dx. \]
Now we distinguish between $x\xi\lesssim 1$ and $x\xi\gtrsim 1$.
In the former case we use the bound
$|\partial_x \phi(x,\xi^2)|\lesssim \langle x\rangle^{-\frac12+}\lesssim 
\langle x\rangle^{-\frac12-}\xi^{0-}$ and Cauchy-Schwarz to obtain
\[ \chi(\xi^2)\int_0^\infty |\chi(x\xi)\partial_x \phi(x,\xi^2)f(x)|dx 
\lesssim \chi(\xi^2)\xi^{0-}\|f\|_{L^2(\R_+)}. \]
In the latter case $x\xi\gtrsim 1$ we infer
\[ \partial_x \phi(x,\xi^2)=O(\xi^{\frac12-})\Re e^{ix\xi}[1+O_\C((x\xi)^{-1})] \]
which yields
\begin{align*}
\chi(\xi^2)&\int_0^\infty [1-\chi(x\xi)]\partial_x \phi(x,\xi^2)f(x)dx \\
&=\chi(\xi^2)O(\xi^{\frac12-})\Re \int_0^\infty e^{ix\xi}f(x)dx 
+\chi(\xi^2)\int_0^{\xi^{-1}}O(x^0\xi^{\frac12-})f(x)dx \\
&\quad +\chi(\xi^2)\int_{\xi^{-1}}^\infty O(x^{-1}\xi^{-\frac12-})f(x)dx
\end{align*}
and the last two terms are bounded by $\chi(\xi^2)\xi^{0-}\|f\|_{L^2(\R_+)}$.
In summary, we infer
\[ \chi(\xi^2)\xi^{-\frac12}\mc{F}(f')(\xi)=\chi(\xi^2)\left [
O(\xi^{0-})F(\xi)+O(\xi^{-\frac12-})\|f\|_{L^2(\R_+)}\right ]
\]
where $F(\xi):=\Re \int_0^\infty e^{ix\xi}f(x)dx$.
Note that by standard, one-dimensional Plancherel we infer $\|F\|_{L^2(\R_+)}\lesssim \|f\|_{L^2(\R_+)}$
and thus, we obtain
\begin{align*}
\left \|\chi(A)A^{-\frac14}\nabla f \right \|_{L^2(\R_+)}&=
\left \|\chi(|\cdot|^2)|\cdot|^{-\frac12}\mc{F}(f')\right \|_{L^2_{\tilde \rho}(\R_+)} \\
&\lesssim \left \|\tilde \rho^\frac12 |\cdot|^{0-}F \right \|_{L^2(0,1)}
+\left \|\tilde \rho^\frac12 |\cdot|^{-\frac12-}\right \|_{L^2(0,1)} \| f\|_{L^2(\R_+)} \\
&\lesssim \|f\|_{L^2(\R_+)}
\end{align*}
since $|\tilde \rho(\xi)|=2|\xi\rho(\xi^2)|\lesssim \xi$ for $\xi\lesssim 1$.

The second bound is obtained similarly by noting that
\[ \nabla \chi(A)A^{-\frac14}g(x)=\int_0^\infty 
\partial_x \phi(x,\xi^2)\chi(\xi^2)\xi^{-\frac12}\mc{F} g(\xi)\tilde \rho(\xi)d\xi. \]
\end{proof}

\begin{lemma}
\label{lem:div}
For all odd $g\in \mathcal S(\R)$ we have the estimate
\[ \left \|\partial_t^\ell \nabla \frac{\sin(t\sqrt{A})}
{\sqrt{A}}(g')\right \|_{H^k(\R_+)}
\leq C_{k,\ell} \|g\|_{H^{1+k+\ell}(\R_+)} \]
for all $t\geq 0$ and $k,\ell \in \N_0$.
\end{lemma}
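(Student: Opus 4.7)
The plan is to exploit the divergence structure $g'=\nabla g$ (with $g$ odd) together with Lemma \ref{lem:lowfreq} to absorb the low-frequency singularity of $A^{-1/2}$ that prevents a naive energy argument. I will treat the case $\ell\geq 1$ by a simple reduction and concentrate the substantive work on $\ell=0$.

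For $\ell\geq 1$, I observe that $\partial_t[\sin(t\sqrt A)/\sqrt A]=\cos(t\sqrt A)$ and iteration yields $\partial_t^\ell[\sin(t\sqrt A)/\sqrt A]=\pm \sqrt A^{\ell-1}\sin(t\sqrt A)$ or $\pm\sqrt A^{\ell-1}\cos(t\sqrt A)$, so the singular $A^{-1/2}$ is consumed. The resulting bound on $\|\nabla \sqrt A^{\ell-1}(\sin\text{ or }\cos)(t\sqrt A)g'\|_{H^k(\R_+)}$ then follows from Lemma \ref{lem:energy} together with the elementary inequality $\|\nabla w\|_{L^2(\R_+)}^2\leq \|\sqrt A w\|_{L^2(\R_+)}^2+\|V\|_{L^\infty(\R_+)}\|w\|_{L^2(\R_+)}^2$, yielding the stated right-hand side $\|g\|_{H^{k+\ell+1}(\R_+)}$.

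For $\ell=0$ I first decompose via the spectral cutoff $\chi(A)$. The high-frequency piece $\nabla(1-\chi(A))A^{-1/2}\sin(t\sqrt A)g'$ is elementary: $(1-\chi(A))A^{-1/2}$ is a bounded function of $A$, and the same energy identity as above bounds it by $\|g'\|_{L^2(\R_+)}\leq \|g\|_{H^1(\R_+)}$.

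The heart of the proof is the low-frequency piece $\nabla\chi(A)A^{-1/2}\sin(t\sqrt A)\nabla g$. I write $\chi=\chi_1\chi_2$ with $\chi_2\equiv 1$ on $\supp\chi_1$; by functional calculus $\chi(A)A^{-1/2}=\chi_1(A)A^{-1/4}\cdot\chi_2(A)A^{-1/4}$, and since $\chi_2(A)A^{-1/4}$ commutes with $\sin(t\sqrt A)$, I obtain the factorization
\[
\nabla\chi(A)A^{-1/2}\sin(t\sqrt A)\nabla g=T_1\sin(t\sqrt A)T_2 g,
\]
where $T_1:=\nabla\chi_1(A)A^{-1/4}$ and $T_2:=\chi_2(A)A^{-1/4}\nabla$. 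Since $g$ is odd, the first half of Lemma \ref{lem:lowfreq} gives $\|T_2 g\|_{L^2(\R_+)}\lesssim\|g\|_{L^2(\R_+)}$; since $\sin(t\sqrt A)$ is an $L^2$-isometry and $\sin(t\sqrt A)T_2 g$ is even, the second half yields $\|T_1\sin(t\sqrt A)T_2 g\|_{L^2(\R_+)}\lesssim\|g\|_{L^2(\R_+)}\leq \|g\|_{H^1(\R_+)}$. This settles the base case $k=\ell=0$.

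For higher $k$, I would propagate the estimate through the factorization. Using Lemma \ref{lem:derweis}, $\|f\|_{H^k(\R_+)}\simeq \|\langle\xi\rangle^k\mc F f\|_{L^2_{\tilde\rho}(\R_+)}$ for suitable even $f$, and the compactly supported cutoffs $\chi_j$ absorb all growth of $\langle\xi\rangle^k$, so that both $T_1$ and $T_2$ remain bounded as $H^k(\R_+)\to H^k(\R_+)$ with constants depending only on $k$. Composing with the $H^k$-isometry $\sin(t\sqrt A)$ and combining with the high-frequency piece (analyzed directly on the Fourier side via Lemma \ref{lem:derweis}) closes the argument with the stated bound $\|g\|_{H^{1+k+\ell}(\R_+)}$.

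The main obstacle is precisely that $\|v\|_{L^2(\R_+)}$, where $v=A^{-1/2}\sin(t\sqrt A)g'$, grows logarithmically with $t$; consequently any argument that first bounds $v$ in $L^2(\R_+)$ (for example via $\|\nabla v\|_{L^2}^2\leq \|\sqrt A v\|_{L^2}^2+\|V\|_\infty\|v\|_{L^2}^2$) cannot give a uniform-in-$t$ estimate. Lemma \ref{lem:lowfreq} is the key ingredient that bypasses this obstruction by trading the unboundedness of $\chi(A)A^{-1/4}$ against the derivative $\nabla$ acting on an odd function.
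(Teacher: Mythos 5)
Your proof takes essentially the same approach as the paper: reduce $\ell\geq 1$ to the energy bounds, split into high and low frequencies, and for the low-frequency piece factor $\chi(A)A^{-1/2}=\chi_1(A)A^{-1/4}\cdot\chi_2(A)A^{-1/4}$ and commute $\sin(t\sqrt A)$ through, then apply the two halves of Lemma \ref{lem:lowfreq}. The paper writes the factorization $\nabla\chi(A)A^{-1/4}\sin(t\sqrt A)A^{-1/4}\nabla g$ with a single cutoff (relying implicitly on the fact that everything but $\nabla$ commutes); your $\chi=\chi_1\chi_2$ device makes this explicit, which is cleaner. For the high-frequency piece the paper works directly on the distorted Fourier side via Lemma \ref{lem:derweis}, whereas you use the energy identity $\|\nabla w\|_{L^2}^2\leq\|\sqrt A w\|_{L^2}^2+\|V\|_\infty\|w\|_{L^2}^2$ for $k=0$ and defer to Lemma \ref{lem:derweis} for higher $k$; the two are interchangeable. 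One slip: $\sin(t\sqrt A)$ is a contraction, not an $L^2$-isometry (only $e^{it\sqrt A}$ is unitary), but boundedness is all that is used, so the argument is unaffected.
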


\begin{proof}
For $\ell\geq 1$ the claim follows from Theorem \ref{thm:energy}.
Thus, it suffices to consider the case $\ell=0$.
We distinguish between high and low frequencies. In the high frequency case we immediately
infer
\begin{align*} 
\left \|
\nabla [1-\chi(A)]\frac{\sin(t\sqrt{A})}{\sqrt{A}}\nabla g 
\right \|_{H^k(\R_+)}&\lesssim \left \|\langle \cdot \rangle^{k+1}\frac{\sin(t|\cdot|)}{|\cdot|}
\mc{F}(g') \right \|_{L^2_{\tilde \rho}(1,\infty)} \\
&\lesssim \|g\|_{H^{1+k}(\R_+)}.
\end{align*}
For low frequencies, on the other hand, it suffices to consider the case $k=0$
and by Lemma \ref{lem:lowfreq} we see that
\[ \nabla \chi(A)\frac{\sin(t\sqrt{A})}{\sqrt{A}}(g')=
\nabla \chi(A)A^{-\frac14}\sin(t\sqrt{A})A^{-\frac14}
\nabla g \]
is a composition of $L^2$-bounded operators acting on $g$.
\end{proof}

\subsection{Bounds involving the scaling vector field}

We turn to the bounds involving the vector field $S=t\partial_t+x\partial_x$.
Recall the definition of $\tilde S$,
\[ \mathcal F\left (\tilde S u(t,\cdot)\right )(\xi)=(t\partial_t-\xi\partial_\xi)
\mathcal F(u(t,\cdot))(\xi), \]
and note that
\[ \tilde S \frac{\sin(t\sqrt{A})}{\sqrt{A}}g=
\frac{\sin(t\sqrt{A})}{\sqrt{A}}(\tilde S g+g). \]
As before, our main concern are low frequencies.
We will need the following auxiliary result.

\begin{lemma}
\label{lem:lowfreq2}
For all $f\in \mathcal S(\R)$ odd and $g\in \mathcal S(\R)$ even we have the bounds
\begin{align*}
\left \|\chi(A)A^{-\frac14}
(S-\tilde S)\nabla f\right \|_{L^2(\R_+)}&\lesssim \|f\|_{L^2(\R_+)} \\
 \left \|\nabla (S-\tilde S)\chi(A) 
A^{-\frac14}g\right \|_{L^2(\R_+)}&\lesssim \|g\|_{L^2(\R_+)}. 
\end{align*}
\end{lemma}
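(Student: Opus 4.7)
The plan is to reduce both estimates to the analysis carried out in Lemma \ref{lem:lowfreq}, by exploiting the operator identity $S - \tilde S = E - I$ where $E := \mc F^{-1}B\mc F$. This identity follows immediately from the definition \eqref{def:B} of $B$ together with $S = t\partial_t + D$ and $\mc F(\tilde S u)(\xi) = (t\partial_t - D)\mc F u(\xi)$, yielding $\mc F(S u) = \mc F(\tilde S u) + (B - I)\mc F u$. Consequently the two bounds become
\begin{align*}
\left \|\chi(A)A^{-\frac14}(E - I)\nabla f\right \|_{L^2(\R_+)} &\lesssim \|f\|_{L^2(\R_+)}, \\
\left \|\nabla (E - I)\chi(A)A^{-\frac14} g\right \|_{L^2(\R_+)} &\lesssim \|g\|_{L^2(\R_+)}.
\end{align*}

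For the first estimate, I would pass to the Fourier side and use Plancherel to reduce to bounding $\chi(\xi^2)\xi^{-1/2}(B-I)\mc F(f')(\xi)$ in $L^2_{\tilde \rho}(\R_+)$. Inserting the explicit representation $B\mc F(f')(\xi) = \int_0^\infty (\xi\partial_\xi - y\partial_y)\phi(y,\xi^2) f'(y)\,dy$ and integrating by parts in $y$ (both boundary terms vanish: at $y = 0$ because $f(0) = 0$ by oddness of $f$, and at $y = \infty$ by rapid decay of $f$), together with the commutator relation $\partial_y(\xi\partial_\xi - y\partial_y) = (\xi\partial_\xi - y\partial_y)\partial_y - \partial_y$ and the identity $\int_0^\infty \partial_y\phi(y,\xi^2) f(y)\,dy = -\mc F(f')(\xi)$, one arrives at
\[ (B-I)\mc F(f')(\xi) = -\int_0^\infty (\xi\partial_\xi - y\partial_y)\partial_y\phi(y,\xi^2) f(y)\,dy - 2\mc F(f')(\xi). \]
The last term contributes only $2\|\chi(A)A^{-1/4}\nabla f\|_{L^2(\R_+)}$, which is controlled by Lemma \ref{lem:lowfreq}. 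The main term is an integral operator with kernel $\chi(\xi^2)\xi^{-1/2}(\xi\partial_\xi - y\partial_y)\partial_y\phi(y,\xi^2)$. The decisive observation is that $\xi\partial_\xi - y\partial_y$ annihilates any function of the product $y\xi$, so that inspection of the representations of $\phi(y,\xi^2)$ in Lemma \ref{lem:phi} shows that $(\xi\partial_\xi - y\partial_y)\phi(y,\xi^2)$ obeys the same pointwise and symbol bounds as $\phi(y,\xi^2)$. Differentiating in $y$ and using the commutator above then gives that $(\xi\partial_\xi - y\partial_y)\partial_y\phi(y,\xi^2)$ obeys the same bounds as $\partial_y\phi(y,\xi^2)$, and the case analysis of Lemma \ref{lem:lowfreq} (distinguishing $y\xi \lesssim 1$ from $y\xi \gtrsim 1$) can be replayed verbatim.

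For the second estimate I would follow the mirror argument, writing $\nabla(E - I)\chi(A)A^{-1/4} g(x) = \int_0^\infty \partial_x\phi(x,\xi^2) (B-I)[\chi(\xi^2)\xi^{-1/2}\mc F g(\xi)]\tilde\rho(\xi)\,d\xi$, inserting the Schwartz-kernel representation of $B$, and integrating by parts in $\xi$. The boundary conditions required here are supplied by the cut-off $\chi(\xi^2)\xi^{-1/2}$ (making the integrand vanish at $\xi = 0$) and by the even parity of $g$, which via Lemmas \ref{lem:F-1Cinfc} and \ref{lem:mapF-1} ensures that $h := \chi(A)A^{-1/4}g$ satisfies $h'(0) = 0$ so that the dual boundary terms also vanish. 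The resulting kernel again involves $(\xi\partial_\xi - x\partial_x)\partial_x\phi(x,\xi^2)$, and the same cancellation principle reduces the bound to the analysis of Lemma \ref{lem:lowfreq}.

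The main technical obstacle will be the careful verification that $(\xi\partial_\xi - y\partial_y)\partial_y\phi(y,\xi^2)$ genuinely inherits the bounds of $\partial_y\phi(y,\xi^2)$ uniformly across every regime of Lemma \ref{lem:phi}. One must check this in the nonoscillatory region $y\xi \lesssim 1$, where the logarithmic factors in $\phi_0$ and in the small-frequency expansion of $\phi$ could in principle be amplified by $\xi\partial_\xi$; and in the oscillatory region $y\xi \gtrsim 1$, where the leading factor $e^{\pm iy\xi}$ is killed by $\xi\partial_\xi - y\partial_y$ and one must track the surviving subleading symbol contributions. Once these pointwise bounds are secured, everything else reduces to the computation already performed in Lemma \ref{lem:lowfreq}.
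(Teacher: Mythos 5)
Your proposal is correct and follows essentially the same route as the paper: both compute the kernel of $(S-\tilde S)\nabla$ (resp.\ $\nabla(S-\tilde S)$ on the inverse-transform side) by integration by parts, arriving at $\partial_x(x\partial_x-\xi\partial_\xi+1)\phi$ (resp.\ $\partial_x[x\partial_x-\xi\partial_\xi-\partial_\xi(\xi\tilde\rho)/\tilde\rho]\phi$), and then observe that this inherits the pointwise asymptotics of $\partial_x\phi$ so that the case analysis of Lemma \ref{lem:lowfreq} applies unchanged. Your spelling out of the annihilation property of $\xi\partial_\xi-x\partial_x$ (and the commutator with $\partial_x$) is exactly the justification the paper leaves implicit; the only imprecision is the phrase ``Schwartz-kernel representation of $B$'' for the second bound, where what is really used is the physical-side integral formula for $E$ from the proof of Lemma \ref{lem:comminf}, not the singular kernel $K(\xi,\eta)$ from Section \ref{sec:vf}.
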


\begin{proof}
By definition, we have
\begin{align*} 
\mc{F}(S \nabla f)(\xi)&=\int_0^\infty \phi(x,\xi^2)x\partial_x f'(x)dx \\
&=\int_0^\infty \partial_x(x\partial_x+1)\phi(x,\xi^2)f(x)dx
\end{align*}
as well as
\begin{align*}
\mc{F}(\tilde S \nabla f)(\xi)&=(t\partial_t-\xi\partial_\xi)\mc{F}(\nabla f)(\xi)
=-\xi\partial_\xi \int_0^\infty \phi(x,\xi^2)f'(x)dx \\
&=\int_0^\infty \partial_x(\xi\partial_\xi)\phi(x,\xi^2)f(x)dx.
\end{align*}
Thus, we obtain the representation
\begin{align*} \mc{F}\big [ \chi(A)&A^{-\frac14}(S-\tilde S)
\nabla f \big ](\xi) \\
&=\chi(\xi^2)\xi^{-\frac12}\int_0^\infty \partial_x (x\partial_x-\xi\partial_\xi+1)
\phi(x,\xi^2)f(x)dx 
\end{align*}
and, since $\partial_x(x\partial_x-\xi\partial_\xi+1)\phi(x,\xi^2)$ has the same asymptotics as
$\partial_x \phi(x,\xi^2)$, the claim follows by repeating the reasoning in the proof of Lemma \ref{lem:lowfreq}.
Similarly, we have
\[ S g(x)=S \mc{F}^{-1}(\mc{F} g)(x)=\int_0^\infty x\partial_x \phi(x,\xi^2)\mc{F} g(\xi)
\tilde \rho(\xi)d\xi \]
as well as
\begin{align*} 
\tilde S g(x)&=\tilde S \mc{F}^{-1}(\mc{F} g)(x)=\int_0^\infty \phi(x,\xi^2)(-\xi\partial_\xi)\mc{F} g(\xi)
\tilde \rho(\xi)d\xi \\
&=\int_0^\infty \left [\xi\partial_\xi+\tfrac{\partial_\xi[\xi\tilde \rho(\xi)]}{\tilde \rho(\xi)}
\right ]\phi(x,\xi^2)g(x)\tilde \rho(\xi)d\xi
\end{align*}
and this yields
\begin{align*} \nabla &(S-\tilde S)\chi(A)A^{-\frac14}g(x) \\
&=\int_0^\infty \partial_x \left [x\partial_x-\xi\partial_\xi
-\tfrac{\partial_\xi[\xi\tilde \rho(\xi)]}{\tilde \rho(\xi)}
\right ]\phi(x,\xi^2)\chi(\xi^2)\xi^{-\frac12}\mc{F} g(\xi)\tilde \rho(\xi)d\xi. 
\end{align*}
Again, $\partial_x [x\partial_x-\xi\partial_\xi
-\tfrac{\partial_\xi[\xi\tilde \rho(\xi)]}{\tilde \rho(\xi)}]\phi(x,\xi^2)$ has the same asymptotics
as $\partial_x \phi(x,\xi^2)$ and the claim follows as in the proof of Lemma \ref{lem:lowfreq}.
\end{proof}

\begin{lemma}
\label{lem:divvf}
For all $g\in \mathcal S(\R)$ odd we have the estimate
\[ \left \|\partial_t^\ell \nabla S^m \frac{\sin(t\sqrt{A})}
{\sqrt{A}}(g') \right \|_{H^k(\R_+)}\leq C_{k,\ell,m}
\sum_{j=0}^m \left \|S^j g \right \|_{H^{1+k+\ell}(\R_+)} \]
for all $t\geq 0$ and $k,\ell,m \in \N_0$.
\end{lemma}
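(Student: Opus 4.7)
The plan is to argue by induction on $m$, with the base case $m = 0$ being exactly Lemma \ref{lem:div}. First I reduce to $\ell = 0$. Since $\partial_t^2 T_t g' = -A T_t g' = -T_t(Ag')$ and $Ag' = \nabla(-g'' + Vg) - V'g$ (using $A = -\nabla^2 + V$), the leading piece $T_t\nabla(-g'' + Vg)$ is again of divergence form, while the lower-order piece $T_t(V'g)$ is treated via Lemma \ref{lem:energyfree} thanks to the rapid decay of $V'$ (which makes $V'g \in L^1$). Iterating trades pairs of time derivatives for two spatial derivatives on $g$, and odd $\ell$ is handled analogously using $\partial_t T_t g' = \cos(t\sqrt A) g'$ together with the standard energy bound for the cosine evolution.

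For $\ell = 0$, the crucial algebraic fact is the operator identity $[\nabla, S] = \nabla$, equivalently $\nabla S = (S+1)\nabla$; hence $\nabla S^m = (S+1)^m\nabla$ and
\[
\nabla S^m T_t g' \;=\; (S+1)^m \nabla T_t g' \;=\; \sum_{j=0}^{m} \binom{m}{j}\, S^j \nabla T_t g'.
\]
It therefore suffices to bound $\|S^j \nabla T_t g'\|_{H^k} \leq C \sum_{i=0}^{j} \|S^i g\|_{H^{1+k}}$ for each $0 \leq j \leq m$, which I do by a secondary induction on $j$ (base case $j=0$ being Lemma \ref{lem:div}). Using $S\nabla = \nabla(S-1)$ together with Lemma \ref{lem:S},
\[
S\,\nabla T_t g' \;=\; \nabla T_t(Dg') \;+\; \nabla [E, T_t]\,g',
\]
where $Dh(x) = xh'(x)$. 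Since $D\nabla = \nabla(D-1)$, we have $Dg' = D\nabla g = \nabla((D-1)g)$, so the first term equals $\nabla T_t \nabla((D-1)g)$, again in divergence form; applying the inductive hypothesis at level $j-1$ with modified data $(D-1)g$, and using $Sg = Dg$ for time-independent $g$, yields a contribution bounded by $C(\|g\|_{H^{1+k}} + \|Sg\|_{H^{1+k}})$. Iterating produces the desired sum.

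The main obstacle is the commutator term $\nabla [E, T_t] g'$: since $Eg'$ is not a priori of divergence form, Lemma \ref{lem:div} does not apply directly to $\nabla T_t E g'$. My plan for this is to exploit the identity
\[
[E, T_t] \;=\; A^{-1/2}[\sqrt A, E]\, T_t \;+\; A^{-1/2}[E, \sin(t\sqrt A)],
\]
obtained by commuting $\sqrt A$ past $E$ inside $T_t = A^{-1/2}\sin(t\sqrt A)$. The smoothing bound of Lemma \ref{lem:commAB} for $[\sqrt A, E]$ (which fundamentally uses the nonresonance assumption $a_1 \neq 0$) compensates for the singular $A^{-1/2}$; Corollary \ref{cor:E} handles the boundedness of $E$; and the factor $\nabla A^{-1/2}$ arising on the resulting expression is controlled at low frequencies by Lemma \ref{lem:lowfreq}, precisely because $g' = \nabla g$ has the required divergence structure. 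Iterated commutators appearing after further applications of $S$ are controlled by Lemma \ref{lem:commAEm}. A more computational alternative, paralleling the proof of Lemma \ref{lem:comminf}, is to write $ET_t g'$ and $T_t E g'$ as explicit oscillatory integrals using the representation of $B$ from Section \ref{sec:vf} and bound $\nabla$ applied to their kernels via the same non-stationary-phase techniques already used in Lemma \ref{lem:div}, exploiting the principal-symbol cancellation between $ET_t$ and $T_t E$.
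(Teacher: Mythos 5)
Your reduction to $\ell = 0$ via $\partial_t^2 T_t g' = -T_t\bigl(\nabla(-g''+Vg)\bigr) + T_t(V'g)$ is a reasonable idea, but it does not match the reference you cite: Lemma~\ref{lem:energyfree} carries no vector field $S^m$, so the ``lower-order piece'' $T_t(V'g)$ is not covered once $m\geq 1$ (and Lemma~\ref{lem:vfodhigh} only reaches $m\leq 2$). One would need a vector-field version of Lemma~\ref{lem:energyfree} for non-divergence data. The paper simply invokes Theorem~\ref{thm:vf}, exploiting that after one time derivative the sine propagator becomes a cosine and the troublesome $A^{-1/2}$ disappears.

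The real problem is the commutator term $\nabla[E,T_t]g'$, and here your proposal has a genuine gap. Your algebraic identity $[E,T_t] = A^{-1/2}[\sqrt A,E]T_t + A^{-1/2}[E,\sin(t\sqrt A)]$ is correct, but neither piece is controlled by the lemmas you cite.
\begin{itemize}
\item Lemma~\ref{lem:commAB} gives $\|[\sqrt A,E]f\|_{H^s}\lesssim\|\sqrt A f\|_{H^{s-1}}$; it does \emph{not} give boundedness of $A^{-1/4}[\sqrt A,E]A^{-1/4}$, which is what you would need after splitting the two $A^{-1/4}$'s off with Lemma~\ref{lem:lowfreq}. The kernel bound used in the proof of Lemma~\ref{lem:commAB} is the rank-one estimate $\xi^{-1}\tilde\rho(\xi)^{1/2}\eta^{-1}\tilde\rho(\eta)^{1/2}$, which after multiplying by the extra $\xi^{-1/2}\eta^{-1/2}$ is no longer square-integrable near the origin; one would have to go back to the kernel $F(\xi,\eta)/(\xi+\eta)$ and redo a Schur-type argument, which is additional work you have not carried out.
\item Lemma~\ref{lem:lowfreq} can only be applied if you can move $\chi(A)A^{-1/4}$ past the commutator expressions, but $E$, and hence $[\sqrt A,E]$ and $[E,\sin(t\sqrt A)]$, do not commute with spectral multipliers of $A$. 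So ``the factor $\nabla A^{-1/2}$ is controlled by Lemma~\ref{lem:lowfreq} because $g'=\nabla g$ has divergence structure'' does not go through: the $A^{-1/4}$ you want to pair with $\nabla g$ on the right is trapped on the wrong side of the $E$-terms.
\item For the second piece, the naive bound $|\sin(t\eta)-\sin(t\xi)|\lesssim t|\xi-\eta|$ shows that $[E,\sin(t\sqrt A)]$ combined with the singular $A^{-1/2}$ gives $t$-dependent constants unless one uses a finer cancellation, which again is not established.
\end{itemize}
The ingredient you are missing is Lemma~\ref{lem:lowfreq2}, which you never cite. It supplies exactly the one-sided operators $\nabla(S-\tilde S)\chi(A)A^{-1/4}$ and $\chi(A)A^{-1/4}(S-\tilde S)\nabla$ (equivalently, with $E-1$ in place of $S-\tilde S$) that the paper needs. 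The paper's proof does not commute $\sqrt A$ past $E$ at all; it decomposes $S=\tilde S+(S-\tilde S)$, writes $\chi(A)T_t = \chi(A)A^{-1/4}\cdot\sin(t\sqrt A)\cdot A^{-1/4}$, and pairs one $A^{-1/4}$ with $\nabla(S-\tilde S)$ on the left via Lemma~\ref{lem:lowfreq2} and the other with $\nabla g$ on the right via Lemma~\ref{lem:lowfreq}. This sidesteps the commutator boundedness question entirely. Your ``more computational alternative'' paralleling Lemma~\ref{lem:comminf} is in spirit what Lemma~\ref{lem:lowfreq2} accomplishes, but as written it is a gesture rather than a proof.
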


\begin{proof}
By Theorem \ref{thm:vf} it suffices to consider the case $\ell=0$.
We start with the case $m=1$ and write
\begin{equation}
\label{eq:keyest2_2_pr}
\nabla S \frac{\sin(t\sqrt{A})}{\sqrt{A}}\nabla g=
\nabla \tilde S \frac{\sin(t\sqrt{A})}{\sqrt{A}}\nabla g
+\nabla (S-\tilde S) \frac{\sin(t\sqrt{A})}{\sqrt{A}}\nabla g.
\end{equation}
In order to estimate the second term in Eq.~\eqref{eq:keyest2_2_pr} we distinguish between
high and low frequencies.
In the high-frequency case we use the $H^k$-boundedness of $S-\tilde S$ to conclude
\[ \left \|\nabla (S-\tilde S) [1-\chi(A)]
\frac{\sin(t\sqrt{A})}{\sqrt{A}}\nabla g
\right \|_{H^k(\R_+)} \lesssim \|g\|_{H^{1+k}(\R_+)}. \]
For low frequencies it suffices to consider $k=0$ and by Lemmas \ref{lem:lowfreq},
\ref{lem:lowfreq2} we infer
\[ \left \|\nabla (S-\tilde S)\chi(A)A^{-\frac14}
\sin(t\sqrt{A})A^{-\frac14}\nabla g
\right \|_{L^2(\R_+)} \lesssim \|g\|_{L^2(\R_+)}. \]
For the first term in Eq.~\eqref{eq:keyest2_2_pr} we obtain
\begin{align*} 
\nabla \tilde S \frac{\sin(t\sqrt{A})}{\sqrt{A}}\nabla g&=
\nabla \frac{\sin(t\sqrt{A})}{\sqrt{A}}(\tilde S \nabla g+\nabla g) \\
&=\nabla \frac{\sin(t\sqrt{A})}{\sqrt{A}}\left [S \nabla g+(\tilde S-S)
\nabla g+\nabla g\right ]
\end{align*}
and, since $[S,\nabla]=-\nabla$, the claim for $m=1$ follows by Lemmas \ref{lem:lowfreq},
\ref{lem:div}, and \ref{lem:lowfreq2}.
For $m=2$ we decompose
\[ S^2=\tilde S^2+2(S-\tilde S)\tilde S
+(S-\tilde S)^2+[\tilde S,S-\tilde S] \]
and note that $[\tilde S,S-\tilde S]$ is $H^k$-bounded by
Lemma \ref{lem:DB}.
Consequently, we may proceed as in the case $m=1$.
Continuing in this fashion one obtains the stated bound for all $m\in \N_0$ by induction.
\end{proof}

\subsection{Bounds for the inhomogeneous problem}
Finally, we turn to the corresponding bounds on the Duhamel formula.

\begin{lemma}
\label{lem:divinhx}
For all $f\in \mathcal S([0,\infty)\times \R)$ such that $f(t,\cdot)$ is odd for any $t\geq 0$
and $\partial_t^\ell f(t,\cdot)|_{t=0}=0$ for all $\ell \in \N_0$,
we have the bound
\begin{align*} 
\Big \| \partial_t^\ell \nabla S^m &\int_0^t 
\frac{\sin((t-s)\sqrt{A})}{\sqrt{A}}\nabla f(s,\cdot)ds \Big \|_{H^k(\R_+)} \\
&\leq C_{k,\ell,m}\sum_{j=0}^m \int_0^t \left \|\partial_s^\ell S^j f(s,\cdot)
\right \|_{H^{1+k}(\R_+)}ds
\end{align*}
for all $t\geq 0$.
\end{lemma}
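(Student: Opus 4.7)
The plan mirrors the proof of the homogeneous divergence-form bound Lemma \ref{lem:divvf}, combined with the commutation identity of Lemma \ref{lem:SDuhamel}.

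First I would reduce to $\ell=0$. Since $\partial_s^j f(s,\cdot)|_{s=0}=0$ for every $j\in\N_0$, repeated integration by parts in $s$ (as in the proof of Lemma \ref{lem:loceninhnabla}) gives
\[
\partial_t^\ell \int_0^t \frac{\sin((t-s)\sqrt A)}{\sqrt A}\nabla f(s,\cdot)\,ds
=\int_0^t \frac{\sin((t-s)\sqrt A)}{\sqrt A}\nabla \partial_s^\ell f(s,\cdot)\,ds,
\]
so replacing $f$ by $\partial_s^\ell f$ reduces the problem to $\ell=0$. The base case $m=0$ then follows at once from Minkowski's integral inequality and Lemma \ref{lem:div} applied slice by slice:
\[
\Big\|\nabla \int_0^t \frac{\sin((t-s)\sqrt A)}{\sqrt A}\nabla f(s,\cdot)\,ds\Big\|_{H^k(\R_+)}
\leq \int_0^t \Big\|\nabla \frac{\sin((t-s)\sqrt A)}{\sqrt A}\nabla f(s,\cdot)\Big\|_{H^k(\R_+)}ds
\lesssim \int_0^t \|f(s,\cdot)\|_{H^{1+k}(\R_+)}\,ds.
\]

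For the inductive step on $m$ I would apply $S=t\partial_t+x\partial_x$ to the Duhamel integral and invoke Lemma \ref{lem:SDuhamel} with source $\nabla f(s,\cdot)$, which produces
\[
Sv(t,\cdot)=\int_0^t \frac{\sin((t-s)\sqrt A)}{\sqrt A} S(\nabla f)(s,\cdot)\,ds + 2 v(t,\cdot) +\int_0^t \Big[E,\frac{\sin((t-s)\sqrt A)}{\sqrt A}\Big]\nabla f(s,\cdot)\,ds,
\]
where $v$ denotes the full Duhamel integral and $K(t-s):=\sin((t-s)\sqrt A)/\sqrt A$. Using $[S,\nabla]=-\nabla$, the first integral rewrites as $\int_0^t K(t-s)\nabla (Sf)(s,\cdot)\,ds-v(t,\cdot)$, which is again in divergence form with $Sf$ as source. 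Iterating $m$ times produces a principal Duhamel integral with source $\nabla S^m f$, finitely many similar integrals with sources $\nabla S^j f$ for $0\leq j<m$, and a collection of commutator tails of the form $\nabla\!\int_0^t [E,K(t-s)]\nabla S^j f(s,\cdot)\,ds$. The first two classes of terms are handled by the base case and the inductive hypothesis.

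The main obstacle is controlling these commutator tails. Following the slice-wise strategy of Lemma \ref{lem:divvf}, I would expand $[E,K(t-s)]=EK(t-s)-K(t-s)E$, use the boundedness of $E$ on $H^k$ (Corollary \ref{cor:E}) together with the analogous boundedness of $[\nabla,E]$ obtained by transporting Lemma \ref{lem:DB} to the physical side, and move $\nabla$ past $E$ in each term. This reduces every slice to the quantity $\|\nabla K(t-s)\nabla h(s,\cdot)\|_{H^k(\R_+)}$ which by Lemma \ref{lem:div} is dominated by $\|h(s,\cdot)\|_{H^{1+k}(\R_+)}$ for $h$ ranging over $f,Sf,\ldots,S^m f$. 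The delicate step, exactly as in the homogeneous proof of Lemma \ref{lem:divvf}, is the low-frequency regime where $\nabla A^{-1/2}$ fails to be $L^2$-bounded: here one factors $\nabla\chi(A)A^{-1/4}\cdot\sin(t\sqrt A)\cdot A^{-1/4}\nabla$ through $L^2$-bounded pieces by means of Lemmas \ref{lem:lowfreq} and \ref{lem:lowfreq2}, after which Minkowski's integral inequality reassembles the slice-wise bounds into the claimed time-integral estimate.
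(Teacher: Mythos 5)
Your reduction to $\ell=0$ via integration by parts in $s$ and the $m=0$ base case (Minkowski plus Lemma \ref{lem:div} slice by slice) match the paper exactly. For $m\geq1$ you invoke Lemma \ref{lem:SDuhamel}, which produces commutator tails $\int_0^t[E,K(t-s)]\nabla f(s,\cdot)\,ds$ with $K(\tau)=\sin(\tau\sqrt A)/\sqrt A$, whereas the paper works instead with the modified operator $\tilde S$, shows that $\tilde S$ commutes exactly with the Duhamel integral after one integration by parts in $s$, and then reduces slice by slice to Lemma \ref{lem:divvf}. Since $S-\tilde S=E-1$ these two routes are algebraically equivalent, so the choice is a matter of bookkeeping.

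The genuine gap is in your proposed treatment of the commutator tails. You claim to "move $\nabla$ past $E$" using "the analogous boundedness of $[\nabla,E]$ obtained by transporting Lemma \ref{lem:DB} to the physical side." This is not correct: Lemma \ref{lem:DB} controls $[D,B]$ with $D$ the \emph{dilation} $\xi\partial_\xi$ on the Fourier side, which after conjugation by $\mc F$ corresponds to the commutator of $E$ with the scaling field, not with $\partial_x$. The paper never establishes any bound on $[\nabla,E]$, and the reduction of the tail to $\|\nabla K(t-s)\nabla h\|_{H^k}$ does not go through once $[\nabla,E]$ is stranded in the middle. What the paper actually does is quite different in spirit: the problematic low-frequency pieces $\nabla(S-\tilde S)K(t-s)\nabla f$ and $\nabla K(t-s)(S-\tilde S)\nabla f$ are controlled by the \emph{composed} operator bounds of Lemma \ref{lem:lowfreq2}, namely that $\nabla(S-\tilde S)\chi(A)A^{-1/4}$ (on even data) and $\chi(A)A^{-1/4}(S-\tilde S)\nabla$ (on odd data) are $L^2$-bounded as single operators; $\nabla$ is never commuted past $E$ or $S-\tilde S$. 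You do cite Lemmas \ref{lem:lowfreq} and \ref{lem:lowfreq2} at the end, but the factorization you write, $\nabla\chi(A)A^{-1/4}\cdot\sin(t\sqrt A)\cdot A^{-1/4}\nabla$, needs only Lemma \ref{lem:lowfreq} and is missing the $(S-\tilde S)$ insertion that Lemma \ref{lem:lowfreq2} was designed to handle and that the commutator tail actually requires.
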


\begin{proof}
Since 
\[ \partial_t^\ell \int_0^t \frac{\sin((t-s)\sqrt{A})}{\sqrt{A}}\nabla f(s,\cdot)ds
=\int_0^t \frac{\sin((t-s)\sqrt{A})}{\sqrt{A}}\nabla \partial_s^\ell f(s,\cdot)ds\]
it suffices to consider the case $\ell=0$.
For $m=0$ we have
\begin{align*} \Big \|\nabla \int_0^t 
&\frac{\sin((t-s)\sqrt{A})}{\sqrt{A}}\nabla f(s,\cdot)ds \Big \|_{H^k(\R_+)} \\
&\lesssim \int_0^t \Big \|\nabla
\frac{\sin((t-s)\sqrt{A})}{\sqrt{A}}\nabla f(s,\cdot)\Big \|_{H^k(\R_+)} 
ds \\
&\lesssim \int_0^t \|f(s,\cdot)\|_{H^{1+k}(\R_+)}ds
\end{align*}
by Lemma \ref{lem:div}.
The case $m\geq 1$ follows from Lemma \ref{lem:divvf} once we understand the action
of $\tilde S$ on the Duhamel formula. 
On the Fourier side we have
\begin{align*} 
(t\partial_t-\xi\partial_\xi-1)&\int_0^t \frac{\sin((t-s)\xi)}{\xi}\mc{F}(\nabla f(s,\cdot))(\xi)ds \\
&=\int_0^t \frac{\sin((t-s)\xi)}{\xi}(s\partial_s-\xi\partial_\xi)\mc{F}(\nabla f(s,\cdot))(\xi)ds 
\end{align*}
as follows by performing one integration by parts with respect to $s$ (and the assumption
$f(0,\cdot)=0$).
This means that essentially, the operator $\tilde S$ commutes through the Duhamel formula 
and the claim follows.
\end{proof}

\begin{lemma}
\label{lem:divinhs}
For all $f\in \mathcal S([0,\infty)\times \R)$ such that $f(t,\cdot)$ is even for any $t\geq 0$
and $\partial_t^\ell f(t,\cdot)|_{t=0}=0$ for all $\ell \in \N_0$,
we have the bound
\begin{align*} 
\Big \| \partial_t^\ell \nabla S^m &\int_0^t 
\frac{\sin((t-s)\sqrt{A})}{\sqrt{A}}\partial_s f(s,\cdot)ds \Big \|_{H^k(\R_+)} \\
&\leq C_{k,\ell,m}\sum_{j=0}^m \int_0^t \left \|\partial_s^\ell S^j f(s,\cdot)
\right \|_{H^{1+k}(\R_+)}ds 
\end{align*}
for all $t\geq 0$.
\end{lemma}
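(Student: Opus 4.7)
The plan is to reduce this lemma to an already-established bound (in the spirit of Lemma \ref{lem:divvf}) via two integrations by parts in $s$. First, I reduce to $\ell = 0$: using that $\partial_s^k f(t,\cdot)|_{t=0} = 0$ for all $k$ and that $\sin(0)/\sqrt A = 0$, one integration by parts in $s$ applied to the sine Duhamel formula yields $\partial_t I(t) = \int_0^t \frac{\sin((t-s)\sqrt A)}{\sqrt A} \partial_s^2 f(s,\cdot)\, ds$, and iterating gives
\[ \partial_t^\ell \int_0^t \tfrac{\sin((t-s)\sqrt A)}{\sqrt A}\, \partial_s f(s,\cdot)\, ds = \int_0^t \tfrac{\sin((t-s)\sqrt A)}{\sqrt A}\, \partial_s^{\ell+1} f(s,\cdot)\, ds. \]
Thus, by replacing $f$ with $\partial_s^\ell f$ (which preserves the vanishing initial data and evenness), it suffices to treat $\ell = 0$.

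Second, one more integration by parts in $s$, using $\partial_s [\sqrt A^{-1}\sin((t-s)\sqrt A)] = -\cos((t-s)\sqrt A)$ together with the boundary vanishings at $s=0$ (from $f(0,\cdot) = 0$) and at $s=t$ (from $\sin(0)/\sqrt A = 0$), produces the key identity
\[ \int_0^t \tfrac{\sin((t-s)\sqrt A)}{\sqrt A}\, \partial_s f(s,\cdot)\, ds = \int_0^t \cos((t-s)\sqrt A) f(s,\cdot)\, ds. \]
This kills the singular factor $\sqrt A^{-1}$ and converts the problem into controlling a cosine Duhamel integral. For $m = 0$, Minkowski's inequality together with the cosine bound $\|\nabla \cos(\tau \sqrt A) h\|_{H^k} \lesssim \|h\|_{H^{1+k}}$ (which appears in the proof of Lemma \ref{lem:energyfree} and, unlike its sine counterpart, requires no $L^1$ norm on $h$) immediately yields the claim.

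For $m \geq 1$, I follow the strategy of the proof of Lemma \ref{lem:divvf}, decomposing $S = \tilde S + (S - \tilde S)$, where $S - \tilde S = E - 1$ with $E = \mathcal F^{-1}B\mathcal F$ bounded on $H^s(\R_+)$ by Corollary \ref{cor:E}. A direct computation on the distorted Fourier side gives the commutation relation
\[ \tilde S \int_0^t \cos((t-s)\sqrt A) f(s,\cdot)\, ds = \int_0^t \cos((t-s)\sqrt A)\bigl(\tilde S + 1\bigr) f(s,\cdot)\, ds, \]
so $\tilde S$ essentially commutes through the cosine Duhamel integral. The $(S - \tilde S)$ remainder is bounded by estimating $\|\nabla E u\|_{H^k} \lesssim \|u\|_{H^{1+k}}$ for $u$ the cosine Duhamel integral, whose $H^{1+k}$ norm is itself controlled by $\int_0^t \|f(s,\cdot)\|_{H^{1+k}}\, ds$ via the $m=0$ step. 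For general $m$, iterating this decomposition introduces the iterated commutators already estimated in Lemmas \ref{lem:adD} and \ref{lem:commAEm}. The main obstacle is the bookkeeping of these error terms, but since it parallels the argument in Lemma \ref{lem:divvf} almost verbatim, no genuinely new ingredient is required.
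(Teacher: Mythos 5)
Your proof is correct and takes essentially the same route as the paper's: you integrate by parts in $s$ to absorb the factor $\sqrt{A}^{-1}$ and pass from the sine to the cosine Duhamel integral, and for $m\geq 1$ you commute the vector field through via the $S=\tilde S+(S-\tilde S)$ decomposition, exactly as in Lemma \ref{lem:divvf}. The paper orders the operations slightly differently (it uses $[\partial_t,S]=\partial_t$ to move $S^m$ past $\partial_t^\ell$ and states the $\tilde S$-commutation for the sine Duhamel rather than the cosine), but the underlying mechanism is the same; your explicit reduction to $\ell=0$ and the identity $\tilde S\int_0^t\cos((t-s)\sqrt A)f(s,\cdot)\,ds=\int_0^t\cos((t-s)\sqrt A)(\tilde S+1)f(s,\cdot)\,ds$ both check out.
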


\begin{proof}
Integration by parts with respect to $s$ yields
\begin{align*}
\partial_t^\ell \int_0^t 
\frac{\sin((t-s)\sqrt{A})}{\sqrt{A}}\partial_s f(s,\cdot)ds=
\int_0^t \cos\left ((t-s)\sqrt{A}\right)\partial_s^\ell f(s,\cdot)ds
\end{align*}
and for $m=0$ the claim follows in a straightforward manner.
In order to handle the case $m\geq 1$ it suffices to note the commutator
$[\partial_t,S]=\partial_t$ and to recall the formula
\begin{align*} 
(t\partial_t-\xi\partial_\xi-1)&\int_0^t \frac{\sin((t-s)\xi)}{\xi}\partial_s \mc{F}(f(s,\cdot))(\xi)ds \\
&=\int_0^t \frac{\sin((t-s)\xi)}{\xi}(s\partial_s-\xi\partial_\xi)\partial_s
\mc{F}(f(s,\cdot))(\xi)ds.
\end{align*}
\end{proof}

\bibliographystyle{plain}
\bibliography{gen}

\end{document}